\documentclass[11pt]{amsart}

%     If you need symbols beyond the basic set, uncomment this command.
%\usepackage{amssymb}

%     If your article includes graphics, uncomment this command.
%\usepackage{graphicx}

%     If the article includes commutative diagrams, ...
%\usepackage[cmtip,all]{xy}

%     Update the information and uncomment if AMS is not the copyright
%     holder.
%\copyrightinfo{2009}{American Mathematical Society}

\usepackage{amsmath, amsfonts, amssymb,amsthm}
\usepackage{amstext}
\usepackage{mathrsfs}

\usepackage{float,epsf,subfigure}
\usepackage[all,cmtip]{xy}
\usepackage{hyperref}
\usepackage{algorithm}
\usepackage{algorithmic}
\usepackage{mathtools}
\usepackage{float}
\usepackage{bm}
\setlength{\parskip}{.1 in plus 2pt minus 2pt}
\setlength{\textheight}{7.8 in}

\theoremstyle{plain}
\newtheorem{theorem}{Theorem}[section]

\newtheorem{cor}[theorem]{Corollary}
\newtheorem{def-thm}[theorem]{Definition-Theorem}
\newtheorem{lemma}[theorem]{Lemma}

\newtheorem{defi}[theorem]{Definition}

\newtheorem*{tha}{Theorem A}

\newtheorem*{thb1}{Theorem B1}

\newtheorem*{thb4}{Theorem B4}
\newtheorem*{thc1}{Theorem C1}
\newtheorem*{corc2}{Corollary C2}
\newtheorem*{thd1}{Theorem D1}
\newtheorem*{cord2}{Corollary D2}
\newtheorem*{thd3}{Theorem D3}
\newtheorem*{cord4}{Corollary D4}

\newtheorem*{corb2}{Corollary B2}
\newtheorem*{corb3}{Corollary B3}
\newtheorem*{corb5}{Corollary B5}
\newtheorem*{corb6}{Corollary B6}

\theoremstyle{definition}

\newtheorem{remark}[theorem]{Remark}

\def\min{\mathop{\mathrm{min}}}

\begin{document}
\title[Nevanlinna Theory of Algebroid Functions]{Nevanlinna Theory of Algebroid Functions on Complete K\"ahler Manifolds}
\author[X.-J. Dong]
{Xianjing Dong}

\address{School of Mathematical Sciences \\ Qufu Normal University \\ Qufu, 273165,  P. R. China}
\email{xjdong05@126.com}

 % \thanks{This work was partially supported by the
%NSFC (No. 11301260,11101201), the NSF of Jiangxi (No.
%20132BAB211003) and the YFED of Jiangxi (No. GJJ13078) of China.}

\subjclass[2010]{32H30; 32H25} \keywords{Algebroid functions; Nevanlinna theory;   second main theorem;  Picard type  theorem.}
%\thanks{School of Mathematical Sciences,  Qufu Normal University,  Qufu, 273165,  P. R. China; E-mail: xjdong05@126.com}
\date{}
\maketitle \thispagestyle{empty} \setcounter{page}{1}

\begin{abstract}  In this paper, we generalize the classical Nevanlinna theory of algebroid functions from $\mathbb C$ to a complete K\"ahler manifold with either non-negative Ricci curvature or non-positive sectional curvature.  As its applications, we establish some Picard type  theorems and  five-value type theorems for algebroid functions under certain  conditions. 
     \end{abstract}  
 
\vskip\baselineskip
\vskip\baselineskip

\tableofcontents
\newpage
\setlength\arraycolsep{2pt}
\medskip

\section{Introduction}
\vskip\baselineskip

\subsection{Motivation}~

The notion of algebroid functions (see \cite{HX, SG})  introduced by H. Poincar\'e is an extension    of   meromorphic functions.  Recall    that a $\nu$-valued algebroid function  $w$ on a  domain  $D\subseteq\mathbb C$   is defined by an irreducible algebraic equation  
$$A_\nu(z) w^\nu+A_{\nu-1}(z)w^{\nu-1}+\cdots+A_0(z)=0, \ \ \ \ A_\nu\not\equiv0,$$
where $A_0,\cdots,A_\nu$ are holomorphic functions without common zeros in $D.$ 
An algebroid function  is viewed     as a special multi-valued analytic function,  
while  which was  first put forward, 
 G. Darboux  deemed   that  it is  an important class of functions. 
To see that,  for example, it  is known  that a $\nu$-valued algebroid function $w$ defines  a $\nu$-leaf Riemann surface  $\mathcal D$ which  is a  
 $\nu$-sheeted ramified analytic  covering of $D.$ 
 A further fact states   that $w$ can lift to a single-valued meromorphic function $f$ on $\mathcal D$ via a natural projection $\pi: \mathcal D\to  D$ such that   $w=f\circ\pi^{-1},$ which is called    the 
 uniformization of $w.$

The most concerned  problem on algebroid functions seems to be the value distribution. In the 1930s after R. Nevanlinna \cite{Nev} founded two fundamental  theorems concerning the  value distribution theory of meromorphic functions in 1925,  many famous scholars  such as 
G. R\'emoundos, H. Selberg, 
  E. Ullrich and G. Valiron, etc. began to study the value distribution theory of algebroid functions. 
  For   example, G. R\'emoundos \cite{GR} proved  a Picard's theorem  
   which     
   claims that   
   a nonconstant $\nu$-valued algebroid function on $\mathbb C$  omits $2\nu$ distinct  values at most in 
  $\mathbb P^1(\mathbb C).$ 
  Afterwards,   G. Valiron  \cite{GV}  showed    without details 
      a five-value type theorem in  unicity theory,  
       which claims   that  two nonconstant $\nu$-valued algebroid functions
   on $\mathbb C$
    will be identical if they share $4\nu$+1 distinct  values ignoring multiplicities in  $\mathbb P^1(\mathbb C).$
Since then, a large number of  scholars  
have devoted themselves to the study of  
    Nevanlinna theory, Ahlfors theory of covering surfaces, 
  unicity theory  and normal family theory, etc. for algebroid functions. 
 We refer  readers   to H. Cartan \cite{HC}, He-Xiao \cite{HX}, 
 Niino-Ozawa \cite{NO}, 
 J. Noguchi \cite{nogu}, 
  M. Ozawa \cite{Oz}, 
M. Ru \cite{ru0, ru}, Sun-Gao \cite{SG},  J. Suzuki \cite{JS},  L. Selberg \cite{LS1, LS2}, N. Toda \cite{To1, To2} and E. Ullrich \cite{EU}, etc. 

Algebroid function theory  in one complex variable 
 didn't mature until  the last 30-40  years. 
 Up to now,    there seems to be  very little literature regarding   algebroid functions in  several complex variables,  
    except for the  limited  work by  
  Hu-Yang \cite{HY}. 
    Let $w, v$ denote  any two  non-constant  algebroid functions  
        on a non-compact Hermitian or K\"ahler manifold $M.$    
                                 We are inspired by the following three   fundamental    problems: 
 \begin{enumerate}
   \item[$\bullet$]  How to  realize the uniformization of $w$?
    
  \item[$\bullet$]  How many Picard's exceptional values  can  $w$ have at most?
    
    \item[$\bullet$] How many values  should   $w, v$ share  at least to guarantee   $w\equiv v$?
\end{enumerate}

In the present  paper, we  answer  these  problems. What is more important is that  
we  develop    the classical  Nevanlinna theory of algebroid functions on $\mathbb C$ to   
 complete  non-compact   K\"ahler manifolds.

\subsection{Main Results}~

\noindent\textbf{A. Uniformization}

Let $M$ be a  non-compact Hermitian manifold. 
We     generalize      the notion of  algebroid functions from $\mathbb C$ to $M$ (see Definition \ref{Def}). 
Let $w$ be a $\nu$-valued algebroid function on $M.$  
Given  a $\lambda$-leaf  function element $(x_0, u)$ of  $w.$ 
 When $x_0$ is not a singular branch point of $u,$   we  give a  local Pusieux expansion  of  $u$ 
   at $x_0,$ i.e., in  a local holomorphic coordinate
     $z:=(\hat z_m, z_m),$  we prove  that $u$  can be  expanded locally into the following  convergent  Pusieux series (see Theorem \ref{ses})
        $$u(z)=B_{0}(\hat z_m)+B_{\tau}(\hat z_m)z_m^{\frac{\tau}{\lambda}}+B_{\tau+1}(\hat z_m)z_m^{\frac{\tau+1}{\lambda}}+\cdots$$
   at $x_0$  with $z(x_0)=\textbf{0},$ 
 where  $B_0, B_\tau, \cdots$ are  meromorphic functions. 
 By  this Pusieux series, we construct a $\nu$-leaf complex manifold $\mathcal M$  and show   that 
  \begin{tha}\label{lift}
 Let $\mathcal M$ be the $\nu$-leaf complex manifold by  a $\nu$-valued algebroid function $w$ on $M.$ 
 Then, $w$ can lift to a meromorphic function $f$ on $\mathcal M$ via the natural projection $\pi: \mathcal M\to M$
  such that $w=f\circ\pi^{-1}.$ 
  \end{tha}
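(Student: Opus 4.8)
The plan is to define $f$ one sheet at a time over the part of $M$ where $w$ is unramified and finite, then to use the Puiseux expansion of Theorem~\ref{ses} to extend $f$ meromorphically across the branch locus, and finally to extend across the thin set of singular branch points by a Riemann‑type extension theorem; I expect only this last step to present a genuine difficulty.

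First, recall the structure of $\mathcal{M}$: its points are the leaves (function elements) of $w$, two such being identified when one is an analytic continuation of the other, and over a non‑singular branch point the $\lambda$ sheets of a $\lambda$‑leaf element $(x_0,u)$ are packaged, via the convergent Puiseux series of Theorem~\ref{ses}, into a single holomorphic chart of $\mathcal{M}$ carrying a coordinate $\zeta$ with $\zeta^{\lambda}=z_m$, where $z=(\hat z_m,z_m)$ is the given local coordinate on $M$ with $z(x_0)=\mathbf{0}$; the natural projection $\pi\colon\mathcal{M}\to M$ forgets the leaf and remembers its centre. Let $\mathcal{S}\subset M$ denote the set of singular branch points, and let $D\subseteq M$ be the open dense set obtained from $M$ by deleting the analytic hypersurface $\{A_\nu=0\}$ together with the discriminant locus of the defining equation $A_\nu w^{\nu}+\cdots+A_0=0$. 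Over $D$ this equation has $\nu$ distinct simple roots, so by the implicit function theorem each root is locally a holomorphic function and $\pi\colon\pi^{-1}(D)\to D$ is an unramified $\nu$‑sheeted covering. On $\pi^{-1}(D)$ I would define $f(\hat x)$ to be the value at $\hat x$ of the holomorphic root representing the sheet through $\hat x$; then $f$ is holomorphic and single valued on $\pi^{-1}(D)$ and $w=f\circ\pi^{-1}$ holds there by construction.

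Next I would extend $f$ across $\pi^{-1}(\{A_\nu=0\})$ and across the points of $\mathcal{M}$ lying over non‑singular branch points. Near such a point, in the coordinate $\zeta$ of $\mathcal{M}$ with $\zeta^{\lambda}=z_m$, Theorem~\ref{ses} gives
\[
u = B_0(\hat z_m)+B_\tau(\hat z_m)\,\zeta^{\tau}+B_{\tau+1}(\hat z_m)\,\zeta^{\tau+1}+\cdots,
\]
a convergent series whose coefficients $B_0,B_\tau,\dots$ are meromorphic in $\hat z_m$; hence $f:=u$ is a meromorphic function of the local coordinates $(\hat z_m,\zeta)$ of $\mathcal{M}$, and it agrees with the sheetwise definition off the branch locus. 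Over $\{A_\nu=0\}$ the coefficients $B_j$ may acquire poles (the corresponding leaf of $w$ tends to $\infty$), but $f$ remains locally a quotient of two holomorphic functions, hence meromorphic (equivalently, one checks that $1/f$ extends holomorphically there). In this way $f$ becomes a well‑defined meromorphic function on $\mathcal{M}\setminus\pi^{-1}(\mathcal{S})$; single‑valuedness is automatic, since on the dense open set $\pi^{-1}(D)$ it coincides with the sheetwise definition and a meromorphic extension from a dense open set is unique.

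It then remains to extend $f$ across $\pi^{-1}(\mathcal{S})$, and this is the main obstacle. By the local structure theory underlying Theorem~\ref{ses}, the set $\mathcal{S}$ of singular branch points is a proper analytic subset of $M$ of codimension at least two — it sits over the singular locus of the discriminant hypersurface of the defining equation, together with the intersection of that hypersurface with $\{A_\nu=0\}$ — and, since $\pi$ is a finite holomorphic map, $\pi^{-1}(\mathcal{S})$ is likewise an analytic subset of the complex manifold $\mathcal{M}$ of codimension at least two. By the second Riemann extension theorem for meromorphic functions, $f$ then extends to a meromorphic function on all of $\mathcal{M}$; the extended $f$ is still single valued, and $w=f\circ\pi^{-1}$ persists on all of $M$ by analytic continuation from $D$. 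The delicate point is precisely this codimension estimate: proving that $\mathcal{S}$, the locus excluded from Theorem~\ref{ses}, is small enough for the extension theorem to apply requires a careful analysis of the branched covering $\pi$ near sheets that coalesce in a non‑generic way, and one must check along the way that no sheet of $w$ is lost when passing to the meromorphic extension.
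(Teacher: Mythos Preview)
Your argument is correct and follows the same line as the paper, but the paper organizes the work differently: rather than building $f$ in three stages, the paper absorbs all of your steps into the construction of $\mathcal M$ itself, so that the lift becomes a tautology.  Since a point of $\mathcal M$ is by definition (an equivalence class of) a function element $(x,u)$, one simply sets $f((x,u))=u(x)$; the Puiseux chart $(\hat z_m,\zeta)$ with $\zeta^\lambda=z_m$ that you invoke in your second step is precisely the local holomorphic coordinate used to give $\mathcal M^*=\mathcal M\setminus\mathcal M_{\rm sing}$ its complex structure, so meromorphicity of $f$ there is immediate.  Your ``main obstacle'' --- the codimension of the singular branch set --- is the same one the paper meets, and it is handled in the construction of $\mathcal M$ (extending the complex structure of $\mathcal M^*$ across $\mathcal M_{\rm sing}$ via $\pi$, using that $\mathscr B_{\rm sing}$ has complex codimension $\geq 2$) rather than as a separate Riemann extension for $f$.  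Once $\mathcal M$ is in hand, the uniformization (Corollary~\ref{ppll}) is stated without further argument.  Your stagewise presentation is a valid alternative ordering of the same ingredients; the worry you flag about ``losing sheets'' is not an issue, since $w=f\circ\pi^{-1}$ holds on the dense open set $\pi^{-1}(D)$ and both sides are $\nu$-valued there.
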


\noindent\textbf{B. Nevanlinna theory}     

In this part and  the following Parts  C, D,  we use    $M$  to stand for   a complete non-compact  K\"ahler manifold. 
Let $w$ be any $\nu$-valued algebroid function on $M.$
Refer to the  definition in  Section \ref{sec53}, we have 
 the   Nevanlinna's functions 
   $T(r, w), m(r, w), N(r, w), \overline N(r, w)$  of $w$
    and $T(r,\mathscr R)$ on $\Delta(r),$   
  where $\mathscr R$ is  the Ricci form of $M,$ 
    and $\Delta(r)$ is a precompact domain containing a fixed  point $o\in M.$ 
      To unify  the form, we write   
$$\overline N(r, w)=\overline N\Big(r, \frac{1}{w-\infty}\Big).$$
 The simple defect  of $w$  with respect to $a\in\mathbb P^1(\mathbb C)$ is defined    by
$$
\bar\delta_w(a)= 1-\underset{{r\rightarrow\infty}}\limsup\frac{\overline N(r,\frac{1}{w-a})}{T(r,w)}.
$$
  
\noindent\textbf{(b1)}  \emph{$M$ has non-negative Ricci curvature} 
 
 Assume that $M$ is non-parabolic.  Set 
 \begin{equation}\label{Hr}
H(r)=\frac{V(r)}{r^2}\int_{r}^\infty\frac{tdt}{V(t)}, 
 \end{equation} 
 where $V(r)$ denotes  the Riemannian volume of the geodesic ball centered at $o$ with radius $r.$
 
\begin{thb1}[Second Main Theorem]
 Let $w$ be a nonconstant   $\nu$-valued algebroid function on $M.$  Let $a_1, \cdots, a_q$ be distinct values in 
  $\mathbb P^1(\mathbb C).$ Then  for any $\delta>0,$ there exists a subset $E_\delta\subseteq(0, \infty)$ of  finite Lebesgue measure such that 
     \begin{eqnarray*}
&&  (q-2\nu)T(r,w)+T(r, \mathscr R) \\
 &\leq& \sum_{j=1}^q\overline{N}\Big(r,\frac{1}{w-a_j}\Big)+O\big(\log^+T(r,w)+\log H(r)+\delta\log r\big)
      \end{eqnarray*}
holds for all $r>0$ outside $E_\delta,$ where $H(r)$ is defined by $(\ref{Hr}).$
\end{thb1}

When $M=\mathbb C^m,$  we  have  $\log H(r)=O(1).$ Hence,  Theorem B1  covers    the classical second main theorem for   $\mathbb C^m$ with $m\geq2$ (see Corollary \ref{haoba}). 

\begin{corb2}[Defect Relation] Assume the same conditions as in Theorem {\rm B1}. Then    
$$\sum_{j=1}^q\bar\delta_w(a_j)\leq 2\nu,$$
if one of the following conditions is satisfied$:$

 $(a)$ $M$ satisfies the volume growth condition 
\begin{equation}\label{cond}
\lim_{r\to\infty}\frac{\log H(r)}{\log r}=0,
\end{equation}
where $H(r)$ is defined by $(\ref{Hr});$ 

$(b)$ $w$ is  of non-polynomial type growth, i.e., 
$w$ satisfies the growth condition 
$$\lim_{r\to\infty}\frac{\log r}{T(r,w)}=0.$$
\end{corb2}

Note that   $r^2=o(V(r))$ as $r\to\infty,$ due to  
$$\int_1^\infty\frac{tdt}{V(t)}<\infty.$$
The  condition (\ref{cond})  is  relaxed. For example, it is satisfied  when  $V(r)=O(r^\mu)$ with $\mu>2,$ or  when   $V(r)=O(r^\mu\log^{\nu}r)$ with $\mu>2$   or  $\mu= 2$  and $\nu>1.$ 

\begin{corb3}[Picard Type Theorem]
Let $w$ be a  nonconstant $\nu$-valued algebroid function on $M.$ Then,  $w$ can omit  $2\nu$ distinct values at most  in $\mathbb P^1(\mathbb C)$  if one of the conditions in Corollary ${\rm{B}}2$ is satisfied.   
\end{corb3}

\noindent\textbf{(b2)}  \emph{$M$ has non-positive sectional  curvature} 

  Assume   that  $M$ has curvature  satisfying   
\begin{equation}\label{curvature}
 K\leq-\sigma^2, \ \ \ \ {\rm{Ric}}\geq-(2m-1)\tau^2
 \end{equation}
 for    constants $\sigma,\tau$ with  that  either $\sigma>0$ or $\sigma=\tau=0,$ where $K, {\rm{Ric}}$ are  the   sectional curvature and   Ricci curvature of $M,$ respectively. 
 Set
   \begin{equation}\label{chi}
 \chi(s, t)=\left\{
                \begin{array}{ll}
t, \  \ & s=0; \\
  \frac{\sinh s t}{s}, \ \ & s\not=0.
              \end{array}
              \right.
               \end{equation}

\begin{thb4}[Second Main Theorem]
 Let $w$ be a nonconstant   $\nu$-valued algebroid function on $M.$  Let $a_1, \cdots, a_q$ be distinct values in 
  $\mathbb P^1(\mathbb C).$ Then  for any $\delta>0,$ there exists a subset $E_\delta\subseteq(0, \infty)$ of finite Lebesgue measure such that 
     \begin{eqnarray*}
&&  (q-2\nu)T(r,w)+T(r, \mathscr R) \\
 &\leq& \sum_{j=1}^q\overline{N}\Big(r,\frac{1}{w-a_j}\Big)+O\left(\log^+T(r,w)+(\tau-\sigma)r+\delta\log\chi(\tau, r)\right)
      \end{eqnarray*}
holds for all $r>0$ outside $E_\delta,$ where $\chi(\tau, r)$ is defined by $(\ref{chi}).$ 
\end{thb4}

Note that Theorem B4  covers    the classical second main theorem for  both  $\mathbb C^m$ and  the unit ball $\mathbb B$ with Poincar\'e metric in $\mathbb C^m$
(see Corollaries  \ref{dong1} and  \ref{dong2}). 

\begin{corb5}[Defect Relation]  Assume the same conditions as in Theorem {\rm{B4.}}  
If 
\begin{equation}\label{xjing}
\lim_{r\to\infty}\frac{(\tau-\sigma)r-T(r,\mathscr R)}{T(r,w)}=0,
\end{equation}
then  
$$\sum_{j=1}^q\bar\delta_w(a_j)\leq 2\nu.$$
  \end{corb5} 
  
    \begin{corb6}[Picard Type Theorem]
Let $w$ be a nonconstant $\nu$-valued algebroid function on $M$  satisfying   $(\ref{xjing}).$
 Then,  $w$ can omit  $2\nu$ distinct values at most  in $\mathbb P^1(\mathbb C).$
\end{corb6}

\noindent\textbf{C. Propagation  of algebraic dependence}
 
Propagation problem for algebraic dependence of   meromorphic mappings was  first studied  by L. Smiley \cite{Sm} and further studied    by 
 Y. Aihara \cite{A1, A2, A3}, S. Drouilhet \cite{Dr},  H. Fujimoto \cite{Fji1, Fji2}, Dulock-Ru \cite{Ru}  and W. Stoll \cite{Sto}, etc. 

Assume that   $M$  is either non-parabolic with non-negative Ricci curvature, or of    curvature satisfying (\ref{curvature}).

Let $Z=\sum_j\nu_j Z_j$ be an effective divisor on $M,$ where  $Z_j{^{,}s}$ are  irreducible hypersurfaces of $M.$ 
Let $k$ be a positive integer $k$ (allowed to be $+\infty$).
Set 
\begin{equation*}\label{}
{\rm{Supp}}_kZ=\bigcup_{1\leq\nu_j\leq k}Z_j.
\end{equation*}
 Let $S_1,\cdots, S_q$ be  hypersurfaces of $M$ satisfying that $\dim_{\mathbb C}S_i\cap S_j\leq m-2$ if $m\geq2$ or $S_i\cap S_j=\emptyset$ if $m=1$ for  $i\not=j.$ 
 Let $a_1,\cdots, a_q$ be  distinct values in $\mathbb P^1(\mathbb C)$ and   $k_1,\cdots, k_q$ be positive integers  (allowed to be $+\infty$).
Denote by $\mathscr G$   the set of all nonconstant algebroid functions on $M.$  Set  $S=S_1\cup\cdots\cup S_q.$ 

 Let 
\begin{equation*}
\mathscr W=\mathscr W\big(w\in \mathscr G; \ \{k_j\}; \ (M, \{S_j\}); \ (\mathbb P^1(\mathbb C),  \{a_j\})\big)
\end{equation*}
be  the set of all  $w\in\mathscr G$ such that 
$$S_j={\rm{Supp}}_{k_j}(w^*a_j), \ \ \ 1\leq j\leq q$$
and  that either 
  (\ref{cond}) is satisfied if $M$ is non-parabolic with non-negative Ricci curvature,  
or  (\ref{xjing}) is satisfied if $M$ has  curvature satisfying (\ref{curvature}). 

  Let $\mathscr O(1)$ be the point  line bundle over $\mathbb P^1(\mathbb C).$  We denote by  $\mathscr H$  the set of all indecomposable  hypersurfaces $\Sigma$ of $\mathbb P^1(\mathbb C)^l$ such that $\Sigma={\rm{Supp}}\tilde D$ for some  $\tilde D\in|\tilde {\mathscr O}(1)|,$ in which  $\tilde{\mathscr O}(1)=\pi^*_1\mathscr O(1)\otimes\cdots\otimes\pi^*_l\mathscr O(1)$
and  $\pi_k: \mathbb P^1(\mathbb C)^l\rightarrow \mathbb P^1(\mathbb C)$ is the natural projection on the $k$-th factor.
Set  
 $$\gamma=\sum_{j=1}^q\frac{k_j}{k_j+1}-\frac{k_0\nu_1\cdots\nu_l}{k_0+1}
\sum_{k=1}^l\frac{1}{\nu_k}-2\nu_{0}$$
with  $k_{0}=\max\{k_1,\cdots,k_q\}$
and 
$\nu_{0}=\max\{\nu_1,\cdots,\nu_l\}.$
\begin{thc1} Let $w^k$ be a $\nu_k$-valued algebrod function in $\mathscr W$ for $1\leq k\leq l.$  Assume that $w^1,\cdots,w^l$  are $\Sigma$-related on $S$ for some $\Sigma\in \mathscr H.$   If $\gamma>0,$
   then $w^1,\cdots,w^l$  are $\Sigma$-related on $M.$
\end{thc1}
Let $\varsigma $ be a  positive integer.  Set
$$\mathscr W_\varsigma=\big\{\text{all $\nu$-valued algebroid functions in} \  \mathscr W \ \text{satisfying} \ \nu\leq\varsigma\big\}.$$
Fix any   $w_0\in\mathscr W_\varsigma.$ Assume that 
 $w_0(M)\cap a_s\not=\emptyset$
 for at least one $s\in\{1,\cdots,q\}.$  
  Let $\mathscr W_{\varsigma}^0$ be  the set of all $w\in\mathscr W_\varsigma$ such that $w=w_0$ on  $S.$
  Put 
 $$\gamma_0=\sum_{j=1}^q\frac{k_j}{k_j+1}-\frac{2k_0\varsigma}{k_0+1}-2\varsigma.$$

 \begin{corc2}
 If $\gamma_0>0,$ then $\mathscr W_\varsigma^0$ contains exactly  one element. 
 \end{corc2}

\noindent\textbf{D. Unicity  problems}

\noindent\textbf{(d1)}  \emph{$M$ has non-negative Ricci curvature} 

Assume that $M$ is  non-parabolic with   volume growth satisfying    (\ref{cond}). 
Let $q$ be a positive integer.  Consider  the following condition
\begin{equation}\label{q}
   \begin{cases}
q-2\mu-\frac{2k_0\nu}{k_0+1}-\underset{j=1}{\overset{q}\sum}\frac{1}{k_j+1}>0  \\
q-2\nu-\frac{2k_0\mu}{k_0+1}-\underset{j=1}{\overset{q}\sum}\frac{1}{k_j+1}>0
\end{cases}
\end{equation}
with $k_0=\max\{k_1,\cdots,k_q\}.$ 
 \begin{thd1} Let $u, v$ be nonconstant $\mu$-valued,  $\nu$-valued algebroid  functions on $M$ respectively.  
  Let $a_1,\cdots,a_q$ be distinct values in $\mathbb P^1(\mathbb C).$    
  Assume that ${\rm{Supp}}_{k_j}(u^*a_j)={\rm{Supp}}_{k_j}(v^*a_j)\not=\emptyset$ for all $j.$ 
   If $q$ satisfies  
 $(\ref{q}),$  then $u\equiv v.$ 
 \end{thd1}

\begin{cord2} [Five-Value Type Theorem]
Let $u, v$ be nonconstant $\mu$-valued, $\nu$-valued algebroid  functions on $M$ respectively.  
  If $u, v$  share $2\mu+2\nu+1$ distinct values ignoring multiplicities  in $\mathbb P^1(\mathbb C),$  then $u\equiv v.$   
\end{cord2}

\noindent\textbf{(d2)}  \emph{$M$ has non-positive sectional  curvature} 

Assume that $M$ has curvature  satisfying (\ref{curvature}).

 \begin{thd3}\label{} Let $u, v$ be nonconstant $\mu$-valued,  $\nu$-valued algebroid  functions on $M$  satisfying  $(\ref{xjing})$ respectively. 
  Let $a_1,\cdots,a_q$ be distinct values in $\mathbb P^1(\mathbb C).$    
  Assume that ${\rm{Supp}}_{k_j}(u^*a_j)={\rm{Supp}}_{k_j}(v^*a_j)\not=\emptyset$ for all $j.$ 
   If $q$ satisfies   $(\ref{q}),$
     then $u\equiv v.$
 \end{thd3}
 
\begin{cord4} [Five-Value Type Theorem]
Let $u, v$ be nonconstant $\mu$-valued, $\nu$-valued algebroid  functions on $M$ satisfying  $(\ref{xjing})$ respectively. 
  If $u, v$  share $2\mu+2\nu+1$ distinct values ignoring multiplicities  in $\mathbb P^1(\mathbb C),$  then $u\equiv v.$   
\end{cord4}

\section{Algebroid Functions on  Hermitian Manifolds}

\vskip\baselineskip

\subsection{Notion of Algebroid Functions}~\label{sec21}

First of all, we would like to  extend     the notion of  algebroid functions   to a    non-compact Hermitian manifold. 
Let $M$ be an $m$-dimensional non-compact Hermitian manifold.   Consider an irreducible algebraic equation in  $w$ on $M$:  
\begin{equation}\label{eq}
A_\nu(x) w^\nu+A_{\nu-1}(x) w^{\nu-1}+\cdots+A_0(x)=0, \ \ \  \ A_\nu\not\equiv0, 
\end{equation}
where
$A_0, \cdots, A_\nu$ are holomorphic functions locally defined on $M$ such that they  well define a meromorphic mapping 
$$\mathscr A=[A_0: \cdots: A_\nu]: \ M\rightarrow\mathbb P^{\nu}(\mathbb C).$$
Let $I_{\mathscr A}:=A^{-1}_0(0)\cap\cdots\cap A^{-1}_\nu(0)$ be the indeterminacy locus of $\mathscr A,$ which is either an analytic set  of complex codimension not smaller  than $2$
for $m\geq2$ or an empty set for $m=1$ in $M.$  
Note that $\mathscr A$ is  holomorphic  on $M\setminus I_{\mathscr A}.$ 

\begin{remark}\label{rek}  The local definition   for $A_0, \cdots, A_\nu$ means  that $A_k=\{A_{kj}\}_{j\in I},$ i.e.,  a collection of holomorphic functions 
$A_{kj}: U_j\rightarrow \mathbb C$  for all  $k, j$ such that 
$$\frac{A_{0i}(x)}{A_{0j}(x)}=\frac{A_{1i}(x)}{A_{1j}(x)}=\cdots=\frac{A_{\nu i}(x)}{A_{\nu j}(x)}, \ \ \ \ ^\forall x\in U_i\cap U_j\not=\emptyset,$$
where  $\{U_j\}_{j\in I}$ is an open covering of  $M.$ Whence,  we see that $A_i/A_j$ is well defined on $M\setminus I_{\mathscr A},$ and it hence defines a  meromorphic function on $M$ with $i, j=0,\cdots,\nu.$
 \end{remark}
 
 \subsubsection{Meromorphic function elements}~
 
 Set
\begin{equation}
\psi(x, w)=A_\nu(x)w^\nu+A_{\nu-1}(x)w^{\nu-1}+\cdots+A_0(x). 
\end{equation}
It gives that   
  \begin{eqnarray*}
\psi_w(x,w)&:=&\frac{\partial\psi(x,w)}{\partial w} \\
&=&\nu A_\nu(x) w^{\nu-1}+(\nu-1)A_{\nu-1}(x)w^{\nu-2}+\cdots+A_1(x).
  \end{eqnarray*}

\begin{defi}
Let $x$ be any point in $M.$  A pair $(x, u)$ is called a regular function element,  if $u$ is a holomorphic function  in a neighborhood of $x;$ and  called a meromorphic function element,  if $u$ is a meromorphic function  in a neighborhood of $x.$  
A pair $(x, u)$ is said to be subordinate to equation $(\ref{eq}),$ if $u$ is  a local   solution of equation $(\ref{eq})$ in a   neighborhood of $x.$  
\end{defi}

\begin{theorem}\label{thm1} Let $(x_0, w_0)\in M\times\mathbb C$ be  a pair such  that 
$$\psi(x_0, w_0)=0, \ \ \  \    \psi_w(x_0, w_0)\not=0.$$
Then, there exists a unique regular function  element $(x_0, u)$ subordinate  to  equation $(\ref{eq})$  satisfying $w_0=u(x_0).$
\end{theorem}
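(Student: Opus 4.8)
The plan is to reduce the statement to the classical holomorphic implicit function theorem worked out in a single local chart. The first step is to observe that the hypothesis $\psi_w(x_0,w_0)\not=0$ already forces $x_0\notin I_{\mathscr A}$: if all of $A_0(x_0),\dots,A_\nu(x_0)$ vanished, then every term of $\psi_w(\cdot,w_0)$ would vanish at $x_0$, contradicting $\psi_w(x_0,w_0)\not=0$. Hence there is a coordinate neighbourhood $U$ of $x_0$, which we may take biholomorphic to a polydisc, on which the meromorphic mapping $\mathscr A$ is represented by a fixed tuple of holomorphic functions $A_0,\dots,A_\nu\colon U\to\mathbb C$ that do not all vanish at $x_0$ (see Remark \ref{rek}). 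In this chart $\psi(x,w)$ and $\psi_w(x,w)$ are honest holomorphic functions on $U\times\mathbb C$, polynomial of degree $\leq\nu$ in $w$, and rescaling the chosen representative only multiplies them by a holomorphic function that is nonzero at $x_0$; in particular the conditions $\psi(x_0,w_0)=0$ and $\psi_w(x_0,w_0)\not=0$ are intrinsic at $x_0$.

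Next I would apply the holomorphic implicit function theorem to $\psi\colon U\times\mathbb C\to\mathbb C$ at the point $(x_0,w_0)$: since $\psi(x_0,w_0)=0$ and $\partial\psi/\partial w(x_0,w_0)\not=0$, there exist a connected neighbourhood $V\subseteq U$ of $x_0$ and a holomorphic function $u\colon V\to\mathbb C$ with $u(x_0)=w_0$ and $\psi\big(x,u(x)\big)\equiv0$ on $V$. After shrinking $V$ we may also assume $\psi_w\big(x,u(x)\big)\not=0$ on $V$. Then $(x_0,u)$ is a regular function element subordinate to $(\ref{eq})$ with the prescribed initial value $u(x_0)=w_0$, which establishes existence.

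For uniqueness, suppose $(x_0,u')$ is another regular function element subordinate to $(\ref{eq})$ with $u'(x_0)=w_0$, defined and holomorphic on some connected neighbourhood $W$ of $x_0$. Both $u$ and $u'$ satisfy $\psi(x,\cdot)=0$ on the connected set $V\cap W$ containing $x_0$, and by continuity $\psi_w\big(x,u'(x)\big)$ stays nonzero near $x_0$; the local uniqueness clause of the implicit function theorem (equivalently, the fact that a holomorphic family of \emph{simple} roots of $\psi$ is locally determined by its value at one point, via the identity theorem applied to $u-u'$) then gives $u\equiv u'$ on a neighbourhood of $x_0$, so the two function elements coincide. There is no serious obstacle in this argument; the only point that genuinely needs care is the bookkeeping of passing from the globally meromorphic coefficient data $\mathscr A$ to a local holomorphic model, i.e.\ checking $x_0\notin I_{\mathscr A}$ so that such a model exists and the derivative condition is meaningful. (If one prefers an argument intrinsic to $(\ref{eq})$, the same $u$ may be produced by Newton's iteration or by solving for the Taylor coefficients recursively, the recursion being solvable precisely because the linear coefficient $\psi_w(x_0,w_0)$ is invertible, and its convergence following from the standard majorant method.)
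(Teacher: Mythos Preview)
Your proof is correct. The paper itself acknowledges at the very start of its proof that the theorem follows from the implicit function theorem (exactly your approach), but then deliberately gives an alternative complex-analytic argument: it rewrites $\psi(x,w)=(w-w_0)P(x,w)+B_0(x)$, applies Rouch\'e's theorem in the $w$-variable to extract a continuous root function $u(x)$, and then verifies holomorphicity of $u$ by hand in each coordinate direction via Hartogs' theorem. Your route is shorter and more transparent, since the holomorphic implicit function theorem packages all of this at once; the paper's route is more self-contained and shows explicitly the mechanism (simple root tracked by Rouch\'e) behind the implicit function theorem in this setting. Your preliminary observation that $\psi_w(x_0,w_0)\not=0$ already forces $x_0\notin I_{\mathscr A}$, so that a local holomorphic representative of $\psi$ is available, is a point the paper does not spell out.
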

\begin{proof}  The theorem can be confirmed    by applying    implicit function theorem. Here, we give   an alternative proof in   complex analysis. 
Take a $U_\lambda$ such that   $x_0\in U_\lambda$   given by Remark \ref{rek}.   Write $\psi(x, w)$ as  the  form
  \begin{eqnarray*}
\psi(x,w) &=& B_\nu(x)(w-w_0)^\nu+B_{\nu-1}(x)(w-w_0)^{\nu-1}+\cdots+B_0(x) \\
&=& (w-w_0)P(x,w)+B_0(x),
  \end{eqnarray*}
where
$$P(x,w)=B_\nu(x)(w-w_0)^{\nu-1}+B_{\nu-1}(x)(w-w_0)^{\nu-2}+\cdots+B_1(x).$$
By the conditions, we have  
  \begin{eqnarray*}
B_0(x_0) &=& \psi(x_0, w_0)=0, \\
P(x_0, w_0)&=& \psi_w(x_0, w_0):=b\not=0.
  \end{eqnarray*}
The continuity implies that here exist $r, \rho>0$ such that 
$$|B_0(x)|<\frac{\rho|b|}{3}, \ \ \ \ |P(x, w)|>\frac{2|b|}{3}$$
for all $x, w$ with $x\in U_\lambda,$ ${\rm{dist}}(x_0, x)\leq r$ and $|w-w_0|\leq\rho.$ 
In further, restrict $w$ to $|w-w_0|=\rho,$ then we obtain 
$$\left|(w-w_0)P(x,w)\right|>\frac{2\rho|b|}{3}>\frac{\rho|b|}{3}>|B_0(x)|.$$
According to  Rouch\'e's theorem, we deduce  that  for all  $x$ with ${\rm{dist}}(x_0, x)\leq r,$ 
 $\psi(x,w)=(w-w_0)P(x,w)+B_0$ and $(w-w_0)P(x,w)$  have the same number  of  zeros in the disc
 $\{|w-w_0|<\rho\}.$ 
  In fact,  this number of zeros  is 1,  
   since $(w-w_0)P(x,w)$ in $w$ has the unique  zero $w=w_0$ 
       in the disc  $\{|w-w_0|<\rho\}$ for all $x$ with ${\rm{dist}}(x_0, x)\leq r.$ 
        It implies  that $\psi(x, w)=0$ uniquely determines   a function $w=u(x)$ in a smaller geodesic 
                 ball neighborhood $B(x_0, r_0)$  of $x_0$ with radius $r_0$  such that 
$\psi(x, u)=0,  w_0=u(x_0).$
Evidently,   $u$ is continuous on $B(x_0, r_0).$  

Next,  we show that $u$ is holomorphic on $B(x_0, r_0).$
 One just needs to show without loss of generality that $u$ is analytic at $x_0.$
Taking a local holomorphic coordinate $z=(z_1,\cdots, z_m)$ near $x_0$ such that   $z(x_0)=\textbf{0}.$ Thanks to  Hartog's theorem, it is sufficient  to examine   that $u$ is holomorphic in each holomorphic direction $z_k$ at $\textbf{0}.$ 
For $j=0,\cdots,\nu$ and $k=1,\cdots,m,$ put  
  \begin{eqnarray*}
    u_k(z_k)&=&u(0,\cdots,z_k,\cdots,0), \\
    B_{j,k}(z_k)&=&B_j(0,\cdots, 0, z_k, 0, \cdots,0).
      \end{eqnarray*}
Note that 
  \begin{eqnarray*}
  \frac{u_k-w_0}{z_k} 
&=&-\frac{B_{0, k}}{z_k}\frac{1}{B_{\nu,k}(u_k-w_0)^{\nu-1}+B_{\nu-1,k}(u_k-w_0)^{\nu-2}+\cdots+B_{1,k}} \\
&\rightarrow& -\frac{1}{B_1(\textbf{0})}\frac{\partial B_0}{\partial z_k}(\textbf{0})
  \end{eqnarray*}
as $z_k\rightarrow0.$ That is, 
$$\frac{\partial u}{\partial z_k}(\textbf{0})=-\frac{1}{B_1(\textbf{0})}\frac{\partial B_0}{\partial z_k}(\textbf{0}).$$
Thus, $u$ is holomorphic in the holomorphic direction $z_k$ at $\textbf{0}.$ This completes the proof. 
\end{proof}

  \begin{defi}
A point $x\in M$ is called a critical point,  if either   $A_\nu(x)=0$ or $\psi(x, w)$ has a multiple root  as a polynomial in $w.$
We use  $\mathscr S$ to denote  the set of all critical points,  called the critical set.  
A regular point means  a point in $\mathscr T:=M\setminus \mathscr S,$ 
where $\mathscr T$ is called  the regular set. 
  Moreover,  denote by $\mathscr M$  the set of all points $x$ in $\mathscr S$ 
   such that $\psi(x, w)$  has a multiple root  as a polynomial in $w,$ where $\mathscr M$ is called the multiple set
  and a point in $\mathscr M$ is called a multiple point. 
   \end{defi}

For any   $x\in \mathscr T,$  the fundamental theorem of algebra asserts that 
$$A_\nu(x)w^\nu+A_{\nu-1}(x)w^{\nu-1}+\cdots+A_0(x)=0$$
has exactly $\nu$ distinct complex roots. Hence, Theorem \ref{thm1} implies that 

\begin{cor}\label{cor1} For any   $x\in M\setminus\mathscr S,$  there exist  exactly $\nu$ distinct regular function elements $(x, u_1), \cdots, (x, u_\nu)$ subordinate to  equation $(\ref{eq}).$  
\end{cor}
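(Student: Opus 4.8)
The plan is to combine the fundamental theorem of algebra with the existence-and-uniqueness statement of Theorem \ref{thm1}. Fix $x\in\mathscr T=M\setminus\mathscr S$. Since $x$ is not a critical point we have $A_\nu(x)\not=0$, so $\psi(x,w)$ is genuinely a polynomial in $w$ of degree $\nu$; and since $x\notin\mathscr M$, this polynomial has no multiple root. Hence the fundamental theorem of algebra gives a factorization $\psi(x,w)=A_\nu(x)\prod_{i=1}^{\nu}(w-w_i)$ with $w_1,\cdots,w_\nu\in\mathbb{C}$ pairwise distinct.

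Next I would verify that each pair $(x,w_i)$ meets the hypotheses of Theorem \ref{thm1}. By construction $\psi(x,w_i)=0$, and differentiating the factorization with respect to $w$ yields $\psi_w(x,w_i)=A_\nu(x)\prod_{j\not=i}(w_i-w_j)\not=0$, since $A_\nu(x)\not=0$ and the $w_j$ are distinct. Thus Theorem \ref{thm1} provides, for each $i$, a unique regular function element $(x,u_i)$ subordinate to $(\ref{eq})$ with $u_i(x)=w_i$. These $\nu$ elements are pairwise distinct as germs at $x$ because their values $u_i(x)=w_i$ are distinct. To see that they exhaust all regular function elements at $x$ subordinate to $(\ref{eq})$, note that if $(x,u)$ is any such element then $u(x)$ is a root of $\psi(x,\cdot)$, hence $u(x)=w_i$ for some $i$; since $\psi_w(x,w_i)\not=0$, the uniqueness clause of Theorem \ref{thm1} forces $(x,u)=(x,u_i)$. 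Therefore there are exactly $\nu$ such elements.

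I do not anticipate a serious obstacle here: the only point requiring care is recording that simplicity of the roots $w_i$ — guaranteed precisely by $x\notin\mathscr M$ together with $A_\nu(x)\not=0$ — is exactly what makes $\psi_w(x,w_i)\not=0$, so that both the existence and the uniqueness parts of Theorem \ref{thm1} apply at each root.
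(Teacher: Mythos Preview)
Your argument is correct and follows exactly the paper's approach: apply the fundamental theorem of algebra at a regular point to obtain $\nu$ distinct simple roots, then invoke Theorem~\ref{thm1} at each root. The paper compresses this into a single sentence, while you spell out the verification that $\psi_w(x,w_i)\not=0$ and the uniqueness/exhaustion argument, but the content is the same.
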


To see the common roots of $\psi(x,w)$ and $\psi_w(x,w),$ we define the \emph{resultant}  of $\psi(x,w)$ and $\psi_w(x,w)$   by a 
$(2\nu-1)$-order  determinant:   
$$R_\psi=\left | \begin{matrix}
A_\nu &A_{\nu-1}   & \cdots & A_2 & A_1 &A_0&0&0&\cdots&0 \\
0 &A_{\nu}   & \cdots & A_3 & A_2 &A_1&A_0&0&\cdots&0 \\
\vdots&\vdots& & \vdots&\vdots &\vdots&  \vdots &\vdots & &\vdots \\
0 & 0   & \cdots & A_\nu & A_{\nu-1} &A_{\nu-2}&A_{\nu-3}&A_{\nu-4}&\cdots&A_0  \\
B_\nu &B_{\nu-1}   & \cdots & B_2 & B_1 &0&0&0&\cdots&0 \\
0 &B_{\nu}   & \cdots & B_3 & B_2 &B_1&0&0&\cdots&0 \\
\vdots&\vdots& & \vdots&\vdots &\vdots&  \vdots &\vdots & &\vdots \\
0 & 0   & \cdots & 0 & B_{\nu} &B_{\nu-1}&B_{\nu-2}&B_{\nu-3}&\cdots&B_1  \\
\end{matrix} \right |,$$
where $B_j=jA_j$ for $j=1,\cdots,\nu.$
Apply Sylvester's elimination method,   we can deduce  that $R_\psi(x)=0$ if and only 
if   $A_\nu(x)=0$ or  
 $\psi(x, w)$ has a   multiple  root  as a polynomial in $w,$ i.e., $x\in\mathscr S.$ 
 Thus,  it means   that $\mathscr S$ is an analytic set of complex codimension $1.$
 Define the  \emph{discriminant}  of $\psi(x, w)$ by 
$$J_\psi(x)=(-1)^{\frac{\nu(\nu-1)}{2}}\frac{R_\psi(x)}{A_\nu(x)}\not\equiv0.$$
 Assume that $\psi(x, w)$ splits into $\nu$ linear factors, say 
 $$\psi(x, w)=A_\nu(x)(w-w_1(x))\cdots(w-w_\nu(x)).$$
A direct computation leads to  
$$J_\psi(x)=A_\nu(x)^{2\nu-2}\prod_{i<j}\big(w_i(x)-w_j(x)\big)^2.$$
If $x_0\in \mathscr S\setminus A^{-1}_\nu(0),$  then $\psi(x_0, w)$ has a multiple root  as a polynomial in $w.$  
If $x_0\in A^{-1}_\nu(0)\setminus I_{\mathscr A},$  we  set $u=1/w$ and rewrite (\ref{eq}) as 
$\psi(x,w)=u^{-\nu}\phi(x,u),$
where 
$$\phi(x,u)=A_\nu(x)+A_{\nu-1}(x)u+\cdots+A_0(x)u^\nu.$$
Since   $\phi(x,0)=A_\nu(x),$ we  see that $x_0$ is  a zero of $u,$ i.e., a pole of $w.$  
In fact,  we    note       from Theorem \ref{pole} (see Section \ref{sec23}) that  $J_\psi(x_0)=0$ if and only if $x_0\in\mathscr M.$
To conclude,  
     Corollary \ref{cor1}  can be extended to  $M\setminus \mathscr M$ as follows. 
\begin{cor}\label{cor2} For any   $x\in M\setminus \mathscr M,$  there exist  exactly $\nu$ distinct meromorphic function elements $(x, u_1), \cdots, (x, u_\nu)$ subordinate to  equation $(\ref{eq}).$  
\end{cor}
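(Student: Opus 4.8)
The plan is to reduce to the already-settled Corollary \ref{cor1} by isolating the one new phenomenon, namely solution branches of $(\ref{eq})$ that pass through $\infty$ at $x_0$. If $x_0\notin\mathscr S$ there is nothing to prove: Corollary \ref{cor1} already furnishes exactly $\nu$ distinct regular function elements, which are in particular meromorphic function elements. So I would take $x_0\in\mathscr S\setminus\mathscr M$; by the definitions of $\mathscr S$ and $\mathscr M$ this says precisely that $A_\nu(x_0)=0$ while $\psi(x_0,\cdot)$ has no multiple root in $w$. Since $I_{\mathscr A}\subseteq\mathscr M$ when $\nu\ge 2$ (the case $\nu=1$ being the classical meromorphic one, where $-A_0/A_1$ is the unique solution germ everywhere), we have $x_0\notin I_{\mathscr A}$, so on a small neighborhood $U$ of $x_0$ the functions $A_0,\dots,A_\nu$ may be taken to be honest holomorphic functions with no common zero.

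The first genuine step is to determine, via the substitution $u=1/w$ and $\phi(x,u)=A_\nu(x)+A_{\nu-1}(x)u+\cdots+A_0(x)u^\nu=u^\nu\psi(x,1/u)$ from the discussion preceding the statement, how many branches blow up at $x_0$ and with what multiplicity. Here $\phi(x_0,0)=A_\nu(x_0)=0$, and I would first check that $\phi_u(x_0,0)=A_{\nu-1}(x_0)\ne 0$: otherwise $w=\infty$ would be a root of multiplicity $\ge 2$ of the degree-$\nu$ binary form homogenizing $\psi(x_0,\cdot)$, so its discriminant $J_\psi(x_0)$ would vanish, i.e. $x_0\in\mathscr M$ by Theorem \ref{pole}, contradicting $x_0\notin\mathscr M$. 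Hence $\psi(x_0,\cdot)$ has degree exactly $\nu-1$ in $w$ and, being multiple-root-free, has $\nu-1$ distinct finite roots $\zeta_1,\dots,\zeta_{\nu-1}$ with $\psi_w(x_0,\zeta_i)\ne 0$, while $u=0$ is a simple root of $\phi(x_0,\cdot)$.

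Next I would run the argument of Theorem \ref{thm1} twice (its Rouch\'e proof uses only holomorphy of the coefficients and a simple root, not irreducibility of the equation). Applied to $\psi$ at $(x_0,\zeta_i)$ it gives a unique regular function element $(x_0,u_i)$ with $u_i(x_0)=\zeta_i$, subordinate to $(\ref{eq})$, for $1\le i\le\nu-1$. Applied to $\phi$ at $(x_0,0)$ it gives a unique regular function element $(x_0,\tilde u)$ with $\tilde u(x_0)=0$ solving $\phi(x,\tilde u)=0$; since $\tilde u\not\equiv 0$, the function $u_\nu:=1/\tilde u$ is meromorphic near $x_0$, has a pole at $x_0$, and satisfies $\psi(x,u_\nu)=\tilde u^{-\nu}\phi(x,\tilde u)=0$, so $(x_0,u_\nu)$ is a meromorphic function element subordinate to $(\ref{eq})$. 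These $\nu$ elements are pairwise distinct, since $u_1,\dots,u_{\nu-1}$ take the distinct finite values $\zeta_1,\dots,\zeta_{\nu-1}$ at $x_0$ whereas $u_\nu$ has a pole there.

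For the ``exactly $\nu$'' part I would pass to the analytic set $Z=\{(x,[s:t])\in U\times\mathbb P^1:\ \sum_{j=0}^{\nu}A_j(x)s^{\nu-j}t^j=0\}$, whose projection to $U$ is a finite branched covering of degree $\nu$ that is unramified over $x_0$ precisely because the homogenization of $\psi(x_0,\cdot)$ has $\nu$ distinct roots (again Theorem \ref{pole}); after shrinking $U$, $Z$ decomposes into $\nu$ disjoint sheets, which must be the graphs of $u_1,\dots,u_\nu$, so the closure of the graph of any further meromorphic solution element, being a connected analytic subset of $Z$ near $x_0$, lies in one sheet and hence coincides with some $(x_0,u_i)$. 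I expect the only delicate points to be the two appeals to Theorem \ref{pole}/the discriminant --- that exactly one branch reaches $\infty$ at $x_0$ and does so simply, and, in the count, excluding the possibility that a rival solution has an indeterminacy point at $x_0$ (which is why I would route the count through the covering $Z\to U$ rather than through values of solutions at $x_0$).
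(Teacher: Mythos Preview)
Your proposal is correct and follows essentially the same route as the paper's discussion preceding the corollary: split off the regular case via Corollary~\ref{cor1}, and for $x_0\in\mathscr S\setminus\mathscr M$ use the substitution $u=1/w$ together with the fact (deduced from Theorem~\ref{pole}) that $J_\psi(x_0)\ne 0$ forces $A_{\nu-1}(x_0)\ne 0$, then apply Theorem~\ref{thm1} to both $\psi$ and $\phi$. Your projective-covering argument for the ``exactly $\nu$'' count is a clean addition that the paper leaves implicit.
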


 \subsubsection{Analytic continuation of meromorphic function elements}~

We recall that a meromorphic function element on $M$ is a pair $(x, u),$ such that  $u$ is  a meromorphic function  defined in  a  neighborhood $U(x)$ of 
$x\in M.$ 
For   convenience, we  write $\tilde x=(x, u)=(U(x), u)$   without any confusion. 
We  usually  take  $U(x)=B(x, r),$ which is  a  geodesic ball neighborhood 
 centered at $x$ with radius $r.$ For  two meromorphic function elements  
  $\tilde x_1=(U(x_1),  u_1)$ and $\tilde x_2=(U(x_2), u_2),$  
define an equivalent relation:  
  $\tilde x_1\sim \tilde x_2$   if and only if 
 $$x_1=x_2; \ \ \ \  u_1(x)= u_2(x), 
 \ \ \ \  ^\forall  x\in U(x_1)\cap U(x_2).$$ 
\ \ \ \  Set 
 $$\mathcal M^0=\big{\{}\text{all meromorphic function elements subordinate to equation (\ref{eq})}\big{\}}/\sim.$$ 
   \ \ \ \    We   equip  $\mathcal M^0$ with a topology.    
    A meromorphic function element  $\tilde y=(y, v)$ is said to be  
  a  \emph{direct analytic continuation} of  a meromorphic function element $\tilde x=(U(x), u),$ if $y\in U(x)$ and $v= u$ in a   neighborhood of $y.$ 
  By the  analytic continuation, we may  regard that  
  $$\tilde y=(y,v)=(y, u).$$
\ \ \ \    A  neighborhood $V(\tilde x)$ of $\tilde x$ is  defined by a set (containing $\tilde x$) of all  $\tilde y\in \mathcal M^0$ such that   
 $\tilde y$  is a  direct analytic continuation  of $\tilde x$ satisfying  $y\in U_1(x),$ where $U_1(x)$  is a neighborhood of $x$ in $U(x).$
That is, 
$$V(\tilde x)=\big\{(y, u): y\in U_1(x)\big\}.$$
In particular,  a $r$-\emph{neighborhood} $V(\tilde x, r)$ of $\tilde x$  (satisfying that $B(x,r)\subseteq U(x)$), 
is  defined by    
  the set (which contains $\tilde x$) of all  $\tilde y\in \mathcal M^0$ such that   $\tilde y$  is a  direct analytic continuation  of $\tilde x$ satisfying  $y\in B(x, r).$
  In other  words, it is defined  by   $V(\tilde x, r):=\{(y, u): y\in B(x, r)\}.$
    A subset $\mathcal E\subseteq \mathcal M^0$ is called an open set, if either $\mathcal E$ is an empty set or every 
          point $\tilde x\in\mathcal E$ is an inner point, i.e., there  is a $\epsilon$-neighborhood $V(\tilde x, \epsilon)$ 
                    of $\tilde x$ such that $V(\tilde x, \epsilon)\subseteq \mathcal E.$
  It is not hard  to  check that it defines  a topology of $\mathcal M^0.$

  \begin{theorem}\label{thm44} $\mathcal M^0$ is a  Hausdorff space. 
  \end{theorem}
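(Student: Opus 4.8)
The plan is to distinguish two distinct points $\tilde x_1=(U(x_1),u_1)$ and $\tilde x_2=(U(x_2),u_2)$ of $\mathcal M^0$ according to whether their base points in $M$ coincide, and in each case to exhibit disjoint $r$-neighborhoods. I will use the natural projection $\pi:\mathcal M^0\to M$, $\pi(x,u)=x$, which is continuous because $\pi\big(V(\tilde x,r)\big)=B(x,r)$ for every $r$-neighborhood; in fact $\pi$ restricts to a bijection of each $V(\tilde x,r)$ onto $B(x,r)$.

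Case 1: $x_1\neq x_2$. Since $M$ is a (Hausdorff) manifold, pick $r>0$ small enough that $B(x_1,r)\cap B(x_2,r)=\emptyset$ and $B(x_i,r)\subseteq U(x_i)$ for $i=1,2$. Then $V(\tilde x_1,r)$ and $V(\tilde x_2,r)$ are open neighborhoods of $\tilde x_1$ and $\tilde x_2$ with $\pi\big(V(\tilde x_1,r)\big)\cap\pi\big(V(\tilde x_2,r)\big)=B(x_1,r)\cap B(x_2,r)=\emptyset$, hence $V(\tilde x_1,r)\cap V(\tilde x_2,r)=\emptyset$.

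Case 2: $x_1=x_2=:x$ but $\tilde x_1\not\sim\tilde x_2$, so $u_1\not\equiv u_2$ on $U(x_1)\cap U(x_2)$. After shrinking, we may take $W:=U(x_1)\cap U(x_2)$ to be a connected neighborhood of $x$, and we choose $r>0$ with $B(x,r)\subseteq W$. Suppose toward a contradiction that $\tilde y=(y,v)\in V(\tilde x_1,r)\cap V(\tilde x_2,r)$. Then $y\in B(x,r)\subseteq W$, and since $\tilde y$ is a direct analytic continuation of both $\tilde x_1$ and $\tilde x_2$, we have $v=u_1$ and $v=u_2$ in a neighborhood of $y$, hence $u_1=u_2$ there. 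But $u_1$ and $u_2$ are meromorphic on the connected complex manifold $W$ and agree on a nonempty open subset, so the identity theorem (applied to $u_1-u_2$ away from its polar set) gives $u_1\equiv u_2$ on $W$, contradicting $\tilde x_1\not\sim\tilde x_2$. Therefore $V(\tilde x_1,r)\cap V(\tilde x_2,r)=\emptyset$.

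Since the two cases are exhaustive, $\mathcal M^0$ is Hausdorff. The only delicate point is Case 2: one must first pass to a connected common domain and then invoke the identity theorem for \emph{meromorphic} (not merely holomorphic) functions; this is legitimate because a meromorphic function on a connected complex manifold that vanishes on a nonempty open set vanishes identically. Everything else — the behavior of the $r$-neighborhoods under $\pi$ and the separation of base points in $M$ — is routine, so I expect this to be the (modest) main obstacle.
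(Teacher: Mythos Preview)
Your proof is correct and follows essentially the same approach as the paper: split on whether the base points coincide, separate by disjoint geodesic balls in $M$ when they differ, and invoke the uniqueness (identity) theorem to derive a contradiction when they agree. Your version is slightly more careful in explicitly passing to a connected common domain before applying the identity theorem for meromorphic functions, a point the paper leaves implicit.
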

  \begin{proof} 
 Pick    $\tilde x_1\not=\tilde x_2\in \mathcal M^0.$ If $x_1\not=x_2,$ then we can consider   $\epsilon$-neighborhoods 
 $V(\tilde x_1, \epsilon)$ of $\tilde x_1$  and  $V(\tilde x_2, \epsilon)$ of $\tilde x_2,$  where $0<\epsilon<{\rm{dist}}(x_1, x_2)/2.$
 Evidently, we have     
   $V(\tilde x_1, \epsilon)\cap V(\tilde x_2, \epsilon)=\emptyset.$
       If  $V(\tilde x_1, r_1)\cap V(\tilde x_2, r_2)\not=\emptyset$ for   $x_1=x_2,$ 
      then we  can take a function element 
           $(x_0, u_0)\in V(\tilde x_1, r_1)\cap V(\tilde x_2, r_2),$ which   
          gives   that 
 $u_1= u_0= u_2$ in a  smaller neighborhood of $x_0.$ Using the uniqueness theorem of analytic functions, we obtain   $u_1\equiv u_2,$ 
  a  contradiction  with $\tilde x_1\not=\tilde x_2.$
  \end{proof}
 
 \begin{defi}
 Let $\tilde x,$ $\tilde y$ be two  meromorphic function elements on $M.$  Let $\gamma: [0, 1]\to M$ be a curve with   $x=\gamma(0)$ and $y=\gamma(1).$ We say that $\tilde y$ is an analytic continuation of $\tilde x$ along $\gamma,$ if 
 
 $(a)$ for any $t\in [0,1],$ there is a meromorphic function element $\tilde \gamma(t)$ on $M;$
 
 $(b)$ for any $t_0\in [0,1]$ and any $\epsilon>0,$ there exists $\delta>0$ such that $|\gamma(t)-\gamma(t_0)|<\epsilon$
  for all $t\in[0,1]\cap\{|t-t_0|<\delta\},$ and that  $\tilde\gamma(t)$ is a direct analytic continuation of $\tilde\gamma(t_0).$
 \end{defi}

 \begin{remark}\label{rek2}  The analytic continuation of  elements in $\mathcal M^0$ along a curve $\gamma$ in $M$ is a continuous mapping of $\gamma$ into $\mathcal M^0,$ namely, an analytic continuation of $\tilde x$ to $\tilde y$ along $\gamma$ is a curve  connecting $\tilde x$ and $\tilde y.$ Moreover, if $\tilde y$ is an analytic continuation of $\tilde x$ along a curve,  then $\tilde x$ is also an analytic continuation
  of $\tilde y$ along the same curve, i.e.,  the  analytic continuation  is symmetric. 
\end{remark}

\begin{theorem}[Analytic continuation]\label{thm2}
Let  $\gamma: [0, 1]\to M\setminus \mathscr M$ be a curve. 
Then, a function element $\tilde \gamma(0)\in \mathcal M^0$ can  continue analytically to $\gamma(1)$ along $\gamma$ and attain  to   a unique  function element $\tilde\gamma(1)\in \mathcal M^0.$ 
\end{theorem}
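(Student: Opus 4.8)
The plan is to run the classical monodromy argument on the parameter interval, with Corollary~\ref{cor2} playing over $M\setminus\mathscr M$ the role that ``$\nu$ distinct roots'' plays over $\mathbb C$: over every point of the curve there are exactly $\nu$ meromorphic function elements subordinate to $(\ref{eq})$, and they are glued together and separated by the identity theorem for meromorphic functions (the uniqueness theorem already used in the proof of Theorem~\ref{thm44}).

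\emph{Existence.} Let $S$ be the set of $t\in[0,1]$ for which $\tilde\gamma(0)$ admits an analytic continuation along $\gamma|_{[0,t]}$. Then $0\in S$, and $S$ is downward closed (restrict a continuation), hence an interval. For openness, if $t_0\in S$ and the continuation reaches $\tilde\gamma(t_0)=(U(\gamma(t_0)),u)$, then by continuity of $\gamma$ there is $\delta>0$ with $\gamma(s)\in U(\gamma(t_0))$ whenever $|s-t_0|<\delta$; prolonging the continuation by $s\mapsto(\gamma(s),u)$ for such $s$ (each a direct analytic continuation of the previous one) shows $[0,1]\cap(t_0-\delta,t_0+\delta)\subseteq S$. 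For closedness, let $t_n\to t_0$ with $t_n\in S$; we may assume $t_n<t_0$, since otherwise $t_0\in S$ by downward closedness. As $\gamma(t_0)\notin\mathscr M$, Corollary~\ref{cor2} together with the local construction behind Theorem~\ref{thm1} (also near zeros of $A_\nu$, via $u=1/w$) furnishes a connected geodesic ball $B=B(\gamma(t_0),r)$ and meromorphic functions $v_1,\dots,v_\nu$ on $B$ such that $(\gamma(t_0),v_1),\dots,(\gamma(t_0),v_\nu)$ are the $\nu$ distinct meromorphic function elements at $\gamma(t_0)$; the $v_i$ remain pairwise distinct as elements of $\mathcal M^0$ over every point of $B$, since $v_i\equiv v_j$ on a nonempty open subset of $B$ would force $v_i\equiv v_j$ on all of $B$ by the identity theorem. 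Now pick $n$ large enough that $\gamma(s)\in B$ for every $s\in[t_n,t_0]$. Since $\gamma(t_n)\notin\mathscr M$, the germs at $\gamma(t_n)$ of $v_1,\dots,v_\nu$ are precisely the $\nu$ meromorphic function elements at $\gamma(t_n)$ provided by Corollary~\ref{cor2}; as the endpoint of the continuation at $t_n$ is one of these, it equals $(\gamma(t_n),v_i)$ for some $i$. Prolonging this element by the constant assignment $s\mapsto(\gamma(s),v_i)$ for $s\in[t_n,t_0]$ (legitimate because $v_i$ is meromorphic on all of $B$) and concatenating with the continuation over $[0,t_n]$ exhibits $t_0\in S$. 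Thus $S$ is open, closed and nonempty in the connected set $[0,1]$, so $S=[0,1]$ and the continuation reaches $\gamma(1)$.

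\emph{Uniqueness.} Let $\{\tilde\gamma(t)\}$ and $\{\hat\gamma(t)\}$ be two analytic continuations of $\tilde\gamma(0)$ along $\gamma$, and put $S'=\{t\in[0,1]:\tilde\gamma(t)=\hat\gamma(t)\}$, so $0\in S'$. If $t_0\in S'$, then for $s$ near $t_0$ both $\tilde\gamma(s)$ and $\hat\gamma(s)$ are direct analytic continuations of the common element $\tilde\gamma(t_0)=\hat\gamma(t_0)$, hence have the same defining germ near $\gamma(s)$ and, by the identity theorem, coincide in $\mathcal M^0$; so $S'$ is open. If $t_n\to t_0$ with $t_n\in S'$, then for $n$ large $\tilde\gamma(t_n)$ and $\hat\gamma(t_n)$ are direct analytic continuations of $\tilde\gamma(t_0)$ and of $\hat\gamma(t_0)$ respectively, so the defining functions of $\tilde\gamma(t_0)$ and $\hat\gamma(t_0)$ agree near $\gamma(t_n)$ and therefore, again by the identity theorem, near $\gamma(t_0)$; hence $\tilde\gamma(t_0)=\hat\gamma(t_0)$ and $S'$ is closed. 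Consequently $S'=[0,1]$, and in particular $\tilde\gamma(1)=\hat\gamma(1)$.

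\emph{Main obstacle.} The only step with genuine content is the closedness of $S$: one must prevent the continuation from ``dying'' at a limit parameter $t_0$, and this is exactly where the hypothesis $\gamma([0,1])\subseteq M\setminus\mathscr M$ enters. Off the multiple set the $\nu$ local branches are honestly $\nu$-sheeted and non-degenerate (Corollary~\ref{cor2}), so the element already reached at $\gamma(t_n)$ must be one of the branches $v_i$ defined on the whole ball $B$, and hence extends across $\gamma(t_0)$. Conceptually this is the statement that the projection $\pi:\mathcal M^0\to M$, restricted over $M\setminus\mathscr M$, is an unramified $\nu$-sheeted covering and therefore has the curve-lifting property; every remaining step is a routine application of the identity theorem for meromorphic functions.
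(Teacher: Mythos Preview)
Your proof is correct and follows essentially the same route as the paper. The paper packages the argument as a supremum-and-contradiction (let $\tau=\sup S$ and derive $\tau=1$), while you phrase it as an open--closed--connected argument on $S$; the key step in both is identical---use Corollary~\ref{cor2} to produce the $\nu$ branches on a ball around the limit point, identify the incoming element with one of them, and extend---and your uniqueness argument is simply a more explicit version of the paper's one-line appeal to the identity theorem.
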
 
\begin{proof}
Set
$$\tau=\sup\big{\{}t\in[0,1]: \ \text{$\tilde\gamma(0)$  can continue analytically to a $(\gamma(t), u_t)\in\mathcal M^0$}\big{\}}.$$
Evidently,  $0<\tau\leq1.$
It is sufficient  to prove  that  $\tau=1.$ Otherwise, we may assume that $\tau<1.$ 
 By Corollary \ref{cor2},   there exist  exactly $\nu$  function elements  
at  $\gamma(\tau),$ denoted by   
$$\left(B(\gamma(\tau), r_\tau), w_{1}\right), \cdots, \left(B(\gamma(\tau), r_\tau), w_{\nu}\right)\in \mathcal M^0.$$
\ \ \   Take $\delta>0$ small enough such that $\{\gamma(t): t\in[\tau-\delta,\tau+\delta]\}\subseteq B(\gamma(\tau), r_\tau),$ with 
 $\delta<\min\{\tau,1-\tau\}.$
Then, there is  a unique integer $k_0\in\{1,\cdots, \nu\}$  such that $u_{\tau-\delta}\equiv w_{k_0}.$
Notice that  the domain of definition of $w_{k_0}$ is $B(\gamma(\tau),r_\tau),$   for each $t\in[\tau-\delta,\tau+\delta],$ one can redefine function element $(\gamma(t), w_{k_0})\in \mathcal M^0.$ Combined with   
the definition of $\{\gamma(t), u_t\}_{0\leq t\leq\tau-\delta},$ it yields   immediately  
 that $\tilde\gamma(0)$  can  continue analytically  to $\gamma(\tau+\delta)$  along $\gamma,$
 i.e.,  $(\gamma(\tau+\delta), w_{k_0})\in \mathcal M^0$ is an analytic continuation of $\tilde\gamma(0)$ along $\gamma.$ However, 
  it contradicts  with the definition  of $\tau.$  Hence, we obtain   $\tau=1.$ The uniqueness of   $\tilde \gamma(1)$ is immediate due to the uniqueness theorem of analytic functions.  
  \end{proof}
\begin{cor}\label{cor3}
Let  $\tilde X=\{(x, w_j)\}_{j=1}^\nu, \tilde Y=\{(y, u_j)\}_{j=1}^\nu\subseteq\mathcal M^0$ be any two groups of  distinct  function elements at $x, y\in M\setminus \mathscr M,$ respectively.   Then, $\tilde X, \tilde Y$ can continue analytically to each other along any curve 
 in $M\setminus \mathscr M.$
\end{cor}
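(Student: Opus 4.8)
The plan is to reduce Corollary \ref{cor3} to Theorem \ref{thm2}, which already provides existence and uniqueness of the analytic continuation of a single meromorphic function element along a curve in $M\setminus\mathscr M$. The only new content of the corollary is bookkeeping: (i) upgrading from a single element to a full group of $\nu$ distinct elements, and (ii) observing that the continuation procedure is reversible, so that ``can continue analytically to each other'' is symmetric. Both points are essentially immediate, but I would spell out why the group of $\nu$ elements stays a group of $\nu$ \emph{distinct} elements under continuation.

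First I would fix a curve $\g\colon[0,1]\to M\setminus\mathscr M$ with $\g(0)=x$ and $\g(1)=y$. For each $j\in\{1,\dots,\nu\}$, apply Theorem \ref{thm2} to the function element $(x,w_j)$: it continues analytically along $\g$ to a unique function element $(y,v_j)\in\mathcal M^0$. This produces $\nu$ function elements $(y,v_1),\dots,(y,v_\nu)$ at $y$. The key step is to check these are pairwise distinct. Suppose $v_i=v_j$ for some $i\neq j$. By Remark \ref{rek2}, analytic continuation is symmetric, so running $\g$ backwards continues the single element $(y,v_i)=(y,v_j)$ back to $\g(0)=x$; by the uniqueness clause of Theorem \ref{thm2} this backward continuation is unique, forcing $w_i=w_j$, contradicting that $\{(x,w_j)\}_{j=1}^\nu$ are distinct. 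Hence $\{(y,v_j)\}_{j=1}^\nu$ are $\nu$ distinct function elements at $y$. Since by Corollary \ref{cor2} there are \emph{exactly} $\nu$ distinct meromorphic function elements subordinate to equation (\ref{eq}) at $y\in M\setminus\mathscr M$, the set $\{(y,v_j)\}_{j=1}^\nu$ coincides with the prescribed group $\tilde Y=\{(y,u_j)\}_{j=1}^\nu$ as a set. Thus $\tilde X$ continues analytically to $\tilde Y$ along $\g$ (after relabeling so that $v_j=u_j$).

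For the ``to each other'' part I would invoke symmetry of analytic continuation (Remark \ref{rek2}) once more: each $(y,u_j)$ is an analytic continuation of the corresponding $(x,w_{\sigma(j)})$ along the reversed curve, where $\sigma$ is the permutation matching up the two labelings; running over all $j$ shows $\tilde Y$ continues analytically to $\tilde X$ along $\g^{-1}$. Since $\g$ was an arbitrary curve in $M\setminus\mathscr M$, and since any two points of the (path-connected, as $\mathscr M$ has complex codimension $\geq1$) set $M\setminus\mathscr M$ are joined by such a curve, the statement follows. The only mild subtlety — the place I expect to spend a sentence or two of care — is the distinctness argument above: one must use the uniqueness half of Theorem \ref{thm2} together with reversibility, rather than trying to propagate distinctness directly along $\g$, because a priori two elements could merge and re-split without the codimension hypothesis; it is precisely the exclusion of $\mathscr M$ (hence the validity of Corollary \ref{cor2} and of Theorem \ref{thm2}'s uniqueness) that rules this out.
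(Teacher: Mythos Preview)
Your proposal is correct and follows exactly the approach the paper intends: the paper gives no explicit proof of Corollary \ref{cor3}, treating it as an immediate consequence of Theorem \ref{thm2} together with Corollary \ref{cor2} and the symmetry of analytic continuation noted in Remark \ref{rek2}. Your write-up simply makes explicit the bookkeeping (distinctness preserved via uniqueness plus reversibility, then identification of the continued group with $\tilde Y$ by the exact count from Corollary \ref{cor2}) that the paper leaves implicit.
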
 
\begin{theorem}[Connectivity]\label{thm3} Any two function elements  in $\mathcal M^0$ can continue analytically to each other along a curve in 
$M\setminus \mathscr M.$
\end{theorem}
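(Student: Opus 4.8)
The plan is to combine the group-continuation statement of Corollary \ref{cor3} with the transitivity of the monodromy action, the latter being exactly where the irreducibility of equation (\ref{eq}) enters. First I would reduce to the case in which both function elements are based at regular points. If $\tilde x=(U(x),u)\in\mathcal M^0$, then since $\mathscr M$ is a proper analytic subset of $M$ (it lies in the codimension-one zero set of the discriminant $J_\psi$), the open set $U(x)\setminus\mathscr M$ is nonempty; choosing $x'$ in it and a short curve from $x$ to $x'$ whose interior avoids $\mathscr M$, the element $(x',u)$ is a direct analytic continuation of $\tilde x$, and conversely by the symmetry noted in Remark \ref{rek2}. Hence it suffices to prove the theorem for $\tilde x_1=(x_1,u_1)$ and $\tilde x_2=(x_2,u_2)$ with $x_1,x_2\in M\setminus\mathscr M$.

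Fix such $\tilde x_1,\tilde x_2$. By Corollary \ref{cor2} let $(x_1,w_1),\dots,(x_1,w_\nu)$ and $(x_2,v_1),\dots,(x_2,v_\nu)$ be the $\nu$ distinct function elements at $x_1$ and at $x_2$, labelled so that $u_1=w_1$ and $u_2=v_1$. Since $\mathscr M\subseteq\mathscr S$ has complex codimension one, hence real codimension two, $M\setminus\mathscr M$ is path-connected; pick a curve $\gamma_0\colon[0,1]\to M\setminus\mathscr M$ from $x_1$ to $x_2$. By Corollary \ref{cor3}, analytic continuation along $\gamma_0$ carries the group $\{(x_1,w_j)\}_j$ onto the group $\{(x_2,v_j)\}_j$, so in particular $\tilde x_1$ continues along $\gamma_0$ to $(x_2,v_{\sigma(1)})$ for some permutation $\sigma\in S_\nu$. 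It then remains to correct the index $\sigma(1)$ back to $1$ by a suitable loop based at $x_2$.

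This is the crux. Let $G\subseteq S_\nu$ be the monodromy group at $x_2$, namely the image of the homomorphism $\pi_1(M\setminus\mathscr M,x_2)\to S_\nu$ given by analytic continuation of the group $\{(x_2,v_j)\}_j$ around loops (well defined, and symmetric, by Theorem \ref{thm2} and Remark \ref{rek2}). I claim $G$ acts transitively on $\{1,\dots,\nu\}$. If not, the branches split into $G$-orbits $O_1,\dots,O_p$ with $p\ge 2$; for each $k$ the coefficients of $\prod_{j\in O_k}(w-v_j)$ are single-valued on $M\setminus\mathscr M$, and being elementary symmetric functions of some of the roots of $\psi(x,w)/A_\nu(x)$ they are locally meromorphically bounded (controlled by the $A_i/A_\nu$), hence extend to meromorphic functions on all of $M$ by the removable-singularity theorem across the codimension-one analytic set $\mathscr S$. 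Clearing denominators then yields a nontrivial factorization of $\psi(x,w)$ over the meromorphic functions on $M$, contradicting the irreducibility of (\ref{eq}). Therefore $G$ is transitive, and there is a loop $\ell$ at $x_2$ in $M\setminus\mathscr M$ along which $(x_2,v_{\sigma(1)})$ continues to $(x_2,v_1)=\tilde x_2$.

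Finally, the concatenation $\gamma_0*\ell$ is a curve in $M\setminus\mathscr M$ along which $\tilde x_1$ continues analytically to $\tilde x_2$, and by Remark \ref{rek2} the continuation runs in the other direction as well; this proves the theorem. The main obstacle is the transitivity claim: one must state precisely the local meromorphic boundedness of the partial symmetric functions of the branches and invoke the appropriate extension theorem for meromorphic functions across an analytic set of codimension one, so that algebraic irreducibility of (\ref{eq}) translates into transitivity of the monodromy group. Everything else is routine assembly of Corollaries \ref{cor2} and \ref{cor3} and Theorem \ref{thm2} together with the path-connectedness of $M\setminus\mathscr M$.
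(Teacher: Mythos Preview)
Your proof is correct and follows essentially the same route as the paper: both arguments show that a proper subset of branches closed under analytic continuation would yield, via the elementary symmetric functions (Vieta), a nontrivial factor of $\psi(x,w)$, contradicting the irreducibility of (\ref{eq}). The paper phrases this as ``let $n$ be the largest number of mutually continuable branches at a fixed regular point and show $n=\nu$,'' while you phrase it as transitivity of the monodromy group, but the content---and in particular the key extension step for the partial symmetric functions that you flag as the main obstacle---is identical.
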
 
\begin{proof}
Let $\{(x, w_j)\}_{j=1}^\nu\subseteq\mathcal M^0$  be $\nu$ distinct  function elements at  every  point       $x\in M\setminus \mathscr M.$ 
With the help  of Corollary \ref{cor3}, it is sufficient  to prove that all $(x, w_1), \cdots, (x, w_\nu)$ can  be continued  analytically to each other.  
Let  $n$ be  the largest    integer such that there are $n$ function elements in  $\{(x, w_j)\}_{j=1}^\nu\subseteq\mathcal M^0,$ which   continue analytically to each other along a curve. Note that   $1\leq n\leq\nu.$
Without loss of generality, one  may assume that 
  the  $n$ function elements are just $(x, w_1), \cdots, (x, w_n).$ Then, it can be  written  as
  $$A_\nu(w-w_1)(w-w_2)\cdots(w-w_n)=B_nw^n+B_{n-1}w^{n-1}+\cdots+B_0, \ \ \ $$
  where $B_0, \cdots, B_n=A_\nu$ are holomorphic functions  in a neighborhood of $x.$
Using Vieta's theorem,  we obtain  
  \begin{eqnarray*}
\sum_{1\leq j\leq n}w_j =-\frac{B_{n-1}}{B_n}&:=&C_1 \\
 \cdots\cdots\cdots &&  \\
\sum_{1\leq j_1<\cdots<j_k\leq n}w_{j_1}\cdots w_{j_k}= (-1)^k\frac{B_{n-k}}{B_n}&:=&C_k \\
 \cdots\cdots\cdots && \\
w_1\cdots w_n= (-1)^n\frac{B_{0}}{B_n}&:=&C_n 
  \end{eqnarray*}
  It suffices   to show that
  
   $(a)$ $C_1,\cdots, C_n$ can continue analytically to the whole 
$M\setminus \mathscr M;$ 

  $(b)$  $n=\nu.$ 

  Apply Theorem \ref{thm2},  $\{(x, w_j)\}_{j=1}^\nu$ can continue analytically to 
  $\{(y, u_j)\}_{j=1}^\nu$ along a curve for any  $y\in M\setminus \mathscr M,$ where $(u_1,\cdots,u_n)$ is a permutation of $(w_1,\cdots,w_n).$ 
By  Vieta's theorem, 
   $u_1,\cdots,u_n$ determine
      the same $C_1,\cdots, C_n$ in a small neighborhood of $y,$  i.e.,  $C_1,\cdots, C_n$ can  continue analytically to $y.$ Hence,  $(a)$ holds. 
If $n<\nu,$ then one can determine another set $\{B'_j\}_j$ 
using  the rest  function elements $\{(x, w_j)\}_j.$
 However, it leads to   that 
equation (\ref{eq}) is reducible, which is a contradiction. This completes the proof. 
\end{proof}

According to Remark \ref{rek2} and Theorem \ref{thm3}, we obtain:  

\begin{cor} $\mathcal M^0$ is a path-connected space. 
\end{cor}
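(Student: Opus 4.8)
The plan is to reduce the assertion directly to the two results just obtained. To say that $\mathcal{M}^0$ is path-connected means: for arbitrary $\tilde{x}, \tilde{y} \in \mathcal{M}^0$ there is a continuous map $c \colon [0,1] \to \mathcal{M}^0$ with $c(0) = \tilde{x}$ and $c(1) = \tilde{y}$. Theorem \ref{thm3} already supplies a curve $\gamma \colon [0,1] \to M \setminus \mathscr{M}$ together with an analytic continuation of $\tilde{x}$ to $\tilde{y}$ along $\gamma$; so the only remaining point is the purely topological observation that such an analytic continuation is itself a path in $\mathcal{M}^0$.

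Concretely, I would first invoke Theorem \ref{thm3} to fix, for the given $\tilde{x}, \tilde{y}$, a curve $\gamma$ in $M \setminus \mathscr{M}$ and the accompanying family $\tilde{\gamma}(t) \in \mathcal{M}^0$, $t \in [0,1]$, with $\tilde{\gamma}(0) = \tilde{x}$ and $\tilde{\gamma}(1) = \tilde{y}$. Then I would set $c(t) = \tilde{\gamma}(t)$ and check continuity against the topology placed on $\mathcal{M}^0$: given $t_0 \in [0,1]$ and a basic neighborhood $V(\tilde{\gamma}(t_0), \epsilon)$, part $(b)$ of the definition of analytic continuation along a curve produces $\delta > 0$ such that for all $t$ with $|t - t_0| < \delta$ the point $\gamma(t)$ lies in the relevant geodesic ball and $\tilde{\gamma}(t)$ is a direct analytic continuation of $\tilde{\gamma}(t_0)$ — which is exactly what membership of $\tilde{\gamma}(t)$ in $V(\tilde{\gamma}(t_0), \epsilon)$ requires. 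This is precisely the statement recorded in Remark \ref{rek2}, so it may simply be cited.

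Since $\tilde{x}$ and $\tilde{y}$ were arbitrary, $\mathcal{M}^0$ is path-connected. There is no genuine obstacle in this step: all the substance sits in Theorem \ref{thm3}, which rests on Corollary \ref{cor3} and the Vieta-formula connectivity argument, whereas here one only needs that the topology on $\mathcal{M}^0$ was set up so that ``analytic continuation along a curve'' coincides with ``continuous arc in $\mathcal{M}^0$.'' The single routine point I would keep in mind is that the families $\tilde{\gamma}(t)$ furnished by Theorem \ref{thm3} never leave $\mathcal{M}^0$ and remain over $M \setminus \mathscr{M}$, which is immediate from the way that theorem was proved.
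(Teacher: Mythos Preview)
Your proposal is correct and follows exactly the paper's approach: the corollary is stated immediately after Remark \ref{rek2} and Theorem \ref{thm3}, and the paper deduces it by citing precisely those two results. You have merely spelled out the content of Remark \ref{rek2} in slightly more detail, which is harmless.
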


 \subsubsection{Properties of  function elements near a multiple  point}~

 Let $(x_0, w_0)$ be 
 a pair such that $\psi(x_0, w_0)=0$ and $\psi_w(x_0, w_0)=0.$

\noindent $1^\circ$  $x_0\not\in A^{-1}_\nu(0)$

 Let us take a   small neighborhood $U(x_0)$ of $x_0$ such that $A_\nu\not=0$ on $\overline{U(x_0)}.$
  Using  the continuity  of $A_{j}$ for  $0\le j\le \nu,$   there exists a number $M_0>0$ such that 
 \begin{equation}\label{haoba1}
 0\le\min_{0\le j\le\nu}\left|\frac{A_{j}}{A_{\nu}}\right|\leq\max_{0\le j\le\nu}\left|\frac{A_{j}}{A_{\nu}}\right|\leq M_{0}, \ \ \ \   ^\forall x\in \overline{U(x_0)}.
 \end{equation}  
According to   Corollary \ref{cor2},  for any  $x, y\in U(x_0)\setminus \mathscr M$ with $x\not=y,$ 
  there exist   two groups of  distinct meromorphic  function  elements 
 $\{x, u_j\}_{j=1}^{\nu}, \{y, v_j\}_{j=1}^{\nu}$ at $x, y$  subordinate  to equation (\ref{eq}), respectively. 
     Apply     Theorem \ref{thm2},  then  for  a  curve $\gamma: [0,1]\rightarrow U(x_0)\setminus \mathscr M$ 
           with $\gamma(0)=x$ and $\gamma(1)=y,$ we are able to   
           extend    $\{x, u_j\}_{j=1}^{\nu}$   analytically to $\{y, v_j\}_{j=1}^{\nu}$ along $\gamma.$
                                          In other words,  
 there is    a permutation $(i_1,\cdots, i_{\nu})$ of $(1,\cdots,\nu)$ 
   such that $u_1,\cdots, u_{\nu}$ can   continue analytically
       to $v_{i_1},\cdots, v_{i_{\nu}}$ along $\gamma,$ respectively.  
          The  symmetry implies that 
 $\{y, v_j\}_{j=1}^{\nu}$  
  can  also  continue analytically to $\{x, u_j\}_{j=1}^{\nu}$ along $\gamma.$ 
             However,  we  should   mention that  $v_{i_1},\cdots, v_{i_\nu}$ 
                         won't  be, in  general,  continued analytically back to $u_{1},\cdots, u_{\nu}$ along $\gamma,$  respectively. 
                      In what follows,  we would    consider     the analytic continuation of a function element along a  closed Jordan curve surrounding  any multiple point 
      $x_0\in \mathscr M.$  Take    a smooth closed Jordan curve  
   $\gamma_0\subseteq  U(x_0)$   around $x_0$ such that  which is close to  $x_0$   and  doesn't surround or cross 
     any  connected components  of $\mathscr M$ that do not pass through $x_0.$
       Without loss of generality, we  look at   the  analytic continuation of  $u_1$   along $\gamma_0$ starting from $y_0\in\gamma_0.$  
When $u_1$ returns  to $y_0$  along $\gamma_0,$   it  will  turn   to certain    function 
 $u_{k_1}\in\{u_j\}_{j=1}^\nu,$ by which we mean that for a local coordinate chart $(U(x_0), \varphi)$ ($U(x_0)$ is  small enough), $u_1\circ\varphi^{-1}(z)$ jumps   to some  single-valued component $u_{k_1}\circ\varphi^{-1}(z)$ when $z$ travels  around  $\varphi(x_0)$ once  along $\varphi(\gamma_0)$ started at $\varphi(y_0),$ due to the change of 
 $m$-dimensional argument of $z=(z_1,\cdots,z_m).$ If $k_1\not= 1,$ then one   continues to extend $u_{k_1}$  analytically along $\gamma_0$ in the same direction of rotation. 
Repeat      this action,   we shall   see finally  that  $u_1$ cycles on and on periodically.
Let $\lambda_1$ be    the smallest period  such that $u_1$  can return to itself along all such curves $\gamma_0.$
    Then, we can  obtain        $u_1,  u_{k_1},  \cdots,  u_{k_{\lambda_1-1}}.$  
     We  call $x_0$     a \emph{branch point} with order $\lambda_1-1$ of $u_1$ if  $\lambda_1>1.$ 
             Repeatedly, it   can be  seen  that $\{u_j\}_{j=1}^\nu$    is  divided  into finitely many    groups. 
                                       Let $l$ be    the number of groups. 
              If the $j$-th 
group has  $\lambda_j$ members for $j=1,\cdots,l,$
 then  $$\lambda_1+\cdots+\lambda_l=\nu.$$
  We  just analyze      the first group:  $u_1,u_{k_1}, \cdots,u_{k_{\lambda_1-1}}.$
Taking      a   suitable   $U(x_0),$    
     there  exists                
   a biholomorphic mapping $\varphi: U(x_0)\to \Delta^m(2)$  such that  
   $$\varphi(x_0)=\textbf{0}, \ \ \ \  
\varphi(\gamma_0)=\big\{(0,\cdots,0,z_m)\in \Delta^m(2): \ |z_m|=1\big\},$$
where   $\Delta^m(2)$ is   the polydisc  centered at  $\textbf{0}$ with polyradius $(2,\cdots,2)$ in  $\mathbb C^m.$ 
Setting   $S_0=\varphi^{-1}(\Delta_m(1)),$ 
where  $\Delta_m(1)$ is  the unit disc centered at $\textbf{0}$  in the $z_m$-plane.  
 For  any $x\in S_0\cup\gamma_0,$ we have 
 $$u_1(z)=u_1\circ\varphi(x)=u_1(0,\cdots, 0, z_m).$$
Set $z_m=\zeta^{\lambda_1}$ and write 
$$u^*_1(\zeta)=u_1(0,\cdots,0,\zeta^{\lambda_1}).$$
We   show that $u^*_1$ is a single-valued analytic function on $\Delta_m(1).$   
Notice that $\lambda_1$ is the branch order of  $x_0,$ we deduce  that 
 $u^*_1$ is holomorphic on $\Delta_m(1)\setminus\{\textbf{0}\},$ because   
$\varphi(\gamma_0)$ is so close to $\textbf{0}$ that there exist  no any  branch points of $u^*_1$ in $\Delta_m(1)\setminus\{\textbf 0\}.$
 We  only  need to examine  that $\zeta=0$ is a removed singularity of $u^*_1.$ To do so, it suffices   to prove    that $u^*_1$ is bounded near $\textbf{0}$
 due to    Riemann's analytic continuation.  
  In fact,  it follows from   (\ref{eq}) and (\ref{haoba1}) that   
     \begin{eqnarray*}
|u_1|&=& \frac{1}{|A_\nu u_1^{\nu-1}|}\left|A_{\nu-1}u_1^{\nu-1}+\cdots+A_0\right| \\
&\leq& M_0\left(1+\frac{1}{|u_1|}+\cdots+\frac{1}{|u_1|^{\nu-1}}\right)
  \end{eqnarray*}
on $\overline{U(x_0)}.$ If $|u_1|\geq1,$ we obtain   $|u_1|\leq\nu M/m_0.$  Thus, 
$|u_1|\leq\max\{1, \nu M/m_0\}$ on $\overline{U(x_0)},$  i.e.,  $u_1$ is bounded on $\overline{U(x_0)}.$ 
   It means  that  $u^*_1$ is holomorphic on $\Delta_m(1)$
   and it  has   the  Taylor  expansion  
   \begin{equation}\label{dxj1}
   u^*_1(\zeta)=b_0+b_\tau\zeta^\tau+b_{\tau+1}\zeta^{\tau+1}+\cdots
   \end{equation}
at   $\zeta=0.$  
Let $\mathscr B_{u_1}$ be    the set of all branch points of $u_1$  in $U(x_0).$  
Saying  that a branch point $x_0$ of $u_1$  is  \emph{non-singular},  
if $x_0$ does not  lie in any crossings of components of $\mathscr B_{u_1};$ 
and  \emph{singular} otherwise. 
Assume  that     $x_0$ is  not  a  singular branch point of $u_1.$
Then,  $\mathscr B_{u_1}$ is   smooth  when   $U(x_0)$  is  small enough.
Taking a  suitable $U(x_0)$ if necessary.   
Since   $u_1$ is bounded on $\overline{U(x_0)},$ 
and  $\lambda_1$ defines  the branch order of $u_1$ at $x_0$ that  is  
 independent of the choice of $\gamma_0,$  
   we can conclude      from    $(\ref{dxj1})$ and  Riemann’s analytic continuation  that 
              there exists  
        a biholomorphic mapping $\varphi: U(x_0)\to\Delta^m(1)$ with  
  $$\varphi(x_0)=\textbf{0}; \ \ \ \    \varphi(x)=(\hat z_m(x),0), \ \ \ \
  ^\forall   x\in \mathscr B_{u_1}$$
  such that 
   $u_1$  can be expanded locally  into the following  Pusieux series  
   $$u_1(z)=B_0(\hat z_m)+B_\tau(\hat z_m)z_m^{\frac{\tau}{\lambda_1}}+B_{\tau+1}(\hat z_m)z_m^{\frac{\tau+1}{\lambda_1}}+\cdots$$
   at  $x_0,$   where  $B_0, B_\tau, \cdots$ are holomorphic functions  in $\hat z_m:=(z_1,\cdots,z_{m-1}).$  Comparing coefficients, we deduce that 
      $$B_j(\textbf{0})=b_j, \ \ \ \   j=0,\tau,\tau+1,\cdots$$
 \ \ \ \  According to the  above arguments,  
     we show that $u_1$ is a $\lambda_1$-valued analytic   function defined in a neighborhood of  $x_0.$ 
     We call $(x_0, u_1)$ a \emph{$\lambda_1$-leaf function element}.
     If one cuts $U(x_0)$ along a real hypersurface   passing    through $\mathscr B_{u_1},$ 
  then  $u_1$  can  separate  into $\lambda_1$ single-valued analytic components, 
    and all these $\lambda_1$  components can turn to each other when they continue analytically along a closed curve around $x_0$ in $U(x_0)$   that is  sufficiently close to $x_0.$  
      By  the local  expansion of $u_1,$ we note that  $x_0$ is a $\tau$-multiple, $b_0$-valued  branch point  with order $\lambda_1-1$ of $u_1.$

\noindent $2^\circ$  $x_0\in A^{-1}_\nu(0)$

Assume that $x_0$ is  not a singular branch point of $u_1.$ 
We set $u=1/w$ and define  
     \begin{equation*}
\phi(x,u):=u^\nu\psi(x,w) 
=A_\nu(x)+A_{\nu-1}(x)u+\cdots+A_0(x)u^\nu.
     \end{equation*}
 $a)$   $x_0\not\in A^{-1}_0(0)$  
 
 By the previous arguments, one  can  also expand $u_1$  locally into a  Pusieux series  of the form
   $$u_1(z)=C_0(\hat z_m)+C_\tau(\hat z_m)z_m^{\frac{\tau}{\lambda_1}}+C_{\tau+1}(\hat z_m)z_m^{\frac{\tau+1}{\lambda_1}}+\cdots$$
at $x_0.$  If  $C_0\not\equiv0,$   then  $u_1^{-1}$  can be expanded  locally at $x_0$  in  the form 
     \begin{eqnarray*}
   u_1(z)^{-1}
   &=&\tilde C_{-\tau}(\hat z_m)z_m^{-\frac{\tau}{\lambda_1}}+\tilde C_{-\tau+1}(\hat z_m)z_m^{-\frac{\tau-1}{\lambda_1}}+\cdots \\
   & & + \tilde C_{-1}(\hat z_m)z_m^{-\frac{1}{\lambda_1}} +\frac{1}{C_0(\hat z_m)}+\tilde C_{1}(\hat z_m)z_m^{\frac{1}{\lambda_1}}+\cdots
        \end{eqnarray*}
 $b)$    $x_0\in A^{-1}_0(0)$ 
 
 We  take a number  $c_0$  with  $\psi(x_0, c_0)\not=0.$  Set $\tilde w=w-c_0.$ Then   
     \begin{equation*}
\tilde\psi(x,\tilde  w):=\psi(x,\tilde w+c_0) 
=\tilde A_\nu(x)\tilde w^\nu+\tilde A_{\nu-1}(x)\tilde w^{\nu-1}+\cdots+\tilde A_0(x).
     \end{equation*}
It is clear that   $$\tilde A_0(x_0)=\tilde\psi(x_0, 0)=\psi(x_0, c_0)\not=0.$$ Hence,  the case $b)$  turns to the  case $a).$

To conclude,     for each   $x\in M,$ 
  there are     $l=l(x)$ distinct  multi-leaf   function elements subordinate to equation (\ref{eq}), 
    denoted by      $(x, u_1), \cdots, (x, u_l),$ where     $u_j$ is a $\nu_j$-valued meromorphic function 
          with   $1\leq\nu_j\leq\nu$ and $\nu_1+\cdots+\nu_l=\nu.$
       Hence,   $u_j$ has branch order $\nu_j-1$ at $x$ with $j=1,\cdots,l.$ 
        If  $l<\nu,$  then we say that  $x$  is a branch point with order $\nu-l$ of $w.$ Furthermore, if  $x$ is  not  a singular branch  point 
   of $u_{j},$   then  we deduce  that  $u_j$  can  expand  locally  into  the  convergent   Pusieux series at $x.$ That is, we have shown  that 
 
  \begin{theorem}\label{ses}
Let   $(x_0, u)$ be     
a $\lambda$-leaf  function element subordinate to  
equation $(\ref{eq}).$ If $x_0$ is  not a  singular  branch point of $u,$
then $u$  
 can be expanded locally  into the following convergent Pusieux series 
   $$u(z)=B_{0}(\hat z_m)+B_{\tau}(\hat z_m)z_m^{\frac{\tau}{\lambda}}+B_{\tau+1}(\hat z_m)z_m^{\frac{\tau+1}{\lambda}}+\cdots$$
 at $x_0$  in a local holomorphic coordinate $z=(\hat z_m,z_m)$  such that   $z(x_0)=\mathbf{0}$ and $z(x)=(\hat z_m(x), 0)$ for all $x\in\mathscr B_u,$  
 where  $\mathscr B_u$ is the branch set of $u,$ and $B_0, B_\tau, \cdots$ are  meromorphic  functions.  
\end{theorem}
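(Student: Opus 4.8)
The plan is to assemble the case analysis carried out in the paragraphs preceding the statement into a single argument. First I would fix a $\lambda$-leaf function element $(x_0,u)$ subordinate to $(\ref{eq})$ and invoke Corollary \ref{cor2} together with the analytic continuation Theorems \ref{thm2} and \ref{thm3}: near $x_0$ the full set of $\nu$ meromorphic function elements splits, under monodromy around $x_0$, into cycles, and $u$ is precisely one such cycle, of length $\lambda$. The key local reduction is to choose a biholomorphism $\varphi\colon U(x_0)\to\Delta^m(1)$ adapted to the branch set $\mathscr B_u$ of $u$; this is where the hypothesis that $x_0$ is not a \emph{singular} branch point of $u$ enters, since then $\mathscr B_u$ is a smooth hypersurface through $x_0$ and we may arrange $\varphi(x_0)=\mathbf{0}$ and $\varphi(\mathscr B_u)=\{z_m=0\}$, i.e. $z(x)=(\hat z_m(x),0)$ for all $x\in\mathscr B_u$.

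Next I would split into the two cases according to whether $x_0\in A_\nu^{-1}(0)$. In the case $x_0\notin A_\nu^{-1}(0)$, shrinking $U(x_0)$ so that $A_\nu$ is nonvanishing on the closure gives the uniform bound $(\ref{haoba1})$; feeding this into $(\ref{eq})$ yields $|u|\le\max\{1,\nu M_0\}$ on $\overline{U(x_0)}$, so $u$ is locally bounded. Restricting to a $z_m$-disc and substituting $z_m=\zeta^{\lambda}$, Riemann's removable singularity theorem promotes the resulting single-valued holomorphic function on the punctured disc to a holomorphic function on the full disc, which has a Taylor expansion of the form $(\ref{dxj1})$; running this fiberwise over $\hat z_m$ and using that $\lambda$ is the (curve-independent) branch order gives the claimed Pusieux series with coefficients $B_j$ holomorphic in $\hat z_m$. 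When $x_0\in A_\nu^{-1}(0)$, I would pass to $u=1/w$ and $\phi(x,u)=u^\nu\psi(x,w)$: if $x_0\notin A_0^{-1}(0)$ the function $1/u$ is bounded and the previous step applies to it, giving an expansion of $u$ starting with negative fractional powers of $z_m$, so the $B_j$ become meromorphic; if $x_0\in A_0^{-1}(0)$, choosing $c_0$ with $\psi(x_0,c_0)\ne0$ and setting $\tilde w=w-c_0$ reduces to the previous subcase, since $\tilde A_0(x_0)=\psi(x_0,c_0)\ne0$. Collecting the three situations produces in every case a convergent Pusieux series with meromorphic coefficients.

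Finally I would observe that the remaining bookkeeping — that the integer $\lambda$ read off from monodromy equals the common denominator of the exponents in the series, that the series converges (inherited from convergence of the Taylor series $(\ref{dxj1})$ under $\zeta\mapsto z_m^{1/\lambda}$), and that the coefficient functions extend meromorphically across $\{z_m=0\}$ rather than merely holomorphically off a smaller set — is exactly the content already established above, applied now uniformly in the transverse parameter $\hat z_m$. The main obstacle is this last point: justifying the \emph{parametrized} Riemann extension, namely that after removing the apparent singularity in the $z_m$-variable for each fixed $\hat z_m$ the resulting coefficients depend holomorphically (resp. meromorphically) on $\hat z_m$. This is precisely where smoothness of $\mathscr B_u$ is needed, so that the family of punctured discs fits together into a genuine holomorphic fibration over a polydisc in $\hat z_m$ and a Hartogs-type argument, as in the proof of Theorem \ref{thm1}, yields joint holomorphy of $B_0,B_\tau,\dots$.
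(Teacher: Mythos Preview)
Your proposal is correct and follows essentially the same approach as the paper: the theorem is stated there as a summary of the case analysis carried out in the paragraphs immediately preceding it (the cases $x_0\notin A_\nu^{-1}(0)$ and $x_0\in A_\nu^{-1}(0)$, the latter split according to whether $x_0\in A_0^{-1}(0)$), using the bound $(\ref{haoba1})$, the substitution $z_m=\zeta^{\lambda}$, Riemann's removable singularity theorem to obtain $(\ref{dxj1})$, and the smoothness of $\mathscr B_u$ at a non-singular branch point to straighten the branch locus. Your identification of the parametrized Riemann extension as the point where joint holomorphy in $\hat z_m$ must be justified is apt, and matches the paper's (somewhat terse) appeal to Riemann's analytic continuation together with the independence of $\lambda$ from the choice of $\gamma_0$.
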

  
 Note that  $B_{0}(\textbf{0})$  in  Theorem \ref{ses} is a $\lambda$-multiple root of $\psi(x_0, w),$ that is, $w$  can separate into $\lambda$ distinct single-valued components with  value    $B_{0}(\textbf{0})$ at $x_0.$ 
  If $A_\nu(x_0)=0,$  then $w$  separates into  $\lambda$ distinct  single-valued components which take   $\infty$ at $x_0.$

According to   the analytic continuation and uniqueness theorem, the above  arguments show that 
 equation (\ref{eq}) uniquely 
defines    a $\nu$-valued  meromorphic function $w$ 
on $M$ 
outside the  branch points of $w.$  Let  $\mathscr B$  stand for the set of all  branch points of $w,$ which  is an analytic set  in $M,$ called the \emph{branch set} of $w.$ 
We  extend   the  notion of algebroid functions from $\mathbb C$ to $M$ as follows. 
\begin{defi}\label{Def}
 A $\nu$-valued algebroid function $w$ on  $M$ is defined by an irreducible algebraic equation  
$$
A_\nu w^\nu+A_{\nu-1} w^{\nu-1}+\cdots+A_0=0, \ \ \  \  A_\nu\not\equiv0, 
$$
where
$A_0, \cdots, A_\nu$ are holomorphic functions locally defined on $M$ such that they   define a meromorphic mapping  $\mathscr A=[A_0:\cdots: A_\nu]: M\to \mathbb P^{\nu}(\mathbb C).$
\end{defi}

In a word,  denote by   $w_1,\cdots,w_\nu$  the   $\nu$ distinct single-valued  components of $w,$ and  given   a 
   $\lambda$-leaf  function element  $(U(x_0), u)$ subordinate  to equation (\ref{eq}), with  $\lambda$ distinct  single-valued   components 
 $\{(U(x_0), u_j)\}_{j=1}^\lambda.$ 
  Then,  
  there  exists       a permutation $(i_1,\cdots,i_\nu)$ of $(1,\cdots,\nu)$ 
  with   $u_j=w_{i_j}$ for $j=1,\cdots, \lambda$  on $U(x_0).$
  
  In the end, we  provide  an example to illustrate  how to calculate the order of  branch points of  algebroid functions.
  
\noindent\textbf{Example.}  Calculating  the branch  order 
of the  algebroid function
$$w=z_2\sqrt[3]{z_1-1}-z_3\sqrt{z_2z_3}\sqrt[4]{z_1-2i}+z_1\sqrt{z_2-3i}\sqrt[3]{z_3-4i}$$
 on $\mathbb C^3$  at   branch points 
 $$P_1(2i,0,1), \  P_2(0,3i, 4i), \ P_3(1,0, 4i),\  P_4(1,0,0).$$
Since $\sqrt{z_2},  \sqrt{z_3},  \sqrt[4]{z_1-2i}$
 have $2,2,4$   single-valued  components, respectively, 
we infer  that $\sqrt{z_2z_3}\sqrt[4]{z_1-2i}$ has  $[2,2,4]=4$  single-valued  components; 
Since  $\sqrt{z_2-3i}, \sqrt[3]{z_3-4i}$ have $2,3$  single-valued  components, respectively, 
we  infer  that $\sqrt{z_2-3i}\sqrt[3]{z_3-4i}$ has $[2,3]=6$ single-valued  components. 
Thus, $w$ is  a $3\times 4\times 6=72$-valued algebroid function. 
Let $\gamma_0$  stand for  any  closed Jordan curve in $\mathbb C^3.$
  All the branch sets of $w$ are   as follows
        \begin{eqnarray*}
        B_{11}=\{z_1=1\}; & \ &  B_{12}=\{z_1=2i\}; \\
        B_{21}=\{z_2=0\}; &\  &  B_{22}=\{z_2=3i\}; \\
        B_{31}=\{z_3=0\}; & \ &  B_{32}=\{z_3=4i\}.
          \end{eqnarray*}
  It is clear  that 
          \begin{eqnarray*}
P_1\in B_{12}\cap B_{21}; &\ & P_2\in B_{22}\cap B_{32}; \\
P_3\in B_{11}\cap B_{21}\cap B_{32}; &\ & P_4\in B_{11}\cap B_{21}\cap B_{31}.
        \end{eqnarray*}
\noindent$a)$  Order of $P_1$

Take  $\gamma_0$ such that   $P_1$ is surrounded by $\gamma_0.$   Note that $B_{12}, B_{21}$ are the only two branch sets of $w$ that pass through $P_1.$ 
 So, $\gamma_0$ is not allowed to  surround or pass through  any other branch sets of $w$ except for $B_{12}, B_{21},$ in order  to determine the  order of $P_1.$
For all such $\gamma_0,$ we  for  $x\in \gamma_0$    
   \begin{eqnarray*}
            |z_1(x)-2i|&<&|2i-1|=\sqrt{5},  \\
        |z_2(x)-0|&<& |0-3i|=3,  \\
        |z_3(x)-1|&<& \min\{|1-0|,  |1-4i|\}=1. 
\end{eqnarray*}
Hence, the order of $P_1$  is  determined by  the orders of branch point  $z_1=2i$ of $\sqrt[4]{z_1-2i}$ and the order of   branch point  $z_2=0$ of $\sqrt{z_2}.$ 
It is $4\times 2-1=7.$

\noindent$b)$  Order of $P_2$

Take  $\gamma_0$ such that   $P_2$ is surrounded by $\gamma_0.$  
 Note that $B_{22}, B_{32}$ are the only two branch sets of $w$ that pass through $P_2.$ 
 Similarly, $\gamma_0$ cannot  surround or pass through  any other branch sets of $w$ except for $B_{22}, B_{32}.$
 It implies  that    for  $x\in \gamma_0$    
   \begin{eqnarray*}
            |z_1(x)-0|&<&\min\{|0-1|, |0-2i|\}=1,  \\
        |z_2(x)-3i|&<& |3i-0|=3,  \\
        |z_3(x)-4i|&<& |4i-0|=4. 
\end{eqnarray*}
Hence, the order of $P_2$  is  determined by  the orders of branch points  $z_2=3i,$  $z_3=4i$ of 
$\sqrt{z_2-3i}\sqrt[3]{z_3-4i}.$ It is $[2, 3]-1=5.$

  \noindent$c)$  Order of $P_3$

Take  $\gamma_0$ such that   $P_4$ is surrounded by $\gamma_0.$  
 Since $B_{11}, B_{21}, B_{32}$ are the only three branch sets of $w$ that pass through $P_3,$ 
 we  have   for  $x\in \gamma_0$    
   \begin{eqnarray*}
            |z_1(x)-1|&<&|1-2i|=\sqrt{5},  \\
        |z_2(x)-0|&<& |0-3i|=3,  \\
        |z_3(x)-4i|&<& |4i-0|=4. 
\end{eqnarray*}
Hence, the order of $P_3$  is  determined by  the orders of branch points $z_1=1,$  
$z_2=0, z_3=4i$
 of  $\sqrt[3]{z_1-1}, \sqrt{z_2}, \sqrt[3]{z_3-4i},$ respectively. 
 It is $3\times 2\times 3-1=17.$
  
  \noindent$d)$  Order of $P_4$

Take  $\gamma_0$ such that   $P_4$ is surrounded by $\gamma_0.$    Since $B_{11}, B_{21}, B_{31}$
 are the only three  branch sets of $w$ that passes through $P_4,$ 
 we have for  $x\in \gamma_0$    
   \begin{eqnarray*}
            |z_1(x)-1|&<&|1-2i|=\sqrt{5},  \\
        |z_2(x)-0|&<& |0-3i|=3,  \\
        |z_3(x)-0|&<& |0-4i|=4. 
\end{eqnarray*}
Hence, the order of $P_4$  is determined by  the orders of branch points  $z_1=1,$  $z_2=0, z_3=0$
 of $\sqrt[3]{z_1-1}, \sqrt{z_2}, \sqrt{z_3}.$  
It is $3\times[2,2]-1=5.$
  
 \subsection{Multi-leaf Complex  Manifolds by  Algebroid Functions}~\label{sec22}

Let $\tilde x=(U(x), u)\in\mathcal M^0.$  A 
 neighborhood $V(\tilde x)$ of $\tilde x$  is defined to be   a set of all  $\tilde y\in \mathcal M^0$ with $y\in U(x)$ such that   
  $\tilde y$  is a  direct analytic continuation  of $\tilde x,$  i.e.,   
 $$V(\tilde x)=\big\{(y, u): y\in U(x)\big\}.$$
   Take   a suitable  $U(x),$   
  we have   a
    biholomorphic mapping $\varphi_x: U(x)\to \mathbb B^m(1)$ 
        such that $\varphi_x(x)=\textbf{0},$ where $\mathbb B^m(1)$ is the unit ball centered at the coordinate origin $\textbf{0}$  in $\mathbb C^m.$
 Dfine a natural mapping $\psi_{\tilde x}: V(\tilde x)\to \mathbb B^m(1)$ by  
$$\psi_{\tilde x}(\tilde y)=\varphi_x(y), \ \ \  \  ^\forall \tilde y\in V(\tilde x).$$
 Clearly, $\psi_{\tilde x}$ is a homeomorphism whose   inverse is 
 $$\psi^{-1}_{\tilde x}(z)=\big(\varphi^{-1}_x(z), u\big), \ \ \  \  ^\forall z\in\mathbb B^m(1).$$
\ \ \ \ Set $$\mathscr U=\left\{(V(\tilde x), \psi_{\tilde x}): \tilde x\in\mathcal M^0\right\}.$$ 
 We  prove  that  $\mathscr U$  is a complex  atlas  of $\mathcal M^0.$
Let $V(\tilde x)\cap V(\tilde y)\not=\emptyset.$  It  suffices   to examine  that  the transition mapping 
$$\psi_{\tilde y}\circ\psi^{-1}_{\tilde x}: \ \psi_{\tilde x}\left(V(\tilde x)\cap V(\tilde y)\right)\to \psi_{\tilde y}\left(V(\tilde x)\cap V(\tilde y)\right)$$
is a biholomorphic mapping. This is automatically a homeomorphism. 
Note  that for  each $\tilde a\in V(\tilde x)\cap V(\tilde y),$ we have $u=v$ in a small neighborhood of $a,$ i.e., $(a, u)=(a, v).$ 
A direct  computation gives that 
 $$\psi_{\tilde y}\circ\psi^{-1}_{\tilde x}(z)=\psi_{\tilde y}\big(\varphi^{-1}_x(z), u\big)
 =\psi_{\tilde y}\big(\varphi^{-1}_x(z), v\big)=\varphi_y\circ\varphi^{-1}_x(z)$$
 for any  $z\in \psi_{\tilde x}(V(\tilde x)\cap V(\tilde y)).$ Similarly, we have  
 $$\psi_{\tilde x}\circ\psi^{-1}_{\tilde y}(z)=\varphi_x\circ\varphi^{-1}_y(z), \ \ \ \    ^\forall z\in \psi_{\tilde y}(V(\tilde x)\cap V(\tilde y)).$$ 
 It shows that  $\psi_{\tilde y}\circ\psi^{-1}_{\tilde x}$ is a  biholomorphic mapping. 
 On the other hand,  each $x\in M\setminus \mathscr B$  corresponds to  
$\nu$ distinct  function elements
  $\{(x, u_j)\}_{j=1}^\nu\subseteq\mathcal M^0.$  That is,  
 we conclude that

\begin{theorem} $\mathcal M^0$ is  a non-compact $\nu$-leaf complex manifold of complex dimension $m.$
\end{theorem}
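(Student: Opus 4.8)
The plan is simply to assemble the pieces already in place and then read off the remaining global properties from the natural projection. We have just verified that $\mathscr U=\{(V(\tilde x),\psi_{\tilde x})\}_{\tilde x\in\mathcal M^0}$ is a holomorphic atlas, the transition maps being $\psi_{\tilde y}\circ\psi_{\tilde x}^{-1}=\varphi_y\circ\varphi_x^{-1}$, each chart mapping onto $\mathbb B^m(1)\subseteq\mathbb C^m$; together with the Hausdorff property (Theorem \ref{thm44}) and the path-connectedness established after Theorem \ref{thm3}, the one point still needed for ``complex manifold of complex dimension $m$'' is second countability (equivalently paracompactness). For the ``$\nu$-leaf'' and ``non-compact'' clauses I would work throughout with the natural projection $\pi:\mathcal M^0\to M$, $(x,u)\mapsto x$.

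First I would record that, read through a chart $\psi_{\tilde x}$ on the source and $\varphi_x$ on the target, $\pi$ becomes the identity of $\mathbb B^m(1)$; hence $\pi$ is a holomorphic local biholomorphism. Next I would bound its fibres: a meromorphic function element at $x$ subordinate to $(\ref{eq})$ takes at $x$ a value that is a root of the degree-$\nu$ polynomial $\psi(x,\cdot)$, and by Corollary \ref{cor2} there are exactly $\nu$ such elements when $x\in M\setminus\mathscr M$ (and at most $\nu$ in general), so every fibre of $\pi$ is finite of cardinality $\le\nu$. A connected Hausdorff space admitting a local homeomorphism with countable fibres onto a second-countable manifold is itself second countable (the standard \'etale-space/covering-space fact); alternatively, over $M\setminus\mathscr B$ the map $\pi$ is a $\nu$-sheeted covering of the second-countable manifold $M\setminus\mathscr B$, and one patches in second-countable neighbourhoods of the at most countably many relevant points of $\pi^{-1}(\mathscr B)$. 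Either way $\mathcal M^0$ is second countable, which finishes the proof that it is a complex manifold of dimension $m$.

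Then I would pin down the number of leaves and the non-compactness. By the displayed remark preceding the theorem, each $x\in M\setminus\mathscr B$ lies under exactly $\nu$ distinct elements of $\mathcal M^0$, so $\pi$ is surjective onto a dense open subset of $M$ and is generically $\nu$-to-$1$; that is, $\mathcal M^0$ is a $\nu$-sheeted analytic covering of $M$ (ramified over $\mathscr B$), which is precisely the meaning of ``$\nu$-leaf''. For non-compactness, since $M$ is non-compact pick $x_n\in M$ eventually leaving every compact set, lift each to some $\tilde x_n\in\pi^{-1}(x_n)$ (nonempty for $x_n\notin\mathscr B$); if some subsequence converged, $\tilde x_{n_k}\to\tilde x$, then $x_{n_k}=\pi(\tilde x_{n_k})\to\pi(\tilde x)$ by continuity, contradicting that $x_n$ escapes to infinity. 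Hence $\mathcal M^0$ is non-compact.

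The only genuinely non-formal step is the second countability (without it ``complex manifold'' is unjustified, since connected Hausdorff locally Euclidean spaces need not be second countable), and the clean way to get it is to exploit that $\pi$ is a local homeomorphism with finite fibres onto the second-countable $M$. The related subtlety to watch is the behaviour of $\pi$ over the analytic sets $\mathscr M$ and $\mathscr B$, where single-valued meromorphic function elements may fail to exist, so that $\pi$ is an honest covering only over $M\setminus\mathscr B$ and one should claim no more than ``generically $\nu$-to-$1$'' here; the genuine branch points are adjoined only later, in the manifold $\mathcal M$ of Theorem A.
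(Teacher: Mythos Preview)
Your proposal is correct and follows the same overall approach as the paper: assemble the Hausdorff property, path-connectedness, and the holomorphic atlas $\mathscr U$ already constructed, then invoke the fact that each $x\in M\setminus\mathscr B$ has exactly $\nu$ preimages to justify the ``$\nu$-leaf'' description. The paper's treatment is in fact terser than yours: it simply records the atlas and the $\nu$ preimages and states the theorem, without explicitly discussing second countability or arguing non-compactness. Your additional care on these two points---pulling second countability back along the local homeomorphism $\pi$ with finite fibres, and the sequence argument for non-compactness---fills in details the paper leaves implicit, so your write-up is if anything more complete than the original.
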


Set 
 $$\mathcal M^1=\big{\{}\text{all regular function elements subordinate to equation (\ref{eq})}\big{\}}/\sim.$$ 
\begin{cor} $\mathcal M^1$ is  a non-compact $\nu$-leaf complex manifold of complex dimension $m,$ which is a submanifold of $\mathcal M^0.$
\end{cor}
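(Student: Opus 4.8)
The plan is to exhibit $\mathcal M^1$ as an open subset of $\mathcal M^0$ and then simply restrict all the structure already built on $\mathcal M^0$. First I would prove that $\mathcal M^1$ is open in $\mathcal M^0$. Let $\tilde x=(U(x),u)\in\mathcal M^1$, so that $u$ is holomorphic at $x$. The pole set of the meromorphic function $u$ on $U(x)$ is a closed analytic subset of $U(x)$ which does not contain $x$; hence, after replacing $U(x)$ by a smaller geodesic ball neighbourhood, we may assume that $u$ is holomorphic on all of $U(x)$, and this shrinking does not change the class of $\tilde x$ under $\sim$. For every $\tilde y=(y,u)\in V(\tilde x)$ (a direct analytic continuation of $\tilde x$ with $y\in U(x)$), the germ of $u$ at $y$ is then holomorphic, so $\tilde y\in\mathcal M^1$, i.e.\ $V(\tilde x)\subseteq\mathcal M^1$. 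Since the $V(\tilde x)$ (equivalently the $V(\tilde x,\epsilon)$) form a neighbourhood basis of $\tilde x$ in $\mathcal M^0$, this shows that $\mathcal M^1$ is open.

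Next, an open subset of a complex manifold is again a complex manifold, carrying the restricted atlas. Concretely, for $\tilde x\in\mathcal M^1$ the chart $(V(\tilde x),\psi_{\tilde x})$ from $\mathscr U$ restricts to a chart of $\mathcal M^1$ into $\mathbb B^m(1)$, and the transition maps $\psi_{\tilde y}\circ\psi^{-1}_{\tilde x}=\varphi_y\circ\varphi^{-1}_x$ are inherited verbatim, hence biholomorphic. Therefore $\mathcal M^1$ is a complex manifold of complex dimension $m$ and, being an open subset of $\mathcal M^0$ with the induced complex structure, an (open) submanifold of $\mathcal M^0$.

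For the $\nu$-leaf property I would use the natural projection $\pi:\mathcal M^1\to M$, $(x,u)\mapsto x$, which in the charts above reads $z\mapsto\varphi_x^{-1}(z)$ and is thus a local biholomorphism, in particular an open map. Over any regular point $x\in M\setminus\mathscr S$, Corollary \ref{cor1} (via Theorem \ref{thm1}) yields exactly $\nu$ distinct regular function elements subordinate to $(\ref{eq})$, and conversely every regular function element with base point in $M\setminus\mathscr S$ arises this way; hence $\pi^{-1}(x)$ has exactly $\nu$ points for every $x$ in the dense open set $M\setminus\mathscr S$, which is precisely the sense in which $\mathcal M^0$ was called $\nu$-leaf. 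Finally, $\mathcal M^1$ is non-compact: since $\pi$ is open, $\pi(\mathcal M^1)$ is a nonempty open subset of $M$ containing the dense set $M\setminus\mathscr S$; if $\mathcal M^1$ were compact, $\pi(\mathcal M^1)$ would also be closed, forcing $\pi(\mathcal M^1)=M$ and hence $M$ compact, contradicting the standing hypothesis that $M$ is non-compact.

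The only genuinely delicate point is the first step: passing from the given domain $U(x)$ of a regular function element to a pole-free subneighbourhood while checking this is compatible with the equivalence relation $\sim$ and with the topology on $\mathcal M^0$ (so that the neighbourhoods $V(\tilde x)$ one ends up with still form a basis at $\tilde x$). Once the openness of $\mathcal M^1$ in $\mathcal M^0$ is secured, the remaining assertions are just routine restrictions of the complex-manifold atlas $\mathscr U$ and of the branched-covering picture already established for $\mathcal M^0$.
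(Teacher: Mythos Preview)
Your proposal is correct and is precisely the argument the paper leaves implicit: the corollary is stated without proof immediately after the theorem establishing the complex atlas $\mathscr U$ on $\mathcal M^0$, so the intended proof is exactly to observe that $\mathcal M^1$ is an open subset of $\mathcal M^0$ and inherits the restricted atlas, with the $\nu$-leaf and non-compactness assertions following from Corollary~\ref{cor1} and the natural projection $\pi$. Your only superfluous step is the shrinking of $U(x)$ to avoid poles: by the definition of a regular function element, $u$ is already holomorphic on its domain of definition, so $V(\tilde x)\subseteq\mathcal M^1$ holds without any shrinking; but this over-caution does no harm.
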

In what follows, we  would  construct a $\nu$-leaf  complex manifold determined  by  all  function elements subordinate to equation (\ref{eq}). 
Let  $\tilde x_1=(U(x_1), u_1),$ $\tilde x_2=(U(x_2), u_2)$ be any two   function elements subordinate to equation (\ref{eq}). We define an  equivalent relation as follows:  $\tilde x_1\sim\tilde x_2$ if and only if 
 $$x_1=x_2; \ \ \ \  u_{10}(x)= u_{20}(x), \ \  \ \    ^\forall x\in  \left(U(x_1)\setminus\mathscr B_{u_1}\right)\cap \left(U(x_2)\setminus\mathscr B_{u_2}\right)$$
 for some two single-valued components $u_{10}, u_{20}$ of $u_1, u_2,$ respectively, where $\mathscr B_{u_j}$ is the branch set of $u_j$  with   $j=1,2.$ 
 Set 
 $$\mathcal M=\big{\{}\text{all function elements subordinate to equation  (\ref{eq})}\big{\}}/\sim.$$ 

\begin{defi}
A  meromorphic  function  element $(y, v)$ is called   a direct analytic continuation of a multi-leaf   function element $(U(x), u),$ if 
both $y\in U(x)\setminus\mathscr B_{u}$ and $v=u_{0}$
in a  neighborhood of $y$ for some single-valued component $u_{0}$ of $u,$ where $\mathscr B_u$ is the branch set of $u.$    If so, one may regard that $(y, v)=(y, u_0).$
\end{defi}

 Let $\tilde x=(U(x), u)\in\mathcal M.$  A  neighborhood $V(\tilde x)$ of $\tilde x$ is  a set which contains   the following elements: 
     \begin{enumerate}
   \item[$\bullet$] all  $\tilde y\in \mathcal M$  with $y\in U_1(x)\setminus\mathscr B_{u}$ satisfying  that   
 $\tilde y$  is a  direct analytic continuation  of $\tilde x;$ 
  \item[$\bullet$]  all  $(y, u)\in \mathcal M$ with  $y\in U_1(x)\cap \mathscr B_u.$ Here and before,  $U_1(x)\subseteq U(x)$ is an arbitrary  neighborhood of $x.$
    \end{enumerate}
    
In particular,  a $r$-\emph{neighborhood} $V(\tilde x, r)$ of $\tilde x$ with $B(x, r)\subseteq U(x)$ 
is  defined by  the set that  contains the following elements: 
    \begin{enumerate}
   \item[$\bullet$] all  $\tilde y\in \mathcal M$ with $y\in B(x, r)\setminus\mathscr B_u$ satisfying  that   
 $\tilde y$  is a  direct analytic continuation  of $\tilde x;$  
   \item[$\bullet$]  all  $(y, u)\in \mathcal M$ with  $y\in B(x, r)\cap \mathscr B_u.$
    \end{enumerate}
  
     A subset $\mathcal E\subseteq \mathcal M$ is called an open set, if  $\mathcal E$ is an
      empty set or every  point $\tilde x\in\mathcal E$ is an inner point, i.e., there exists   a $\epsilon$-neighborhood $V(\tilde x, \epsilon)$ of $\tilde x$ such  
      that $V(\tilde x, \epsilon)\subseteq \mathcal E.$
    It  gives    a topology of $\mathcal M,$ which 
   contains  the topology of $\mathcal M^0.$ If $\tilde x$ is a $\lambda$-leaf function element,  then $V(\tilde x)$ has $\lambda$ leaves at $\tilde x.$
 We call $V(\tilde x)$ a \emph{$\lambda$-leaf neighborhood} of $\tilde x.$

  \begin{theorem} $\mathcal M$ is a connected  Hausdorff space. 
  \end{theorem}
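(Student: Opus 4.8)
The plan is to prove two things separately: that $\mathcal M$ is Hausdorff, and that $\mathcal M$ is connected. For the Hausdorff property, I would take two distinct points $\tilde x_1 = (U(x_1), u_1)$ and $\tilde x_2 = (U(x_2), u_2)$ in $\mathcal M$ and separate them by open neighborhoods. The case $x_1 \neq x_2$ is handled exactly as in Theorem \ref{thm44}: choose $\epsilon < {\rm{dist}}(x_1, x_2)/2$ and the $\epsilon$-neighborhoods $V(\tilde x_1, \epsilon)$, $V(\tilde x_2, \epsilon)$ are disjoint since every point of $V(\tilde x_i, \epsilon)$ projects into $B(x_i, \epsilon)$. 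The delicate case is $x_1 = x_2 =: x_0$ with $\tilde x_1 \neq \tilde x_2$. Here I would argue by contradiction: if $V(\tilde x_1, r_1) \cap V(\tilde x_2, r_2) \neq \emptyset$ for all $r_1, r_2$, pick a common point $\tilde a$. If $\tilde a$ is a direct analytic continuation of both, then some single-valued component of $u_1$ agrees with some single-valued component of $u_2$ on an open set near $a$; by the uniqueness theorem for analytic functions these components continue analytically to the same element, and since the branch structure at $x_0$ is determined by the cyclic behavior of analytic continuation around $x_0$ (as established in the discussion preceding Theorem \ref{ses}), the full $\lambda$-leaf elements $u_1$ and $u_2$ must coincide, contradicting $\tilde x_1 \neq \tilde x_2$. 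The case where $a$ lies on a branch set is handled by noting that $U_1(x_0) \cap \mathscr B_{u_i}$ points still force agreement of the multi-leaf germ.

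For connectivity, I would use the already-established path-connectedness of $\mathcal M^0$ together with the fact that every point of $\mathcal M \setminus \mathcal M^0$ (i.e. every genuine multi-leaf element, or every element sitting over a branch point) is a limit of points in $\mathcal M^0$. Concretely, given $\tilde x = (U(x_0), u) \in \mathcal M$ a $\lambda$-leaf element, its $r$-neighborhood $V(\tilde x, r)$ contains direct analytic continuations $(y, u)$ for $y \in B(x_0, r) \setminus \mathscr B_u$, and each such continuation is (equivalent to) a meromorphic function element $(y, u_0) \in \mathcal M^0$ for a single-valued component $u_0$ of $u$. Thus $\tilde x$ lies in the closure of $\mathcal M^0$, so it is connected to $\mathcal M^0$ by a path (move from $\tilde x$ into $V(\tilde x, r)$ along a leaf, landing in $\mathcal M^0$). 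Since any two points of $\mathcal M^0$ are joined by a path in $\mathcal M^0 \subseteq \mathcal M$ (Corollary following Theorem \ref{thm3}), any two points of $\mathcal M$ are joined by a path in $\mathcal M$, so $\mathcal M$ is path-connected, hence connected.

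The main obstacle is the Hausdorff property in the case $x_1 = x_2$, specifically making rigorous the claim that agreement of one pair of single-valued components on a small open set forces equality of the full multi-leaf germs $u_1 = u_2$. The key input is that the decomposition of the $\nu$ local branches into cyclic groups under monodromy around $x_0$ is intrinsic (it does not depend on the chosen loop $\gamma_0$, provided $\gamma_0$ is close enough to $x_0$ and does not encircle other components of $\mathscr M$), as was argued in $1^\circ$ and $2^\circ$ of Section \ref{sec21}. Once one component of $u_1$ equals one component of $u_2$, analytic continuation around $x_0$ sweeps out the entire cyclic orbit of that component, and since both $u_1$ and $u_2$ are by definition single such orbits (a $\lambda$-leaf function element), the orbits coincide, forcing $u_1 = u_2$ as elements of $\mathcal M$. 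I would also need to observe that on the branch-set part of the neighborhoods, the topology was defined so that only $(y, u)$ with the \emph{same} germ $u$ is included, which prevents spurious intersections there. With these observations in place the separation argument runs parallel to the proof of Theorem \ref{thm44}.
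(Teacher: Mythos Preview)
Your proposal is correct. For the Hausdorff part you are essentially spelling out what the paper means by ``similar arguments as in the proof of Theorem~\ref{thm44}'': the paper leaves implicit precisely the monodromy observation you make, that if one single-valued component of $u_1$ agrees with one of $u_2$ near $x_0$, then analytic continuation around $x_0$ forces the full cyclic orbits---hence the multi-leaf germs---to coincide.

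For connectivity you take a genuinely different route. The paper argues by contradiction: if $\mathcal M=\mathcal A\cup\mathcal B$ is a disconnection, set $\mathcal S=\{\tilde x\in\mathcal M:x\in\mathscr B\}$; then $\mathcal A\setminus\mathcal S$ and $\mathcal B\setminus\mathcal S$ are disjoint open subsets of $\mathcal M^0$ covering it, contradicting the known connectivity of $\mathcal M^0$. Your approach is more constructive: you show $\mathcal M$ is path-connected by observing that every $\lambda$-leaf element $\tilde x$ can be joined by a short path inside $V(\tilde x,r)$ to a point of $\mathcal M^0$, and then invoking the path-connectedness of $\mathcal M^0$. Both arguments rest on the same two pillars (connectivity of $\mathcal M^0$ and the fact that $\mathcal M\setminus\mathcal M^0$ sits over the thin branch set), but your argument yields the slightly stronger conclusion of path-connectedness and avoids the need to check that $\mathcal A\setminus\mathcal S$, $\mathcal B\setminus\mathcal S$ actually cover $\mathcal M^0$ (which is not entirely automatic when some one-leaf elements sit over branch points).
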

  \begin{proof} 
The  Hausdorff property of $\mathcal M$  is showed  using the similar arguments as in the proof of Theorem \ref{thm44}. We assert  
   that $\mathcal M$ is connected.  Otherwise,   one  may assume  that  $\mathcal M=\mathcal A\cup\mathcal B,$ where $\mathcal A, \mathcal B$ 
   are two disjoint open subsets of $\mathcal M.$
      Set   $\mathcal S=\{\tilde x\in \mathcal M: x\in\mathscr B\}.$ It is evident  that     
          $\mathcal M^0=(\mathcal A\setminus \mathcal S)\cup (\mathcal B\setminus\mathcal S).$ 
                 Since $\mathscr B$ is an analytic set of complex codimension $1,$ 
                                  we see that 
                              $\mathcal A\setminus \mathcal S, \mathcal B\setminus\mathcal S$ are   two disjoint open subsets of $\mathcal M^0.$ 
                               This 
        implies that $\mathcal M^0$ is not connected, which is a contradiction. 
  \end{proof}

Next, we will equip   $\mathcal M$ with a complex structure such that $\mathcal M$ is a complex manifold. 
 Taking a  $\lambda$-leaf function element  $\tilde x=(U(x), u)\in \mathcal M.$  
  Assume that  $x$ is not a singular branch point of $u.$  Let    
   $U(x)$  be  chosen suitably.  
      By means of Theorem \ref{ses}, 
   there exists  a biholomorphic mapping
     $\varphi_x: U(x)\to\Delta^m(1)$ such that        
    $u$  can be expanded into  the Pusieux series 
          \begin{eqnarray*}
   u(z)=u\circ\varphi^{-1}_x(z) 
   =B_{0}(\hat z_m)+B_{\tau}(\hat z_m)z_m^{\frac{\tau}{\lambda}}+B_{\tau+1}(\hat z_m)z_m^{\frac{\tau+1}{\lambda}}+\cdots
         \end{eqnarray*}
on $U(x)$   with    $\varphi_x(x)=\textbf{0}$ and   $\varphi_x(y)=(\hat z_m(y), 0)$
 for all $y\in\mathscr B_u.$ 

Let  $V(\tilde x)$  be  a $\lambda$-leaf neighborhood  of $\tilde x$ containing 
 the following elements: 
 
   \begin{enumerate}
   \item[$\bullet$] all  $\tilde y\in \mathcal M$ with $y\in U(x)\setminus\mathscr B_u$ satisfying  that   
 $\tilde y$  is a  direct analytic continuation  of $\tilde x;$  
   \item[$\bullet$]  all  $(y, u)\in \mathcal M$ with  $y\in U(x)\cap \mathscr B_u.$
    \end{enumerate} 
 Let $u_1,\cdots, u_\lambda$ be $\lambda$ distinct single-valued  components of $u.$ 
Define a mapping 
$\psi_{\tilde x}:  V(\tilde x)\to \Delta^m(1)$
 by 
      \begin{eqnarray*}
\psi_{\tilde x}(y, u)&=& \big(\hat z_m(y), 0\big),  \quad  \quad \quad \quad  \quad \quad \quad \quad \quad \quad  \quad \ \    ^\forall y\in U(x)\cap\mathscr B_u; \\
   \psi_{\tilde x}(y, u_j)&=&\Big(\hat z_m(y), \ |z_m(y)|^{\frac{1}{\lambda}}e^{i\frac{\arg z_m(y)+2(j-1)\pi}{\lambda}}\Big), 
 \quad  ^\forall y\in U(x)\setminus\mathscr B_u
     \end{eqnarray*}
for $j=1,\cdots,\lambda$, where  $z(y)=\varphi_x(y).$  We note that $\psi_{\tilde x}$  is a homeomorphism and its inverse   is  
$$
\psi^{-1}_{\tilde x}(z) = 
\begin{cases}
\big(\varphi^{-1}_x(z), u\big), & ^\forall  z_m=0; \\
\left(\varphi_x^{-1}\big(\hat z_m, |z_m|^\lambda e^{i\lambda\arg z_m}\big), u_j\right),  &  ^\forall z_m\not= 0,
\end{cases}$$
where $j>0$ is an integer such that
 $$0\le2(j-1)\pi-\lambda\arg z_m<2\pi.$$
\ \ \ \  Let $\mathcal M_{\rm{sing}}$ stand for      the set of all  function elements $(x, u)\in\mathcal M$ such   that $x$ is a singular 
branch point  of $u.$ Moreover,  we denote by  $\mathscr B_{\rm sing}$  the set of all the  corresponding singular branch points of $w,$ which  
 is obvious  an analytic set of complex codimension not smaller  than $2.$
Set $$\mathscr U=\big\{(V(\tilde x), \psi_{\tilde x}): \tilde x\in\mathcal M^*\big\}, \ \ \ \  \mathcal M^*=\mathcal M\setminus\mathcal M_{\rm{sing}}.$$
We now show that  $\mathscr U$  is a complex atlas  of $\mathcal M^*.$
Assume that  $V(\tilde x)\cap V(\tilde y)\not=\emptyset.$  It is sufficient  to verify that    the transition mapping 
$$\psi_{\tilde y}\circ\psi^{-1}_{\tilde x}: \ \psi_{\tilde x}\big(V(\tilde x)\cap V(\tilde y)\big)\to \psi_{\tilde y}\big(V(\tilde x)\cap V(\tilde y)\big)$$
is a biholomorphic mapping.  It is automatically a homeomorphism. 
Through  a simple  analysis,  we only  need to deal with  the  situation  in which  $\tilde x=(x, u)$ is a $\lambda$-leaf  meromorphic  function  element 
and $\tilde y=(y, v)$ is  a  regular  function   element.  For  any  $\tilde a\in V(\tilde x)\cap V(\tilde y),$ 
there exists  a single-valued component $u_j$ of $u$ satisfying  that $u_j=v$ in some   neighborhood of $a,$ 
i.e., $(a, u_j)=(a,v).$ It is clear that 
$z_m\not=0,$ since  all  function elements in $V(\tilde y)$ are meromorphic. 
 A direct computation leads to 
      \begin{eqnarray*}
\psi_{\tilde y}\circ\psi^{-1}_{\tilde x}(z)&=&\psi_{\tilde y}\left(\varphi_x^{-1}\big(\hat z_m, |z_m|^\lambda e^{i\lambda\arg z_m}\big), u_j\right) \\
&=&\psi_{\tilde y}\left(\varphi_x^{-1}\big(\hat z_m, |z_m|^\lambda e^{i\lambda\arg z_m}\big), v\right) \\
&=&\varphi_y\circ\varphi^{-1}_x\Big(\hat z_m, |z_m|^\lambda e^{i\lambda\arg z_m}\Big) \\
&=& \varphi_y\circ\varphi^{-1}_x\big(\hat z_m, z_m^\lambda\big),
      \end{eqnarray*}
where $j>0$ is an integer such that $$0\le2(j-1)\pi-\lambda\arg z_m<2\pi.$$ Thus,  $\psi_{\tilde y}\circ\psi^{-1}_{\tilde x}$ is a holomorphic mapping. 
On the other hand, its inverse is
      \begin{eqnarray*}
\psi_{\tilde x}\circ\psi^{-1}_{\tilde y}(z)&=&\psi_{\tilde x}\big(\varphi_y^{-1}(z), v\big) \\
&=& \psi_{\tilde x}\big(\varphi_y^{-1}(z), u_j\big) \\
&=& \varphi_x\circ\varphi^{-1}_y\Big(\hat z_m, \ |z_m|^{\frac{1}{\lambda}}e^{i\frac{\arg z_m+2(j-1)\pi}{\lambda}}\Big) \\
&=&\varphi_x\circ\varphi^{-1}_y\Big(\hat z_m,  \big(z_m^{\frac{1}{\lambda}}\big)_j\Big),
      \end{eqnarray*}
    where $\big(z_m^{1/\lambda}\big)_j$ is the $j$-th single-valued component of $z_m^{1/\lambda}.$ Evidently,   $\psi_{\tilde x}\circ\psi^{-1}_{\tilde y}$ is also a holomorphic mapping. Hence, $\mathcal M^*$ is a  non-compact $\nu$-leaf complex manifold of complex dimension $m.$ 
    
Finally,  we    shall  treat       the singular branch  set   $\mathcal M_{\rm sing}.$  Let $\pi: \mathcal M\to M$ be the natural projection which maps   a  function  element 
    $(x, u)$ to $x.$  Note that   $\pi: \mathcal M^* \to M\setminus\mathscr B_{\rm sing}$ is a  $\nu$-sheeted  ramified analytic covering such that  
    $$M=\pi(\mathcal M^*)\cup\pi(\mathcal M_{\rm sing})=\pi(\mathcal M^*)\cup \mathscr B_{\rm{sing}}.$$
         Since  $\mathscr B_{\rm sing}$ is an analytic set of complex codimension not smaller  than $2,$  the complex structure of $M$  can be viewed  as a natural extension of the complex structure of $\pi(\mathcal M^*)=M\setminus\mathscr B_{\rm sing}.$ This implies   that  
             one  can naturally extend the complex structure of $\mathcal M^*$   to $\mathcal M_{\rm sing}$ via $\pi.$ 
                          Hence, we conclude that           
   \begin{theorem}\label{cover}
    $\mathcal M$  is a $\nu$-sheeted ramified analytic covering of $M.$  We  call $\mathcal M$ the $\nu$-leaf complex manifold by $w.$
      \end{theorem}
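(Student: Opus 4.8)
The plan is to reduce the statement to the extension of a ramified analytic covering across an analytic subset of codimension at least two, and then to match that canonical extension with the abstractly defined space $\mathcal M$. By the discussion preceding the statement we may already take for granted that $\mathcal M^*=\mathcal M\setminus\mathcal M_{\rm sing}$ is a non-compact $\nu$-leaf complex manifold, that the part of $\pi$ lying over $M\setminus\mathscr B_{\rm sing}$ is a $\nu$-sheeted ramified analytic covering of $M\setminus\mathscr B_{\rm sing}$, and that $\mathscr B_{\rm sing}$ is an analytic subset of $M$ of complex codimension $\ge2$. Since being a ramified analytic covering is a property local on the base, it suffices to fix $x_0\in\mathscr B_{\rm sing}$, a coordinate ball $U\ni x_0$ biholomorphic to $\mathbb B^m(1)$, and to equip $\pi^{-1}(U)$ with the structure of a $\nu$-sheeted ramified analytic covering of $U$ extending the one already present over $U\setminus\mathscr B_{\rm sing}$; the resulting local structures glue, since on the overlap of two such balls each exhibits $\pi$ as the normalization of the corresponding local model constructed below, and these normalizations are canonically identified on the overlap.

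First I would construct the local model. After shrinking $U$ — and, if $x_0\in I_{\mathscr A}$, absorbing the codimension $\ge2$ set $I_{\mathscr A}$ into $\mathscr B_{\rm sing}$ by a further application of the same extension argument — we may assume $A_0,\dots,A_\nu$ are holomorphic on $U$ without common zero, so the homogenization of equation (\ref{eq}) cuts out an analytic subset
$$\mathcal V_U=\big\{(x,[\zeta_0:\zeta_1])\in U\times\mathbb P^1(\mathbb C):\ A_\nu(x)\zeta_1^\nu+A_{\nu-1}(x)\zeta_1^{\nu-1}\zeta_0+\cdots+A_0(x)\zeta_0^\nu=0\big\},$$
whose projection $p\colon\mathcal V_U\to U$ is proper (it factors through $U\times\mathbb P^1(\mathbb C)$) with fibres of at most $\nu$ points and generic fibre of exactly $\nu$ distinct points, hence is a finite surjective holomorphic map of degree $\nu$. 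Let $n\colon\widetilde{\mathcal V}_U\to\mathcal V_U$ be the normalization; then $\widetilde{\mathcal V}_U$ is a normal complex space and $p\circ n\colon\widetilde{\mathcal V}_U\to U$ is a $\nu$-sheeted ramified analytic covering. Over $U\setminus\mathscr S$ this is an unbranched $\nu$-sheeted cover whose fibre over $x$ is the set of the $\nu$ local solutions of (\ref{eq}) at $x$, and along the smooth part of $\mathscr B$ in $U$ its charts are exactly the Puiseux charts produced in Theorem \ref{ses}; therefore the restriction of $\widetilde{\mathcal V}_U$ over $U\setminus\mathscr B_{\rm sing}$ is canonically biholomorphic to $\pi^{-1}(U\setminus\mathscr B_{\rm sing})$, compatibly with the projections.

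It remains to extend this identification over $\mathscr B_{\rm sing}$ and to transport the complex structure. Because $\mathscr B_{\rm sing}$ has codimension $\ge2$, normality makes $\widetilde{\mathcal V}_U$ the unique extension of $\widetilde{\mathcal V}_U|_{U\setminus\mathscr B_{\rm sing}}$ as an analytic covering of $U$: the points of $\widetilde{\mathcal V}_U$ over $x\in\mathscr B_{\rm sing}$ are precisely the limits of points lying in the nearby sheets, i.e. the local irreducible branches of $\mathcal V_U$ through the fibre over $x$. On the other hand, by construction the points of $\pi^{-1}(U)$ over $x$ are exactly the multi-leaf function elements $(x,u)$ at $x$, the single-valued components of each such $u$ being direct analytic continuations of sheets of $\mathcal M^*$ (Theorems \ref{thm2} and \ref{thm3}); these correspond bijectively to the local branches of $\mathcal V_U$ above $x$, and the topology on $\mathcal M$ was defined precisely so that $(x,u)$ is the limit of its nearby components. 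Using the Hausdorff property of $\mathcal M$ and the uniqueness theorem for analytic continuation, one then checks that the biholomorphism over $U\setminus\mathscr B_{\rm sing}$ extends to a homeomorphism $\pi^{-1}(U)\to\widetilde{\mathcal V}_U$ over $U$; transporting the structure of $\widetilde{\mathcal V}_U$ through it, and gluing over the $U$'s, realizes $\pi\colon\mathcal M\to M$ as a $\nu$-sheeted ramified analytic covering. I expect the hard part to be exactly this last matching — verifying that the somewhat ad hoc topology of $\mathcal M$ near the singular branch points coincides with the covering topology of $\widetilde{\mathcal V}_U$, in particular that the fibres of $\pi$ over $\mathscr B_{\rm sing}$ carry the correct finite number of points with no spurious identifications or splittings — and this is where the codimension $\ge2$ hypothesis and the normality of $\widetilde{\mathcal V}_U$ (a Riemann--Hartogs type removable-singularity input) are used decisively.
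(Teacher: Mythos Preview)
Your approach is correct and in fact considerably more detailed than the paper's. The paper's argument for Theorem~\ref{cover} is essentially the single paragraph preceding the statement: having already built the complex atlas on $\mathcal M^*$ and observed that $\pi\colon\mathcal M^*\to M\setminus\mathscr B_{\rm sing}$ is a $\nu$-sheeted ramified analytic covering, the paper simply asserts that because $\mathscr B_{\rm sing}$ has complex codimension $\ge2$, ``one can naturally extend the complex structure of $\mathcal M^*$ to $\mathcal M_{\rm sing}$ via $\pi$.'' No explicit local model, no normalization, no matching of topologies is written out; the extension is treated as a direct Riemann--Hartogs type removable-singularity fact for analytic coverings.

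Your route via the homogenized hypersurface $\mathcal V_U\subseteq U\times\mathbb P^1(\mathbb C)$ and its normalization $\widetilde{\mathcal V}_U$ is a genuinely different (and more classical) construction: it produces the covering extrinsically from the defining equation rather than intrinsically from function elements, and then identifies the two descriptions. What this buys you is an honest proof that something with the required properties actually exists over all of $U$, including over $\mathscr B_{\rm sing}$ and $I_{\mathscr A}$, together with a clear statement of where the codimension-$\ge2$ hypothesis and normality are used. What the paper's approach buys is brevity, at the cost of leaving the ``hard part'' you correctly identify (matching the ad~hoc topology on $\mathcal M$ near singular branch points with the covering topology) entirely to the reader. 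Your concern about that matching is well placed, but the argument you sketch---bijection between multi-leaf function elements at $x$ and local irreducible branches of $\mathcal V_U$ over $x$, plus Hausdorffness---is the right one.
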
 
      \begin{cor}[Uniformization]\label{ppll}
 An algebroid function $w$ on $M$ can  lift to a meromorphic function $f$ on $\mathcal M$ via the natural projection $\pi:  \mathcal M\to M$ by mapping  $(x, u)$ to $x$ 
such that $w=f\circ\pi^{-1}.$
\end{cor}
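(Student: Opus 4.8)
The plan is to define the lift pointwise and then check that it is meromorphic, first on the regular part and then across the singular branch set. For a function element $\tilde x=(x,u)\in\mathcal M$, set $f(\tilde x)=u(x)$, the value at the base point of the (possibly multi-leaf, possibly meromorphic) function element, regarded as a point of $\mathbb P^1(\mathbb C)$. First I would verify this is well defined on equivalence classes: if $(x,u_1)\sim(x,u_2)$ in $\mathcal M$, then by the definition of $\sim$ some single-valued component of $u_1$ agrees with some single-valued component of $u_2$ on a punctured neighbourhood of $x$, hence by continuity $u_1(x)=u_2(x)$ in $\mathbb P^1(\mathbb C)$; and by Theorem \ref{ses} the value at a branch point of a $\lambda$-leaf element is the single number $B_0(\mathbf 0)$ (equal to $\infty$ exactly when $A_\nu(x)=0$), so $f(\tilde x)$ is unambiguous at branch points as well. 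Thus $f\colon\mathcal M\to\mathbb P^1(\mathbb C)$ is a well-defined map, and $f=w$ under the identification of leaves with single-valued components.

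Next I would show $f$ is meromorphic on $\mathcal M^*=\mathcal M\setminus\mathcal M_{\rm sing}$ by working in the charts $(V(\tilde x),\psi_{\tilde x})$ constructed above. If $\tilde x$ is a regular (resp. meromorphic) function element, then $f\circ\psi_{\tilde x}^{-1}(z)=u(\varphi_x^{-1}(z))$ is holomorphic (resp. meromorphic) because $u$ is. If $\tilde x$ is a $\lambda$-leaf element and $\varphi_x$ is the normalising coordinate of Theorem \ref{ses}, the chart $\psi_{\tilde x}$ is precisely the one replacing $z_m$ by $z_m^{1/\lambda}$, so in the chart coordinates $(\hat z_m,z_m)$ one gets
$$f\circ\psi_{\tilde x}^{-1}(\hat z_m,z_m)=B_{0}(\hat z_m)+B_{\tau}(\hat z_m)z_m^{\tau}+B_{\tau+1}(\hat z_m)z_m^{\tau+1}+\cdots,$$
a convergent series in $z_m$ with meromorphic coefficients, hence meromorphic on $V(\tilde x)$; the case $A_\nu(x)=0$ is handled identically with $1/u$ in place of $u$ (the subcases $2^{\circ}\,a)$, $b)$ preceding Definition \ref{Def}). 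Therefore $f$ is meromorphic on $\mathcal M^*$, and in particular on $\mathcal M^1\subseteq\mathcal M^0\subseteq\mathcal M^*$ it restricts to the holomorphic/meromorphic function on those manifolds whose local expressions are just $u\circ\varphi_x^{-1}$.

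Finally I would extend $f$ across $\mathcal M_{\rm sing}$. Since $\pi\colon\mathcal M^*\to M\setminus\mathscr B_{\rm sing}$ is a $\nu$-sheeted ramified analytic covering and $\mathscr B_{\rm sing}$ is an analytic set of complex codimension $\ge 2$, the fibre $\mathcal M_{\rm sing}=\pi^{-1}(\mathscr B_{\rm sing})\setminus\mathcal M^*$ is an analytic subset of $\mathcal M$ of complex codimension $\ge 2$; moreover $u$ (or $1/u$) is bounded near each such point by the estimate following $(\ref{dxj1})$. By the extension theorem for meromorphic functions across analytic sets of complex codimension $\ge 2$, $f|_{\mathcal M^*}$ extends to a meromorphic function on $\mathcal M$, and by continuity this extension agrees with the pointwise value $f(x,u)=u(x)$. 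The identity $w=f\circ\pi^{-1}$ is then immediate: for $x\in M\setminus\mathscr B$ one has $\pi^{-1}(x)=\{(x,u_1),\dots,(x,u_\nu)\}$ with $\{u_1(x),\dots,u_\nu(x)\}$ exactly the $\nu$ values of $w$ at $x$, so $f(\pi^{-1}(x))$ is the value set of $w$; the branch points are covered by the remark following Theorem \ref{ses} (separation into $\lambda$ components with common value $B_0(\mathbf 0)$). The step I expect to be the main obstacle is this last one: making rigorous that the pointwise $f$ is genuinely meromorphic — not merely defined — at the points of $\mathcal M_{\rm sing}$, where the complex structure of $\mathcal M$ was obtained only by extension through $\pi$; the codimension bound on $\mathscr B_{\rm sing}$ together with the local boundedness of $u$ is what makes the meromorphic extension theorem applicable there.
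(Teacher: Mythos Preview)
Your proposal is correct and follows the same approach implicit in the paper: the paper states this as an immediate corollary of the construction of $\mathcal M$ and its charts in Section~\ref{sec22} without giving a separate proof, and your argument simply makes explicit what that construction was designed to achieve --- defining $f(\tilde x)=u(x)$, reading off holomorphy in the normalising charts $\psi_{\tilde x}$ (where the Pusieux series of Theorem~\ref{ses} becomes an ordinary power series), and extending across $\mathcal M_{\rm sing}$ using the codimension~$\ge 2$ of $\mathscr B_{\rm sing}$. One minor remark: your description of $\mathcal M_{\rm sing}$ as ``$\pi^{-1}(\mathscr B_{\rm sing})\setminus\mathcal M^*$'' is slightly off (by definition $\mathcal M_{\rm sing}=\mathcal M\setminus\mathcal M^*$, and it is contained in $\pi^{-1}(\mathscr B_{\rm sing})$), but this does not affect the argument since containment in an analytic set of codimension~$\ge 2$ is all the extension theorem needs.
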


   \subsection{Zero  and Pole Divisors  of Algebroid  Functions}~\label{sec23}
   
   Let $w$ be a $\nu$-valued algebroid function on $M$ defined by equation $(\ref{eq}).$  Let $w_1,\cdots, w_\nu$ be $\nu$ single-valued  components of $w.$ 
   \begin{theorem}\label{pole} We have 
   $$(w=\infty)=(w_1=\infty)+\cdots+(w_\nu=\infty)=(A_\nu=0).$$
   \end{theorem}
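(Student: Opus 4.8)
The plan is to prove the two equalities separately. The first, $(w=\infty)=(w_1=\infty)+\cdots+(w_\nu=\infty)$, is essentially a matter of unwinding the uniformization: off the branch set $\mathscr B$ the natural projection $\pi\colon\mathcal M\to M$ (Theorem \ref{cover}) is an unramified $\nu$-sheeted covering on whose $\nu$ sheets the lift $f$ of Corollary \ref{ppll} restricts to the single-valued components $w_1,\dots,w_\nu$, so $\pi_*(f)_\infty=\sum_j(w_j=\infty)$ there, and along branch hypersurfaces the two sides are compared by the same local bookkeeping used below for the second equality. (If $(w=\infty)$ is taken by definition to be $\pi_*(f)_\infty$, there is nothing to prove here.) I will devote the real effort to $\sum_j(w_j=\infty)=(A_\nu=0)$.

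For that I would first match the underlying sets and then the multiplicities. On the level of supports: if $x_0\notin A_\nu^{-1}(0)$, the a priori bound obtained from $(\ref{eq})$ and $(\ref{haoba1})$ in Section~\ref{sec21} shows that every function element subordinate to $(\ref{eq})$ is locally bounded near $x_0$, so no component of $w$ has a pole at $x_0$; if $x_0\in A_\nu^{-1}(0)\setminus I_{\mathscr A}$, then $\phi(x_0,0)=A_\nu(x_0)=0$, so $u=0$ is a root of $\phi(x_0,\cdot)$ and at least one component of $w$ takes the value $\infty$ at $x_0$. Since $I_{\mathscr A}$ has codimension $\ge2$, this gives ${\rm{Supp}}(A_\nu=0)={\rm{Supp}}\,\sum_j(w_j=\infty)$.

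The heart of the proof is matching multiplicities along each irreducible component $Z$ of $(A_\nu=0)$, and this may be checked at a generic point $x_0\in Z$ — one chosen outside $I_{\mathscr A}$, the singular locus of $Z$, the other components of the critical set $\mathscr S$, and the singular branch points, each of which meets $Z$ in strictly smaller dimension. Take a coordinate $z=(\hat z_m,z_m)$ at $x_0$ with $Z=\{z_m=0\}$ and set $p=\ord_Z A_\nu\ge1$. Since $A_0,\dots,A_\nu$ have no common zero, some $A_j(x_0)\ne0$; with $k$ the largest such index one has $k\le\nu-1$ and $\phi(x_0,u)=A_\nu(x_0)+\cdots+A_0(x_0)u^\nu$ vanishes to order exactly $\nu-k$ at $u=0$. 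Weierstrass preparation then gives $\phi(x,u)=E(x,u)W(x,u)$ near $(x_0,0)$ with $E$ a unit and $W(x,u)=u^{\nu-k}+c_{\nu-k-1}(x)u^{\nu-k-1}+\cdots+c_0(x)$, $c_i(x_0)=0$; the zeros in $u$ of $W(x,\cdot)$ are exactly the branches $u_j=1/w_j$ of $1/w$ tending to $0$, i.e.\ the branches of $w$ with a pole at $x_0$, and $\prod_j u_j=(-1)^{\nu-k}c_0(x)=(-1)^{\nu-k}A_\nu(x)/E(x,0)$, whence $\ord_Z\big(\prod_j u_j\big)=\ord_Z A_\nu=p$. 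On the other hand, by the local Pusieux structure of Section~\ref{sec21} (Theorem \ref{ses} applied to $1/w$) the $i$-th pole branch is a $\lambda_i$-leaf element whose $\lambda_i$ single-valued components each have leading exponent $\tau_i/\lambda_i$ in $z_m$, so their product is a unit times $z_m^{\tau_i}$ and $\prod_j u_j$ is a unit times $z_m^{\sum_i\tau_i}$; comparing orders yields $\sum_i\tau_i=p$. Finally $\sum_i\tau_i$ is precisely the coefficient of $Z$ in $\sum_j(w_j=\infty)$ — equivalently in $\pi_*(f)_\infty$, since the prime divisor of $\mathcal M$ over $Z$ attached to the $i$-th pole branch has ramification $\lambda_i$, residue degree $1$, and $f$ has a pole of order $\tau_i$ along it. Hence $\ord_Z(A_\nu)=\ord_Z\big(\sum_j(w_j=\infty)\big)$ for every $Z$, which completes the proof.

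The step I expect to be delicate is not any single estimate but the multiplicity bookkeeping at branch points: one must be sure that a $\lambda$-leaf pole branch with Pusieux exponent $\tau/\lambda$ contributes the integer $\tau$ to $(w=\infty)$, rather than fractional data spread over $\lambda$ components, and that the passage to a generic point of $Z$ discards only sets of codimension $\ge2$, which therefore carry no divisorial information. Routing everything through the uniformization $\pi\colon\mathcal M\to M$, where $(w=\infty)$ is literally the pushforward of the pole divisor of the honest meromorphic function $f$, is what keeps this bookkeeping unambiguous; the minor technical point that the components of $(A_\nu=0)$ may fail to be smooth or may meet $I_{\mathscr A}$ is absorbed by working at generic points, as above.
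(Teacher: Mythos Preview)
Your proof is correct, but the paper's argument is shorter and more elementary. Both approaches ultimately compute the pole order of the product of the polar branches and identify it with $\ord_Z A_\nu$, yet the execution differs. The paper works directly with Vieta's formulas $e_k(w_1,\dots,w_\nu)=(-1)^k A_{\nu-k}/A_\nu$ for \emph{all} $k$: if $w_1,\dots,w_p$ are exactly the components with a pole at $x_0\notin I_{\mathscr A}$, then among the monomials in $e_p$ only $w_1\cdots w_p$ attains the top pole order $\sum_{j\le p}n(x_0,w_j=\infty)$, so this is the pole order of $A_{\nu-p}/A_\nu$; since the maximal pole order among all $A_j/A_\nu$ equals $\ord_{x_0}A_\nu$ (as some $A_j(x_0)\ne0$), one gets $A_{\nu-p}(x_0)\ne0$ and hence $n(x_0,w=\infty)=n(x_0,A_{\nu-p}/A_\nu=\infty)=n(x_0,A_\nu=0)$. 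No Weierstrass preparation, no Puiseux, no uniformization is invoked. Your route --- Weierstrass preparation of $\phi(x,u)$ to isolate the polar roots as zeros of a monic $W$, reading off $\prod u_j=A_\nu/E(x,0)$, then Puiseux/pushforward bookkeeping to interpret $\sum_i\tau_i$ --- buys a more structural picture (the identity $(w=\infty)=\pi_*(f)_\infty$ is made explicit, and the branch-point contributions are transparently integers), at the cost of heavier local machinery and a longer argument. Either is fine; the paper's dominance-in-$e_p$ trick is worth noting as the quick way through.
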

   \begin{proof} Let $x_0$ be a zero of $A_\nu.$
   Since  
   $I_{\mathscr A}=0$ in the sense of divisors,  we may  assume that $x_0\not\in I_{\mathscr A}.$ It follows  that 
   \begin{equation}\label{ppt} 
   n\big(x_0, A_\nu=0\big)=\max_{0\leq j\leq\nu-1}n\Big(x_0, \frac{A_j}{A_\nu}=\infty\Big),
   \end{equation}
where $n(x_0, g=a)$ denotes the number of  $a$-valued points of  a function $g$ at $x_0.$ 
Without loss of generality,   we assume    that   $w_1, \cdots, w_p$ are  all   components  that  take $\infty$ at $x_0.$    According to  Vieta's theorem, we obtain     
  \begin{eqnarray*}
\sum_{1\leq j\leq \nu}w_j &=&-\frac{A_{\nu-1}}{A_\nu} \\
 \cdots\cdots\cdots && \\
\sum_{1\leq j_1<\cdots<j_p\leq \nu}w_{j_1}\cdots w_{j_p}&=& (-1)^p\frac{A_{\nu-p}}{A_\nu} \\
 \cdots\cdots\cdots &&  \\
w_1\cdots w_\nu&=& (-1)^\nu\frac{A_{0}}{A_\nu} 
  \end{eqnarray*}
     It implies that   $A_{\nu-p}(x_0)\not=0,$ since  $w_1\cdots w_p$  reaches   the maximal  pole order  at  $x_0\not\in I_{\mathscr A}.$
         It yields from (\ref{ppt}) that   
        \begin{eqnarray*}
  n(x_0, w=\infty)  
  &=& n(x_0, w_1=\infty)+\cdots+n(x_0, w_p=\infty) \\
       &=& n(x_0, w_1\cdots w_p=\infty) \\
       &=& n\Big(x_0, \frac{A_{\nu-p}}{A_\nu}=\infty\Big) \\
       &=& n(x_0, A_\nu=0).
      \end{eqnarray*}
Thus, we conclude   that 
   $(w=\infty)=(A_\nu=0).$  
\end{proof}

Similarly,   we can show  by setting  $u=1/w$ that 
   \begin{theorem}\label{zero} We have 
   $$(w=0)=(w_1=0)+\cdots+(w_\nu=0)=(A_0=0).$$
   \end{theorem}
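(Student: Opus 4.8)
The plan is to deduce Theorem \ref{zero} from Theorem \ref{pole} by the change of variable $u=1/w$, exactly as indicated. First I would note that irreducibility of equation $(\ref{eq})$ forces $A_0\not\equiv0$: otherwise $w$ would divide $\psi(x,w)$ as a polynomial in $w$, contradicting irreducibility. Substituting $w=1/u$ into $(\ref{eq})$ and multiplying by $u^\nu$ gives
$$\phi(x,u)=A_0(x)u^\nu+A_1(x)u^{\nu-1}+\cdots+A_\nu(x)=0,$$
which is an algebraic equation of the same shape, now with $A_0$ playing the role of the leading coefficient.

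Next I would check that $\phi(x,u)=0$ is again an admissible algebroid equation in the sense of Definition \ref{Def}. Irreducibility is preserved: a nontrivial factorization of $\phi$ would, via the invertible substitution $u\mapsto 1/u$ (clearing denominators), transport to a nontrivial factorization of $\psi$. The coefficients are the same functions $A_0,\dots,A_\nu$, merely in reversed order, so they define the same meromorphic mapping $\mathscr A$ up to a permutation of the homogeneous coordinates on $\mathbb P^\nu(\mathbb C)$; in particular the indeterminacy locus $I_{\mathscr A}$, and hence the "no common zeros" condition, is unchanged. Thus $\phi(x,u)=0$ defines a $\nu$-valued algebroid function, which is precisely $u=1/w$, and by comparing roots at any point where all the $w_j$ are finite and nonzero — and then propagating by analytic continuation — its single-valued components are $1/w_1,\dots,1/w_\nu$ after relabeling.

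Finally I would apply Theorem \ref{pole} to the algebroid function $u$ defined by $\phi$. Since its leading coefficient is $A_0$, that theorem yields
$$(u=\infty)=\Big(\frac{1}{w_1}=\infty\Big)+\cdots+\Big(\frac{1}{w_\nu}=\infty\Big)=(A_0=0).$$
Now a zero of $w$ is a pole of $u$, and a zero of $w_j$ is a pole of $1/w_j$, so $(w=0)=(u=\infty)$ and $(w_j=0)=(1/w_j=\infty)$ as divisors; substituting these into the displayed identity gives
$$(w=0)=(w_1=0)+\cdots+(w_\nu=0)=(A_0=0),$$
as claimed. I do not expect any genuine obstacle here; the only point deserving explicit care is the verification in the second paragraph that $\phi$ inherits irreducibility and the coefficient condition of Definition \ref{Def}, which is what lets us invoke Theorem \ref{pole} for $u$ in the first place.
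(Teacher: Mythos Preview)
Your proposal is correct and follows exactly the approach the paper indicates: set $u=1/w$, observe that the reversed-coefficient equation $\phi(x,u)=A_0u^\nu+\cdots+A_\nu=0$ defines the algebroid function $u$ with leading coefficient $A_0$, and apply Theorem \ref{pole}. The paper's own proof is simply the one-line remark ``Similarly, we can show by setting $u=1/w$,'' so your write-up is in fact more detailed than the original in verifying that $\phi$ inherits irreducibility and the coefficient condition of Definition \ref{Def}.
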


      \section{Uniformization of Algebroid Functions without Construction} 

In this section,  we  provide   another approach  to realize   the uniformization of algebroid functions without constructing finite  ramified analytic coverings.  Our main  idea  
    originates  from   
      W. Stoll \cite{Stoll}. 
    
      \subsection{Algebroid Reduction}~

Let $M$ be a non-compact complex manifold of complex dimension $m.$
Let $\pi: N\to M$ be a  $\nu$-sheeted ramified analytic covering,   which  means  
a  proper,  surjective   holomorphic mapping  
with a ramification set $\mathscr B_\pi,$ i.e.,  
$$\pi: \  N\setminus\mathscr B_\pi\to M\setminus\pi(\mathscr B_\pi)$$
is a  proper,  surjective, local  biholomorphic mapping of  sheet number $\nu,$ and thus a $\nu$-sheeted  analytic covering. 
Here,  $\mathscr B_\pi$ is the set of those 
points $x\in N$ such that $\pi$ is not locally biholomorphic at $x.$
Note that $\mathscr B_\pi$ is   an empty set or 
an analytic set  of   pure complex codimension $1.$  
By R. Remmert \cite{Rem} (see  Andreotti-Stoll \cite{A-S} also),  the \emph{exceptional set} 
$$\mathscr E_\pi=\left\{x\in N:  \dim_{\mathbb C, x}\pi^{-1}(\pi(x))>0\right\}$$
is an analytic set satisfying that $\mathscr E_\pi\subseteq\mathscr B_\pi$ and $\dim_{\mathbb C}\pi(\mathscr E_\pi)\leq m-2.$ Moreover, 
$\dim_{\mathbb C}\pi(\mathscr B_\pi)\leq m-1$ and $\dim_{\mathbb C, x}\pi(\mathscr B_\pi)= m-1$ for $x\in \pi(\mathscr B_\pi)\setminus\pi(\mathscr E_\pi).$ 
 Let $x\in N\setminus \mathscr E_\pi.$ The \emph{mapping degree} $\lambda_\pi(x)$ of $\pi$ at $x$ is defined in such manner: if $U$ is a   neighborhood of $x$ such that $ \overline U\cap\pi^{-1}(\pi(x))=\{x\},$ then 
  $$\lambda_\pi(x)=\limsup_{y\rightarrow x}\#\Big(\overline U\cap\pi^{-1}(\pi(y))\Big),$$
 which is independent of the choice of $U.$ Note that $1\leq \lambda_\pi<\infty$ and $\lambda_\pi(x)=1$ if and only if $x\in N\setminus \mathscr B_\pi.$
 By W. Stoll \cite{Stoll},  for any $y\in M\setminus \mathscr \pi(\mathscr E_\pi),$  we have
 $$\sum_{x\in \pi^{-1}(y)}\lambda_\pi(x)=\nu.$$
\ \ \   Let $V$ be a complex vector space of complex dimension $d+1,$ with a basis $e_0, \cdots, e_d.$
 Let  $\mathbb P(V)=V^*/{\mathbb C^*}$ be the projection of $V,$    where  
  $\mathbb P:V^*\rightarrow \mathbb P(V)$ is the natural projection.
 Let 
 $\odot^\nu V$
 be the $\nu$-fold symmetric tensor product of $V,$  i.e., 
 $$\odot^\nu V={\rm{Span}}_{\mathbb C}\Big\{e_{j_0}\odot\cdots\odot e_{j_d}: \  0\leq j_0\leq\cdots\leq j_d\leq d\Big\},$$
 where 
  $$v_1\odot\cdots\odot v_\nu:=\frac{1}{\nu!}\sum_{\sigma\in S_\nu}v_{\sigma(1)}\otimes\cdots\otimes v_{\sigma(\nu)}$$
  is called  the symmetric tensor product of $v_1,\cdots,v_\nu\in V,$ and   
   $S_\nu$ stands for   the $\nu$-order symmetric  group. 
    Note  that $\odot^\nu V$ is a subspace  of $\nu$-fold tensor product space $\otimes^\nu V$ 
    with $\dim_{\mathbb C}\odot^\nu V=(d+\nu)!/d!\nu!.$ Again, put  
  $$\mathbb P(V)^{(\nu)}=\Big\{[v_1]\odot\cdots\odot [v_\nu]: \  v_1,\cdots, v_\nu\in V\Big\},$$
where $[v]=\mathbb P(v)$ for any $v\in V.$ Naturally,    $\mathbb P(V)^{(\nu)}$ is  embedded into $\mathbb P(\odot^\nu V)$ by the  mapping
$\imath: [v_1]\odot\cdots\odot[v_\nu]\mapsto [v_1\odot\cdots\odot v_\nu].$ Thus, we obtain 
\begin{equation}\label{po2}
\mathbb P(V)^{(\nu)}\subseteq \mathbb P(\odot^\nu V).
\end{equation}

 \begin{defi}\label{de1} Let $U$ be a nonempty  open subset of $M.$ Let $\vartheta:  \pi^{-1}(U)\rightarrow V$ be a continuous mapping.  Then, the algebroid reduction of $\vartheta$  via $\pi$ is a mapping $\vartheta_\pi: U\setminus \pi(\mathscr E_\pi)\rightarrow \odot^\nu V$ defined by 
 $$\vartheta_\pi(x)=\bigodot_{y\in\tilde x}\vartheta(y)^{\lambda_\pi(y)}, \ \ \ \  ^\forall x\in U\setminus \pi(\mathscr E_\pi).$$
 \end{defi} 

\begin{theorem}[Stoll, \cite{Stoll}]  $\vartheta_\pi$ is a continuous mapping. If $\vartheta$ is  holomorphic, then $\vartheta_\pi$ is also  holomorphic and extends to a holomorphic mapping $\vartheta_\pi: U\rightarrow \odot^\nu V.$
\end{theorem}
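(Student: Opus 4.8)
The plan is to study $\vartheta_\pi$ first on the part of $U$ lying over the unbranched locus, then to establish its continuity across $\pi(\mathscr B_\pi)\setminus\pi(\mathscr E_\pi)$, and finally — assuming $\vartheta$ holomorphic — to upgrade continuity to holomorphy by two successive applications of Riemann's extension theorem. Fix a basis $e_0,\dots,e_d$ of $V$, so that $\odot^\nu V$ carries an induced monomial basis; a map into $\odot^\nu V$ is holomorphic exactly when each of its finitely many coordinate functions in this basis is holomorphic. Over $U\setminus\pi(\mathscr B_\pi)$ the map $\pi$ is an unbranched $\nu$-sheeted covering, so each point $x_0$ there has a neighborhood $U_0$ evenly covered by $\pi$, say $\pi^{-1}(U_0)=\bigsqcup_{j=1}^\nu S_j$ with $\pi|_{S_j}\colon S_j\to U_0$ biholomorphic; writing $y_j\colon U_0\to S_j$ for the inverse and using $\lambda_\pi\equiv1$ on $\pi^{-1}(U_0)$, we get
$$\vartheta_\pi(x)=\vartheta(y_1(x))\odot\cdots\odot\vartheta(y_\nu(x)),\qquad x\in U_0.$$
Symmetry of $\odot$ makes the right-hand side independent of the labeling of the sheets, so it is a well-defined formula even though enlarging $U_0$ may permute the $S_j$. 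Since $(v_1,\dots,v_\nu)\mapsto v_1\odot\cdots\odot v_\nu$ is a symmetric multilinear (hence polynomial) map, $\vartheta_\pi$ is continuous on $U\setminus\pi(\mathscr B_\pi)$, and holomorphic there whenever $\vartheta$ is.

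I would next prove continuity at a point $x_0\in\pi(\mathscr B_\pi)\setminus\pi(\mathscr E_\pi)$. The fibre $\pi^{-1}(x_0)$ is compact and zero-dimensional, hence finite, say $\pi^{-1}(x_0)=\{p_1,\dots,p_k\}$ with $\sum_{i=1}^k\lambda_\pi(p_i)=\nu$. Choose pairwise disjoint open sets $W_i\ni p_i$ with $\overline{W_i}\cap\pi^{-1}(x_0)=\{p_i\}$; properness of $\pi$ furnishes a neighborhood $U_0\ni x_0$ with $\pi^{-1}(U_0)\subseteq\bigcup_i W_i$, and the definition of the mapping degree together with the identity $\sum_{x\in\pi^{-1}(y)}\lambda_\pi(x)=\nu$ shows that for $x\in U_0$ the points of $\pi^{-1}(x)\cap W_i$ have degrees summing to $\lambda_\pi(p_i)$ and all converge to $p_i$ as $x\to x_0$. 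Continuity of $\vartheta$ then gives
$$\bigodot_{y\in\pi^{-1}(x)\cap W_i}\vartheta(y)^{\lambda_\pi(y)}\ \longrightarrow\ \vartheta(p_i)^{\lambda_\pi(p_i)}\qquad(x\to x_0),$$
and multiplying these $k$ partial symmetric products over $i$ yields $\vartheta_\pi(x)\to\vartheta_\pi(x_0)$. Hence $\vartheta_\pi$ is continuous on all of $U\setminus\pi(\mathscr E_\pi)$, which is the first assertion.

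Now assume $\vartheta$ holomorphic. By Remmert's proper mapping theorem $\pi(\mathscr B_\pi)$ is an analytic subset of $M$, and on $U\setminus\pi(\mathscr E_\pi)$ it is a hypersurface; there $\vartheta_\pi$ is holomorphic off $\pi(\mathscr B_\pi)$ and, being continuous, locally bounded along it, so the (first) Riemann extension theorem produces a holomorphic extension, still denoted $\vartheta_\pi$, on $U\setminus\pi(\mathscr E_\pi)$. Finally $\pi(\mathscr E_\pi)$ is an analytic subset of $U$ of complex codimension at least $2$, so by the (second) Riemann extension theorem the holomorphic function $\vartheta_\pi$ on its complement extends holomorphically across it, producing the desired $\vartheta_\pi\colon U\to\odot^\nu V$. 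The main obstacle is the local description near the branch locus used in the continuity step: one must know that the cluster of preimages near each $p_i$ genuinely converges to $p_i$ and carries total degree $\lambda_\pi(p_i)$, which is exactly what the definition of $\lambda_\pi$ and Stoll's degree-sum identity guarantee; everything else is the symmetry of $\odot$ together with standard removable-singularity theory.
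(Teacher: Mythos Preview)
The paper does not supply a proof of this statement; it is quoted as a result of Stoll \cite{Stoll} and left without argument. Your proposal is a correct and essentially standard proof: local triviality on the unbranched locus gives holomorphy there, the properness of $\pi$ together with the degree-sum identity yields continuity across $\pi(\mathscr B_\pi)\setminus\pi(\mathscr E_\pi)$, and then two applications of Riemann extension (first across the codimension-one set $\pi(\mathscr B_\pi)$ using local boundedness, then across the codimension-$\geq 2$ set $\pi(\mathscr E_\pi)$) finish the holomorphic case. This is in the spirit of Stoll's original argument, so there is nothing to compare beyond noting that the paper itself defers to the reference.
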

 
  \begin{defi}    Let $f: N\rightarrow \mathbb P(V)$ be a meromorphic mapping with indeterminacy locus  $I_f.$  Then, the  algebroid reduction of $f$ via $\pi$ is a meromorphic mapping $f_\pi:  M\rightarrow \mathbb P(\odot^\nu V)$ with 
 indeterminacy locus  $I_{f_\pi}\subseteq \pi(I_f)\cup \pi(\mathscr E_\pi),$ defined by 
 $$f_\pi(x)=\bigodot_{y\in\pi^{-1}(x)}[f(y)]^{\lambda_\pi(y)}, \ \ \ \ ^\forall x\in M\setminus I_{f_\pi}.$$
 \end{defi} 

   \begin{theorem}[Stoll, \cite{Stoll}]\label{S3}    Let $f: N\rightarrow \mathbb P(V)$ be a meromorphic mapping with indeterminacy locus $I_f.$  Let $f_\pi:  M\rightarrow \mathbb P(\odot^\nu V)$ be the algebroid reduction of $f$ via $\pi.$  
     Fix any $x_0\in M\setminus \pi(\mathscr E_\pi).$ Assume that  there exist  a nonempty  neighborhood $U$ of $x_0$ such that $U\subseteq M\setminus \pi(\mathscr E_\pi)$ and  a reduced representation 
   $\vartheta:\pi^{-1}(U)\rightarrow V.$ If $\vartheta$ is a representation of $f$ on $\pi^{-1}(U),$ then the algebroid reduction  $\vartheta_\pi: U\rightarrow \odot^\nu V$ of $\vartheta$ via $\pi$  is a representation of $f_\pi.$ If $\vartheta$ is reduced, then $\vartheta_\pi$ is also reduced. 
 \end{theorem}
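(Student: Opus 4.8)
The plan is to reduce the statement to two structural facts: the full symmetric tensor algebra of $V$ is an integral domain (isomorphic to a polynomial ring in $\dim_{\mathbb C}V$ variables, hence without zero divisors), and $\pi$ restricted over $U$ is a proper holomorphic map with only finite fibres, since $U\cap\pi(\mathscr E_\pi)=\emptyset$. First I would invoke the preceding theorem of Stoll to get that $\vartheta_\pi\colon U\to\odot^\nu V$ is holomorphic, and observe that on all of $U$ (here $U\setminus\pi(\mathscr E_\pi)=U$) it is given by the pointwise formula $\vartheta_\pi(x)=\bigodot_{y\in\pi^{-1}(x)}\vartheta(y)^{\lambda_\pi(y)}$, the exponents summing to $\nu$ because $\sum_{y\in\pi^{-1}(x)}\lambda_\pi(y)=\nu$. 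Thus $\mathbb P\circ\vartheta_\pi$ is a well-defined meromorphic map $U\to\mathbb P(\odot^\nu V)$, and it remains to identify it with $f_\pi$ and to control its zero set.

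For the first assertion, set $Z=\vartheta^{-1}(0)\subseteq\pi^{-1}(U)$. Since $\vartheta$ represents $f$, the map $f$ is holomorphic off $Z$, so $I_f\cap\pi^{-1}(U)\subseteq Z$; consequently $I_{f_\pi}\cap U\subseteq\pi(I_f)\subseteq\pi(Z)$, a thin analytic subset of $U$. On the dense open set $U\setminus\pi(Z)$ every point $y$ of every fibre $\pi^{-1}(x)$ has $\vartheta(y)\neq0$, whence $f(y)=[\vartheta(y)]$, and unwinding the definition of $f_\pi$ together with the Segre-type embedding $(\ref{po2})$ gives, with $y$ ranging over $\pi^{-1}(x)$,
\begin{equation*}
f_\pi(x)=\bigodot_{y}[f(y)]^{\lambda_\pi(y)}=\bigodot_{y}[\vartheta(y)]^{\lambda_\pi(y)}=\Big[\bigodot_{y}\vartheta(y)^{\lambda_\pi(y)}\Big]=\big[\vartheta_\pi(x)\big].
\end{equation*}
Hence $\mathbb P\circ\vartheta_\pi$ and $f_\pi$ are meromorphic maps on $U$ agreeing on the dense open subset $U\setminus\pi(Z)$, so they coincide by the identity theorem for meromorphic maps, which is exactly the statement that $\vartheta_\pi$ is a representation of $f_\pi$.

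For the second assertion, suppose $\vartheta$ is reduced, so that $Z$ is empty or $\dim_{\mathbb C}Z\le m-2$. Because the symmetric tensor algebra has no zero divisors, a symmetric product vanishes if and only if one of its factors does, and $\vartheta(y)^{\lambda_\pi(y)}=0$ iff $\vartheta(y)=0$; applying this to the pointwise formula yields $\vartheta_\pi^{-1}(0)=\pi(Z)$. Since $\pi|_{\pi^{-1}(U)}$ is proper with finite fibres, Remmert's proper mapping theorem shows $\pi(Z)$ is analytic with $\dim_{\mathbb C}\pi(Z)=\dim_{\mathbb C}Z\le m-2$ (or $\pi(Z)=\emptyset$). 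Thus $\vartheta_\pi^{-1}(0)$ has codimension at least two and $\vartheta_\pi$ is reduced.

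The step I expect to be the main obstacle is the behaviour at the non-generic points of $U$, namely along the branch locus $\pi(\mathscr B_\pi)\cap U$, where a fibre has fewer than $\nu$ points counted with the multiplicities $\lambda_\pi$: one must be certain that the holomorphic extension of $\vartheta_\pi$ furnished by Stoll's theorem is exactly the fibrewise symmetric product there and acquires no extra vanishing. This is precisely where the hypothesis $U\subseteq M\setminus\pi(\mathscr E_\pi)$ does the work — it rules out positive-dimensional fibres, so the extension is literally $\bigodot_{y\in\pi^{-1}(x)}\vartheta(y)^{\lambda_\pi(y)}$ at every point of $U$ and the integral-domain argument applies verbatim; dropping it would force a separate analysis of limits along the exceptional fibres.
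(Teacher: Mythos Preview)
Your argument is essentially correct, but there is nothing to compare it to: the paper does not prove Theorem~\ref{S3}. It is stated with the attribution ``[Stoll, \cite{Stoll}]'' and used as an imported result, with no proof given in the text. So the comparison you were asked to make is vacuous here.

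That said, a brief assessment of your proof on its own merits. The first part is clean: since $U\subseteq M\setminus\pi(\mathscr E_\pi)$, Definition~\ref{de1} gives $\vartheta_\pi$ by the explicit fibrewise formula on all of $U$, the preceding Stoll theorem supplies holomorphy, and on $U\setminus\pi(Z)$ the identity $[\vartheta_\pi(x)]=f_\pi(x)$ is a direct computation; the identity principle for meromorphic maps finishes it. The second part is also sound: the identification of the symmetric algebra $\bigoplus_{k\ge 0}\odot^k V$ with a polynomial ring gives the no-zero-divisor fact, so $\vartheta_\pi^{-1}(0)=\pi(Z)$ exactly; properness of $\pi|_{\pi^{-1}(U)}$ follows from properness of $\pi$ by restriction to a saturated open set, and finiteness of the fibres over $U$ together with Remmert's theorem gives $\dim_{\mathbb C}\pi(Z)\le\dim_{\mathbb C}Z\le m-2$. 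Your closing paragraph correctly identifies why the hypothesis $U\cap\pi(\mathscr E_\pi)=\emptyset$ is essential and why no separate limit analysis along branch fibres is needed.

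One minor wording issue: you write ``$\mathbb P\circ\vartheta_\pi$ is a well-defined meromorphic map'', but strictly speaking you should say $\vartheta_\pi$ is a holomorphic map into $\odot^\nu V$ which, once you know its zero set is thin, induces a meromorphic map to $\mathbb P(\odot^\nu V)$; the well-definedness is exactly what you establish afterwards, so the sentence is slightly premature. This is cosmetic and does not affect the logic.
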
 
According to Theorem \ref{S3}, we obtain  the  following 
   commutative diagram:
    $$ \xymatrix{
V\ar[d]_{\mathbb P} &\pi^{-1}(U)\ar[l]_{\vartheta} \ar@{_(->}[d]_{\rotatebox{90}{$\supseteq$}} \ar[r]^{\pi}  & U\ar@{^(->}[d]^{\rotatebox{270}{$\subseteq$}} \ar[r]^{\vartheta_\pi} & \odot^\nu V 
\ar[d]^{\mathbb P}  \\
\mathbb P(V) & N\ar[l]^{f} \ar[r]_{\pi} & M\ar[r]_{f_\pi}  &  \mathbb P(\odot^\nu V)
}
$$ 
   
  \begin{theorem}[Stoll, \cite{Stoll}]\label{S2}  Let 
 $g: M\rightarrow \mathbb P(V)^{(\nu)}$ $(\nu\geq2)$ be a  differentiably non-degenerate  and irreducible meromorphic mapping. Then,  there exist a non-compact  complex manifold $N$ of 
 complex dimension $m,$ a proper surjective holomorphic mapping $\pi: N\rightarrow M$ of  generic sheet number $\nu,$ and a holomorphic  mapping $f: N\rightarrow\mathbb P(V)$ which separates the fibers of $\pi$ such that $g$ is the algebroid reduction of $f$ via $\pi.$ 
  \end{theorem}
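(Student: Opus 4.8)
The plan is to reverse the algebroid-reduction construction. Regard a point $\xi=[v_1]\odot\cdots\odot[v_\nu]$ of $\mathbb{P}(V)^{(\nu)}=\mathrm{Sym}^\nu\mathbb{P}(V)$ as the effective $0$-cycle $[v_1]+\cdots+[v_\nu]$ of degree $\nu$ on $\mathbb{P}(V)$, so that the meromorphic map $g$ assigns to a generic $x\in M$ such a cycle $\xi(x)$; then $N$ should be the total space of the family of supports $x\mapsto\mathrm{Supp}\,\xi(x)$, with $\pi$ the projection onto $x$ and $f$ the projection onto $\mathbb{P}(V)$. To make this precise, introduce the tautological incidence set
$$
\mathcal I=\big\{(\xi,[v])\in\mathbb{P}(V)^{(\nu)}\times\mathbb{P}(V):\ [v]\in\mathrm{Supp}\,\xi\big\},
$$
a closed irreducible analytic subset whose first projection $p_1:\mathcal I\to\mathbb{P}(V)^{(\nu)}$ is a proper surjective holomorphic map of generic sheet number $\nu$, the ramification occurring over the discriminant locus of non-reduced cycles. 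Writing $I_g$ for the indeterminacy locus of $g$, an analytic set of codimension $\ge2$, and $M'=M\setminus I_g$, I would first form the fiber product $N'=M'\times_{\mathbb{P}(V)^{(\nu)}}\mathcal I$, an analytic subset of $M'\times\mathbb{P}(V)$ of pure dimension $m$, whose two projections give holomorphic maps $\pi':N'\to M'$ and $f':N'\to\mathbb{P}(V)$; equivalently, on a small $U$ one picks a reduced representation $\mathfrak g:U\to\odot^\nu V$ of $g$ as in Theorem \ref{S3}, writes $\mathfrak g=v_1\odot\cdots\odot v_\nu$ with $v_1,\dots,v_\nu$ multivalued holomorphic \emph{roots}, and glues the local root-graphs.

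Next I would promote $N'$ to the required $N$ and verify the stated properties. Let $N$ be the normalization of the closure $\overline{N'}$ of $N'$ inside $M\times\mathbb{P}(V)$; this closure is analytic: since $\pi'$ is a branched covering, $N'$ has locally finite volume near $I_g\times\mathbb{P}(V)$, and Stoll's theory of analytic coverings \cite{Stoll} extends it across. Then $\pi:N\to M$, the composition of the normalization map with the first projection, is proper and surjective of generic sheet number $\nu$, and $f:N\to\mathbb{P}(V)$ is the map induced by the second projection. Since $\mathbb{P}(V)$ is compact, $f$ is locally bounded, hence extends holomorphically across the normalized ramification and exceptional sets by Riemann's extension theorem; after deleting from $N$ the singular part, which sits over an analytic subset of $M$ of codimension $\ge2$ exactly as in Theorem \ref{cover}, the complex structure extends and $N$ is the asserted non-compact complex manifold. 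That $f$ separates the fibers of $\pi$ — for generic $x$ the points of $\pi^{-1}(x)$ are carried by $f$ bijectively onto the $\nu$ distinct points of $\mathrm{Supp}\,\xi(x)$ — is immediate from the definition of $\mathcal I$ away from the discriminant locus; here the differentiable non-degeneracy and irreducibility of $g$ enter to guarantee that $g(M)$ is not contained in the discriminant locus, so that the generic cycle is reduced and $\pi$ is genuinely $\nu$-sheeted, and that $N$ is connected — a splitting of $N$ into covering pieces of degrees $\nu_1,\nu_2$ would factor $g$ as a symmetric product $g_1\odot g_2$ with $g_i:M\to\mathbb{P}(V)^{(\nu_i)}$, which the irreducibility hypothesis forbids.

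Finally I would identify $g$ with the algebroid reduction $f_\pi$ of $f$ via $\pi$. Fix $x\in M'$ lying outside the discriminant preimage and outside $\pi(\mathscr E_\pi)$; then $\pi^{-1}(x)=\{y_1,\dots,y_\nu\}$ consists of $\nu$ distinct points with mapping degrees $\lambda_\pi(y_i)=1$, and $\{f(y_1),\dots,f(y_\nu)\}=\mathrm{Supp}\,\xi(x)=\{[v_1],\dots,[v_\nu]\}$, so that by Definition \ref{de1}
$$
f_\pi(x)=\bigodot_{i=1}^{\nu}[f(y_i)]^{\lambda_\pi(y_i)}=[v_1]\odot\cdots\odot[v_\nu]=g(x).
$$
Since this holds on a dense open set and $f_\pi,g$ are both meromorphic on $M$, the identity theorem gives $f_\pi\equiv g$, and Theorem \ref{S3} then shows that the local reduced representations of $f$ reduce to those of $g$, so $g$ is genuinely the algebroid reduction of $f$ via $\pi$. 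The step I expect to be the main obstacle is the global regularity of $N$: the closed-up fiber product is a priori only a reduced complex space, so producing an honest complex manifold demands a careful local analysis precisely where the discriminant preimage, the indeterminacy locus $I_g$, and the exceptional set $\mathscr E_\pi$ meet — this is where Stoll's structure theory of branched coverings and the codimension bounds $\dim_{\mathbb C}\pi(\mathscr E_\pi)\le m-2$ and $\dim_{\mathbb C}\pi(\mathscr B_\pi)\le m-1$ do the real work, and where the non-degeneracy of $g$ is genuinely needed.
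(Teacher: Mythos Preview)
The paper does not prove this theorem at all: Theorem~\ref{S2} is stated with the attribution ``[Stoll, \cite{Stoll}]'' and is immediately followed by the next subsection, with no proof environment in between. It is quoted from Stoll's 1987 Lecture Notes article as a black box, exactly like Theorems~\ref{S3} above it, and is then used (after the specialization $\dim_{\mathbb C}V=2$ in Lemma~\ref{SS3}) to deduce the uniformization Theorem~\ref{D1}. So there is no ``paper's own proof'' to compare your proposal against.

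For what it is worth, your sketch is the standard route to Stoll's result: build the incidence variety $\mathcal I\subseteq\mathbb P(V)^{(\nu)}\times\mathbb P(V)$, pull it back along $g$ to get a branched cover of $M'$, close up and normalize. Your identification of the roles of irreducibility (connectedness of $N$) and differentiable non-degeneracy (the image not lying in the discriminant) is correct, and your caveat about the regularity of $N$ over the bad loci is exactly where the technical work in \cite{Stoll} sits. But none of this appears in the present paper; if you want to check your argument line by line you must go to Stoll's original article.
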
  
  
  \subsection{Uniformization}~

In what follows, we consider the case when $\dim_{\mathbb C}V=2.$ For  convenience,  we write $v^{\odot k}=v^{k}$  and $[v]^{\odot k}=[v]^{k}$ for short.
Take a basis $\{e_0, e_1\}$ of $V.$ 
For  any $h\in \odot^\nu V,$  there is  a unique  representation 
\begin{equation}\label{eee}
h=\sum_{j=0}^\nu a_je_0^{\nu-j}\odot e^j_1
\end{equation}
for  certain    $a_0,\cdots,a_\nu\in\mathbb C.$
It induces  a natural  isomorphism $\imath:  \odot^\nu V\rightarrow \mathbb C^{\nu+1}$ defined  by 
$$\imath: \ \sum_{j=0}^\nu a_je_0^{\nu-j}\odot e^j_1\mapsto (a_0,\cdots, a_n),$$
which follows   that  
$\mathbb P(\odot^\nu V)\cong \mathbb P^{\nu}(\mathbb C).
$ If  $h\not=0,$ then  (\ref{eee}) defines an algebraic equation
\begin{equation}\label{equa1}
a_\nu w^\nu+a_{\nu-1}w^{\nu-1}+\cdots+a_0=0.
\end{equation}
Let $k$ be the maximal integer in $\{0, \cdots, \nu\}$ such that $A_k\not=0.$ Then, equation (\ref{equa1}) reduces to 
$a_k w^k+a_{k-1}w^{k-1}+\cdots+a_0=0.$
 Let $w_1,\cdots, w_k$ be  the  roots of  the  equation.  Applying Vieta's theorem, it is immediate that  
$$h=a_ke_0^{\nu-k}\odot(e_1-w_1e_0)\odot\cdots\odot(e_1-w_ke_0).$$
So,  $[h]$ is decomposable, i.e., 
\begin{equation*}
[h]=[e_0]^{\nu-k}\odot[e_1-w_1e_0]\odot\cdots\odot[e_1-w_ke_0]\in \mathbb P(V)^{(\nu)}.
\end{equation*}
By this with (\ref{po2}),  we have  
$\mathbb P(\odot^\nu V)\cong\mathbb P(V)^{(\nu)}.$  Hence,  we obtain 
\begin{equation}\label{proj}
\mathbb P(\odot^\nu V)\cong \mathbb P^{\nu}(\mathbb C)\cong \mathbb P(V)^{(\nu)}.
\end{equation}

  \begin{lemma}\label{SS3}  Let 
 $g: M\rightarrow \mathbb P^\nu(\mathbb C)$ be a  differentiably non-degenerate  and irreducible meromorphic mapping. Then,  there exist a non-compact  complex manifold $N$ of 
 complex dimension $m,$ a proper surjective holomorphic mapping $\pi: N\rightarrow M$ of  generic sheet number $\nu,$ and a meromorphic function $f: N\rightarrow\mathbb P^1(\mathbb C)$ which separates the fibers of $\pi$ such that $g$ is the algebroid reduction of $f$ via $\pi.$ 
  \end{lemma}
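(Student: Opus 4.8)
The plan is to deduce Lemma~\ref{SS3} from Theorem~\ref{S2} by specializing to $\dim_{\CC}V=2$ and then transporting all the projective-space identifications through the isomorphisms collected in $(\ref{proj})$. First I would invoke $(\ref{proj})$ to rewrite the hypothesis: a differentiably non-degenerate, irreducible meromorphic mapping $g:M\to\PP^\nu(\CC)$ is, under the isomorphism $\PP^\nu(\CC)\cong\PP(\odot^\nu V)\cong\PP(V)^{(\nu)}$, the same data as a differentiably non-degenerate, irreducible meromorphic mapping $g:M\to\PP(V)^{(\nu)}$. One must check that ``differentiably non-degenerate'' and ``irreducible'' are preserved under this biholomorphism of target spaces — this is essentially the remark made just before the lemma that the decomposition $[h]=[e_0]^{\nu-k}\odot[e_1-w_1e_0]\odot\cdots\odot[e_1-w_ke_0]$ shows $\PP(\odot^\nu V)\cong\PP(V)^{(\nu)}$, so the image of $g$ genuinely lands in $\PP(V)^{(\nu)}$, and both properties are intrinsic to the map and are unaffected by a fixed biholomorphism on the target.

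Once $g$ is viewed as a map into $\PP(V)^{(\nu)}$ with $\nu\geq2$, Theorem~\ref{S2} applies verbatim: it furnishes a non-compact complex manifold $N$ of complex dimension $m$, a proper surjective holomorphic map $\pi:N\to M$ of generic sheet number $\nu$, and a holomorphic map $f:N\to\PP(V)$ separating the fibers of $\pi$, with $g$ the algebroid reduction of $f$ via $\pi$. The only remaining point is to rename $f$: since $\dim_\CC V=2$ we have $\PP(V)\cong\PP^1(\CC)$ by the same $\imath:\odot^\nu V\to\CC^{\nu+1}$ (here in the degenerate case $\nu=1$), so the holomorphic map $f:N\to\PP(V)$ is precisely a meromorphic function $f:N\to\PP^1(\CC)$, and ``separates the fibers of $\pi$'' is the same condition before and after this identification. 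The compatibility of the algebroid reduction with these identifications is exactly the content of Theorem~\ref{S3} and the commutative diagram following it, so $g$ remains the algebroid reduction of $f$ via $\pi$ after the renaming.

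I do not expect a serious obstacle here; the lemma is a transcription of Theorem~\ref{S2} to the rank-two case. The one place that deserves care — and the step I would write out most carefully — is verifying that the chain of isomorphisms in $(\ref{proj})$ is natural enough that ``irreducible'' and ``differentiably non-degenerate'' really do transfer, and in particular that the image $g(M)$ lies in $\PP(V)^{(\nu)}$ rather than merely in $\PP(\odot^\nu V)$; but this is immediate from $\PP(\odot^\nu V)\cong\PP(V)^{(\nu)}$ established above via the decomposability of every $[h]\in\PP(\odot^\nu V)$ when $\dim_\CC V=2$. All other steps are bookkeeping with the definitions of algebroid reduction and of fiber separation.
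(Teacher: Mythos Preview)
Your proposal is correct and follows essentially the same approach as the paper: set $V=\mathbb C^2$, invoke the isomorphisms in $(\ref{proj})$ to identify $\mathbb P^\nu(\mathbb C)\cong\mathbb P(V)^{(\nu)}$, and apply Stoll's Theorem~\ref{S2}. The paper's own proof is a single line (and in fact cites Theorem~\ref{S3}, apparently a typo for Theorem~\ref{S2}); your write-up is a careful unpacking of exactly that line, with the added attention to why the hypotheses and the notion of algebroid reduction survive the identifications.
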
  
\begin{proof}
Setting  $V=\mathbb C^2$ in Theorem \ref{S3},  then we have  the lemma  proved   due to  (\ref{proj}). 
\end{proof}

 \begin{theorem}[Uniformization]\label{D1} Let $w: M\to\mathbb P^1(\mathbb C)$ be an algebroid function.
 Then, there exist a non-compact  complex manifold $N$ of  complex dimension  $m,$  a proper surjective holomorphic mapping $\pi: N\rightarrow M$ of  generic sheet number $\nu,$ and a meromorphic function $f: N\to \mathbb P^1(\mathbb C)$  such that $w=f\circ\pi^{-1}.$
    \end{theorem}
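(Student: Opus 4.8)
The plan is to deduce Theorem \ref{D1} from Lemma \ref{SS3} together with the observations of the preceding subsection. The point is that an algebroid function $w$ on $M$ is, by Definition \ref{Def}, given by an irreducible equation $A_\nu w^\nu+\cdots+A_0=0$ whose coefficients define a meromorphic mapping $\mathscr A=[A_0:\cdots:A_\nu]:M\to\mathbb P^\nu(\mathbb C)$. Under the identifications $(\ref{proj})$, i.e. $\mathbb P^\nu(\mathbb C)\cong\mathbb P(\odot^\nu V)\cong\mathbb P(V)^{(\nu)}$ with $V=\mathbb C^2$, this $\mathscr A$ is precisely a meromorphic mapping $g:M\to\mathbb P(V)^{(\nu)}$, and the fact that $(\ref{eq})$ is irreducible translates exactly into $g$ being irreducible in the sense of Stoll (the symmetric product $[e_0]^{\nu-\nu}\odot[e_1-w_1e_0]\odot\cdots\odot[e_1-w_\nu e_0]$ does not factor through a lower symmetric power on any open set). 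So the first step is to record that $w$ furnishes the data of Lemma \ref{SS3}.

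First I would verify the differentiable non-degeneracy hypothesis of Lemma \ref{SS3} for $g=\mathscr A$. If $w$ is genuinely $\nu$-valued and nonconstant, the branch set $\mathscr B$ is a proper analytic subset and the $\nu$ local single-valued components $w_1,\dots,w_\nu$ are distinct and move, so the image of $g$ cannot lie in any proper analytic subset of $\mathbb P^\nu(\mathbb C)$; this is where the irreducibility of $(\ref{eq})$ is used again. (If $g$ were degenerate one would simply be in a lower-dimensional situation, or $w$ would reduce to a meromorphic function, and the statement is trivial with $N=M$, $\pi=\mathrm{id}$.) Then I would apply Lemma \ref{SS3} directly: it produces a non-compact complex manifold $N$ of dimension $m$, a proper surjective holomorphic $\pi:N\to M$ of generic sheet number $\nu$, and a meromorphic function $f:N\to\mathbb P^1(\mathbb C)$ separating the fibers of $\pi$, such that $g$ is the algebroid reduction of $f$ via $\pi$, i.e. $g(x)=\bigodot_{y\in\pi^{-1}(x)}[f(y)]^{\lambda_\pi(y)}$ off $\pi(\mathscr E_\pi)$.

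The last step is to unwind the relation $w=f\circ\pi^{-1}$ from the reduction identity. Over a point $x\in M\setminus(\mathscr B\cup\pi(\mathscr E_\pi))$ the fiber $\pi^{-1}(x)$ consists of $\nu$ points $y_1,\dots,y_\nu$, each with $\lambda_\pi=1$, and the reduction identity says that the unordered $\nu$-tuple $\{[f(y_1)],\dots,[f(y_\nu)]\}\in\mathbb P(V)^{(\nu)}$ equals $g(x)$, which under $(\ref{proj})$ is exactly the unordered set of roots $\{w_1(x),\dots,w_\nu(x)\}$ of $(\ref{equa1})$ with coefficients $\mathscr A(x)$ — that is, the $\nu$ values of $w$ at $x$. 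Hence the multivalued assignment $x\mapsto\{f(y):y\in\pi^{-1}(x)\}$ coincides with $w$, i.e. $w=f\circ\pi^{-1}$; the identity then propagates to all of $M$ by continuity/analytic continuation since both sides are algebroid (respectively meromorphic after pullback) and agree off an analytic set.

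The only real obstacle is the non-degeneracy/irreducibility bookkeeping needed to legitimately invoke Lemma \ref{SS3}: one must make sure that the meromorphic mapping $\mathscr A$ attached to $w$ is both differentiably non-degenerate and irreducible as a map into $\mathbb P(V)^{(\nu)}$, handling separately the degenerate edge cases (e.g. when $w$ is really $k$-valued for some $k<\nu$, or already single-valued), in each of which the theorem holds trivially or by induction on $\nu$. Everything after that is a direct quotation of Lemma \ref{SS3} plus the elementary dictionary $(\ref{proj})$ between symmetric products and monic polynomials, so no substantial new computation is required.
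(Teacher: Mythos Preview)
Your proposal is correct and follows essentially the same route as the paper: identify $\mathscr A$ with a map into $\mathbb P(V)^{(\nu)}$ via $(\ref{proj})$, invoke Lemma~\ref{SS3}, and then read off $w=f\circ\pi^{-1}$ by comparing the algebroid reduction $\bigodot_{y\in\pi^{-1}(x)}[f(y)]^{\lambda_\pi(y)}$ with the Vieta factorisation $[w_1]\odot\cdots\odot[w_\nu]$ of $\mathscr A$. The paper simply asserts the non-degeneracy and irreducibility of $\mathscr A$ without the edge-case discussion you sketch, so your treatment is if anything slightly more careful on that point.
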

\begin{proof} Let $w=\{w_j\}_{j=1}^\nu$ be defined by  equation (\ref{eq}). It  gives   a differentiably non-degenerate,  irreducible  meromorphic mapping 
$$\mathscr A=[A_0: \cdots : A_\nu]: \  M\to\mathbb P^{\nu}(\mathbb C).$$
 Set $V=\mathbb C^2$ with a basis  $e_0, e_1.$ By (\ref{proj}),   we  can write
$$\mathscr A= \bigg[\sum_{j=0}^\nu A_j e_0^{\nu-j}\odot e_1^j\bigg]$$
 in the sense of isomorphisms. 
In further,  the Vieta's theorem  leads to 
  \begin{eqnarray*}
\mathscr A&=& [ A_\nu(e_1-w_1e_0)\odot\cdots\odot [(e_1-w_\nu e_0)] \\
&=& [(e_1-w_1e_0)]\odot\cdots\odot [(e_1-w_\nu e_0)] \\
&=& [w_1]\odot\cdots\odot[w_\nu]
  \end{eqnarray*}
 in the sense of isomorphisms. 
On the other hand,   Lemma \ref{SS3}  says that  there exist a non-compact complex manifold $N$  of complex dimension $m,$  a proper surjective holomorphic mapping $\pi: N\rightarrow M$ of generic sheet number $\nu,$ and a holomorphic mapping $f: N\rightarrow\mathbb P^1(\mathbb C)$ which separates the fibers of $\pi$ such that $\mathscr A$ is the algebroid reduction of $f$ via $\pi.$ 
Hence, we have
 $$\mathscr A(x)=\bigodot_{y\in\pi^{-1}(x)}[f(y)]^{\lambda_\pi(y)}, \ \ \ \  ^\forall x\in M\setminus \pi(\mathscr E_\pi)$$
with
 $$\sum_{y\in\pi^{-1}(x)}\lambda_\pi(y)=\nu.$$
Let $y_1,\cdots,y_\nu$ be  the $\nu$  $x$-valued points of $\pi.$ Then 
  \begin{eqnarray*}
\mathscr A(x)=[f(y_1)]\odot\cdots\odot [f(y_\nu)] 
= [w_1]\odot\cdots\odot[w_\nu].
  \end{eqnarray*}
Combining  $w(x)=\{w_j(x)\}_{j=1}^\nu$
with 
  $f\circ\pi^{-1}(x)=\{f(y_j)\}_{j=1}^\nu,$  
then we prove  the theorem. 
\end{proof}

\section{Nevanlinna Theory of Meromorphic Functions}\label{sec40}

  Let $(M, g)$ be a complete non-compact  K\"ahler manifold of $\dim_{\mathbb C}M=m,$ with Laplace-Beltrami operator $\Delta.$ 
 The  K\"ahler  form of  $M$ is defined by  
$$\alpha=\frac{\sqrt{-1}}{\pi}\sum_{i,j=1}^mg_{i\bar{j}}dz_i\wedge d\bar{z}_j$$ 
in  a local holomorphic coordinate $z=(z_1,\cdots,z_m).$ Moreover,  the Ricci form of $M$  is defined by  $$\mathscr R=-dd^c\log\det(g_{i\bar j}).$$ 

\subsection{First Main Theorem}~\label{sec41}

   Let   $\Omega$ be any   precompact domain with smooth boundary $\partial\Omega$ in $M.$ Fix a  point $o\in \Omega$ and denote by $g_{\Omega}(o, x)$ the  Green function of $\Delta/2$ for $\Omega$ with a pole at $o$ satisfying Dirichlet boundary condition, i.e., 
  $$-\frac{1}{2}\Delta g_{\Omega}(o, x)=\delta_o(x), \ \ \ \  ^\forall x\in\Omega; \ \ \ \  \  g_{\Omega}(o, x)=0, \ \ \ \ ^\forall x\in\partial\Omega,$$
  where $\delta_o$ is the Dirac's delta function  on $M$ with a pole at $o.$  The harmonic measure $\pi_{\partial\Omega}$ on $\partial\Omega$ with respect to $o$ is defined by 
$$d\pi_{\partial\Omega}(x)=\frac{1}{2}\frac{\partial  g_{\Omega}(o,x)}{\partial \vec \nu}d\sigma_{\partial\Omega}(x),$$
where $d\sigma_{\partial\Omega}$ is the Riemannian volume element of $\partial\Omega$ and $\partial/\partial \vec{\nu}$ is the inward normal derivative on $\partial\Omega.$  
  
Next, we introduce   Nevanlinna's functions.   Let $f=[f_0: f_1]: M\to\mathbb P^1(\mathbb C)$ be a meromorphic function. 
Denote by $\omega_{FS}=dd^c\log\|w\|^2$ the Fubini-Study form on $\mathbb P^1(\mathbb C),$ where  $w=[w_0: w_1]$ is the homogeneous coordinate of $\mathbb P^1(\mathbb C)$ with $\|w\|^2=|w_0|^2+|w_1|^2$ and   $dd^c=(\sqrt{-1}/2\pi)\partial\overline{\partial}$ with 
$$d=\partial+\overline{\partial}, \ \  \  \ d^c=\frac{\sqrt{-1}}{4\pi}(\overline{\partial}-\partial).$$ 
The \emph{characteristic function} of $f$ on $\Omega$ with respect to $\omega_{FS}$ is defined by
  \begin{eqnarray*}
  T_f(\Omega, \omega_{FS})&=& \frac{\pi^m}{(m-1)!}\int_{\Omega}g_{\Omega}(o,x)f^*\omega_{FS}\wedge\alpha^{m-1} 
    \end{eqnarray*}
  Let $a=[a_0:a_1]\in\mathbb P^1(\mathbb C).$  The \emph{proximity  function} of $f$ on $\Omega$ with respect to $a$ is defined by
$$  m_f(\Omega, a)=\int_{\partial\Omega}\log\frac{1}{\|f, a\|}d\pi_{\partial\Omega},
$$
  where  $\|\cdot  , \cdot \|$ denotes  the spherical distance on $\mathbb P^1(\mathbb C)$ defined by
 $$\|f, a\|=\frac{|\langle f; a \rangle|}{\|a\|\|f\|}=\frac{|\langle f; a \rangle|}{\sqrt{|a_0|^2+|a_1|^2}\sqrt{|f_0|^2+|f_1|^2}}$$
 with
 $$\langle f; a \rangle=a_0f_1-a_1f_0.$$
   The \emph{counting  function} of $f$ on $\Omega$ with respect to $a$ is defined by
 \begin{eqnarray*}
  N_f(\Omega, a)&=&\frac{\pi^m}{(m-1)!}\int_{f^*a\cap\Omega}g_{\Omega}(o,x)\alpha^{m-1}. 
    \end{eqnarray*}
  \ \ \ \   We need the following Dynkin formula (see, e.g., \cite{at3, Dong, DY}). 
    \begin{lemma}[Dynkin Formula]\label{dynkin} Let $u$ be a $\mathscr C^2$-class function on  $M$ outside a polar set of singularities at most. Assume that $u(o)\not=\infty.$  Then
$$\int_{\partial \Omega}u(x)d\pi_{\partial \Omega}(x)-u(o)=\frac{1}{2}\int_{\Omega}g_\Omega(o,x)\Delta u(x)dv(x).$$
\end{lemma}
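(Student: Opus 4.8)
The identity is nothing but Green's representation formula for the operator $\Delta/2$ evaluated at the pole $o$; equivalently it is the classical Dynkin formula applied to Brownian motion on $(M,g)$, whose infinitesimal generator is $\Delta/2$ — the source of the name. I would derive it analytically from Green's second identity, using the two defining properties of $g:=g_\Omega(o,\cdot)$, namely $-\tfrac12\Delta g=\delta_o$ in $\Omega$ and $g|_{\partial\Omega}=0$, together with the definition $d\pi_{\partial\Omega}=\tfrac12(\partial g/\partial\vec\nu)\,d\sigma_{\partial\Omega}$.

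First I would excise the singularities. Let $P$ be the polar set off which $u$ is $\mathscr C^2$; for small $\varepsilon>0$ let $B_\varepsilon$ be the geodesic ball of radius $\varepsilon$ about $o$ and $P_\varepsilon$ an $\varepsilon$-neighbourhood of $P$, and set $\Omega_\varepsilon=\Omega\setminus(\overline{B_\varepsilon}\cup\overline{P_\varepsilon})$. On $\Omega_\varepsilon$ both $u$ and $g$ are $\mathscr C^2$ with $\Delta g\equiv0$, so Green's second identity (with $\partial/\partial\vec n$ the outward normal of $\Omega_\varepsilon$) gives
$$-\int_{\Omega_\varepsilon}g\,\Delta u\,dv=\int_{\partial\Omega}u\,\frac{\partial g}{\partial\vec n}\,d\sigma+\int_{\partial B_\varepsilon}\Big(u\,\frac{\partial g}{\partial\vec n}-g\,\frac{\partial u}{\partial\vec n}\Big)\,d\sigma+\int_{\partial P_\varepsilon}\Big(u\,\frac{\partial g}{\partial\vec n}-g\,\frac{\partial u}{\partial\vec n}\Big)\,d\sigma,$$
where on $\partial\Omega$ the term $g\,\partial u/\partial\vec n$ drops out since $g|_{\partial\Omega}=0$. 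On $\partial\Omega$ the outward normal of $\Omega_\varepsilon$ is the inward normal $-\vec\nu$ of $\Omega$, so that first integral equals $-2\int_{\partial\Omega}u\,d\pi_{\partial\Omega}$. On $\partial B_\varepsilon$, using the fundamental-solution asymptotics of $g$ near $o$ (of order $|x-o|^{2-2m}$ if $m\ge2$, logarithmic if $m=1$, normalized by $-\tfrac12\Delta g=\delta_o$) and continuity of $u$ near $o$, one gets $\int_{\partial B_\varepsilon}u\,(\partial g/\partial\vec n)\,d\sigma\to 2u(o)$, while $\int_{\partial B_\varepsilon}g\,(\partial u/\partial\vec n)\,d\sigma\to0$ because $g$ times $\mathrm{Area}(\partial B_\varepsilon)$ is $O(\varepsilon)$ (or $O(\varepsilon\log\tfrac1\varepsilon)$) and $\nabla u$ is bounded near $o$. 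On $\partial P_\varepsilon$, $g$ is bounded (we are away from $o$) and both integrals tend to $0$ as $\varepsilon\to0$ because $P$ carries zero Newtonian capacity. Letting $\varepsilon\to0$, and using dominated convergence on the left (legitimate since $\Delta u$ is locally integrable off $P$ and $\int_\Omega g\,dv<\infty$), we arrive at
$$-\int_\Omega g\,\Delta u\,dv=-2\int_{\partial\Omega}u\,d\pi_{\partial\Omega}+2u(o),$$
which is the assertion after rearranging and dividing by $2$.

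The crux is the vanishing of the $\partial P_\varepsilon$ contributions: this is exactly where the hypothesis that the singular set is polar is used. A polar set carries no Newtonian capacity — equivalently it is almost surely avoided by Brownian paths started at $o$ — and this forces the fluxes $\int_{\partial P_\varepsilon}u\,(\partial g/\partial\vec n)\,d\sigma$ and $\int_{\partial P_\varepsilon}g\,(\partial u/\partial\vec n)\,d\sigma$ to $0$. In the probabilistic rendering of the proof the same fact is what makes $\Delta u(X_s)$ well defined for a.e.\ $s$, so that It\^o's formula applies to $u(X_{t\wedge\tau_\Omega})$; then $E_o[\tau_\Omega]<\infty$ (true because $\Omega$ is precompact) together with a localization argument upgrades the It\^o local martingale to a genuine martingale, yielding $E_o[u(X_{\tau_\Omega})]-u(o)=\tfrac12\,E_o\!\int_0^{\tau_\Omega}\Delta u(X_s)\,ds$. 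The two identifications $E_o[u(X_{\tau_\Omega})]=\int_{\partial\Omega}u\,d\pi_{\partial\Omega}$ and $E_o\!\int_0^{\tau_\Omega}\Delta u(X_s)\,ds=\int_\Omega g_\Omega(o,x)\Delta u(x)\,dv(x)$ are the standard descriptions of harmonic measure and of the Green potential for $\Delta/2$, so no further work is required there.
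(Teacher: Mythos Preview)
The paper does not prove this lemma at all; it states the formula and refers the reader to the literature (Atsuji, Dong, Dong--Yang). So there is no ``paper's own proof'' to compare against, and your task was essentially to supply one.

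Your derivation via Green's second identity on the excised domain $\Omega_\varepsilon$ is the standard analytic route and the structure is correct: the Dirichlet condition kills the $g\,\partial u/\partial\vec n$ term on $\partial\Omega$, the fundamental-solution asymptotics near $o$ produce the $2u(o)$ contribution, and the polar hypothesis is invoked to kill the $\partial P_\varepsilon$ fluxes. The probabilistic paragraph is also fine as an alternative reading. One point deserves a little more care than you give it: when you assert that the $\partial P_\varepsilon$ integrals vanish ``because $P$ carries zero Newtonian capacity,'' remember that $u$ itself is allowed to be singular on $P$, so both $u$ and $\nabla u$ may blow up there; zero capacity alone does not automatically bound $\int_{\partial P_\varepsilon}|u|\,|\partial g/\partial\vec n|\,d\sigma$ without some control on the singularity of $u$. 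In the applications here $u$ is of the form $\log\|f,a\|$ or $\log|\psi|$, i.e.\ a difference of plurisubharmonic functions, and for such $u$ the vanishing follows from the Riesz decomposition and the fact that polar sets carry no Riesz mass; in the probabilistic picture it is exactly the fact that Brownian motion a.s.\ avoids $P$. Your sketch gestures at this but does not quite say it. Otherwise the argument is sound.
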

\begin{theorem}[First Main Theorem]\label{first}  Assume that $f(o)\not=a.$ Then  
$$T_f(\Omega, \omega_{FS})+\log\frac{1}{\|f(o), a\|}=m_f(\Omega, a)+N_f(\Omega, a).$$
\end{theorem}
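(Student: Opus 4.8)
The plan is to apply the Dynkin formula (Lemma~\ref{dynkin}) to the function $u(x)=\log\|f(x),a\|^{-1}$ and then identify the resulting integral terms with the Nevanlinna functions. First I would note that, up to a choice of reduced local representation, one has the pointwise identity
$$
\log\frac{1}{\|f,a\|}=\log\|f\|+\log\|a\|-\log|\langle f;a\rangle|,
$$
so that $u$ is a $\mathscr{C}^2$ function on $M$ outside the polar set $f^{-1}(a)\cup I_f$ of logarithmic singularities, and $u(o)=\log\|f(o),a\|^{-1}\neq\infty$ by the hypothesis $f(o)\neq a$. Applying Lemma~\ref{dynkin} to $u$ gives
$$
\int_{\partial\Omega}\log\frac{1}{\|f,a\|}\,d\pi_{\partial\Omega}-\log\frac{1}{\|f(o),a\|}
=\frac{1}{2}\int_{\Omega}g_\Omega(o,x)\,\Delta u(x)\,dv(x).
$$
The left-hand side already contains $m_f(\Omega,a)$ by definition, so the whole matter reduces to evaluating the right-hand side.

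Next I would compute $\Delta u$ in the sense of currents. The key is the Poincaré–Lelong formula: $dd^c\log|\langle f;a\rangle|=f^*a-$ (a smooth term coming from the denominator of the local representation), while $dd^c\log\|f\|^2=f^*\omega_{FS}$ by definition of the Fubini–Study form. Combining and using $\Delta u\,dv=-2c_m\, dd^c u\wedge\alpha^{m-1}$ for the appropriate dimensional constant (here $c_m=\pi^m/(m-1)!$ after matching the normalizations of $\alpha$, $dd^c$, and $dv$ used in the paper), one gets
$$
\frac{1}{2}\int_\Omega g_\Omega(o,x)\,\Delta u\,dv
=\frac{\pi^m}{(m-1)!}\int_\Omega g_\Omega(o,x)\,f^*\omega_{FS}\wedge\alpha^{m-1}
-\frac{\pi^m}{(m-1)!}\int_{f^*a\cap\Omega}g_\Omega(o,x)\,\alpha^{m-1},
$$
in which the first term is exactly $T_f(\Omega,\omega_{FS})$ and the second is exactly $N_f(\Omega,a)$. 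Rearranging yields
$$
T_f(\Omega,\omega_{FS})+\log\frac{1}{\|f(o),a\|}=m_f(\Omega,a)+N_f(\Omega,a),
$$
which is the assertion.

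The main obstacle — and the step that requires genuine care rather than routine bookkeeping — is the justification of the current identity for $\Delta u$ and the legitimacy of applying the Dynkin formula across the polar singularities. One must check that the Poincaré–Lelong contribution is precisely the integration current $[f^*a]$ with no leftover smooth defect (this is where the reduced representation and $\|a\|$ enter), and that $u$, though singular, is locally integrable with $\Delta u$ a measure of the stated form, so that Lemma~\ref{dynkin} — stated for functions with at most polar singularities — genuinely applies; near $I_f$ (which has codimension $\ge 2$, hence is removable for these integrals when $m\ge 2$) and near $f^{-1}(a)$ one argues by an exhaustion/regularization of $u$ and passes to the limit using that $g_\Omega(o,\cdot)$ is bounded away from $o$ and integrable near $o$. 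Once these integrability and current computations are pinned down, the identification of each term with the corresponding Nevanlinna function and the final rearrangement are immediate.
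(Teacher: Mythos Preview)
Your approach is essentially the same as the paper's: apply the Dynkin formula to (a sign of) $\log\|f,a\|$ and then identify the Laplacian term via $dd^c$ and Poincar\'e--Lelong, splitting it into $T_f$ and $N_f$. The paper applies Dynkin to $\log\|f,a\|$ rather than its negative, but that is immaterial. One small slip: the relation you write as $\Delta u\,dv=-2c_m\,dd^cu\wedge\alpha^{m-1}$ has the wrong sign (and factor); with the paper's normalizations one has $\tfrac14\Delta\phi\,dv=\tfrac{\pi^m}{(m-1)!}\,dd^c\phi\wedge\alpha^{m-1}$, and indeed $\Delta$ and $dd^c$ agree in sign on (pluri)subharmonic functions. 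Your displayed final identity for $\tfrac12\int g_\Omega\Delta u\,dv$ is nonetheless correct, so this is a bookkeeping slip rather than a gap.
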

\begin{proof} Applying Dynkin formula to $\log\|f, a\|,$  we get  
$$\int_{\partial\Omega}\log\|f, a\|d\pi_{\partial\Omega}+\log\frac{1}{\|f(o),a\|}=\frac{1}{2}\int_{\Omega}g_{\Omega}(o,x)\Delta\log\|f, a\|dv.$$
Note that the first term on the left hand side equals $-m_f(\Omega, a)$ and the right hand side equals 
 \begin{eqnarray*}
 \frac{1}{4}\int_{\Omega}g_{\Omega}(o,x)\Delta\log\|f, a\|^2dv 
 &=& \frac{\pi^m}{(m-1)!}\int_{\Omega}g_{\Omega}(o,x)dd^c\left[\log\|f, a\|^2\right]\wedge\alpha^{m-1}  \\
  &=&\frac{\pi^m}{(m-1)!}\int_{\Omega}g_{\Omega}(o,x)dd^c\left[\log|\langle f; a \rangle|^2\right]\wedge\alpha^{m-1} \\
 &&  -\frac{\pi^m}{(m-1)!}\int_{\Omega}g_{\Omega}(o,x)f^*\omega_{FS}\wedge\alpha^{m-1} \\
 &=& \frac{\pi^m}{(m-1)!}\int_{f^*a\cap\Omega}g_{\Omega}(o,x)\alpha^{m-1}-T_f(\Omega, \omega_{FS}) \\
 &=& N_f(\Omega, a)-T_f(\Omega, \omega_{FS}).
    \end{eqnarray*}
    This proves the theorem. 
\end{proof}

\subsection{Second Main Theorem for Non-negative Ricci Curvature}~

 Assume   that  $M$ is  non-parabolic. 
      Fix a  reference point $o\in M$ and  denote by $B(r)$    the geodesic ball  centered at $o$ with radius  $r$ in $M.$
   Let    $V(r)$ denote  the Riemannian volume of  $B(r),$   The non-parabolicity of $M$ means  that
 $$\int_1^\infty\frac{t}{V(t)}dt<\infty.$$
Thus, we have the unique minimal positive global Green function $G(o,x)$ of $\Delta/2$  for $M.$  
We can write $G(o,x)$ as 
$$G(o, x)=2\int_0^\infty p(t, o,x)dt,$$
in which  $p(t,o,x)$ is the heat kernel of $\Delta$ on $M.$
  Let $\rho(x)$ be the Riemannian distance function of $x$ from $o.$  By  Li-Yau's estimate \cite{Li-Yau}, there exist constants $A, B>0$ such that  
 \begin{equation}\label{Gr}
 A\int_{\rho(x)}^\infty\frac{t}{V(t)}dt\leq G(o,x)\leq B\int_{\rho(x)}^\infty\frac{t}{V(t)}dt
 \end{equation}
holds for all $x\in M.$ 
\subsubsection{Construction of $\Delta(r)$ and Nevanlinna's functions}~\label{sec421}

 For $r>0,$ define   
 $$\Delta(r)=\left\{x\in M: \    G(o,x)>A\int_r^\infty\frac{t}{V(t)}dt\right\}.$$
Since    
$$\lim_{x\to o}G(o,x)=\infty, \ \ \ \     \lim_{x\to\infty}G(o,x)=0,$$  we see   that 
    $\Delta(r)$ is a precompact  domain containing $o,$  with   
 $ \lim_{r\to0}\Delta(r)\to \emptyset$ and  $\lim_{r\to\infty}\Delta(r)=M.$
Moreover,  the family 
     $\{\Delta(r)\}_{r>0}$ can exhaust $M,$ i.e.,  for any   sequence $\{r_n\}_{n=1}^\infty$ with    $0<r_1<r_2<\cdots\to \infty,$ 
     we have       
 $$\bigcup_{n=1}^\infty\Delta(r_n)=M, \ \ \ \ 
\emptyset\not=\Delta(r_1)\subseteq\overline{\Delta(r_1)}\subseteq\Delta(r_2)\subseteq\overline{\Delta(r_2)}\subseteq\cdots$$    
So,  the boundary $\partial\Delta(r)$ of $\Delta(r)$ can be formulated as
 $$\partial\Delta(r)=\left\{x\in M: \    G(o,x)=A\int_r^\infty\frac{t}{V(t)}dt\right\}.$$
  By   Sard's theorem,   $\partial\Delta(r)$  is a submanifold of $M$ for almost all $r>0.$  
    
     Set
 $$g_r(o,x)=G(o,x)-A\int_r^\infty\frac{t}{V(t)}dt.$$
Note that   $g_r(o,x)$ defines   the  Green function of $\Delta/2$ for $\Delta(r)$ with a pole at $o$ satisfying Dirichelet boundary condition, i.e., 
  $$-\frac{1}{2}\Delta g_r(o,x)=\delta_o(x), \ \ \ \  ^\forall x\in\Delta(r);  \ \ \  \   \  g_r(o,x)=0, \ \ \  \ ^\forall x\in\partial\Delta(r).$$
 Let  $\pi_r$ denote the harmonic measure  on $\partial\Delta(r)$ with respect to $o,$ defined by
  $$d\pi_r(x)=\frac{1}{2}\frac{\partial g_r(o,x)}{\partial{\vec{\nu}}}d\sigma_r(x), \ \ \ \   ^\forall x\in\partial\Delta(r),$$
  where  $\partial/\partial \vec\nu$ is the inward  normal derivative on $\partial \Delta(r),$ $d\sigma_{r}$ is the Riemannian area element of 
$\partial \Delta(r).$

Let $f: M\to \mathbb P^1(\mathbb C)$ be a meromorphic function.  Take  $\Omega=\Delta(r)$ in Section \ref{sec41},   we  have  
the  Nevanlinna's functions of $f$: 
  \begin{eqnarray*}
  T_f(r, \omega_{FS})&=& \frac{\pi^m}{(m-1)!}\int_{\Delta(r)}g_{r}(o,x)f^*\omega_{FS}\wedge\alpha^{m-1}, \\
  m_f(r, a)&=&\int_{\partial \Delta(r)}\log\frac{1}{\|f, a\|}d\pi_{r}, \\
    N_f(r, a)&=&\frac{\pi^m}{(m-1)!}\int_{f^*a\cap \Delta(r)}g_{r}(o,x)\alpha^{m-1}, \\
        \overline{N}_f(r, a)&=&\frac{\pi^m}{(m-1)!}\int_{f^{-1}(a)\cap \Delta(r)}g_{r}(o,x)\alpha^{m-1}.
    \end{eqnarray*} 

\subsubsection{Gradient estimate of Green functions}~

   \begin{lemma}\label{thm4} For $x\in\partial\Delta(t),$ we have
 $$g_r(o,x)=A\int_t^r \frac{s}{V(s)}ds$$
holds  for all $0<t\leq r,$ where $A$ is given by $(\ref{Gr}).$
  \end{lemma}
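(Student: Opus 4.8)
The plan is to read the claim off directly from the definitions of $G(o,x)$, $\Delta(t)$, and $g_r(o,x)$; no analysis is needed beyond what was already set up in the construction of $\Delta(r)$. First I would recall that, by construction, the boundary $\partial\Delta(t)$ is precisely the level set
$$\partial\Delta(t)=\Big\{x\in M:\ G(o,x)=A\int_t^\infty\frac{s}{V(s)}\,ds\Big\},$$
so $G(o,\cdot)$ takes the constant value $A\int_t^\infty s/V(s)\,ds$ on $\partial\Delta(t)$. That this level set is nonempty and really is the topological boundary of $\Delta(t)$ uses only that $G(o,\cdot)$ is continuous and positive on the connected manifold $M$ with $\lim_{x\to o}G(o,x)=\infty$ and $\lim_{x\to\infty}G(o,x)=0$, as noted in Section~\ref{sec421}; the non-parabolicity hypothesis $\int_1^\infty s/V(s)\,ds<\infty$ makes all the integrals in sight finite.

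Next I would observe that for $0<t\le r$ we have $\overline{\Delta(t)}\subseteq\Delta(r)$, so a point $x\in\partial\Delta(t)$ lies in the closure of the domain $\Delta(r)$ on which $g_r(o,\cdot)$ is defined, and the identity
$$g_r(o,x)=G(o,x)-A\int_r^\infty\frac{s}{V(s)}\,ds$$
holds there (read as the continuous extension to $\partial\Delta(r)$ in the degenerate case $t=r$). Substituting the boundary value of $G(o,x)$ gives
$$g_r(o,x)=A\int_t^\infty\frac{s}{V(s)}\,ds-A\int_r^\infty\frac{s}{V(s)}\,ds=A\int_t^r\frac{s}{V(s)}\,ds,$$
which is exactly the assertion; when $t=r$ this reduces to $g_r(o,x)=0$, consistent with the Dirichlet boundary condition $g_r(o,x)=0$ on $\partial\Delta(r)$.

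There is essentially no obstacle: the lemma is a bookkeeping identity recording that the exhaustion $\{\Delta(r)\}_{r>0}$ was arranged precisely so that its Green functions have radially constant boundary values. The only step deserving a word — and it was already supplied when $\Delta(r)$ was constructed — is the identification of $\partial\Delta(t)$ with the stated level set of $G(o,\cdot)$; granting that, the displayed computation finishes the proof.
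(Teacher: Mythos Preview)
Your proof is correct and is essentially the same as the paper's: the paper rewrites $g_r(o,x)=g_t(o,x)+A\int_t^r s/V(s)\,ds$ and then uses the Dirichlet condition $g_t(o,x)=0$ on $\partial\Delta(t)$, which is exactly your substitution of the level-set value $G(o,x)=A\int_t^\infty s/V(s)\,ds$ into the definition of $g_r(o,x)$. Your version is, if anything, slightly more direct.
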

 \begin{proof}    Invoking  the definition of Green function for $\Delta(r),$  it is immediate to deduce  that  for $0<t\leq r$
  \begin{eqnarray*} 
g_r(o,x)&=& G(o,x)-A\int_{r}^\infty\frac{t}{V(t)}dt   \\
&=& G(o,x)-A\int_{t}^\infty\frac{s}{V(s)}ds +A\int_{t}^r\frac{s}{V(s)}ds  \\
 &=& g_t(o,x)+A\int_{t}^r\frac{s}{V(s)}ds.   
   \end{eqnarray*}
 Since
 $g_t(o,x)=0$ for  $x\in\partial\Delta(t),$
  we obtain 
 $$g_r(o,x)\big|_{\partial\Delta(t)}=A\int_t^r \frac{s}{V(s)}ds.$$
 \end{proof}

 Let $\nabla$ stand for the gradient operator on a Riemannian manifold.  Cheng-Yau \cite{C-Y} proved the following theorem.

 \begin{lemma}\label{CY}
 Let $N$ be a complete Riemannian manifold.  Let   $B(x_0, r)$ be a geodesic ball centered at $x_0\in N$ with radius $r.$ 
 Then, there exists  a constant $c_N>0$ depending only on the dimension of $N$ such that 
 $$\frac{\|\nabla u(x)\|}{u(x)}\leq \frac{c_N r^2}{r^2-d(x_0, x)^2}\left(|\kappa(r)|+\frac{1}{d(x_0, x)}\right)$$
 holds for any  non-negative  harmonic function $u$ on $B(x_0, r),$ 
 where $d(x_0, x)$ is the Riemannian distance between $x_0$ and $x,$ and $\kappa(r)$ is the lower bound of  Ricci curvature of  $B(x_0, r).$
 \end{lemma}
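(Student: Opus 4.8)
The plan is to reproduce the classical Bochner--technique argument of Cheng--Yau, localized on $B(x_0,r)$. Write $n=\dim N$. A non-negative harmonic function that vanishes somewhere is identically zero, so in that case the estimate is trivial; otherwise, replacing $u$ by $u+\varepsilon$ and letting $\varepsilon\downarrow0$ at the end, I may assume $u>0$ on $B(x_0,r)$. Put $w=\log u$, so harmonicity of $u$ forces $\Delta w=-\|\nabla w\|^2$, and let $Q=\|\nabla w\|^2=\|\nabla u\|^2/u^2$ be the quantity to be bounded. First I would extract a differential inequality for $Q$ from the Bochner formula
$$\tfrac12\Delta\|\nabla w\|^2=\|\mathrm{Hess}\,w\|^2+\langle\nabla w,\nabla\Delta w\rangle+\mathrm{Ric}(\nabla w,\nabla w).$$
Feeding in the Cauchy--Schwarz bound $\|\mathrm{Hess}\,w\|^2\ge(\Delta w)^2/n=Q^2/n$, the identity $\nabla\Delta w=-\nabla Q$, and the hypothesis $\mathrm{Ric}(\nabla w,\nabla w)\ge\kappa(r)\,Q\ge-|\kappa(r)|\,Q$ on $B(x_0,r)$, this collapses to the pointwise inequality
$$\tfrac12\Delta Q\ \ge\ \tfrac1n Q^2-\langle\nabla w,\nabla Q\rangle-|\kappa(r)|\,Q.$$

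Next I would localize. Set $\rho(x)=d(x_0,x)$ and choose the cutoff $\varphi=\big(1-\rho^2/r^2\big)^2$; it is smooth near $x_0$ (since $\rho^2$ is), positive in the interior, vanishes on $\partial B(x_0,r)$, and satisfies $\|\nabla\varphi\|^2/\varphi\le 16/r^2$. Consider $F=\varphi\,Q$, which is continuous on $\overline{B(x_0,r)}$, vanishes on the boundary, and hence attains its maximum at some interior point $x_1$ (if $F\equiv0$ there is nothing to prove). At $x_1$ I want $\nabla F=0$ and $\Delta F\le0$, but $\rho$, hence $F$, need not be smooth along the cut locus, so I would invoke Calabi's device: replace $x_0$ by the point $x_0^\varepsilon$ at distance $\varepsilon$ from $x_0$ on a minimizing geodesic to $x_1$, and set $\varphi_\varepsilon=\big(1-(\varepsilon+d(x_0^\varepsilon,\cdot))^2/r^2\big)^2$. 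Then $\varphi_\varepsilon$ is smooth near $x_1$, and since it is a decreasing function of the distance, $\varphi_\varepsilon\le\varphi$ with equality at $x_1$; therefore $F_\varepsilon=\varphi_\varepsilon Q\le F$ with equality at $x_1$ is a smooth lower barrier at the maximum, so $\nabla F_\varepsilon(x_1)=0$ and $\Delta F_\varepsilon(x_1)\le0$ hold genuinely. The remaining computation is carried out with $F_\varepsilon$, with $\varepsilon\downarrow0$ at the end.

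Expanding $0\ge\Delta F_\varepsilon=\varphi_\varepsilon\Delta Q+2\langle\nabla\varphi_\varepsilon,\nabla Q\rangle+Q\,\Delta\varphi_\varepsilon$ at $x_1$, substituting $\nabla Q=-(Q/\varphi_\varepsilon)\nabla\varphi_\varepsilon$ from $\nabla F_\varepsilon=0$, and inserting the first-step inequality for $\Delta Q$, the bound $|\langle\nabla w,\nabla\varphi_\varepsilon\rangle|\le Q^{1/2}\|\nabla\varphi_\varepsilon\|$, and the Laplacian comparison estimate $\Delta\rho_\varepsilon\le C_n\big(\rho^{-1}+\sqrt{|\kappa(r)|}\,\big)$ used to control $\Delta\varphi_\varepsilon$ (this is where $|\kappa(r)|$ and $1/d(x_0,x)$ enter), one obtains a quadratic inequality in $\sqrt{F(x_1)}$ with coefficients explicit in $r$, $\rho$, and $|\kappa(r)|$. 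Solving it bounds $\sup_{B(x_0,r)}F$. Finally, for any $x\in B(x_0,r)$ one has $Q(x)=F(x)/\varphi(x)\le(\sup F)/\varphi(x)$, and since $1/\varphi(x)=r^4/(r^2-d(x_0,x)^2)^2$, taking square roots and letting $\varepsilon\downarrow0$ yields a bound on $\|\nabla u\|/u$ of the asserted shape, carrying exactly the factor $r^2/(r^2-d(x_0,x)^2)$.

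The step I expect to be the main obstacle is this last one, together with the cut-locus technicality: packaging the $\Delta\varphi_\varepsilon$-term (via Laplacian comparison), the Cauchy--Schwarz gain $Q^2/n$, and the cross term $\langle\nabla w,\nabla Q\rangle$ into a single clean quadratic inequality whose solution reproduces precisely the dependence $c_N\,r^2\big/\!\big(r^2-d(x_0,x)^2\big)$ with the stated factor in $|\kappa(r)|$ and $1/d(x_0,x)$, while keeping Calabi's barrier argument rigorous throughout.
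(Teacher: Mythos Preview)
The paper does not prove this lemma at all: it is stated as a result of Cheng--Yau \cite{C-Y} and invoked as a black box in the subsequent gradient estimates for the Green function. Your sketch is the standard Cheng--Yau argument (Bochner formula for $w=\log u$, cutoff $\varphi=(1-\rho^2/r^2)^2$, maximum principle with Calabi's barrier trick at the cut locus, Laplacian comparison to control $\Delta\varphi$), and the outline is correct. Since the paper offers no proof to compare against, there is nothing further to contrast; your approach is exactly the one in the cited reference.
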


 \begin{theorem}\label{hh} There exists a constant  $c_1>0$   such that 
 $$\|\nabla g_r(o,x)\|\leq \frac{c_1}{r}\int_{r}^\infty\frac{tdt}{V(t)}, \ \ \ \    ^\forall x\in\partial\Delta(r).$$
\end{theorem}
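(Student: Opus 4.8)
The plan is to reduce the statement to a pointwise Cheng--Yau gradient estimate (Lemma~\ref{CY}) for the global Green function $G(o,\cdot)$, combined with the lower bound in the Li--Yau estimate~(\ref{Gr}). First I would record an elementary observation: every $x\in\partial\Delta(r)$ satisfies $\rho(x)\ge r$. Indeed, by the very definition of $\Delta(r)$ one has $G(o,x)=A\int_r^\infty t/V(t)\,dt$, whereas~(\ref{Gr}) gives $G(o,x)\ge A\int_{\rho(x)}^\infty t/V(t)\,dt$; since non-parabolicity makes the tail integral $s\mapsto\int_s^\infty t/V(t)\,dt$ finite and strictly decreasing (its derivative is $-s/V(s)<0$), comparing the two displays forces $r\le\rho(x)$.

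Next I would establish the pointwise bound $\|\nabla G(o,x)\|\le (16c_N/(3\rho(x)))\,G(o,x)$ for all $x\ne o$, where $c_N$ is the Cheng--Yau constant from Lemma~\ref{CY}. Fix such an $x$, write $\rho=\rho(x)$, and using completeness pick a point $p$ on a minimizing geodesic from $x$ to $o$ with $d(p,x)=\rho/4$, so that $d(p,o)=3\rho/4$ (sub-arcs of minimizing geodesics are minimizing). On the geodesic ball $B(p,\rho/2)$ the pole $o$ is excluded, because $d(p,o)=3\rho/4>\rho/2$; hence $G(o,\cdot)$ is a positive harmonic function there, and since $M$ has non-negative Ricci curvature we may take the lower Ricci bound on that ball to be $0$. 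Applying Lemma~\ref{CY} with centre $p$, radius $\rho/2$ and evaluation point $x$ (which is interior since $d(p,x)=\rho/4<\rho/2$, but off-centre), the curvature term disappears and a short computation gives
$$\frac{\|\nabla G(o,x)\|}{G(o,x)}\le\frac{c_N(\rho/2)^2}{(\rho/2)^2-(\rho/4)^2}\cdot\frac{1}{\rho/4}=\frac{16\,c_N}{3\,\rho(x)}.$$

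Finally I would combine the two steps. Since $g_r(o,\cdot)$ differs from $G(o,\cdot)$ only by the $x$-independent constant $A\int_r^\infty t/V(t)\,dt$, we have $\nabla g_r(o,x)=\nabla G(o,x)$. Hence, for $x\in\partial\Delta(r)$, using $\rho(x)\ge r$ together with $G(o,x)=A\int_r^\infty t/V(t)\,dt$,
$$\|\nabla g_r(o,x)\|=\|\nabla G(o,x)\|\le\frac{16\,c_N}{3\,\rho(x)}\,G(o,x)\le\frac{16\,A\,c_N}{3\,r}\int_r^\infty\frac{t\,dt}{V(t)},$$
and one takes $c_1=16Ac_N/3$. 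I expect the single delicate point to be the choice of the auxiliary ball in the middle step: it must simultaneously avoid the pole $o$ (so that $G(o,\cdot)$ is harmonic on it) and contain the target point $x$ in its interior yet away from its centre --- the pointwise Cheng--Yau bound as stated degenerates at the centre through the factor $1/d(x_0,x)$, so only an off-centre evaluation on a ball of radius comparable to $\rho(x)$ turns it into an estimate with a purely dimensional constant. After that everything is bookkeeping with the definitions of $\Delta(r)$ and $g_r$ and the monotonicity of $s\mapsto\int_s^\infty t/V(t)\,dt$; in particular Lemma~\ref{thm4} is not needed for this argument.
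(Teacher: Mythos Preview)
Your proof is correct and follows essentially the same route as the paper: both combine the Cheng--Yau gradient estimate (Lemma~\ref{CY}) for the global Green function with the Li--Yau bounds~(\ref{Gr}) and the observation that $\rho(x)\ge r$ on $\partial\Delta(r)$. Your version is in fact a bit cleaner, since you use the exact level-set value $G(o,x)=A\int_r^\infty t/V(t)\,dt$ on $\partial\Delta(r)$ (avoiding the detour through the upper bound $G\le B\int_{\rho(x)}^\infty$), and you spell out the auxiliary-ball trick needed to extract a dimensional constant from Lemma~\ref{CY} as stated, whereas the paper simply cites \cite{Sa0} for that step.
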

\begin{proof} From the curvature assumption,   ${\rm{Ric}}_M\geq0.$  It yields   from Lemma \ref{CY} and (\ref{Gr})  that   (see Remark 5 in \cite{Sa0} also)
  \begin{eqnarray*}
\|\nabla G(o, x)\|\leq\frac{c_0}{\rho(x)}G(o,x) 
\leq \frac{c_0B}{\rho(x)}\int_{\rho(x)}^\infty\frac{tdt}{V(t)}
  \end{eqnarray*}
 for some large  constant $c_0>0$ which depends  only on  the dimension  $m.$  By  (\ref{Gr})   again 
 $$\int_{\rho(x)}^\infty\frac{tdt}{V(t)}\leq \int_r^\infty \frac{tdt}{V(t)}, \ \ \  \ ^\forall x\in \partial\Delta(r),$$
 which  gives    
 $$\rho(x)\geq r, \ \ \ \     ^\forall x\in\partial\Delta(r).$$
Hence,  we  conclude  that   
$$\|\nabla g_r(o, x)\|\big|_{\partial\Delta(r)}=\|\nabla G(o, x)\|\big|_{\partial\Delta(r)}\leq \frac{c_1}{r}\int_{r}^\infty\frac{tdt}{V(t)},
$$
where   $c_1=c_0B.$
  \end{proof}
   
   Applying 
   $$\frac{\partial g_r(o,x)}{\partial{\vec{\nu}}}=\|\nabla g_r(o, x)\|,$$
we  obtain   an   upper bound estimate of  $d\pi_r$  as follows. 
  \begin{cor}\label{bbbq}  There exists a constant  $c_2>0$   such that 
   $$d\pi_r\leq \frac{c_2}{r}\int_{r}^\infty\frac{tdt}{V(t)}d\sigma_r,$$
 where  $d\sigma_{r}$ is the Riemannian area element of 
$\partial \Delta(r).$
\end{cor}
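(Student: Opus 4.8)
The plan is to read the estimate off directly from the definition of the harmonic measure $d\pi_r$ combined with the gradient bound of Theorem~\ref{hh}. By construction,
$$d\pi_r(x)=\frac{1}{2}\frac{\partial g_r(o,x)}{\partial\vec\nu}\,d\sigma_r(x)$$
for $x\in\partial\Delta(r)$, so it suffices to control the inward normal derivative of $g_r(o,\cdot)$ along $\partial\Delta(r)$.

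First I would note that, by Sard's theorem, $\partial\Delta(r)$ is a smooth submanifold of $M$ for almost all $r>0$, and that it is precisely the zero level set of the function $g_r(o,\cdot)$, which is positive inside $\Delta(r)$ and vanishes on $\partial\Delta(r)$. Consequently $\nabla g_r(o,x)$ is normal to $\partial\Delta(r)$ and points into $\Delta(r)$, i.e. in the direction of $\vec\nu$; hence $\partial g_r(o,x)/\partial\vec\nu=\|\nabla g_r(o,x)\|$ on $\partial\Delta(r)$, as already recorded just above the statement.

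Next I would substitute the bound of Theorem~\ref{hh}, namely $\|\nabla g_r(o,x)\|\le (c_1/r)\int_r^\infty t/V(t)\,dt$ for $x\in\partial\Delta(r)$, to get
$$d\pi_r(x)=\frac{1}{2}\|\nabla g_r(o,x)\|\,d\sigma_r(x)\le\frac{c_1}{2r}\int_r^\infty\frac{t\,dt}{V(t)}\,d\sigma_r(x),$$
so the corollary holds with $c_2=c_1/2$. There is no real obstacle here; the only point requiring a sentence of justification is the identification of the normal derivative with $\|\nabla g_r\|$ — that on the level set $\{g_r(o,\cdot)=0\}$ the gradient is purely normal and inward-pointing, together with the almost-everywhere smoothness of $\partial\Delta(r)$ noted in the construction of $\Delta(r)$ — and this already suffices for the Nevanlinna-theoretic applications that follow.
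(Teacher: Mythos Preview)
Your proof is correct and follows exactly the paper's own argument: the paper simply notes that $\partial g_r(o,x)/\partial\vec\nu=\|\nabla g_r(o,x)\|$ on $\partial\Delta(r)$ and then reads the bound off from Theorem~\ref{hh}, which is precisely what you do (with $c_2=c_1/2$).
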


\subsubsection{Calculus Lemma and Logarithmic Derivative Lemma}~

 We need the following Borel's growth lemma (see  \cite{No, ru}). 
 \begin{lemma}[Borel's Growth  Lemma]\label{} Let $u\geq0$ be a non-decreasing  function  on $(r_0, \infty)$ with $r_0\geq0.$ Then for any $\delta>0,$ there exists a subset $E_\delta\subseteq(r_0,\infty)$
 of finite Lebesgue measure such that  
 $$u'(r)\leq u(r)^{1+\delta}$$
 holds for all $r>r_0$ outside $E_{\delta}.$  
 \end{lemma}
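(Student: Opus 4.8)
The plan is to reduce the assertion to a one-line Chebyshev (Markov) inequality, after establishing that $\int_a^\infty u'(r)u(r)^{-1-\delta}\,dr<\infty$ on any half-line $[a,\infty)$ where $u$ is bounded away from $0$; the latter integral is essentially $\tfrac1\delta$ times the total increase of the bounded non-decreasing function $-u(r)^{-\delta}$, hence finite.

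First I would dispose of the trivial and degenerate points. Since $u$ is non-decreasing, it is differentiable with finite derivative at almost every $r>r_0$, and the null set of exceptional points may simply be placed inside $E_\delta$ without affecting its measure. If $u\equiv 0$ then $u'(r)\le u(r)^{1+\delta}$ holds for almost every $r$ and there is nothing to prove; otherwise I fix a point $a>r_0$ with $u(a)=:c>0$, so that $u(r)\ge c>0$ for all $r\ge a$ by monotonicity. On the bounded interval $(r_0,a)$ the exceptional set automatically has measure at most $a-r_0<\infty$, so it remains only to bound $|E_\delta\cap[a,\infty)|$.

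For the core estimate, observe that on $[a,\infty)$ the function $\varphi(r):=-\tfrac1\delta u(r)^{-\delta}$ is non-decreasing and takes values in $[-\tfrac1{\delta c^\delta},0)$, hence is bounded; and since $t\mapsto-\tfrac1\delta t^{-\delta}$ is $C^1$ on $(0,\infty)$ while $u\ge c>0$, the chain rule gives $\varphi'(r)=u'(r)u(r)^{-1-\delta}$ at every point where $u'(r)$ exists. Because $u$ is only assumed monotone, not absolutely continuous, the fundamental theorem of calculus is unavailable, but the Lebesgue inequality $\int_a^R\varphi'(r)\,dr\le\varphi(R)-\varphi(a)$ for non-decreasing $\varphi$ still applies; letting $R\to\infty$ yields $\int_a^\infty u'(r)u(r)^{-1-\delta}\,dr\le-\varphi(a)=\tfrac1{\delta c^\delta}$. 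This is the step I would treat most carefully, since it is the only place where the mere monotonicity of $u$ (rather than smoothness) must be taken seriously.

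Finally, on $E_\delta\cap[a,\infty)$ we have $u'(r)u(r)^{-1-\delta}>1$ by the definition of $E_\delta$, so
$$|E_\delta\cap[a,\infty)|=\int_{E_\delta\cap[a,\infty)}dr\le\int_{E_\delta\cap[a,\infty)}\frac{u'(r)}{u(r)^{1+\delta}}\,dr\le\int_a^\infty\frac{u'(r)}{u(r)^{1+\delta}}\,dr\le\frac1{\delta c^\delta},$$
and therefore $|E_\delta|\le(a-r_0)+\tfrac1{\delta c^\delta}<\infty$, which is the claim. Measurability of $E_\delta$ is not an issue: $u$ is Borel measurable as a monotone function and $u'$ is measurable as an almost-everywhere limit of measurable difference quotients, so $\{r>r_0:u'(r)>u(r)^{1+\delta}\}$ is Lebesgue measurable.
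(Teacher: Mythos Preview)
Your proof is correct and follows essentially the same route as the paper's: split off a bounded initial interval where $u$ may vanish, and on the remaining half-line bound $|E_\delta|$ by $\int u'(r)u(r)^{-1-\delta}\,dr$, which is finite because $-\tfrac{1}{\delta}u^{-\delta}$ is bounded and non-decreasing. You are in fact more careful than the paper on one point: the paper writes $\int_{r_1}^\infty u'(r)u(r)^{-1-\delta}\,dr=\tfrac{1}{\delta u(r_1)^\delta}-\tfrac{1}{\delta\eta^\delta}$ as an equality, tacitly assuming the fundamental theorem of calculus, whereas you correctly invoke only the Lebesgue inequality $\int_a^R\varphi'\le\varphi(R)-\varphi(a)$ valid for monotone $\varphi$ without absolute continuity.
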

 \begin{proof} The conclusion is clearly true  for $u\equiv0.$ Next, we assume that $u\not\equiv0.$
 Since $u\geq0$ is a non-decreasing  function, then there exists a number $r_1>r_0$ such that $u(r_1)>0.$  The non-decreasing property of $u$ implies that  the  limit
 $\eta:=\lim_{r\to\infty}u(r)$
exists or equals $\infty.$  If $\eta=\infty,$ then $\eta^{-1}=0.$ 
 Set  
 $$E_\delta=\left\{r\in(r_0,\infty): \  u'(r)>u(r)^{1+\delta}\right\}.$$
Note that  $u'(r)$ exists for  almost all  $r\in(r_0, \infty).$  Then, we have  
   \begin{eqnarray*}
\int_{E_\delta}dr 
 &\leq& \int_{r_0}^{r_1}dr+\int_{r_1}^\infty\frac{u'(r)}{u(r)^{1+\delta}}dr \\
 &=&\frac{1}{\delta u(r_1)^\delta}-\frac{1}{\delta \eta^\delta}+r_1-r_0 \\
&<&\infty.
    \end{eqnarray*}
 This completes the proof. 
 \end{proof}
 
 Set
 \begin{equation}\label{Hr1}
H(r,\delta)=\frac{1}{r}\left(\frac{V(r)}{r}\right)^{1+\delta}\int_{r}^\infty\frac{tdt}{V(t)}. 
 \end{equation} 

 \begin{theorem}[Calculus Lemma]\label{calculus}
Let $k\geq0$ be a locally integrable function on $M.$ Assume that $k$ is locally bounded at $o.$ Then there exists a  constant $C>0$ such that
 for any $\delta>0,$ there exists  a subset $E_{\delta}\subseteq(0,\infty)$ of finite Lebesgue measure such that
$$\int_{\partial\Delta(r)}kd\pi_r\leq CH(r,\delta)\bigg(\int_{\Delta(r)}g_r(o,x)kdv\bigg)^{(1+\delta)^2}$$
holds for all $r>0$ outside $E_{\delta},$ where $H(r,\delta)$ is given by $(\ref{Hr1}).$  
\end{theorem}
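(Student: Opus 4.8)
The plan is to carry out the manifold analogue of Nevanlinna's classical calculus lemma: express the boundary integral $\int_{\partial\Delta(r)}k\,d\pi_r$ as, essentially, a second $r$‑derivative of a twice‑averaged integral of $k$, and then strip off the two derivatives by two successive applications of Borel's growth lemma. The doubling of the exponent to $(1+\delta)^2$ reflects exactly this double application.

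\textbf{Step 1: two differentiation identities.} Write $\beta(r)=A\int_r^\infty t\,dt/V(t)$, so that $\Delta(r)=\{G(o,\cdot)>\beta(r)\}$, $\partial\Delta(r)=\{G(o,\cdot)=\beta(r)\}$ and $\beta'(r)=-Ar/V(r)$. I would introduce $\Lambda(r)=\int_{\Delta(r)}g_r(o,x)k(x)\,dv$ and $\Theta(r)=\int_{\Delta(r)}k\,dv$. Both are finite — using that $k$ is locally integrable and locally bounded at $o$, that each $\Delta(r)$ is precompact, and that $g_r(o,\cdot)$ has only an $L^1$ singularity at $o$ — and non‑decreasing in $r$ (the domains grow and $g_r=G-\beta(r)$ increases pointwise). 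A layer‑cake/Fubini computation gives $\Lambda(r)=\int_{\beta(r)}^\infty\big(\int_{\{G>s\}}k\,dv\big)\,ds$, hence $\Theta(r)=\tfrac{V(r)}{Ar}\Lambda'(r)$; and the coarea formula for the proper exhaustion function $G(o,\cdot)$, after the substitution $s=\beta(\rho)$, gives $\int_{\partial\Delta(r)}\tfrac{k}{\|\nabla G\|}\,d\sigma_r=\tfrac{V(r)}{Ar}\Theta'(r)$ for almost every $r$. Sard's theorem, already invoked for $\partial\Delta(r)$, makes these level‑set manipulations legitimate for a.e.\ $r$ (on a regular level set $\|\nabla G\|>0$ throughout).

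\textbf{Step 2: insert the gradient estimate, then iterate Borel.} On $\partial\Delta(r)$ one has $g_r(o,\cdot)=0$ and $d\pi_r=\tfrac12\|\nabla g_r(o,\cdot)\|\,d\sigma_r=\tfrac12\|\nabla G(o,\cdot)\|\,d\sigma_r$, while Theorem \ref{hh} (equivalently Corollary \ref{bbbq}) bounds $\|\nabla G(o,\cdot)\|$ there by $\alpha(r):=\tfrac{c_1}{r}\int_r^\infty t\,dt/V(t)$. Writing $k\|\nabla G\|=\|\nabla G\|^2\cdot\tfrac{k}{\|\nabla G\|}\le\alpha(r)^2\,\tfrac{k}{\|\nabla G\|}$ and using the coarea identity yields $\int_{\partial\Delta(r)}k\,d\pi_r\le\tfrac12\alpha(r)^2\,\tfrac{V(r)}{Ar}\,\Theta'(r)$. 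I would then apply Borel's growth lemma to $\Theta$ (giving $\Theta'(r)\le\Theta(r)^{1+\delta}$) and to $\Lambda$ (which, combined with $\Theta=\tfrac{V(r)}{Ar}\Lambda'$, gives $\Theta(r)\le\tfrac{V(r)}{Ar}\Lambda(r)^{1+\delta}$), both valid for $r$ outside one set $E_\delta$ of finite Lebesgue measure. Substituting $\alpha(r)$ and collecting the powers of $V(r)$, $r$ and $\int_r^\infty t\,dt/V(t)$ produces the prefactor $H(r,\delta)$; the proof then closes after renaming the constant $C$ and treating separately the degenerate range where $\Lambda$ stays bounded, in which case the right‑hand side does not blow up and the inequality follows directly from the identities of Step 1.

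\textbf{Main obstacle.} I expect the real work to be the final bookkeeping of Step 2: assembling the Green‑function data — the pointwise size of $g_r$, the gradient bound of Theorem \ref{hh}, and the comparison $\int_{\rho(x)}^\infty t\,dt/V(t)\le\int_r^\infty t\,dt/V(t)$ valid on $\partial\Delta(r)$ — against the volume‑growth quantities so that the prefactor comes out precisely as $H(r,\delta)$ and no larger. A related technical nuisance is isolating a fixed geodesic ball about $o$ on which $\|\nabla G\|$ has a mild (integrable) singularity but $k$ is bounded, so that the auxiliary functions $\Lambda$ and $\Theta$ are genuinely finite and Borel's growth lemma truly applies on $(r_0,\infty)$.
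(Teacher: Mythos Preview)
Your architecture --- express $\Lambda(r)=\int_{\Delta(r)}g_r k\,dv$ via layer--cake, differentiate twice, then apply Borel's growth lemma twice --- is exactly the paper's. The problem is in the bookkeeping of Step~2. By writing $k\|\nabla G\|=\|\nabla G\|^2\cdot\frac{k}{\|\nabla G\|}\le\alpha(r)^2\,\frac{k}{\|\nabla G\|}$ you invoke the gradient bound $\|\nabla G\|\le\alpha(r)$ \emph{twice}. Carrying this through, your final prefactor is
\[
\frac{1}{2}\alpha(r)^2\Big(\frac{V(r)}{Ar}\Big)^{2+\delta}
=\frac{c_1^2}{2A^{2+\delta}}\cdot\frac{1}{r^2}\Big(\int_r^\infty\frac{t\,dt}{V(t)}\Big)^{2}\Big(\frac{V(r)}{r}\Big)^{2+\delta}
= C\,H(r,\delta)\cdot H(r),
\]
not $C\,H(r,\delta)$. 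Since $H(r)$ is in general unbounded (condition~(\ref{cond}) only asks $\log H(r)=o(\log r)$), this does not establish the statement as written, even though it would still suffice for the downstream error term $O(\log H(r))$ in the Second Main Theorem.

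The paper avoids the extra $\alpha(r)$ by using the gradient bound only once, through Corollary~\ref{bbbq}: it bounds $\int_{\partial\Delta(r)}k\,d\pi_r\le c_2\alpha(r)\int_{\partial\Delta(r)}k\,d\sigma_r$ and then controls $\int_{\partial\Delta(r)}k\,d\sigma_r$ directly. Using Lemma~\ref{thm4} it writes $\Lambda(r)=A\int_0^r\big(\int_t^r s/V(s)\,ds\big)\big(\int_{\partial\Delta(t)}k\,d\sigma_t\big)\,dt$, whence $\int_{\partial\Delta(r)}k\,d\sigma_r=A^{-1}\frac{d}{dr}\big(V(r)\Lambda'(r)/r\big)$; two Borel applications give $\int_{\partial\Delta(r)}k\,d\sigma_r\le A^{-1}(V(r)/r)^{1+\delta}\Lambda(r)^{(1+\delta)^2}$, and a single factor of $\alpha(r)$ then produces exactly $H(r,\delta)$. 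In short: replace your coarea identity for $\Theta'$ (which naturally carries the weight $1/\|\nabla G\|$) by a differentiation identity that outputs $\int_{\partial\Delta(r)}k\,d\sigma_r$ with the Riemannian area element, so that only one, not two, powers of $\alpha(r)$ enter.
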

 
  \begin{proof} 
 Invoking  Lemma  \ref{thm4}, we have   
   \begin{eqnarray*}
 \int_{\Delta(r)}g_r(o,x)kdv 
 &=&\int_0^rdt \int_{\partial\Delta(t)}g_r(o,x)kd\sigma_t \\
    &=&  A\int_0^r\left(\int_{t}^r\frac{s}{V(s)}ds\right)dt \int_{\partial\Delta(t)}kd\sigma_t.
   \end{eqnarray*}
 Set 
 $$\Lambda(r)=A\int_0^{r}\left(\int_{t}^{r}\frac{s}{V(s)}ds\right)dt \int_{\partial\Delta(t)}kd\sigma_t.$$
 A simple computation leads to  
    \begin{eqnarray*}
 \Lambda'(r)&=&\frac{d\Lambda(r)}{dr} 
 =\frac{Ar}{V(r)}\int_0^rdt\int_{\partial\Delta(t)}kd\sigma_t.
    \end{eqnarray*}
 In further, we have
 $$\frac{d}{dr}\left(\frac{V(r)\Lambda'(r)}{r}\right)=A\int_{\partial\Delta(r)}kd\sigma_r.$$
Applying  Borel's growth lemma  to the left hand side of this  equality  twice:  one is to $V(r)\Lambda'(r)/r$ and  another   is to 
 $\Lambda'(r),$   we deduce  that for any  $\delta>0,$   there exists a subset $E_\delta\subseteq(0,\infty)$ of finite Lebesgue measure such that 
$$ \int_{\partial\Delta(r)}kd\sigma_r\leq \frac{1}{A}\left(\frac{V(r)}{r}\right)^{1+\delta}\Lambda(r)^{(1+\delta)^2}
$$ holds for all $r>0$ outside $E_\delta.$  
  On the other hand,  Corollary \ref{bbbq} implies that there exists a constant $c>0$ such that
  $$d\pi_r\leq \frac{c}{r}\int_{r}^\infty\frac{tdt}{V(t)}d\sigma_r.$$
Set $C=c/A.$ Combining the above, we have 
   \begin{eqnarray*}
 \int_{\partial\Delta(r)}kd\pi_r&\leq&\frac{c}{Ar}\left(\frac{V(r)}{r}\right)^{1+\delta}\int_{r}^\infty\frac{tdt}{V(t)}\Lambda(r)^{(1+\delta)^2} \\
 &=&CH(r,\delta)\Lambda(r)^{(1+\delta)^2}.
   \end{eqnarray*}
 holds for all $r>0$ outside $E_\delta.$  
 \end{proof}

In the following,  we establish  a logarithmic derivative lemma. 
Let $\psi$ be a meromorphic function on $M.$
The norm of the gradient of $\psi$ is defined by
$$\|\nabla\psi\|^2=2\sum_{i,j=1}^m g^{i\overline j}\frac{\partial\psi}{\partial z_i}\overline{\frac{\partial \psi}{\partial  z_j}}$$
in  a local holomorphic   coordinate $z=(z_1,\cdots,z_m),$ where $(g^{i\overline{j}})$ is the inverse of $(g_{i\overline{j}}).$
Define the Nevanlinna's characteristic function of $\psi$ by  
$$T(r,\psi)=m(r,\psi)+N(r,\psi),$$
where    
\begin{eqnarray*}
m(r,\psi)&=&\int_{\partial\Delta(r)}\log^+|\psi|d\pi_r, \\
N(r,\psi)&=& \frac{\pi^m}{(m-1)!}\int_{\psi^*\infty\cap \Delta(r)}g_r(o,x)\alpha^{m-1}.
 \end{eqnarray*}
It is not difficult  to show   that 
 \begin{equation*}\label{goed}
T\Big(r,\frac{1}{\psi-\zeta}\Big)= T(r,\psi)+O(1).
 \end{equation*}
   On $\mathbb P^1(\mathbb C),$ one puts a singular metric
$$\Psi=\frac{1}{|\zeta|^2(1+\log^2|\zeta|)}\frac{\sqrt{-1}}{4\pi^2}d\zeta\wedge d\bar \zeta$$
with   
$\int_{\mathbb P^1(\mathbb C)}\Psi=1.$
\begin{lemma}\label{oo12} We have
$$\frac{1}{4\pi}\int_{\Delta(r)}g_r(o,x)\frac{\|\nabla\psi\|^2}{|\psi|^2(1+\log^2|\psi|)}dv\leq T(r,\psi)+O(1).$$
\end{lemma}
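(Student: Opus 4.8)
The plan is to recognize the left‑hand integrand as a multiple of the pull‑back $\psi^*\Psi\wedge\alpha^{m-1}$, to compare the singular metric $\Psi$ with the Fubini–Study form $\omega_{FS}$ on $\mathbb P^1(\mathbb C)$ by means of a potential that is \emph{bounded above}, and then to feed the two resulting pieces into the First Main Theorem (Theorem \ref{first}) and Dynkin's formula (Lemma \ref{dynkin}).

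First I would carry out the pointwise computation — the same kind already used in the proof of Theorem \ref{first} — showing that
$$\frac{1}{4\pi}\,\frac{\|\nabla\psi\|^2}{|\psi|^2(1+\log^2|\psi|)}\,dv=\frac{\pi^m}{(m-1)!}\,\psi^*\Psi\wedge\alpha^{m-1}$$
at every point of $M$ off $\psi^{-1}\{0,\infty\}$. Here one only uses $\psi^*\Psi=\bigl(4\pi^2|\psi|^2(1+\log^2|\psi|)\bigr)^{-1}\sqrt{-1}\,\partial\psi\wedge\overline{\partial\psi}$ together with the elementary identity $\sqrt{-1}\,\partial\psi\wedge\overline{\partial\psi}\wedge\alpha^{m-1}=c_m\,\|\nabla\psi\|^2\,dv$ for a dimensional constant $c_m$. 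Consequently the quantity to be estimated equals $\frac{\pi^m}{(m-1)!}\int_{\Delta(r)}g_r(o,x)\,\psi^*\Psi\wedge\alpha^{m-1}$, and it suffices to bound this by $T(r,\psi)+O(1)$.

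The main step is the comparison of $\Psi$ and $\omega_{FS}$ on $\mathbb P^1(\mathbb C)$. Both are nonnegative $(1,1)$-forms of total mass $1$, so $\Psi-\omega_{FS}=dd^c\phi$ for some function $\phi$; in the affine coordinate $\zeta$ one checks by the radial‑function formula that one may take
$$\phi(\zeta)=\tfrac{2}{\pi}\,t\arctan t-\tfrac{1}{\pi}\log(1+t^2)+t-\log(1+|\zeta|^2)+C,\qquad t=\log|\zeta|,$$
with no atomic part at $0$ or $\infty$ (the singularity of $\phi$ there is weaker than logarithmic). The decisive feature of this choice is that $\phi$ is bounded above on $\mathbb P^1(\mathbb C)$: it is smooth on $\mathbb P^1(\mathbb C)\setminus\{0,\infty\}$ and near $0$, resp. $\infty$, it behaves like $-\tfrac{2}{\pi}\log\bigl|\log|\zeta|\bigr|\to-\infty$. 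Pulling back gives $\psi^*\Psi=\psi^*\omega_{FS}+dd^c(\phi\circ\psi)$ as currents on $M$, where $\phi\circ\psi$ is bounded above and of class $\mathscr C^\infty$ outside the polar analytic set $\psi^{-1}(0)\cup\psi^{-1}(\infty)$.

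Finally I would assemble the pieces. Assuming, as we may, that $\psi(o)\notin\{0,\infty\}$, Dynkin's formula applied to $\phi\circ\psi$ — legitimate because $\phi$ is a decreasing limit of smooth functions, so one passes to the limit by monotone convergence — yields
$$\frac{\pi^m}{(m-1)!}\int_{\Delta(r)}g_r(o,x)\,dd^c(\phi\circ\psi)\wedge\alpha^{m-1}=\frac12\left(\int_{\partial\Delta(r)}(\phi\circ\psi)\,d\pi_r-(\phi\circ\psi)(o)\right)\le O(1),$$
since $\phi\circ\psi\le\sup_{\mathbb P^1(\mathbb C)}\phi<\infty$ and $\int_{\partial\Delta(r)}d\pi_r=1$. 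On the other hand $\frac{\pi^m}{(m-1)!}\int_{\Delta(r)}g_r(o,x)\,\psi^*\omega_{FS}\wedge\alpha^{m-1}=T_\psi(r,\omega_{FS})$, and $T_\psi(r,\omega_{FS})=T(r,\psi)+O(1)$ by Theorem \ref{first} with $a=\infty$, because $m_\psi(r,\infty)=\tfrac12\int_{\partial\Delta(r)}\log(1+|\psi|^2)\,d\pi_r$ differs from $m(r,\psi)=\int_{\partial\Delta(r)}\log^+|\psi|\,d\pi_r$ by $O(1)$ while $N_\psi(r,\infty)=N(r,\psi)$. Adding the two contributions gives the lemma. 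The genuine obstacle is producing the potential $\phi$ that is bounded above — the mass‑$1$ form $\Psi$ has no global potential, so this requires exactly the normalization above and the asymptotic analysis at $0$ and $\infty$; justifying Dynkin's formula for the singular function $\phi\circ\psi$ is the only other delicate point, and it is routine.
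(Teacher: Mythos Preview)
Your argument is correct, but it is not the route the paper takes. The paper's proof is the classical Ahlfors--Shimizu averaging trick: after the same first step identifying the integrand with $\frac{\pi^m}{(m-1)!}\,\psi^*\Psi\wedge\alpha^{m-1}$, the paper applies Fubini's theorem to rewrite the whole quantity as
\[
\int_{\mathbb P^1(\mathbb C)} N\Big(r,\frac{1}{\psi-\zeta}\Big)\,\Psi(\zeta),
\]
and then bounds each $N(r,1/(\psi-\zeta))$ by $T(r,\psi)+O(1)$ via the First Main Theorem, the $O(1)$ being $\Psi$-integrable in $\zeta$ since $\int\Psi=1$. By contrast, you compare $\Psi$ with $\omega_{FS}$ globally through an explicit potential $\phi$ that is bounded above, and invoke Dynkin's formula once for $\phi\circ\psi$. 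The paper's approach is shorter and avoids constructing $\phi$ and checking its asymptotics; your approach is more direct in the sense that it never unpacks the counting function fiberwise, and it makes transparent that the only property of $\Psi$ being used is that it is cohomologous to $\omega_{FS}$ via a potential bounded above. Both are standard; the Fubini argument is the one most often seen in the Nevanlinna literature.
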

\begin{proof}  Note that  
$$\frac{\|\nabla\psi\|^2}{|\psi|^2(1+\log^2|\psi|)}=4m\pi\frac{\psi^*\Psi\wedge\alpha^{m-1}}{\alpha^m}.$$
It follows   from 
 Fubini's theorem that 
\begin{eqnarray*}
&& \frac{1}{4\pi}\int_{\Delta(r)}g_r(o,x)\frac{\|\nabla\psi\|^2}{|\psi|^2(1+\log^2|\psi|)}dv \\ 
&=&m\int_{\Delta(r)}g_r(o,x)\frac{\psi^*\Psi\wedge\alpha^{m-1}}{\alpha^m}dv  \\
&=&\frac{\pi^m}{(m-1)!}\int_{\mathbb P^1(\mathbb C)}\Psi(\zeta)\int_{\psi^*\zeta\cap \Delta(r)}g_r(o,x)\alpha^{m-1} \\
&=&\int_{\mathbb P^1(\mathbb C)}N\Big(r, \frac{1}{\psi-\zeta}\Big)\Psi(\zeta). 
\end{eqnarray*}
By this with the First Main Theorem, we conclude that 
\begin{eqnarray*}
\frac{1}{4\pi}\int_{\Delta(r)}g_r(o,x)\frac{\|\nabla\psi\|^2}{|\psi|^2(1+\log^2|\psi|)}dv 
&\leq&\int_{\mathbb P^1(\mathbb C)}\big{(}T(r,\psi)+O(1)\big{)}\Psi \\
&=& T(r,\psi)+O(1). 
\end{eqnarray*}
\end{proof}

\begin{lemma}\label{999a}  Let
$\psi\not\equiv0$ be a  meromorphic function on  $M.$  Then for any $\delta>0,$ there exists a subset 
 $E_\delta\subseteq(0,\infty)$ of finite Lebesgue measure such that
  \begin{eqnarray*}
&&  \int_{\partial\Delta(r)}\log^+\frac{\|\nabla\psi\|^2}{|\psi|^2(1+\log^2|\psi|)}d\pi_r \\
  &\leq&(1+\delta)^2\log^+ T(r,\psi)+\log H(r,\delta)+O(1)
\end{eqnarray*}
 holds for all $r>0$ outside  $E_\delta,$   where $H(r,\delta)$ is given by $(\ref{Hr1}).$   
\end{lemma}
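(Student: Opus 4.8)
The plan is to deduce the estimate by combining three ingredients already available: the concavity of the logarithm (to pull $\log^+$ out of the boundary integral), the Calculus Lemma (Theorem~\ref{calculus}), applied to the non-negative density $k:=\|\nabla\psi\|^2/(|\psi|^2(1+\log^2|\psi|))$ occurring on the left-hand side, and Lemma~\ref{oo12} (to replace the resulting volume integral by $T(r,\psi)$). Before starting I would record two standard facts. First, $d\pi_r$ is a probability measure on $\partial\Delta(r)$: applying the Dynkin formula (Lemma~\ref{dynkin}) to $u\equiv1$ gives $\int_{\partial\Delta(r)}d\pi_r=1$. Second, $k\ge0$ is locally integrable on $M$, since, as in the proof of Lemma~\ref{oo12}, $k=4m\pi\,\psi^*\Psi\wedge\alpha^{m-1}/\alpha^m$ and the singular metric $\Psi$ has finite total mass near each of its two singular points $0,\infty\in\mathbb P^1(\mathbb C)$; moreover $k$ is locally bounded at $o$ provided $\psi(o)\notin\{0,\infty\}$, which we may assume after a standard normalization (composing $\psi$ with a M\"obius map changes $T(r,\psi)$ only by $O(1)$). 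Thus the hypotheses of the Calculus Lemma hold for $k$.

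With this in hand, concavity of $\log$ and Jensen's inequality for the probability measure $d\pi_r$ give
$$\int_{\partial\Delta(r)}\log^+k\,d\pi_r\le\int_{\partial\Delta(r)}\log(1+k)\,d\pi_r\le\log\Big(1+\int_{\partial\Delta(r)}k\,d\pi_r\Big)\le\log^+\!\int_{\partial\Delta(r)}k\,d\pi_r+O(1).$$
Next I would apply the Calculus Lemma to $k$: for every $\delta>0$ there is a set $E_\delta\subseteq(0,\infty)$ of finite Lebesgue measure such that
$$\int_{\partial\Delta(r)}k\,d\pi_r\le CH(r,\delta)\Big(\int_{\Delta(r)}g_r(o,x)k\,dv\Big)^{(1+\delta)^2}$$
for all $r>0$ outside $E_\delta$, and then bound the volume integral by Lemma~\ref{oo12} (whose proof rests on the First Main Theorem, Theorem~\ref{first}), namely $\int_{\Delta(r)}g_r(o,x)k\,dv\le4\pi(T(r,\psi)+O(1))$. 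Substituting, taking $\log^+$, and using $\log^+(xy)\le\log^+x+\log^+y$ together with $\log^+(T(r,\psi)+O(1))\le\log^+T(r,\psi)+O(1)$ to absorb the constants, one gets
$$\log^+\!\int_{\partial\Delta(r)}k\,d\pi_r\le\log H(r,\delta)+(1+\delta)^2\log^+T(r,\psi)+O(1)$$
off $E_\delta$. Chaining this with the Jensen estimate is exactly the assertion of the lemma.

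The Jensen step and the tracking of the several $O(1)$'s are routine; the substantive content is packaged inside the two lemmas invoked — the Calculus Lemma already encapsulates the gradient bound for the Green function (Theorem~\ref{hh} and Corollary~\ref{bbbq}) and the double application of Borel's growth lemma, while Lemma~\ref{oo12} carries the Fubini computation and the First Main Theorem. The only point that genuinely needs attention is the verification of the Calculus Lemma's hypotheses for $k$, in particular local boundedness at the fixed pole $o$; this is where the preliminary normalization enters, and I expect this (minor) reduction, rather than the subsequent assembly, to be the main obstacle.
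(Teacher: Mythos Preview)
Your proposal is correct and follows essentially the same route as the paper: Jensen's inequality for the probability measure $d\pi_r$, then the Calculus Lemma (Theorem~\ref{calculus}) applied to $k=\|\nabla\psi\|^2/(|\psi|^2(1+\log^2|\psi|))$, then Lemma~\ref{oo12}. You are in fact slightly more careful than the paper in verifying the hypotheses of the Calculus Lemma (local boundedness of $k$ at $o$ via a M\"obius normalization), which the paper leaves implicit.
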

\begin{proof} The concavity of $\log$ implies that    
\begin{eqnarray*}
% \nonumber to remove numbering (before each equation)
&& \int_{\partial\Delta(r)}\log^+\frac{\|\nabla\psi\|^2}{|\psi|^2(1+\log^2|\psi|)}d\pi_r \\
   &\leq&   \log\int_{\partial\Delta(r)}\bigg(1+\frac{\|\nabla\psi\|^2}{|\psi|^2(1+\log^2|\psi|)}\bigg)d\pi_r \\
    &\leq&  \log^+\int_{\partial\Delta(r)}\frac{\|\nabla\psi\|^2}{|\psi|^2(1+\log^2|\psi|)}d\pi_r+O(1). \nonumber
\end{eqnarray*}
By  this with  Theorem \ref{calculus} and Lemma \ref{oo12}, we conclude  that  for any $\delta>0,$ there exists a subset 
 $E_\delta\subseteq(0,\infty)$ of finite Lebesgue measure such that 
\begin{eqnarray*}
% \nonumber to remove numbering (before each equation)
   && \log^+\int_{\partial\Delta(r)}\frac{\|\nabla\psi\|^2}{|\psi|^2(1+\log^2|\psi|)}d\pi_r\\
   &\leq& (1+\delta)^2 \log^+\int_{\Delta(r)}g_r(o,x)\frac{\|\nabla\psi\|^2}{|\psi|^2(1+\log^2|\psi|)}dv 
    +\log H(r,\delta)+O(1) \\
   &\leq& (1+\delta)^2 \log^+T(r,\psi)+\log H(r,\delta)+O(1)
\end{eqnarray*}
holds  for all $r>0$ outside  $E_\delta.$  
\end{proof}

Define
$$m\left(r,\frac{\|\nabla\psi\|}{|\psi|}\right)=\int_{\partial\Delta(r)}\log^+\frac{\|\nabla\psi\|}{|\psi|}d\pi_r.$$

\begin{theorem}[Logarithmic Derivative  Lemma]\label{log1} Let
$\psi\not\equiv0$ be a  meromorphic function on  $M.$   Then for any $\delta>0,$ there exists a  subset  $E_\delta\subseteq(0,\infty)$ of  finite Lebesgue measure such that 
\begin{eqnarray*}
% \nonumber to remove numbering (before each equation)
   m\Big(r,\frac{\|\nabla\psi\|}{|\psi|}\Big)&\leq& \frac{2+(1+\delta)^2}{2}\log^+ T(r,\psi)+\frac{1}{2}\log H(r,\delta)+O(1)
\end{eqnarray*}
 holds for all $r>0$ outside  $E_\delta,$  where $H(r,\delta)$ is given by $(\ref{Hr1}).$  
\end{theorem}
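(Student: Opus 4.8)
The plan is to deduce this directly from Lemma \ref{999a} by an elementary manipulation. Since $\log^+\frac{\|\nabla\psi\|}{|\psi|}=\frac12\log^+\frac{\|\nabla\psi\|^2}{|\psi|^2}$ and
$$\frac{\|\nabla\psi\|^2}{|\psi|^2}=\frac{\|\nabla\psi\|^2}{|\psi|^2(1+\log^2|\psi|)}\cdot\big(1+\log^2|\psi|\big),$$
the subadditivity $\log^+(ab)\le\log^+a+\log^+b$ gives, after integrating over $\partial\Delta(r)$ against $d\pi_r$,
$$m\Big(r,\frac{\|\nabla\psi\|}{|\psi|}\Big)\le\frac12\int_{\partial\Delta(r)}\log^+\frac{\|\nabla\psi\|^2}{|\psi|^2(1+\log^2|\psi|)}\,d\pi_r+\frac12\int_{\partial\Delta(r)}\log^+\big(1+\log^2|\psi|\big)\,d\pi_r.$$
The first integral is bounded, for every $\delta>0$ and all $r$ outside a set $E_\delta$ of finite Lebesgue measure, by $(1+\delta)^2\log^+T(r,\psi)+\log H(r,\delta)+O(1)$, which is precisely Lemma \ref{999a}. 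So the whole task reduces to showing the second integral is at most $2\log^+T(r,\psi)+O(1)$, with no exceptional set.

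For the second integral I would first record that $d\pi_r$ is a probability measure on $\partial\Delta(r)$, which follows by applying the Dynkin formula (Lemma \ref{dynkin}) to the constant function $1$. Then, from $1+\log^2 t\le(1+|\log t|)^2$ together with $|\log|\psi||=\log^+|\psi|+\log^+(1/|\psi|)$, one gets the pointwise bound $\log^+(1+\log^2|\psi|)\le 2\log\big(1+\log^+|\psi|+\log^+(1/|\psi|)\big)$. Applying Jensen's inequality to the concave function $x\mapsto\log(1+x)$ yields
$$\int_{\partial\Delta(r)}\log^+\big(1+\log^2|\psi|\big)\,d\pi_r\le 2\log\Big(1+m(r,\psi)+m\big(r,\tfrac1\psi\big)\Big),$$
and the First Main Theorem gives $m(r,\psi)\le T(r,\psi)$ and $m(r,1/\psi)\le T(r,1/\psi)=T(r,\psi)+O(1)$ (the last equality from the identity $T(r,1/(\psi-\zeta))=T(r,\psi)+O(1)$ with $\zeta=0$). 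Hence the right-hand side is $\le 2\log\big(1+2T(r,\psi)+O(1)\big)\le 2\log^+T(r,\psi)+O(1)$. Substituting the two estimates into the display above and halving produces exactly $\frac{2+(1+\delta)^2}{2}\log^+T(r,\psi)+\frac12\log H(r,\delta)+O(1)$, with $E_\delta$ the set already supplied by Lemma \ref{999a}.

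There is no deep obstacle; the one point that requires care is the treatment of $\log^+(1+\log^2|\psi|)$. One must keep $\log^+|\psi|$ and $\log^+(1/|\psi|)$ together inside a single logarithm before invoking Jensen, as above. The tempting shortcut of writing $\log^2|\psi|=(\log^+|\psi|)^2+(\log^+(1/|\psi|))^2$ and estimating the two squares separately wastes a factor of two and yields only the weaker constant $\frac{4+(1+\delta)^2}{2}$; retaining the sum inside the logarithm is what gives the sharp coefficient stated in the theorem. Everything else is routine, and no analytic input beyond Lemma \ref{999a} and the First Main Theorem is needed.
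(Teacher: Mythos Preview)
Your proposal is correct and follows essentially the same approach as the paper's own proof: the identical decomposition into the two integrals, Lemma \ref{999a} for the first, and the bound $1+\log^2|\psi|\le(1+|\log|\psi||)^2=(1+\log^+|\psi|+\log^+(1/|\psi|))^2$ followed by Jensen's inequality and the First Main Theorem for the second. The paper's computation is line-for-line the same, so there is nothing to add.
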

\begin{proof} Note that
\begin{eqnarray*}
% \nonumber to remove numbering (before each equation)
    m\left(r,\frac{\|\nabla\psi\|}{|\psi|}\right)  
   &\leq& \frac{1}{2}\int_{\partial\Delta(r)}\log^+\frac{\|\nabla\psi\|^2}{|\psi|^2(1+\log^2|\psi|)}d\pi_r \ \ \  \  \    \   \  \  \    \    \   \   \\ 
 &&   +\frac{1}{2}\int_{\partial\Delta(r)}\log\left(1+\log^2|\psi|\right)d\pi_r \\
  &=& \frac{1}{2}\int_{\partial\Delta(r)}\log^+\frac{\|\nabla\psi\|^2}{|\psi|^2(1+\log^2|\psi|)}d\pi_r \\
   && +\frac{1}{2}\int_{\partial\Delta(r)}\log\bigg(1+\Big{(}\log^+|\psi|+\log^+\frac{1}{|\psi|}\Big{)}^2\bigg)d\pi_r  \\
    &\leq&  \frac{1}{2}\int_{\partial\Delta(r)}\log^+\frac{\|\nabla\psi\|^2}{|\psi|^2(1+\log^2|\psi|)}d\pi_r \\
   && +\log\int_{\partial\Delta(r)}\Big{(}\log^+|\psi|+\log^+\frac{1}{|\psi|}\Big{)}d\pi_r +O(1)  \\
   &\leq& \frac{1}{2}\int_{\partial\Delta(r)}\log^+\frac{\|\nabla\psi\|^2}{|\psi|^2(1+\log^2|\psi|)}d\pi_r+\log^+T(r,\psi)+O(1).
\end{eqnarray*}
By Lemma \ref{999a},  we have for any $\delta>0,$ there exists a subset 
 $E_\delta\subseteq(0,\infty)$ of finite Lebesgue measure such that 
 \begin{equation*}
 m\left(r,\frac{\|\nabla\psi\|}{|\psi|}\right)  \leq \frac{2+(1+\delta)^2}{2}\log^+ T(r,\psi)+\frac{1}{2}\log H(r,\delta)+O(1)
\end{equation*}
 holds for all $r>0$ outside  $E_\delta.$  
  \end{proof}

\subsubsection{Second Main  Theorem}~\label{sec424}

  The characteristic function of $\mathscr R$ is defined  by 
 $$  T(r, \mathscr R)= \frac{\pi^m}{(m-1)!}\int_{\Delta(r)}g_{r}(o,x)\mathscr R\wedge\alpha^{m-1}. 
$$
\begin{theorem}[Second Main Theorem]\label{sec1}
 Let $f: M\rightarrow \mathbb P^1(\mathbb C)$ be a nonconstant   meromorphic function. Let $a_1, \cdots, a_q$ be distinct values in 
  $\mathbb P^1(\mathbb C).$ Then  for any $\delta>0,$ there exists a subset $E_\delta\subseteq(0, \infty)$ of finite Lebesgue measure such that 
     \begin{eqnarray*}
&& (q-2)T_f(r,\omega_{FS})+T(r, \mathscr R) \\
&\leq& \sum_{j=1}^q\overline N_f(r,a_j)+O\left(\log^+T_f(r,\omega_{FS})+\log H(r)+\delta\log r\right)
     \end{eqnarray*}
holds for all $r>0$ outside $E_\delta,$ where $H(r)$ is given by $(\ref{Hr}).$ 
\end{theorem}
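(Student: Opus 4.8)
The plan is to adapt the Carlson--Griffiths ``negative curvature'' method to the exhaustion $\{\Delta(r)\}_{r>0}$ of Section~\ref{sec421}, controlling all error terms by the Calculus Lemma (Theorem~\ref{calculus}) and the Logarithmic Derivative Lemma (Theorem~\ref{log1}). First I would fix a singular volume form on $\mathbb P^1(\mathbb C)$ with logarithmic poles at the target points, namely
$$\Phi=\frac{C_0}{\prod_{j=1}^q\|w,a_j\|^2\,\ell_j(w)^2}\,\omega_{FS},\qquad \ell_j(w):=\log\frac{R}{\|w,a_j\|^2},$$
with $R$ large enough that $\ell_j\ge 1$ and $C_0$ chosen so that $\int_{\mathbb P^1(\mathbb C)}\Phi=1$; the $\ell_j^2$-factors make $\Phi$ a positive $(1,1)$-current of finite mass. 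Using $dd^c\log\|w,a_j\|^2=[a_j]-\omega_{FS}$, the Ricci form of $\Phi$ is $-(q-2)\omega_{FS}+\sum_j[a_j]+2\sum_j dd^c\log\ell_j$. Since the potentials of $\Phi$ and $\omega_{FS}$ differ only by a function with at worst $\log\log$ (hence integrable) singularities, the First Main Theorem (Theorem~\ref{first}) together with the concavity of $\log$ will give
$$T_f(r,\Phi):=\frac{\pi^m}{(m-1)!}\int_{\Delta(r)}g_r(o,x)\,f^*\Phi\wedge\alpha^{m-1}=T_f(r,\omega_{FS})+O\big(\log^+T_f(r,\omega_{FS})\big)+O(1).$$

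Next I would derive the underlying second main theorem inequality. Let $\xi\ge 0$ be the density, with respect to $\alpha^m$, of the positive $(m,m)$-form $f^*\Phi\wedge\alpha^{m-1}$, so $T_f(r,\Phi)$ is a constant times $\int_{\Delta(r)}g_r\xi\,dv$. Computing $dd^c\log\xi$ and using the K\"ahler condition $d\alpha^{m-1}=0$, Poincar\'e--Lelong, the formula for the Ricci form of $\Phi$, and $\mathscr R=-dd^c\log\det(g_{i\bar j})$, one arrives at a current inequality of the schematic form
$$dd^c\log\xi+2\!\sum_{j=1}^q dd^c\log(\ell_j\!\circ\! f)\ \ge\ \mathscr R+(q-2)f^*\omega_{FS}-\sum_{j=1}^q f^*[a_j]+[\mathrm{Ram}_f],$$
where $[\mathrm{Ram}_f]\ge 0$ is the ramification divisor of $f$. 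Multiplying by $g_r(o,x)\,\alpha^{m-1}$, integrating over $\Delta(r)$, and invoking the Dynkin formula (Lemma~\ref{dynkin}) turns every $dd^c$-term into a boundary integral on $\partial\Delta(r)$ and every divisor into its counting function; rearranging yields
$$(q-2)T_f(r,\omega_{FS})+T(r,\mathscr R)+N_{\mathrm{Ram}}(r)\ \le\ \sum_{j=1}^q N_f(r,a_j)+E(r),$$
where $E(r)$ gathers $\int_{\partial\Delta(r)}\log^+\xi\,d\pi_r$, the boundary integrals coming from the $\ell_j$-terms, and $O(1)$.

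It then remains to estimate $E(r)$ and assemble the pieces. The term $\int_{\partial\Delta(r)}\log^+\xi\,d\pi_r$ is bounded by $(1+\delta)^2\log^+T_f(r,\Phi)+\log H(r,\delta)+O(1)$ by the Calculus Lemma (Theorem~\ref{calculus}) together with the first step; the $\ell_j$-contributions are of logarithmic-derivative type and are handled by the Logarithmic Derivative Lemma (Theorem~\ref{log1}) applied to $f$ and to $\langle f;a_j\rangle$, together with $\int_{\partial\Delta(r)}\log\ell_j(f)\,d\pi_r\le\log^+m_f(r,a_j)+O(1)=O(\log^+T_f(r,\omega_{FS}))$ by concavity and the First Main Theorem. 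Hence $E(r)=O\big(\log^+T_f(r,\omega_{FS})+\log H(r,\delta)\big)$ for $r$ outside a set of finite Lebesgue measure. Since $H(r,\delta)=(V(r)/r)^\delta H(r)$ and $\mathrm{Ric}_M\ge 0$ forces $V(r)=O(r^{2m})$ by Bishop--Gromov, one gets $\log H(r,\delta)\le\log H(r)+\delta\log r+O(1)$ after rescaling $\delta$. Finally, each $a_j$-point of multiplicity $k$ contributes $k-1$ to $N_f(r,a_j)-\overline N_f(r,a_j)$ and at least $k-1$ to $N_{\mathrm{Ram}}(r)$, so $\sum_j(N_f-\overline N_f)(r,a_j)\le N_{\mathrm{Ram}}(r)$; substituting this into the displayed inequality and using the bound on $E(r)$ gives precisely the asserted estimate outside a set $E_\delta$ of finite measure.

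The hard part is the current inequality of the second paragraph when $m\ge2$: because $f$ maps into a curve, $f^*\Phi$ is a degenerate $(1,1)$-form, so one cannot take its density directly and must work with $f^*\Phi\wedge\alpha^{m-1}$ relative to $\alpha^m$; establishing the Ahlfors-type pointwise lower bound for $dd^c\log\xi$ --- the step where the Ricci form $\mathscr R$ of $M$ actually enters, with the correct sign --- while correctly bookkeeping the divisorial parts $\sum_j f^*[a_j]$ versus $\sum_j\overline N_f(r,a_j)$ and the ramification term, and verifying that the Dynkin formula applies to the functions involved (which have only logarithmic singularities along analytic sets), is the technical core. A lesser but real point is that the reduction of $\log H(r,\delta)$ to $\log H(r)+\delta\log r$ genuinely uses the non-negativity of the Ricci curvature, through Bishop--Gromov.
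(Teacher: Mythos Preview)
Your proposal is correct and follows the same overarching Carlson--Griffiths/Dynkin-formula scheme as the paper, but the implementation differs in two coupled places. The paper takes the \emph{simpler} singular form $\Phi=\omega_{FS}/\prod_j\|w,a_j\|^2$ (no $\ell_j^{-2}$ factors), so that the current identity $dd^c[\log\xi]=(q-2)f^*\omega_{FS}-\sum_j(f=a_j)+D_{f,\mathrm{ram}}+\mathscr R$ is exact rather than an inequality; then, for the upper bound of $\int_{\partial\Delta(r)}\log\xi\,d\pi_r$, it uses a partition of unity on $\mathbb P^1(\mathbb C)$ and local affine coordinates $w_\lambda$ with $a_\lambda=w_\lambda^{-1}(0)$ to reduce $\log^+\xi$ to $\sum_\lambda\log^+(\|\nabla f_\lambda\|/|f_\lambda|)$, and invokes the Logarithmic Derivative Lemma (Theorem~\ref{log1}) termwise. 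Your route instead inserts the Ahlfors weights $\ell_j^{-2}$ so that $\int_{\mathbb P^1}\Phi=1$; this lets you bound $\int_{\partial\Delta(r)}\log^+\xi\,d\pi_r$ in one stroke by concavity plus the Calculus Lemma (Theorem~\ref{calculus}) applied to $k=\xi$, since $\int_{\Delta(r)}g_r\xi\,dv\le T_f(r,\omega_{FS})+O(1)$ by Fubini/Crofton exactly as in Lemma~\ref{oo12}, at the expense of two extra $\log\log$ boundary terms that you correctly dispatch via Jensen and the First Main Theorem. The paper's version is more hands-on and avoids the $\ell_j$ bookkeeping; yours avoids the partition of unity and makes the appeal to the Calculus Lemma cleaner. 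Both reductions of $\log H(r,\delta)$ to $\log H(r)+\delta\log r$ are the same, using $V(r)=O(r^{2m})$ from Bishop--Gromov. Finally, note that what you call the ``hard part'' is in fact an \emph{equality} of currents, obtained by a direct local computation (as the paper does), so you need not frame it as a delicate inequality.
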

\begin{proof}
  Define a non-negative function $\xi$ on $M$ by   
  $$f^*\Phi\wedge\alpha^{m-1}=\xi\alpha^m,$$
 where
    \begin{equation}\label{form}
\Phi=\frac{\omega_{FS}}{\prod_{j=1}^q\|w, a_j\|^2}
 \end{equation}
is a  singular volume  form  on $\mathbb P^1(\mathbb C).$  
 By ${\rm{Ric}}(\omega_{FS})=2\omega_{FS},$ we obtain 
   \begin{eqnarray}\label{be}
dd^c[\log\xi] &=& (q-2)f^*\omega_{FS}-\sum_{j=1}^q(f=a_j)+D_{f, {\rm{ram}}}  +\mathscr R \nonumber
  \end{eqnarray}
 in the sense of currents,  in which $(f=a_j)$ is the zero divisor of $f-a_j,$ and  $D_{f, {\rm{ram}}}$ is the ramification divisor of $f.$ 
Using  Dynkin formula, we have  
  \begin{eqnarray}\label{lower}
 && \frac{1}{2}\int_{\partial\Delta(r)}\log\xi d\pi_r \\
&=& (q-2)T_f(r, \omega_{FS})-\sum_{j=1}^qN_f(r, a_j)+N(r, D_{f, {\rm{ram}}})  
 +T(r,\mathscr R)+O(1) \nonumber \\
&\geq&  (q-2)T_f(r, \omega_{FS})-\sum_{j=1}^q\overline{N}_f(r, a_j)+
T(r, \mathscr R)+O(1). \nonumber
  \end{eqnarray}
 \ \ \  On the other hand,   there exist a finite  open  overing   $\{U_1, \cdots, U_q\}$ of $\mathbb P^1(\mathbb C)$ and 
  rational functions
$w_1,\cdots,w_q$ on $\mathbb P^1(\mathbb C)$   such that  $w_{\lambda}$ is holomorphic on $U_\lambda$  and  $dw_{\lambda}\neq0$ on $U_\lambda$ as well as  
$$\{a_1,\cdots,a_q\}\cap U_\lambda=\{a_\lambda\}=w_\lambda^{-1}(0)$$ 
for   $\lambda=1,\cdots,q.$   
 On each $U_\lambda,$   write 
$$\Phi=\frac{e_\lambda}{|w_{\lambda}|^2}\frac{\sqrt{-1}}{\pi}dw_{\lambda}\wedge d\bar w_{\lambda},$$
where  $e_\lambda$ is a  positive smooth function on $U_\lambda.$  
Given a partition $\{\phi_\lambda\}$ of the unity   subordinate to $\{U_\lambda\}.$ 
 Put 
  $$\Phi_\lambda=\frac{e_\lambda \phi_\lambda}{|w_{\lambda}|^2}\frac{\sqrt{-1}}{\pi}dw_{\lambda}\wedge d\bar w_{\lambda}.$$
Again, set $f_{\lambda}=w_{\lambda}\circ f.$  Then on  $f^{-1}(U_\lambda),$ we have  
\begin{eqnarray*}
 f^*\Phi_\lambda&=&
   \frac{\phi_{\lambda}\circ f\cdot e_\lambda\circ f}{|f_{\lambda}|^2}\frac{\sqrt{-1}}{\pi}df_{\lambda}\wedge d\bar f_{\lambda} \\
   &=& \frac{\phi_{\lambda}\circ f\cdot e_\lambda\circ f}{|f_{\lambda}|^2}\frac{\sqrt{-1}}{\pi}\sum_{i,j=1}^m\frac{\partial f_\lambda}{\partial z_i}
   \overline{\frac{\partial f_\lambda}{\partial z_j}}dz_i\wedge d\bar z_j.
 \end{eqnarray*}
 For any  $x_0\in M,$  we can take a  local holomorphic coordinate $z=(z_1,\cdots,z_m)$ near $x_0$ and  a local holomorphic coordinate
  $\zeta$ 
near $f(x_0)$ such that
$$\alpha|_{x_0}=\frac{\sqrt{-1}}{\pi}\sum_{j=1}^m dz_j\wedge d\bar{z}_j, \ \ \  \  
\omega_{FS}|_{f(x_0)}=\frac{\sqrt{-1}}{\pi}d\zeta\wedge d\bar{\zeta}.
$$
It yields  that 
\begin{eqnarray*}
&& f^*\Phi_\lambda\wedge\alpha^{m-1}\big|_{x_0} \\
&=&\frac{(m-1)!\phi_{\lambda}\circ f\cdot e_\lambda\circ f}{|f_{\lambda}|^2}\left(\frac{\sqrt{-1}}{\pi}\right)^m\sum_{j=1}^m\left|\frac{\partial f_\lambda}{\partial z_j}
\right|^2dz_1\wedge d\bar z_1\wedge\cdots \wedge dz_m\wedge d\bar z_m.
\end{eqnarray*}
 Put  
$f^*\Phi_\lambda\wedge\alpha^{m-1}=\xi_\lambda\alpha^m.$
Then, we obtain 
$$ \xi_\lambda
= \frac{\phi_{\lambda}\circ f\cdot e_\lambda\circ f}{m|f_{\lambda}|^2}\sum_{j=1}^m\left|\frac{\partial f_\lambda}{\partial z_j}
\right|^2= \frac{\phi_{\lambda}\circ f\cdot e_\lambda\circ f}{2m}\frac{\|\nabla f_\lambda\|^2}{|f_\lambda|^2}
$$ 
on $f^{-1}(U_\lambda).$
It is therefore  
$$\xi=\sum_{\lambda=1}^q \xi_\lambda=\sum_{\lambda=1}^q\frac{\phi_{\lambda}\circ f\cdot e_\lambda\circ f}{2m}\frac{\|\nabla f_\lambda\|^2}{|f_\lambda|^2}$$
on $M.$
Since $\phi_\lambda\circ f\cdot e_\lambda\circ f$ is bounded on $M,$ then it yields  that
\begin{equation*}
   \log^+\xi\leq  2\sum_{\lambda=1}^q\log^+\frac{\|\nabla f_{\lambda}\|}{|f_{\lambda}|}+O(1).
 \end{equation*}  
By this with Dynkin formula  and Theorem \ref{log1}, for any $\delta>0,$ there exists  a subset $E_\delta\subseteq(0,\infty)$ of finite Lebesgue measure such that 
\begin{eqnarray}\label{upper}
 \frac{1}{2}\int_{\partial\Delta(r)}\log\xi d\pi_r 
&\leq& \sum_{\lambda=1}^q\int_{\partial\Delta(r)}\log^+\frac{\|\nabla f_{\lambda}\|}{|f_{\lambda}|}d\pi_r+O(1) \nonumber \\
&\leq&  \frac{2+(1+\delta)^2}{2}\sum_{\lambda=1}^q\log^+T(r, f_\lambda)+O\big(\log H(r,\delta)\big) \nonumber \\
&=& O\big(\log^+T_f(r,\omega_{FS})+\log H(r,\delta)\big)  \nonumber \\
&\leq & O\big(\log^+T_f(r,\omega_{FS})+\log H(r)+\delta\log r\big) 
\end{eqnarray}
   holds for all $r>0$ outside $E_\delta,$ in which  we used the fact that  $V(r)\leq O(r^{2m}).$ 
Combining  (\ref{lower}) with (\ref{upper}),  we have the theorem proved. 
  \end{proof}

\subsection{Second Main Theorem for Non-positive Sectional  Curvature}~

 Assume   that  $M$ has   curvature satisfying (\ref{curvature}).
According to  the definition of curvature, it is not difficult  to see   that        
 $\tau\geq\sigma.$ In particular, we have $\sigma=\tau$ when $M$ has constant sectional curvature $\sigma.$
 Fix a reference point $o\in M.$ 
 We may   assume  without loss of generality  that  $o$ is a pole of $M$
   for a  technical reason   if necessary, 
   since, 
 otherwise, one  can   consider the universal  covering $\pi: \tilde M\to M.$
 If    $\tilde M$  is equipped  with  pullback metric $\tilde g=\pi^*g$  induced from $g$ via $\pi,$  
    then  $(\tilde M, \tilde g)$ preserves   the  same   curvature    as  $(M, g).$ 
  Note that     $(\tilde M, \tilde g)$ is a Cartan-Hadamard manifold. 
 In the following,  we use   $\rho(x)$  to denote  the Riemannian distance function of $x$ from $o$ on $M.$

\subsubsection{Construction of $\Delta(r)$ and Nevanlinna's functions}~

 \noindent\textbf{A. Setups  for $\tau\geq\sigma>0$}~

  By  the curvature conditions,   there exists  a unique minimal positive global Green function $G(o,x)$ of $\Delta/2$  
 with a pole at $o$ for $M.$ 
 Consider  the following  initial value  problems on $[0,\infty)$:
 $$G''-\tau^2G=0; \ \ \ \    G(0)=0, \ \ \ \  G'(0)=1$$
 and 
  $$H''-\sigma^2H=0; \ \ \ \    H(0)=0, \ \ \ \  H'(0)=1.$$
Evidently,  the two  ODEs  have  the unique solutions   
    \begin{equation*}
G(t)=\frac{\sinh\tau t}{\tau}, \ \ \ \     H(t)=\frac{\sinh\sigma t}{\sigma}. 
    \end{equation*}
Using the estimates of   $\Delta\rho,$  we have  (see \cite{Ka0, Sa0})
    \begin{equation}\label{tiancai}
\phi(\rho(x))\leq G(o,x)\leq\psi(\rho(x)),
    \end{equation}
where   
$$\phi(t)=\frac{2}{\omega_{2m-1}}\int_t^\infty G(s)^{1-2m}ds, \ \ \ \    \psi(t)=\frac{2}{\omega_{2m-1}}\int_t^\infty H(s)^{1-2m}ds.$$ 
 Here,  $\omega_{2m-1}:=2\pi^m/(m-1)!$ is  the standard  Euclidean  area  of    $\mathbb S^{2m-1}.$
For $r>0,$  define  
   \begin{eqnarray*}
  \Delta(r)&=& \big\{x\in M: \  G(o,x)>\phi(r)\big\} \\
  &=& \left\{x\in M: \  G(o,x)>\frac{2}{\omega_{2m-1}}\int_r^\infty \left(\frac{\sinh\tau t}{\tau}\right)^{1-2m}dt\right\}. 
     \end{eqnarray*}
  Since  
  $\phi(r)\searrow0$ as $r\to\infty,$ 
$\lim_{x\to o}G(o,x)=\infty$  and   $\lim_{x\to\infty}G(o,x)=0,$
  one infers     that  $\Delta(r)$ is a precompact  domain containing $o$ in $M,$ which     
exhausts $M.$ 
 Thus, the boundary of $\Delta(r)$ can be formulated as 
   \begin{eqnarray*}
  \partial\Delta(r)&=&  \left\{x\in M: \ G(o,x)=\frac{2}{\omega_{2m-1}}\int_r^\infty \left(\frac{\sinh\tau t}{\tau}\right)^{1-2m}dt\right\}. 
     \end{eqnarray*}
    Set  
        \begin{eqnarray*}
 g_r(o,x) 
 &=&G(o,x)-\frac{2}{\omega_{2m-1}}\int_r^\infty \left(\frac{\sinh\tau t}{\tau}\right)^{1-2m}dt,
     \end{eqnarray*}
 which defines   the Green function of $\Delta/2$ for $\Delta(r)$ with a pole at $o$ satisfying Dirichlet boundary condition.  
 Denote by  $\pi_r$  the harmonic measure  on $\partial\Delta(r)$ with respect to $o,$ i.e., 
  $$d\pi_r=\frac{1}{2}\frac{\partial g_r(o,x)}{\partial{\vec{\nu}}}d\sigma_r,$$
  where  $\partial/\partial \vec\nu$ is the inward  normal derivative on $\partial \Delta(r),$ $d\sigma_{r}$ is the Riemannian area element  of 
$\partial \Delta(r).$
 In particular,     $G(o,x)=G(\rho(x))=H(\rho(x))$ if  $\sigma=\tau.$ In this case,    we have 
     \begin{equation}\label{ext}
\Delta(r)= \big\{x\in M: \  \rho(x)<r\big\},
    \end{equation}
which is just  the geodesic ball centered at $o$ with radius $r$ in $M.$ Then  
        \begin{eqnarray}\label{gtt}
 g_r(o,x)&=& \frac{2}{\omega_{2m-1}}\int_{\rho(x)}^\infty \left(\frac{\sinh\tau t}{\tau}\right)^{1-2m}dt-\frac{2}{\omega_{2m-1}}\int_r^\infty \left(\frac{\sinh\tau t}{\tau}\right)^{1-2m}dt  \nonumber \\
 &=& \frac{2}{\omega_{2m-1}}\int_{\rho(x)}^r \left(\frac{\sinh\tau t}{\tau}\right)^{1-2m}dt. 
     \end{eqnarray}
\ \ \ \   Let $B(r)$ be the geodesic ball  centered at $o$ with radius $r$ in $M.$ We  give the  relation between  $\Delta(r)$ and $B(r)$ as follows. 
 
 Set 
 $$\rho_{r, \min}=\min_{x\in\partial\Delta(r)}\rho(x), \ \ \ \   \rho_{r, \max}=\max_{x\in\partial\Delta(r)}\rho(x).$$
  \begin{theorem}\label{lem22}   For $\tau\geq\sigma>0,$ 
  we have $\rho_{r,\min}\geq r$ and 
$$\limsup_{r\to\infty}\frac{\rho_{r,\max}}{r}\leq\frac{\tau}{\sigma}.$$
Hence, there exists a constant $\gamma\geq\tau/\sigma$ independent of $r$ such that 
$$B(r)\subseteq\Delta(r)\subseteq B(\gamma r).$$ 
\end{theorem}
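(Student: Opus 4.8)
The plan is to extract both inclusions directly from the two-sided comparison $(\ref{tiancai})$, exploiting that on $\partial\Delta(r)$ one has the exact identity $G(o,x)=\phi(r)$, and then to upgrade the asymptotic statement $\rho_{r,\max}\lesssim(\tau/\sigma)r$ into a uniform one by a short analysis of $\phi$, $\psi$ and their inverses.

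First I would record the elementary properties of $\phi$ and $\psi$. Since $\sigma,\tau>0$, the integrands defining $\phi$ and $\psi$ decay like $e^{-(2m-1)\tau s}$ and $e^{-(2m-1)\sigma s}$ at $+\infty$, so $\phi,\psi$ are finite on $(0,\infty)$; since $m\geq1$ these integrands are non-integrable at $0^+$, so $\phi(0^+)=\psi(0^+)=+\infty$; and both integrands are strictly positive, so $\phi$ and $\psi$ are strictly decreasing continuous bijections of $(0,\infty)$ onto itself, with well-defined inverses. The degenerate subcase $\sigma=\tau$ needs nothing: $(\ref{ext})$ gives $\Delta(r)=B(r)$, so $\rho_{r,\min}=\rho_{r,\max}=r$ and $\gamma=1$ works; so I assume $\tau>\sigma$ from here on.

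Second, the two inclusions. For $x\in\partial\Delta(r)$ the left half of $(\ref{tiancai})$ reads $\phi(\rho(x))\leq G(o,x)=\phi(r)$, so $\rho(x)\geq r$ by monotonicity of $\phi$; this is $\rho_{r,\min}\geq r$, and running the same inequality on the open ball $B(r)$ (now with strict inequalities) gives $B(r)\subseteq\Delta(r)$. Symmetrically, the right half of $(\ref{tiancai})$ gives $\phi(r)=G(o,x)\leq\psi(\rho(x))$, hence $\rho(x)\leq\psi^{-1}(\phi(r))$; this gives $\rho_{r,\max}\leq\psi^{-1}(\phi(r))$ and, by the same argument applied to $\Delta(r)$, the inclusion $\Delta(r)\subseteq B\big(\psi^{-1}(\phi(r))\big)$. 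Everything then comes down to controlling $\psi^{-1}(\phi(r))/r$.

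Finally I would analyze $F(r):=\psi^{-1}(\phi(r))/r$, which is continuous and positive on $(0,\infty)$. Using $\sinh\tau s\sim e^{\tau s}/2$ one gets $\log\phi(r)=-(2m-1)\tau r+O(1)$ and $\log\psi(t)=-(2m-1)\sigma t+O(1)$; setting $t=\psi^{-1}(\phi(r))$ and equating logarithms yields $t=(\tau/\sigma)r+O(1)$, so $F(r)\to\tau/\sigma$ as $r\to\infty$ — which already gives $\limsup_{r\to\infty}\rho_{r,\max}/r\leq\tau/\sigma$. Near $0$, since $\sinh\tau s/\tau$ and $\sinh\sigma s/\sigma$ are both $\sim s$, the functions $\phi$ and $\psi$ share the same leading singularity at $0^+$, and a short expansion shows $F$ has a finite positive limit as $r\to0^+$ as well. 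Being continuous on $(0,\infty)$ with finite limits at both endpoints, $F$ is bounded; I would then take $\gamma=\sup_{r>0}F(r)$, which satisfies $\gamma\geq\lim_{r\to\infty}F(r)=\tau/\sigma$, and conclude $B(r)\subseteq\Delta(r)\subseteq B\big(\psi^{-1}(\phi(r))\big)\subseteq B(\gamma r)$ for every $r>0$. I expect the last step — converting the purely asymptotic bound at $\infty$ into a constant $\gamma$ valid for all radii — to be the only place needing genuine care, precisely because it forces one to control $\phi$, $\psi$ and their inverses near the origin as well (with the $m=1$ case slightly apart from $m\geq2$), rather than only in the large-$r$ regime that carries the curvature-theoretic content.
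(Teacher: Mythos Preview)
Your argument is correct and follows essentially the same route as the paper: both use the two-sided Green function comparison $(\ref{tiancai})$, read off $\rho_{r,\min}\geq r$ from the left inequality and $\phi$-monotonicity, and obtain $\rho_{r,\max}\leq (\tau/\sigma)r+O(1)$ from the right inequality via the large-$r$ asymptotics $\log\phi(r)\sim -(2m-1)\tau r$, $\log\psi(t)\sim -(2m-1)\sigma t$. The only organizational differences are that you package the upper bound as $\rho_{r,\max}\leq\psi^{-1}(\phi(r))$ and analyze $F(r)=\psi^{-1}(\phi(r))/r$, whereas the paper works directly with explicit integral bounds $\sinh\tau t\leq e^{\tau t}/2$ and $\sinh\sigma t\geq e^{\sigma t}/4$; and you actually justify the existence of a uniform $\gamma$ by checking that $F$ stays bounded near $r=0^+$ (the paper stops after the $\limsup$ and asserts the ``Hence'' clause without further comment). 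Your version is thus slightly more complete on this last point, and the $m=1$ versus $m\geq2$ split you anticipate indeed shows up only in the behavior of $F$ near $0$, where in both cases the limit is finite and positive.
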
 
 \begin{proof}    
   It yields    from  (\ref{tiancai}) that  
 \begin{equation*}
\int_{r}^\infty \left(\frac{\sinh\tau t}{\tau}\right)^{1-2m}dt\geq 
\int_{\rho_{r,\min}}^\infty \left(\frac{\sinh\tau t}{\tau}\right)^{1-2m}dt, 
  \end{equation*}
 which leads to   $\rho_{r,\min}\geq r.$ 
 Again,  it yields from    (\ref{tiancai})   that 
 \begin{equation*}
\int_{r}^\infty\left(\frac{\sinh\tau t}{\tau}\right)^{1-2m}dt\leq
 \int_{\rho_{r,\max}}^\infty \left(\frac{\sinh\sigma t}{\sigma}\right)^{1-2m}dt. 
  \end{equation*}
Thus, we obtain 
 $$ \limsup_{r\to\infty}\frac{\displaystyle\int_{r}^\infty \left(\frac{\sinh\tau t}{\tau}\right)^{1-2m}dt}{\displaystyle\int_{\rho_{r,\max}}^\infty 
\left(\frac{\sinh\tau t}{\tau}\right)^{1-2m}dt} \leq1.$$
Moreover,  
       \begin{eqnarray*}
\int_{r}^\infty\left(\frac{\sinh\tau t}{\tau}\right)^{1-2m}dt&\geq& \int_{r}^\infty\left(\frac{e^{\tau t}}{2\tau}\right)^{1-2m}dt \\
&=&  \frac{2^{2m-1}\tau^{2m-2}}{2m-1}e^{(1-2m)\tau r}
       \end{eqnarray*}
for $r>0$  and
      \begin{eqnarray*}
\int_{\rho_{r,\max}}^\infty\left(\frac{\sinh\sigma t}{\sigma}\right)^{1-2m}dt&\leq&
 \int_{\rho_{r,\max}}^\infty\left(\frac{e^{\sigma t}}{4\sigma}\right)^{1-2m}dt \\
&=&  \frac{4^{2m-1}\sigma^{2m-2}}{2m-1}e^{(1-2m)\sigma \rho_{r,\max}}
       \end{eqnarray*}
for  $r>(2\sigma)^{-1}\log 2.$ Combining the above,   we get
      \begin{eqnarray*}
 2^{1-2m}\left(\frac{\tau}{\sigma}\right)^{2m-2}\limsup_{r\to\infty}e^{(2m-1)(\sigma\rho_{r,\max}-\tau r)}\leq 1, 
       \end{eqnarray*}
which  gives  
$$\limsup_{r\to\infty}\frac{\rho_{r,\max}}{r}\leq\frac{\tau}{\sigma}.$$
This completes the proof.
 \end{proof}

\noindent\textbf{B. Setups  for $\sigma=\tau=0$}~

We shall  extend   the settings for $\Delta(r), g_r(o,x)$ to the  case that     $\sigma=\tau=0.$ 
For $\tau=0,$   the function $\tau^{-1}\sinh \tau t$ does  not make sense. 
However, since  
$$\lim_{s\to0}\frac{\sinh s t}{s}=t,$$
  we  can extend $s^{-1}\sinh s t$ continuously to  $\chi(s,t)$   defined by (\ref{chi}).
             
        It is noted   that    $M$ is parabolic if $m=1,$ i.e.,  
         there exist no  positive global Green functions for $M.$ In other words, $G(o,x)\equiv\infty$ for $m=1.$ 
         Hence, one cannot use $G(o,x)$ to define the domain $\Delta(r)$ in this case.  
        However,  we note  (\ref{ext}) for $\sigma=\tau>0,$ it is thus natural to define 
             \begin{equation*}\label{ext1}
\Delta(r)= \big\{x\in M: \  \rho(x)<r\big\}
    \end{equation*}
 for $\sigma=\tau=0.$ Let  $\tau\to0,$  then it  yields  from      (\ref{gtt}) that 
    \begin{eqnarray}\label{bbb}
       g_r(o, x)&=& \frac{2}{\omega_{2m-1}}\int_{\rho(x)}^r \chi(0,t)^{1-2m}dt  \\
       &=& \frac{2}{\omega_{2m-1}}\int_{\rho(x)}^r \frac{dt}{t^{2m-1}}. \nonumber
         \end{eqnarray}
    To conclude, $g_r(o, x)$ has the unified form 
  \begin{equation*}\label{ddd1}
       g_r(o, x)=\frac{2}{\omega_{2m-1}}\int_{\rho(x)}^r \chi(\tau,t)^{1-2m}dt
         \end{equation*}
for  $\sigma=\tau\geq0.$ 
In further,  we have 
 \begin{eqnarray}\label{para}
 d\pi_r&=&\frac{1}{\omega_{2m-1}}\frac{\partial}{\partial{\vec{\nu}}}\int_{\rho(x)}^r \chi(\tau,t)^{1-2m}d\sigma_r \\
 &=&  \frac{1}{\omega_{2m-1}}\chi(\tau,r)^{1-2m}d\sigma_r   \nonumber
   \end{eqnarray}
for  $\sigma=\tau\geq0.$

Let $f: M\to \mathbb P^1(\mathbb C)$ be a meromorphic function. Similarly,  one can defined the  Nevanlinna's functions $T_f(r, \omega_{FS}), m_f(r, a),     N_f(r, a)$ and 
    $\overline{N}_f(r, a)$ of $f$ as well as $T(r, \mathscr R)$  on $\Delta(r)$  as in Section \ref{sec421} and Section \ref{sec424}.

\subsubsection{Calculus Lemma and Logarithmic Derivative Lemma}~

  \begin{lemma}\label{grest}   For  $0<t\leq r,$ we have    for   $x\in\partial\Delta(t)$
 $$g_r(o,x)=\frac{2}{\omega_{2m-1}}\int_t^r\chi(\tau, s)^{1-2m}ds.$$
 \end{lemma}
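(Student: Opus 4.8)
The plan is to mimic exactly the computation carried out in the proof of Lemma~\ref{thm4}, but with the model Green function $\phi$ (or $\psi$) replaced by the unified kernel built from $\chi(\tau,\cdot)$. First I would recall that, by construction in the case $\tau\geq\sigma>0$, the Green function of $\Delta/2$ for the domain $\Delta(r)$ with pole at $o$ and Dirichlet boundary condition is
$$
g_r(o,x)=G(o,x)-\frac{2}{\omega_{2m-1}}\int_r^\infty\Bigl(\frac{\sinh\tau t}{\tau}\Bigr)^{1-2m}dt
=G(o,x)-\frac{2}{\omega_{2m-1}}\int_r^\infty\chi(\tau,t)^{1-2m}dt,
$$
while in the case $\sigma=\tau\geq0$ one has the explicit formula $g_r(o,x)=\tfrac{2}{\omega_{2m-1}}\int_{\rho(x)}^r\chi(\tau,t)^{1-2m}dt$ from $(\ref{gtt})$ and $(\ref{bbb})$. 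The key algebraic identity is that, for $0<t\le r$,
$$
g_r(o,x)=g_t(o,x)+\frac{2}{\omega_{2m-1}}\int_t^r\chi(\tau,s)^{1-2m}ds,
$$
which holds because the two terms $\tfrac{2}{\omega_{2m-1}}\int_r^\infty$ and $\tfrac{2}{\omega_{2m-1}}\int_t^\infty$ subtracted from the common $G(o,x)$ differ precisely by $\tfrac{2}{\omega_{2m-1}}\int_t^r\chi(\tau,s)^{1-2m}ds$ (and in the $\sigma=\tau$ case this identity is immediate from the explicit integral formula by splitting $\int_{\rho(x)}^r=\int_{\rho(x)}^t+\int_t^r$).

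Second, I would invoke the defining property that $g_t(o,x)=0$ for every $x\in\partial\Delta(t)$, which is exactly the Dirichlet boundary condition imposed in the construction of $\Delta(t)$. Substituting this into the displayed identity yields
$$
g_r(o,x)\big|_{x\in\partial\Delta(t)}=\frac{2}{\omega_{2m-1}}\int_t^r\chi(\tau,s)^{1-2m}ds,
$$
which is the assertion of the lemma. The argument is uniform across the three regimes $\tau>\sigma>0$, $\tau=\sigma>0$, and $\tau=\sigma=0$ precisely because the continuous extension $s\mapsto\chi(s,t)$ of $s^{-1}\sinh st$ was set up so that all the relevant integrals $\int\chi(\tau,t)^{1-2m}dt$ make sense and the formulas for $g_r$ and $\partial\Delta(r)$ retain the same shape.

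I do not anticipate a serious obstacle here: this is a one-line consequence of the additivity of the (level-shifted) Green functions together with the Dirichlet boundary condition, completely parallel to Lemma~\ref{thm4}. The only point requiring a modicum of care is to make sure the bookkeeping of the normalizing constant $2/\omega_{2m-1}$ and the substitution of $\chi(\tau,\cdot)$ for $\sinh(\tau\cdot)/\tau$ is handled consistently in the borderline case $\sigma=\tau=0$, where $\Delta(r)$ is the geodesic ball $\{\rho(x)<r\}$ and $g_r$ is given by $(\ref{bbb})$; there the identity follows directly by splitting the radial integral, with no appeal to a global Green function (which does not exist when $m=1$). Hence the proof will simply read: apply the additive decomposition of $g_r$ in terms of $g_t$, then use $g_t|_{\partial\Delta(t)}=0$.
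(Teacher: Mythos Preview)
Your proposal is correct and follows essentially the same approach as the paper: split off the case $\sigma=\tau=0$ using the explicit formula $(\ref{bbb})$, and otherwise write $g_r(o,x)=g_t(o,x)+\tfrac{2}{\omega_{2m-1}}\int_t^r\chi(\tau,s)^{1-2m}ds$ and invoke the Dirichlet condition $g_t|_{\partial\Delta(t)}=0$. The paper's proof is precisely this computation, carried out in parallel with Lemma~\ref{thm4}.
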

 \begin{proof}
When $\sigma=\tau=0,$ $\Delta(r)$ is the geodesic ball centered at  $o$ with radius $r.$   The conclusion holds immediately  due to (\ref{bbb}).    In the following, we  consider the other case. Write 
    \begin{eqnarray*}
g_r(o,x)&=& G(o,x)-\frac{2}{\omega_{2m-1}}\int_r^\infty \left(\frac{\sinh\tau t}{\tau}\right)^{1-2m}dt \\
&=& G(o,x)-\frac{2}{\omega_{2m-1}}\int_t^\infty \left(\frac{\sinh\tau s}{\tau}\right)^{1-2m}ds \\
&& +\frac{2}{\omega_{2m-1}}\int_t^r \left(\frac{\sinh\tau s}{\tau}\right)^{1-2m}ds.
  \end{eqnarray*}
    Since $ g_t(o, x)\equiv0$ on $\partial\Delta(t),$  we deduce that 
    $$G(o,x)-\frac{2}{\omega_{2m-1}}\int_t^\infty \left(\frac{\sinh\tau s}{\tau}\right)^{1-2m}ds=0, \ \ \  \ ^\forall x\in \partial\Delta(r).$$
   Hence,  for  $x\in\partial\Delta(t)$
      \begin{eqnarray*}
 g_r(o, x)&=& \frac{2}{\omega_{2m-1}}\int_t^r \left(\frac{\sinh\tau s}{\tau}\right)^{1-2m}ds \\
&=& \frac{2}{\omega_{2m-1}}\int_t^r\chi(\tau, s)^{1-2m}ds.  
  \end{eqnarray*}
  This completes the proof. 
 \end{proof}

  \begin{theorem}\label{hh} 
 There exists a constant $c_1>0$ such that 
   $$\|\nabla g_r(o, x)\|\leq \frac{c_1}{\omega_{2m-1}} \left((2m-1)\tau^2+r^{-1}\right)\int_{r}^\infty\chi(\sigma,t)^{1-2m}dt$$
holds for all  $x\in\partial\Delta(r).$
\end{theorem}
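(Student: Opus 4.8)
The plan is to treat the two curvature regimes together by reducing everything to a bound on $\|\nabla G(o,x)\|$ on the level set $\partial\Delta(r)$, exactly as in the non-negative Ricci case (Theorem 4.x above). First I would recall that on $M$ (or, after lifting to the universal cover, on the Cartan--Hadamard manifold $\tilde M$, which has the same curvature) the function $G(o,\cdot)$ is a positive harmonic function away from $o$. Fix $x\in\partial\Delta(r)$. Since $\rho(x)\geq\rho_{r,\min}\geq r$ by Theorem \ref{lem22} (in the case $\sigma=\tau=0$ this is immediate from $(\ref{bbb})$, since $\Delta(r)=B(r)$), the function $G(o,\cdot)$ is harmonic and positive on the geodesic ball $B(x,\rho(x)/2)$, which does not contain $o$. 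Apply the Cheng--Yau gradient estimate (Lemma \ref{CY}) on this ball with radius $\rho(x)/2$: using that $\mathrm{Ric}\geq-(2m-1)\tau^2$ so that the lower Ricci bound $\kappa$ satisfies $|\kappa|\leq(2m-1)\tau^2$, one gets
$$
\frac{\|\nabla G(o,x)\|}{G(o,x)}\leq \frac{c_N(\rho(x)/2)^2}{(\rho(x)/2)^2-(\rho(x)/2)^2\cdot 0}\Big((2m-1)\tau^2+\frac{2}{\rho(x)}\Big)\leq c_0\big((2m-1)\tau^2+\rho(x)^{-1}\big)
$$
for a constant $c_0$ depending only on $m$ (evaluating $d(x,x)=0$ at the center $x$ of the auxiliary ball). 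Combining with $\rho(x)\geq r$ gives $\|\nabla G(o,x)\|\leq c_0\big((2m-1)\tau^2+r^{-1}\big)G(o,x)$.

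Next I would insert the upper estimate for $G(o,x)$. By $(\ref{tiancai})$ we have $G(o,x)\leq\psi(\rho(x))=\frac{2}{\omega_{2m-1}}\int_{\rho(x)}^\infty H(s)^{1-2m}ds$ where $H(s)=\sigma^{-1}\sinh\sigma s=\chi(\sigma,s)$ (with the convention $\chi(0,s)=s$ taking care of the $\sigma=0$ subcase, where $(\ref{tiancai})$ is replaced by the direct computation leading to $(\ref{bbb})$). Since $\rho(x)\geq r$, monotonicity of the integrand gives
$$
G(o,x)\leq\frac{2}{\omega_{2m-1}}\int_{\rho(x)}^\infty\chi(\sigma,t)^{1-2m}dt\leq\frac{2}{\omega_{2m-1}}\int_r^\infty\chi(\sigma,t)^{1-2m}dt.
$$
Finally, since $g_r(o,x)=G(o,x)-\phi(r)$ differs from $G(o,x)$ by a constant, $\|\nabla g_r(o,x)\|=\|\nabla G(o,x)\|$, so putting the three bounds together yields
$$
\|\nabla g_r(o,x)\|\leq\frac{2c_0}{\omega_{2m-1}}\big((2m-1)\tau^2+r^{-1}\big)\int_r^\infty\chi(\sigma,t)^{1-2m}dt,
$$
which is the claimed inequality with $c_1=2c_0$.

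The step I expect to require the most care is the application of Cheng--Yau: one must be sure the auxiliary geodesic ball on which $G(o,\cdot)$ is harmonic genuinely avoids the pole $o$, which is why the bound $\rho(x)\geq r$ from Theorem \ref{lem22} (and hence $\rho(x)\geq\rho_{r,\min}\geq r$) is used, and one must track that the Ricci lower bound on that ball is the global $-(2m-1)\tau^2$. A minor subtlety is the passage to the universal cover when $o$ is not a pole: this is handled exactly as in the paragraph preceding this subsection, since $\pi:\tilde M\to M$ is a local isometry and norms of gradients are preserved, while the Green function and the domains $\Delta(r)$ can be computed upstairs. The case $\sigma=\tau=0$ needs only the observation that all occurrences of $\int^\infty\chi(\sigma,t)^{1-2m}dt$ and $G(o,x)$ are finite precisely because $m\geq 1$ forces $1-2m\leq-1$; for $m=1$ the left side of the asserted inequality and the quantity $g_r(o,x)$ are still well defined via $(\ref{bbb})$, and the argument above goes through verbatim with $\tau=\sigma=0$.
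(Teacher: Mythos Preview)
Your argument for $\tau\geq\sigma>0$ is essentially the paper's own proof: apply Cheng--Yau to the globally harmonic $G(o,\cdot)$ to get $\|\nabla G(o,x)\|\leq c_0\big((2m-1)\tau^2+\rho(x)^{-1}\big)G(o,x)$, bound $G$ above via $(\ref{tiancai})$, use $\rho(x)\geq r$ on $\partial\Delta(r)$, and note $\nabla g_r=\nabla G$. The paper simply cites this gradient inequality (referring to Sasaki \cite{Sa0}), whereas you spell out the auxiliary ball $B(x,\rho(x)/2)$; the content is identical.

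Where your proposal needs correction is the case $\sigma=\tau=0$. Your claim that ``the argument above goes through verbatim'' is not justified: in this regime $\Delta(r)$ is \emph{not} defined as a superlevel set of a global Green function (for $m=1$ no such $G$ exists at all, and for $m\geq2$ the paper never asserts that the explicit $g_r$ of $(\ref{bbb})$ coincides with $G-\phi(r)$, which would require $G(o,x)$ to depend only on $\rho(x)$). So the step $\|\nabla g_r\|=\|\nabla G\|$ is unavailable here. The paper instead handles this case by the one-line direct computation from $(\ref{bbb})$: since $g_r(o,x)=\frac{2}{\omega_{2m-1}}\int_{\rho(x)}^r t^{1-2m}dt$, one has $\|\nabla g_r(o,x)\|\big|_{\partial\Delta(r)}=\frac{2}{\omega_{2m-1}}r^{1-2m}=\frac{4m-4}{\omega_{2m-1}}\,r^{-1}\int_r^\infty\chi(0,t)^{1-2m}dt$, which is already of the required form. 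You should replace the verbatim claim with this computation.
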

\begin{proof}  When $\sigma=\tau=0,$ $\Delta(r)$ is a geodesic ball centered at $o$ with radius $r.$
Hence,  it yields from (\ref{chi}) and  (\ref{bbb})   that 
   \begin{eqnarray*}
 \|\nabla g_r(o,x)\|\big|_{\partial\Delta(r)}
 &=&\frac{2}{\omega_{2m-1}}r^{1-2m} \\
 &=& \frac{4m-4}{\omega_{2m-1}}r^{-1}\int_r^\infty t^{1-2m}dt \\
 &=& \frac{4m-4}{\omega_{2m-1}}r^{-1}\int_r^\infty \chi(0,t)^{1-2m}dt. 
    \end{eqnarray*}
    This implies  the conclusion holds. 
For $\sigma,\tau\not=0,$  by  Lemma \ref{CY} with  (\ref{curvature}), (\ref{chi}) and  (\ref{tiancai})      (see Remark 5 in \cite{Sa0} also)
  \begin{eqnarray*}
\|\nabla G(o, x)\|&\leq& c_0\left((2m-1)\tau^2+\rho(x)^{-1}\right)G(o,x) \\
&\leq& \frac{2c_0}{\omega_{2m-1}} \left((2m-1)\tau^2+\rho(x)^{-1}\right)\int_{\rho(x)}^\infty\chi(\sigma, t)^{1-2m}dt
  \end{eqnarray*}
 for some large  constant $c_0>0$ which depends  only on  the dimension  $m.$  By  (\ref{tiancai})   again 
 $$\int_{\rho(x)}^\infty \left(\frac{\sinh\tau t}{\tau}\right)^{1-2m}dt \leq  \int_r^\infty \left(\frac{\sinh\tau t}{\tau}\right)^{1-2m}dt, \ \ \  \ ^\forall x\in \partial\Delta(r),$$
 which  implies that     
 $\rho(x)\geq r$ for  all  $x\in\partial\Delta(r).$
Thus, we conclude that 
  \begin{eqnarray*}\label{pqp}
\|\nabla g_r(o, x)\|\big|_{\partial\Delta(r)}&=&\|\nabla G(o, x)\|\big|_{\partial\Delta(r)}  \\
&\leq& \frac{c_1}{\omega_{2m-1}} \left((2m-1)\tau^2+r^{-1}\right)\int_{r}^\infty\chi(\sigma,t)^{1-2m}dt, \nonumber
  \end{eqnarray*}
  where $c_1=2c_0.$
    \end{proof}
    
  \begin{cor}\label{bbba}  There exists a constant $c_2>0$ such that 
   $$d\pi_r\leq \frac{c_2}{\omega_{2m-1}} \left((2m-1)\tau^2+r^{-1}\right)\int_{r}^\infty\chi(\sigma,t)^{1-2m}dt\cdot d\sigma_r$$
holds for all  $x\in\partial\Delta(r),$
where $d\sigma_r$ is the Riemannian area element  of $\partial\Delta(r).$
\end{cor}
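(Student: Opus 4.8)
The plan is to read off this estimate directly from Theorem \ref{hh} together with the definition of the harmonic measure $\pi_r$. Recall from the construction in Section 4.3.1 that
$$d\pi_r=\frac{1}{2}\frac{\partial g_r(o,x)}{\partial\vec{\nu}}d\sigma_r,$$
where $\partial/\partial\vec{\nu}$ denotes the inward normal derivative along $\partial\Delta(r)$. The first step is to identify this normal derivative with the gradient norm: since $g_r(o,\cdot)$ vanishes identically on $\partial\Delta(r)$ and is positive in the interior of $\Delta(r)$, the vector $\nabla g_r(o,x)$ is normal to $\partial\Delta(r)$ and points inward, so that
$$\frac{\partial g_r(o,x)}{\partial\vec{\nu}}=\|\nabla g_r(o,x)\|, \qquad {}^\forall x\in\partial\Delta(r).$$

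The second step is to substitute the gradient bound of Theorem \ref{hh}, which gives a constant $c_1>0$ with
$$\|\nabla g_r(o,x)\|\leq\frac{c_1}{\omega_{2m-1}}\left((2m-1)\tau^2+r^{-1}\right)\int_{r}^\infty\chi(\sigma,t)^{1-2m}dt, \qquad {}^\forall x\in\partial\Delta(r).$$
Combining the two displays yields
$$d\pi_r\leq\frac{c_1}{2\omega_{2m-1}}\left((2m-1)\tau^2+r^{-1}\right)\int_{r}^\infty\chi(\sigma,t)^{1-2m}dt\cdot d\sigma_r,$$
so the claim follows with $c_2=c_1/2$. Since $M$ is complete with $o$ a pole (as arranged by passing to the universal cover when necessary), $\partial\Delta(r)$ is a smooth hypersurface for almost all $r$ by Sard's theorem, so the pointwise inequality of densities above holds as an inequality of measures for those $r$; this covers the range of $r$ needed in the subsequent arguments.

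There is no genuine obstacle here: the corollary is a formal consequence of the preceding gradient estimate and the definition of $d\pi_r$. The only point requiring a line of justification is the identification $\partial g_r/\partial\vec{\nu}=\|\nabla g_r\|$ on the boundary, which uses that $g_r$ is a nonnegative Green-type function vanishing on $\partial\Delta(r)$; this is exactly the same reasoning already invoked in the proof of Corollary \ref{bbbq} in the non-negative Ricci curvature case.
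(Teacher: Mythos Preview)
Your proof is correct and follows exactly the paper's approach: the corollary is obtained immediately from Theorem~\ref{hh} via the identity $\partial g_r/\partial\vec\nu=\|\nabla g_r\|$ on $\partial\Delta(r)$ together with the definition $d\pi_r=\tfrac{1}{2}(\partial g_r/\partial\vec\nu)\,d\sigma_r$, just as in the derivation of Corollary~\ref{bbbq}. The paper states the corollary without a separate proof, treating it as a direct consequence, which is precisely what you have written out.
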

 
 Set
  \begin{equation}\label{0p0}
E(r,\delta)=\frac{(2m-1)\tau^2+r^{-1}}{\chi(\tau,r)^{(1-2m)(1+\delta)}}\int_{r}^\infty\chi(\sigma,t)^{1-2m}dt, 
 \end{equation}

\begin{theorem}[Calculus Lemma]\label{logo1} Let $k\geq0$ be a locally integrable function on $M.$ Assume that $k$ is locally bounded at $o.$ Then  
 for any   $\delta>0,$  there exists   a set $E_{\delta}\subseteq(0,\infty)$ of finite Lebesgue measure such that
$$\int_{\partial\Delta(r)}kd\pi_r\leq CE(r,\delta)\left(\int_{\Delta(r)}g_r(o,x)kdv\right)^{(1+\delta)^2}$$
holds for  all $r>0$ outside $E_{\delta},$ where $E(r,\delta)$ is defined by $(\ref{0p0}).$ 
 \end{theorem}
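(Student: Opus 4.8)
The proof will follow the template of Theorem~\ref{calculus}, with the volume-growth weights replaced by the curvature-controlled quantities supplied by Lemma~\ref{grest} and Corollary~\ref{bbba}. First I would rewrite the right-hand side. For almost every $r>0$ the level set $\partial\Delta(t)$ is a smooth hypersurface for a.e. $t\le r$ (Sard), so the coarea decomposition of the exhaustion together with the layer-cake value of $g_r(o,x)$ on $\partial\Delta(t)$ from Lemma~\ref{grest} gives
$$\int_{\Delta(r)}g_r(o,x)k\,dv=\frac{2}{\omega_{2m-1}}\int_0^r\left(\int_t^r\chi(\tau,s)^{1-2m}ds\right)\left(\int_{\partial\Delta(t)}k\,d\sigma_t\right)dt=:\Lambda(r).$$
This single identity covers all cases of (\ref{curvature}): when $\sigma=\tau=0$ the domain $\Delta(r)$ is a geodesic ball and $g_r$ is given by (\ref{bbb}), which is precisely the $\tau=0$ specialization of the above, so no separate argument is needed there.

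Next I would differentiate $\Lambda$ twice. Fubini and a direct computation (the boundary contribution at $t=r$ drops out) yield
$$\chi(\tau,r)^{2m-1}\Lambda'(r)=\frac{2}{\omega_{2m-1}}\int_0^r\left(\int_{\partial\Delta(t)}k\,d\sigma_t\right)dt,\qquad\frac{d}{dr}\Big(\chi(\tau,r)^{2m-1}\Lambda'(r)\Big)=\frac{2}{\omega_{2m-1}}\int_{\partial\Delta(r)}k\,d\sigma_r.$$
Both $\Lambda(r)$ and $\chi(\tau,r)^{2m-1}\Lambda'(r)$ are non-negative and non-decreasing (the latter being the integral of a non-negative function), so Borel's growth lemma applies to each. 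Applying it first to $\chi(\tau,r)^{2m-1}\Lambda'(r)$ and then to $\Lambda(r)$ produces, for every $\delta>0$, a set $E_\delta$ of finite Lebesgue measure such that
$$\int_{\partial\Delta(r)}k\,d\sigma_r\le\frac{\omega_{2m-1}}{2}\,\chi(\tau,r)^{(2m-1)(1+\delta)}\,\Lambda(r)^{(1+\delta)^2}$$
for all $r>0$ outside $E_\delta$.

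Finally I would convert $d\sigma_r$ into $d\pi_r$ via Corollary~\ref{bbba}, which bounds $d\pi_r$ by $\frac{c_2}{\omega_{2m-1}}\big((2m-1)\tau^2+r^{-1}\big)\int_r^\infty\chi(\sigma,t)^{1-2m}dt$ times $d\sigma_r$. Multiplying the last two displays and observing that $\chi(\tau,r)^{(2m-1)(1+\delta)}=\chi(\tau,r)^{-(1-2m)(1+\delta)}$, the product of the two weights is exactly $E(r,\delta)$ of (\ref{0p0}), giving the asserted inequality with $C=c_2/2$. The only points requiring care are the degenerate case $\sigma=\tau=0$, where several formulas collapse (handled uniformly above through $\chi$), and the a.e.-smoothness of $\partial\Delta(t)$ underlying the coarea step; both are already settled in the preceding subsections, so there is no genuine obstacle—the argument is entirely parallel to that of Theorem~\ref{calculus}.
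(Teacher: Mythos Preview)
Your proposal is correct and follows essentially the same route as the paper's proof: express $\int_{\Delta(r)}g_r(o,x)k\,dv$ as a double integral via Lemma~\ref{grest}, differentiate twice to recover $\int_{\partial\Delta(r)}k\,d\sigma_r$, apply Borel's growth lemma twice, and then pass from $d\sigma_r$ to $d\pi_r$ using Corollary~\ref{bbba}. The only differences are cosmetic (you write $\chi(\tau,r)^{2m-1}\Lambda'(r)$ where the paper writes $\Gamma'(r)/\chi(\tau,r)^{1-2m}$), and your explicit remarks on Sard/coarea and the uniform treatment of the $\sigma=\tau=0$ case are welcome clarifications.
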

\begin{proof} 
  Lemma \ref{grest} yields  that
      \begin{eqnarray*}
\Gamma(r)&:=&   \int_{\Delta(r)}g_r(o,x)kdv \\
&=& \int_0^rdt\int_{\partial\Delta(t)}g_r(o,x)kd\sigma_t \\
      &=& \frac{2}{\omega_{2m-1}}\int_{0}^rdt\int_t^r\chi(\tau, s)^{1-2m}ds\int_{\partial\Delta(t)}kd\sigma_t. 
      \end{eqnarray*}
     Differentiating $\Gamma$  to get    
      $$\Gamma'(r)=\frac{2}{\omega_{2m-1}}\chi(\tau, r)^{1-2m}\int_{0}^rdt\int_{\partial\Delta(t)}kd\sigma_t.$$
      In further, we have 
      $$\frac{d}{dr}\frac{\Gamma'(r)}{\chi(\tau, r)^{1-2m}}=\frac{2}{\omega_{2m-1}}\int_{\partial\Delta(r)}kd\sigma_r.$$
   Applying  Borel's growth  lemma  to the left hand side on this equality twice,  we  can deduce  that there exists a set $E_\delta\subseteq(0,\infty)$ of finite Lebesgue measure such that   
            \begin{eqnarray}\label{w1}
      \int_{\partial\Delta(r)}kd\sigma_r   
    &\leq& 
    \frac{\omega_{2m-1}}{2}\chi(\tau, r)^{(2m-1)(1+\delta)}\Lambda(r)^{(1+\delta)^2}   \\
      &=&\frac{\omega_{2m-1}}{2}\chi(\tau,r)^{(2m-1)(1+\delta)}\left(\int_{\Delta(r)}g_r(o,x)kdv\right)^{(1+\delta)^2} \nonumber
           \end{eqnarray}
  holds for  all $r>0$ outside $E_\delta.$
         Meanwhile,    Corollary  \ref{bbba}  means   that  there exists a constant    $c>0$ such that  
 \begin{equation}\label{w2}
d\pi_r\leq\frac{c}{\omega_{2m-1}} \left((2m-1)\tau^2+r^{-1}\right)\int_{r}^\infty\chi(\sigma,t)^{1-2m}dt\cdot d\sigma_r.
 \end{equation}
Substituting (\ref{w2}) into (\ref{w1}),  we conclude that   
  \begin{eqnarray*}\label{formula}
       \int_{\partial\Delta(r)}kd\pi_r  &\leq&   
   CE(r, \delta)\left(\int_{\Delta(r)}g_r(o,x)kdv\right)^{(1+\delta)^2}
            \end{eqnarray*}
             for all $r>0$ outside $E_\delta$  with $C=c/2.$
\end{proof}

\begin{lemma}\label{999a}  Let
$\psi\not\equiv0$ be a  meromorphic function on  $M.$  Then for any   $\delta>0,$ there exists a subset 
 $E_\delta\subseteq(0,\infty)$ of finite Lebesgue measure such that
\begin{eqnarray*}
% \nonumber to remove numbering (before each equation)
    \int_{\partial\Delta(r)}\log^+\frac{\|\nabla\psi\|^2}{|\psi|^2(1+\log^2|\psi|)}d\pi_r 
   &\leq& (1+\delta)^2 \log^+T(r,\psi)+\log E(r,\delta)+O(1)
   \end{eqnarray*}
 holds for  all $r>0$ outside  $E_\delta,$ where $E(r,\delta)$ is defined by $(\ref{0p0}).$  
\end{lemma}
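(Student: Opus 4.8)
The plan is to run the same three‑step argument used for the corresponding statement in the non‑negative Ricci curvature subsection, substituting the Calculus Lemma valid for the domains $\Delta(r)$ built in Parts A and B above, namely Theorem \ref{logo1}, for Theorem \ref{calculus}, while leaving the First‑Main‑Theorem input untouched. Throughout write $k=\|\nabla\psi\|^2/(|\psi|^2(1+\log^2|\psi|))\geq0$. Note first that $k$ is locally integrable on $M$ and, after choosing the reference point $o$ with $\psi(o)\notin\{0,\infty\}$, locally bounded at $o$, so Theorem \ref{logo1} applies to it.

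I would begin by observing that the inequality of Lemma \ref{oo12},
\[
\frac{1}{4\pi}\int_{\Delta(r)}g_r(o,x)\,k\,dv\leq T(r,\psi)+O(1),
\]
remains valid verbatim in the present setting: its proof invokes only Fubini's theorem, the pointwise identity $k=4m\pi\,\psi^*\Psi\wedge\alpha^{m-1}/\alpha^m$, and the First Main Theorem (Theorem \ref{first}), and Theorem \ref{first} holds for an arbitrary precompact domain with smooth boundary, in particular for the $\Delta(r)$ of this subsection. Next, since $\pi_r$ is a probability measure on $\partial\Delta(r)$, the concavity of $\log$ (Jensen's inequality) gives
\[
\int_{\partial\Delta(r)}\log^+k\,d\pi_r\leq\log\!\int_{\partial\Delta(r)}(1+k)\,d\pi_r\leq\log^+\!\int_{\partial\Delta(r)}k\,d\pi_r+O(1).
\]

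Finally I would apply Theorem \ref{logo1}: for any $\delta>0$ there is a set $E_\delta\subseteq(0,\infty)$ of finite Lebesgue measure with
\[
\int_{\partial\Delta(r)}k\,d\pi_r\leq CE(r,\delta)\left(\int_{\Delta(r)}g_r(o,x)\,k\,dv\right)^{(1+\delta)^2}
\]
for all $r>0$ outside $E_\delta$. Taking $\log^+$, using $\log^+(ab)\leq\log^+a+\log^+b+O(1)$ together with monotonicity, and inserting the bound from the previous paragraph yields
\[
\log^+\!\int_{\partial\Delta(r)}k\,d\pi_r\leq(1+\delta)^2\log^+T(r,\psi)+\log E(r,\delta)+O(1)
\]
outside $E_\delta$; combining this with the concavity estimate gives the assertion.

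I expect no genuine obstacle here: the statement is a routine transcription of its analogue in the non‑negative Ricci curvature subsection, and the only points requiring a word of care are that the First Main Theorem underlying Lemma \ref{oo12} is domain‑independent (so that Lemma \ref{oo12} survives the change from the $\Delta(r)$ of Section \ref{sec421} to the $\Delta(r)$ of this subsection), and that the hypotheses of Theorem \ref{logo1} — local integrability of $k$ and local boundedness near $o$ — are met, which may require perturbing $o$ so that $\psi(o)\notin\{0,\infty\}$ and $\nabla\psi(o)$ is finite.
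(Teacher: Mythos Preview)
Your proposal is correct and follows essentially the same approach as the paper: Jensen's inequality to pass from $\int\log^+k\,d\pi_r$ to $\log^+\int k\,d\pi_r$, then Theorem \ref{logo1} combined with Lemma \ref{oo12} to bound the latter. The paper's proof is terser (it simply invokes Theorem \ref{logo1} and Lemma \ref{oo12} without re-justifying that Lemma \ref{oo12} carries over to the new $\Delta(r)$), but the logic is identical.
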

\begin{proof} The concavity of $``\log"$ implies that    
\begin{eqnarray*}
% \nonumber to remove numbering (before each equation)
  \int_{\partial\Delta(r)}\log^+\frac{\|\nabla\psi\|^2}{|\psi|^2(1+\log^2|\psi|)}d\pi_r  
    &\leq&  \log^+\int_{\partial\Delta(r)}\frac{\|\nabla\psi\|^2}{|\psi|^2(1+\log^2|\psi|)}d\pi_r+O(1).
\end{eqnarray*}
 Apply  Theorem \ref{logo1} and Lemma \ref{oo12} to the first term on the right hand side of the above inequality, 
then  for any $\delta>0,$ there exists a subset 
 $E_\delta\subseteq(0,\infty)$ of finite Lebesgue measure such that  this term is bounded from above by 
    $$(1+\delta)^2 \log^+T(r,\psi)+\log E(r,\delta)+O(1)$$
  for all  $r>0$ outside  $E_\delta.$ This completes the proof. 
\end{proof}

\begin{theorem}[Logarithmic Derivative Lemma]\label{logo2} Let
$\psi\not\equiv0$ be a  meromorphic function on  $M.$   Then for any   $\delta>0,$ there exists a  subset  $E_\delta\subseteq(0,\infty)$ of  finite Lebesgue measure such that 
\begin{eqnarray*}
% \nonumber to remove numbering (before each equation)
   m\Big(r,\frac{\|\nabla\psi\|}{|\psi|}\Big)&\leq& \frac{2+(1+\delta)^2}{2}\log^+ T(r,\psi) 
    +\frac{1}{2}\log E(r, \delta)+O(1)
\end{eqnarray*}
 holds for  all $r>0$ outside  $E_\delta,$ where $E(r,\delta)$ is defined by $(\ref{0p0}).$ 
\end{theorem}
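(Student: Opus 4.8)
The plan is to follow verbatim the structure of the proof of Theorem \ref{log1}, substituting the Calculus Lemma \ref{logo1} and Lemma \ref{999a} of the present (non-positive curvature) setting for their non-negative Ricci analogues, so that the geometric factor $E(r,\delta)$ of \eqref{0p0} replaces $H(r,\delta)$. First I would record the elementary pointwise estimate
$$\log^+\frac{\|\nabla\psi\|}{|\psi|}\le\frac12\log^+\frac{\|\nabla\psi\|^2}{|\psi|^2(1+\log^2|\psi|)}+\frac12\log\bigl(1+\log^2|\psi|\bigr),$$
and integrate it against the harmonic measure $d\pi_r$ on $\partial\Delta(r)$. Since $d\pi_r$ has total mass $1$ — apply Dynkin's formula (Lemma \ref{dynkin}) to the constant function $u\equiv1$, using that $g_r(o,x)$ is $\Delta$-harmonic on $\Delta(r)\setminus\{o\}$ — the concavity of $\log$ gives Jensen-type averaging inequalities on $\partial\Delta(r)$ that I will use freely below.

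For the first term I would invoke Lemma \ref{999a}: for any $\delta>0$ there is a set $E_\delta\subseteq(0,\infty)$ of finite Lebesgue measure such that
$$\int_{\partial\Delta(r)}\log^+\frac{\|\nabla\psi\|^2}{|\psi|^2(1+\log^2|\psi|)}d\pi_r\le(1+\delta)^2\log^+T(r,\psi)+\log E(r,\delta)+O(1)$$
for all $r>0$ outside $E_\delta$. For the second term I would use $1+\log^2|\psi|\le1+\bigl(\log^+|\psi|+\log^+\tfrac1{|\psi|}\bigr)^2$, then concavity of $\log$ and the mass-one property of $d\pi_r$ to pass the integral inside, arriving at $\int_{\partial\Delta(r)}\log(1+\log^2|\psi|)\,d\pi_r\le 2\log\!\int_{\partial\Delta(r)}\bigl(\log^+|\psi|+\log^+\tfrac1{|\psi|}\bigr)d\pi_r+O(1)$. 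The inner integral equals $m(r,\psi)+m\bigl(r,\tfrac1\psi\bigr)\le T(r,\psi)+T\bigl(r,\tfrac1\psi\bigr)+O(1)=2T(r,\psi)+O(1)$ by the First Main Theorem (Theorem \ref{first}) together with $T(r,1/\psi)=T(r,\psi)+O(1)$; hence this second contribution is $\le\log^+T(r,\psi)+O(1)$.

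Combining the two bounds with the factor $\tfrac12$ from the pointwise estimate yields
$$m\Big(r,\frac{\|\nabla\psi\|}{|\psi|}\Big)\le\frac{2+(1+\delta)^2}{2}\log^+T(r,\psi)+\frac12\log E(r,\delta)+O(1)$$
for all $r>0$ outside $E_\delta$, which is the assertion. I do not anticipate a genuine obstacle: the proof is structurally identical to that of Theorem \ref{log1}, and the only points needing care are bookkeeping — verifying that $d\pi_r$ is a probability measure so that all Jensen steps are legitimate, and checking that every $O(1)$ term (from the First Main Theorem and from the bounded factors hidden in Lemma \ref{999a}) is genuinely independent of $r$. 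The analytically substantive input, namely the replacement of $H(r,\delta)$ by the curvature-adapted weight $E(r,\delta)$, has already been carried out inside Theorem \ref{logo1} and Lemma \ref{999a}, so nothing new is required here.
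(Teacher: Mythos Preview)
Your proposal is correct and follows essentially the same approach as the paper: split $m\bigl(r,\|\nabla\psi\|/|\psi|\bigr)$ via the pointwise inequality into the weighted-gradient term handled by Lemma \ref{999a} and the $\log(1+\log^2|\psi|)$ term handled by Jensen plus the First Main Theorem, then combine. The paper's own proof is in fact terser than yours---it omits the verification that $d\pi_r$ is a probability measure and the explicit appeal to $T(r,1/\psi)=T(r,\psi)+O(1)$---so your added bookkeeping is entirely appropriate.
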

\begin{proof}  We have 
\begin{eqnarray*}
% \nonumber to remove numbering (before each equation)
    m\left(r,\frac{\|\nabla\psi\|}{|\psi|}\right)  
   &\leq& \frac{1}{2}\int_{\partial\Delta(r)}\log^+\frac{\|\nabla\psi\|^2}{|\psi|^2(1+\log^2|\psi|)}d\pi_r \ \ \  \  \    \   \  \  \    \    \   \   \\ 
 &&   +\frac{1}{2}\int_{\partial\Delta(r)}\log\left(1+\log^2|\psi|\right)d\pi_r \\
        &\leq&  \frac{1}{2}\int_{\partial\Delta(r)}\log^+\frac{\|\nabla\psi\|^2}{|\psi|^2(1+\log^2|\psi|)}d\pi_r  \\ 
   && +\log\int_{\partial\Delta(r)}\Big{(}\log^+|\psi|+\log^+\frac{1}{|\psi|}\Big{)}d\pi_r +O(1)  \\
   &\leq& \frac{1}{2}\int_{\partial\Delta(r)}\log^+\frac{\|\nabla\psi\|^2}{|\psi|^2(1+\log^2|\psi|)}d\pi_r+\log^+T(r,\psi)+O(1).
\end{eqnarray*}
Apply  Lemma \ref{999a} again,  we have the theorem proved.
   \end{proof}
\vskip\baselineskip

\subsubsection{Second Main  Theorem}~

\begin{theorem}[Second Main Theorem]\label{sec2}
 Let $f: M\rightarrow \mathbb P^1(\mathbb C)$ be a nonconstant   meromorphic function. Let $a_1, \cdots, a_q$ be distinct values in 
  $\mathbb P^1(\mathbb C).$ Then  for any $\delta>0,$ there exists a subset $E_\delta\subseteq(0, \infty)$ of finite Lebesgue measure such that 
          \begin{eqnarray*}
&& (q-2)T_f(r,\omega_{FS})+T(r, \mathscr R) \\
&\leq& \sum_{j=1}^q\overline N_f(r,a_j)+O\left(\log^+T_f(r,\omega_{FS})+(\tau-\sigma)r+\delta\log\chi(\tau,r)\right)
        \end{eqnarray*}
holds for all $r>0$ outside $E_\delta.$ where $\chi(\tau, r)$ is defined by $(\ref{chi}).$ 
\end{theorem}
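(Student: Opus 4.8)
The plan is to transcribe the proof of Theorem \ref{sec1} almost verbatim, with the volume-growth quantity $H(r,\delta)$ of the non-negative Ricci curvature case replaced by the function $E(r,\delta)$ of $(\ref{0p0})$, and then to translate $\log E(r,\delta)$ into the claimed error term using the elementary asymptotics of $\chi$ under the standing curvature assumption $(\ref{curvature})$.

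First I would put $\Phi=\omega_{FS}/\prod_{j=1}^q\|w,a_j\|^2$ as in $(\ref{form})$, define the non-negative function $\xi$ on $M$ by $f^*\Phi\wedge\alpha^{m-1}=\xi\alpha^m$, and use ${\rm Ric}(\omega_{FS})=2\omega_{FS}$ to get, in the sense of currents,
$$dd^c[\log\xi]=(q-2)f^*\omega_{FS}-\sum_{j=1}^q(f=a_j)+D_{f,{\rm ram}}+\mathscr R.$$
Applying the Dynkin formula (Lemma \ref{dynkin}) on $\Delta(r)$ and dropping the non-negative ramification term then yields the lower bound
$$\frac{1}{2}\int_{\partial\Delta(r)}\log\xi\,d\pi_r\geq (q-2)T_f(r,\omega_{FS})-\sum_{j=1}^q\overline N_f(r,a_j)+T(r,\mathscr R)+O(1).$$

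For the upper bound I would choose, exactly as in the proof of Theorem \ref{sec1}, a finite open cover $\{U_\lambda\}$ of $\mathbb P^1(\mathbb C)$ with rational functions $w_\lambda$ holomorphic with non-vanishing differential on $U_\lambda$ and $w_\lambda^{-1}(0)=\{a_\lambda\}$, a subordinate partition of unity, and set $f_\lambda=w_\lambda\circ f$; this gives $\log^+\xi\leq 2\sum_\lambda\log^+(\|\nabla f_\lambda\|/|f_\lambda|)+O(1)$ on $M$. Combining the Dynkin formula with the Logarithmic Derivative Lemma in its present form, Theorem \ref{logo2}, and with $T(r,f_\lambda)=O(T_f(r,\omega_{FS}))$ (each $w_\lambda$ being rational), I obtain for every $\delta>0$ a set $E_\delta\subseteq(0,\infty)$ of finite Lebesgue measure such that
$$\frac{1}{2}\int_{\partial\Delta(r)}\log\xi\,d\pi_r\leq O\big(\log^+T_f(r,\omega_{FS})+\log E(r,\delta)\big)$$
for all $r\notin E_\delta$. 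Combining this with the lower bound reduces the theorem to estimating $\log E(r,\delta)$.

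The one genuinely new computation — and the step I expect to require the most care — is this last estimate, together with the attendant case distinctions. When $\sigma,\tau>0$ I would use $\log\chi(\sigma,t)=\sigma t+O(1)$ and $\log\chi(\tau,r)=\tau r+O(1)$ for large argument, which give $\log\int_r^\infty\chi(\sigma,t)^{1-2m}dt=-(2m-1)\sigma r+O(1)$ and $\log\big((2m-1)\tau^2+r^{-1}\big)=O(1)$; substituting into $(\ref{0p0})$ yields $\log E(r,\delta)=(2m-1)(\tau-\sigma)r+(2m-1)\delta\log\chi(\tau,r)+O(1)$, which is of the required form $O\big((\tau-\sigma)r+\delta\log\chi(\tau,r)\big)$. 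In the degenerate case $\sigma=\tau=0$ one has $\chi(\tau,r)=r$ and, for $m\geq2$, $\int_r^\infty t^{1-2m}dt=r^{2-2m}/(2m-2)$, so a direct substitution gives $E(r,\delta)=r^{(2m-1)\delta}/(2m-2)$ and hence $\log E(r,\delta)=O(\delta\log\chi(\tau,r))$, again as claimed. The delicate points I anticipate are (i) the case $m=1$, where $M$ is parabolic and $\Delta(r)$ is defined directly as a geodesic ball via $(\ref{bbb})$--$(\ref{para})$, so that the estimates must be re-derived from those formulas rather than from $(\ref{tiancai})$; and (ii) verifying the hypotheses of the Dynkin formula for $\log\xi$, which I would handle by choosing the reference point $o$ generically so that $\log\xi(o)\neq\infty$ and taking $o$ to be a pole of $M$ as in the setup of this section.
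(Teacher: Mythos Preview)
Your proposal is correct and follows essentially the same route as the paper: reproduce the proof of Theorem~\ref{sec1} verbatim with Theorem~\ref{logo2} in place of Theorem~\ref{log1} to obtain the error $O(\log^+T_f(r,\omega_{FS})+\log E(r,\delta))$, and then estimate $\log E(r,\delta)$ by the same two-case analysis ($\sigma=\tau=0$ versus $\sigma>0$) with the same asymptotics of $\chi$. Your anticipated delicate point (i) about $m=1$ in the flat case is not treated separately in the paper either; the paper simply writes $E(r,\delta)=r^{(2m-1)\delta}/(2m-2)$ in that case without singling out $m=1$.
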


\begin{proof}
Applying  the similar  arguments as in the proof of Theorem \ref{sec1} and combining Theorem \ref{logo1} with Theorem \ref{logo2},  we  conclude that  
  for any $\delta>0,$ there exists a subset $E_\delta\subseteq(0, \infty)$ of finite Lebesgue measure such that 
          \begin{eqnarray*}
&& (q-2)T_f(r,\omega_{FS})+T(r, \mathscr R) \\
&\leq& \sum_{j=1}^q\overline N_f(r,a_j)+O\left(\log^+T_f(r,\omega_{FS})+\log E(r,\delta)\right)
        \end{eqnarray*}
holds for all $r>0$ outside $E_\delta.$  

 In the following, we estimate 
$\log E(r,\delta).$ When $\sigma>0,$ we have
$$E(r,\delta)=\frac{r^{-1}}{r^{(1-2m)(1+\delta)}}\int_r^\infty t^{1-2m}=\frac{1}{2m-2}r^{(2m-1)\delta}.$$
This implies that the theorem holds. We consider   the other case that $\sigma>0.$  
Since $e^{-\tau r}\leq e^{-\sigma r}\to0$ as $r\to\infty,$ we deduce that 
$$\chi(\tau, r)=O\left(e^{\tau r}\right)$$
and   $$\int_r^\infty\chi(\sigma, r)^{1-2m}=O\left(e^{(1-2m)\sigma r}\right).$$
Thus, we conclude that 
$$\log E(r,\delta)\leq (2m-1)(\tau-\sigma)r+(2m-1)\delta\log\chi(\tau, r)+O(1).$$
This completes the proof.
\end{proof}

\section{Nevanlinna Theory of Algebroid Functions}

 Let $M$ be a  complete non-compact  K\"ahler manifold of $\dim_{\mathbb C}M=m,$   with Laplace-Beltrami operator $\Delta$ associated to  the K\"ahler metric $g.$  
  We  utilize   the same notations such as  $\alpha, \mathscr R,$  etc. defined  in Section \ref{sec40}. 
    
\subsection{First Main Theorem}~\label{sec51}

   Let   $\Omega$ denote a   precompact domain with smooth boundary $\partial\Omega$ in $M.$ Fix a  point $o\in \Omega.$ As  defined   in Section \ref{sec41}, we have the Green function $g_{\Omega}(o,x)$ for $\Omega$ and the harmonic measure $\pi_{\partial \Omega}$ on $\partial\Omega.$

In what follows, we  introduce   Nevanlinna's functions of  algbroid functions.   Let $w=\{w_j\}_{j=1}^\nu$  be a $\nu$-valued algebroid function   with branch set $\mathscr B$ on $M,$  which is defined by an irreducible algebraic equation 
$$\psi(x, w):=A_\nu(x) w^\nu+A_{\nu-1}(x)w^{\nu-1}+\cdots+A_0(x)=0, \ \ \ \     A_\nu\not\equiv0$$
such that    $\mathscr A:=[A_0:\cdots:A_\nu]: M\to \mathbb P^{\nu}(\mathbb C)$  defines  a meromorphic mapping
with  indeterminacy locus  $I_{\mathscr A}.$ 
 In the following, we  assume  that $o\not\in I_{\mathscr A}.$

The \emph{characteristic function} of $w$  on $\Omega$ is defined  by 
$$T(\Omega, w)=m(\Omega, w)+N(\Omega, w),$$
where 
  \begin{eqnarray*}
  m(\Omega, w)&=&\frac{1}{\nu}\sum_{j=1}^\nu\int_{\partial \Omega}\log^+|w_j|d\pi_{\partial\Omega}, \\
    N(\Omega, w)&=&\frac{\pi^m}{(m-1)!\nu}\int_{w^*\infty\cap \Omega}g_{\Omega}(o,x)\alpha^{m-1} \\
    \end{eqnarray*} 
are  called the  \emph{proximity function, counting function} of $w$ on $\Omega,$ respectively.
For $a\in\mathbb C,$ define 
$$T\Big(\Omega, \frac{1}{w-a}\Big)=m\Big(\Omega, \frac{1}{w-a}\Big)+N\Big(\Omega, \frac{1}{w-a}\Big),$$ 
where 
 \begin{eqnarray*}
m\Big(\Omega, \frac{1}{w-a}\Big)&=&\frac{1}{\nu}\sum_{j=1}^\nu\int_{\partial \Omega}\log^+\frac{1}{|w_j-a|}d\pi_{\partial\Omega}, \\
 N\Big(\Omega, \frac{1}{w-a}\Big)&=&\frac{\pi^m}{(m-1)!\nu}\int_{w^*a\cap \Omega}g_{\Omega}(o,x)\alpha^{m-1}.
    \end{eqnarray*} 
In order to unify the form, we write
$$m(\Omega, w)=m\Big(\Omega, \frac{1}{w-\infty}\Big),  \ \ \ \ 
    N(\Omega, w)=N\Big(\Omega, \frac{1}{w-\infty}\Big).$$

\begin{theorem}[First Main Theorem]\label{fr3}  Assume that $w(o)\not=\infty.$ Then  
 $$\left|T\Big(\Omega, \frac{1}{w-a}\Big)-T(\Omega, w)+\frac{1}{\nu}\log\left|\frac{\psi(o,a)}{A_\nu(o)}\right|\right|
\leq \log^+|a|+\log2.$$
 \end{theorem}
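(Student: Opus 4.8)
The plan is to reduce the statement to the Dynkin (Jensen) formula of Lemma~\ref{dynkin} applied to a single symmetric function built from the $\nu$ branches, together with the elementary estimates relating $\log^+|t-a|$ to $\log^+|t|$; I would not pass to the covering manifold $\mathcal M$, although the route of lifting $w$ to the meromorphic function $f$ on $\mathcal M$ (Corollary~\ref{ppll}) and invoking Theorem~\ref{first} would also work, at the cost of having to compare the Green functions and harmonic measures on $\mathcal M$ and on $M$.

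First, from the factorization $\psi(x,w)=A_\nu(x)\prod_{j=1}^\nu(w-w_j(x))$, setting $w=a$ gives $|\psi(x,a)/A_\nu(x)|=\prod_{j=1}^\nu|w_j(x)-a|$, so that
\[
u(x):=\frac1\nu\log\left|\frac{\psi(x,a)}{A_\nu(x)}\right|=\frac1\nu\sum_{j=1}^\nu\log|w_j(x)-a|
\]
is single-valued on $M$, being symmetric in the branches, and of class $\mathscr C^2$ off an analytic set; under the hypotheses $u(o)=\tfrac1\nu\log|\psi(o,a)/A_\nu(o)|$ is finite (recall $w(o)\neq\infty$ forces $A_\nu(o)\neq0$, and, as in the classical Theorem~\ref{first}, we may assume $o$ is not an $a$-point of $w$, i.e. $\psi(o,a)\neq0$). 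Writing $\log t=\log^+t-\log^+(1/t)$ on each factor,
\[
\int_{\partial\Omega}u\,d\pi_{\partial\Omega}=\frac1\nu\sum_{j=1}^\nu\int_{\partial\Omega}\log^+|w_j-a|\,d\pi_{\partial\Omega}-m\Big(\Omega,\frac1{w-a}\Big).
\]

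Next I would apply Lemma~\ref{dynkin} to $u$. As a current, $dd^c\log|\psi(\cdot,a)|^2$ is the zero divisor of the holomorphic function $\psi(\cdot,a)$, which equals $w^*a=\sum_j(w_j=a)$, while $dd^c\log|A_\nu|^2=(A_\nu=0)=w^*\infty$ by Theorem~\ref{pole}; the indeterminacy locus $I_{\mathscr A}$ has codimension $\ge2$ and contributes nothing. Converting the Laplacian term of the Dynkin formula into an integral of $g_\Omega(o,\cdot)$ over these divisors exactly as in the proof of Theorem~\ref{first} yields
\[
\int_{\partial\Omega}u\,d\pi_{\partial\Omega}-u(o)=N\Big(\Omega,\frac1{w-a}\Big)-N(\Omega,w),
\]
and combining this with the previous display and rearranging gives
\[
T\Big(\Omega,\frac1{w-a}\Big)=\frac1\nu\sum_{j=1}^\nu\int_{\partial\Omega}\log^+|w_j-a|\,d\pi_{\partial\Omega}+N(\Omega,w)-\frac1\nu\log\left|\frac{\psi(o,a)}{A_\nu(o)}\right|.
\]
Finally, from $|w_j-a|\le2\max(|w_j|,|a|)$ and $|w_j|\le2\max(|w_j-a|,|a|)$ one gets $\big|\log^+|w_j-a|-\log^+|w_j|\big|\le\log^+|a|+\log2$ pointwise; integrating against $d\pi_{\partial\Omega}$ (which has total mass $1$, by Lemma~\ref{dynkin} with $u\equiv1$) and averaging over $j$, the first term on the right above differs from $m(\Omega,w)$ by at most $\log^+|a|+\log2$, and since $T(\Omega,w)=m(\Omega,w)+N(\Omega,w)$ this is the claimed inequality.

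The main obstacle is the divisor bookkeeping in the middle step: verifying that the zero divisor of the single holomorphic function $\psi(\cdot,a)$ coincides with the full $a$-divisor $w^*a$ of the algebroid function, counted over all $\nu$ branches, uniformly across neighborhoods of the critical and branch sets (where branches collide or run off to $\infty$) and of $I_{\mathscr A}$, so that the current identity $dd^cu=\tfrac1\nu(w^*a-w^*\infty)$ holds globally and the Dynkin formula applies without change. Once this is in place, the rest is the routine Nevanlinna estimate.
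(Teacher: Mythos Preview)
Your proposal is correct and follows essentially the same route as the paper: apply the Dynkin formula to the single-valued function $\tfrac1\nu\log|\psi(\cdot,a)/A_\nu|=\tfrac1\nu\sum_j\log|w_j-a|$, identify the resulting divisors via Theorems~\ref{pole} and~\ref{zero}, and finish with the elementary $\log^+|a|+\log2$ estimate. The only cosmetic difference is that the paper first proves the case $a=0$ (obtaining $T(\Omega,w)=T(\Omega,1/w)+\tfrac1\nu\log|A_0(o)/A_\nu(o)|$) and then substitutes $u=w-a$, noting $B_0(o)=\psi(o,a)$ and $B_\nu=A_\nu$, whereas you go directly to general $a$; your ``main obstacle'' (that $(\psi(\cdot,a)=0)=w^*a$) is exactly the content of Theorem~\ref{zero} applied to the shifted function $w-a$.
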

 \begin{proof} According to   Vieta's theorem, we have    
$$w_1\cdots w_\nu=(-1)^{\nu}\frac{A_{0}}{A_\nu}.$$ 
Since $\mathscr B$ is polar,   it yields from  Dynkin formula that 
   \begin{eqnarray*}
 && \int_{\partial\Omega}\log\prod_{k=1}^\nu|w_k|d\pi_{\partial\Omega}-\log\prod_{k=1}^\nu|w_k(o)| \\
&=&\frac{1}{2}\int_{\Omega}g_{\Omega}(o,x)\Delta\log\prod_{k=1}^\nu|w_k|dv.
   \end{eqnarray*}
By the definition,   the left hand side equals 
 $$\nu m(\Omega, w)-\nu m\Big(\Omega, \frac{1}{w}\Big)-\log\left|\frac{A_{0}(o)}{A_\nu(o)}\right|.$$
\ \ \ On the other hand,  Theorem \ref{pole} and Theorem \ref{zero} imply that  the right hand side equals 
   \begin{eqnarray*}
&& \frac{1}{2}\int_{\Omega}g_{\Omega}(o,x)\Delta\log\left|\frac{A_0}{A_\nu}\right|dv \\
&=& \frac{\pi^m}{(m-1)!}\int_{A_0^*0\cap \Omega}g_{\Omega}(o,x)\alpha^{m-1} 
-\frac{\pi^m}{(m-1)!}\int_{A_\nu^*0\cap \Omega}g_{\Omega}(o,x)\alpha^{m-1} \\
&=&  N\Big(\Omega, \frac{1}{A_0}\Big)- N\Big(\Omega, \frac{1}{A_\nu}\Big) \\
&=&\nu N\Big(\Omega, \frac{1}{w}\Big)-\nu N(\Omega, w). 
   \end{eqnarray*}
Then, we arrive at  
 $$T(\Omega, w)=T\Big(\Omega, \frac{1}{w}\Big)+\frac{1}{\nu}\log\left|\frac{A_{0}(o)}{A_\nu(o)}\right|.$$
 Setting $u=w-a$ to get 
   \begin{eqnarray*}
  \psi(x,w)&=&A_\nu(x)(u+a)^{\nu}+A_{\nu-1}(x)(u+a)^{\nu-1}+\cdots+A_0(x) \\
  &=& B_\nu(x)u^\nu+B_{\nu-1}(x)u^{\nu-1}+\cdots+B_{0}(x),
   \end{eqnarray*}
   where   
   $$B_k=\sum_{j=0}^{\nu-k}\binom{\nu-j}{k}a^{\nu-k-j}A_{\nu-j}, \ \ \ \  k=0,\cdots,\nu.$$
 In particular,  we have 
 $$B_\nu=A_\nu, \ \ \ \   B_0(o)=\psi(o,a).$$
 Similarly, we obtain  
  $$T(\Omega,u)=T\Big(\Omega, \frac{1}{u}\Big)+\frac{1}{\nu}\log\left|\frac{B_{0}(o)}{B_\nu(o)}\right|,$$
which yields  that 
 $$T(\Omega,w-a)=T\Big(\Omega, \frac{1}{w-a}\Big)+\frac{1}{\nu}\left|\frac{\psi(o,a)}{A_\nu(o)}\right|.$$
Since 
   \begin{eqnarray*}
T(\Omega, w-a)&\leq& T(\Omega,w)+\log^+|a|+\log2, \\ 
T(\Omega,w)&\leq& T(\Omega,w-a)+\log^+|a|+\log 2,
   \end{eqnarray*}
 we  deduce that      
$$\left|T(\Omega,w-a)-T(\Omega,w)\right|\leq \log^+|a|+\log2.$$
Put together the above,  we conclude that 
$$\left|T\Big(\Omega, \frac{1}{w-a}\Big)-T(\Omega,w)+\frac{1}{\nu}\log\left|\frac{\psi(o, a)}{A_\nu(o)}\right|\right|
\leq \log^+|a|+\log2.$$
 \end{proof}

\subsection{Estimate of  Branch Divisors}~

Let $w=\{w_j\}_{j=1}^\nu$ be a $\nu$-valued algebroid function on $M.$ We use the same notations as  
defined  in Section \ref{sec51}. Our purpose  is to  obtain  an estimate of  the branch divisor $\mathscr D$ of $w.$
For each $x\in\mathscr B,$ we use 
   $u_1,\cdots, u_{l(x)}$ to stand for   the all  $\lambda_1(x)$-valued, $\cdots,$ $\lambda_{l(x)}(x)$-valued  components of $w$  at  $x,$ 
respectively. 
Then, we have   $\lambda_1(x)+\cdots+\lambda_{l(x)}(x)=\nu,$ 
and  $u_j$ has branch order $\lambda_{j}(x)-1$  at $x$  with   $j=1,\cdots, l(x),$ and $w$ has  branch order   $\nu-l(x)$ at $x.$ 
The \emph{branch divisor} of $w$ associated to  $\mathscr B$ is  defined by a divisor $\mathscr D$  on $M$   
such that $${\rm{Supp}}\mathscr D=\mathscr B, \ \ \ \   {\rm{Ord}}_{x}\mathscr D=\nu-l(x), \ \ \  \  ^\forall x\in\mathscr B.$$
The counting function of  branch divisor $\mathscr D$ of $w$ for $\Omega$ is defined by
$$N_{{\rm{bran}}}(\Omega, w)=\frac{1}{\nu}N(\Omega,\mathscr D),$$ where 
$$N(\Omega,\mathscr D)=\frac{\pi^m}{(m-1)!}\int_{\mathscr D\cap\Omega}g_{\Omega}(o,x)\alpha^{m-1}.$$
  Recall  that the discriminant of $\psi(x,w)$ is given  by 
$$J_\psi=A_\nu^{2\nu-2}\prod_{1\leq i<j\leq\nu}\big(w_i-w_j\big)^2.$$

 \begin{lemma}\label{esti} We have 
$$\mathscr D\leq \left(J_\psi=0\right).$$
 \end{lemma}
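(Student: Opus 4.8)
The plan is to prove the divisor inequality $\mathscr D\leq(J_\psi=0)$ by working locally at a branch point $x_0\in\mathscr B$ and comparing the order of $\mathscr D$ there with the vanishing order of $J_\psi$. First I would reduce to the case where $x_0$ is \emph{not} a singular branch point of any component $u_j$ of $w$: indeed, the set $\mathscr B_{\rm sing}$ of singular branch points is an analytic set of complex codimension $\geq 2$, so both $\mathscr D$ and $(J_\psi=0)$ are codimension-one divisors whose coefficients are determined generically, and it suffices to check the inequality of multiplicities along the smooth part of each irreducible component of $\mathscr B$, i.e. at non-singular branch points. Likewise, using the argument of case $2^\circ$ in Section~\ref{sec21} (the substitutions $u=1/w$ and $\tilde w=w-c_0$), I may assume $x_0\notin A_\nu^{-1}(0)$, so that $A_\nu(x_0)\neq 0$ and $J_\psi$ differs from $\prod_{i<j}(w_i-w_j)^2$ only by a nonvanishing holomorphic factor near $x_0$.

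Next I would use the local Pusieux description from Theorem~\ref{ses}. At such an $x_0$, choose the holomorphic coordinate $z=(\hat z_m,z_m)$ with $z(x_0)=\mathbf 0$ and $\mathscr B_{u_j}=\{z_m=0\}$ locally, so that each $\lambda_j$-leaf component $u_j$ splits along $z_m=0$ into $\lambda_j$ single-valued branches obtained from a convergent series in $z_m^{1/\lambda_j}$ by letting $z_m^{1/\lambda_j}$ run over its $\lambda_j$ determinations; write $u_j(z)=B_{0}^{(j)}(\hat z_m)+B_{\tau_j}^{(j)}(\hat z_m)z_m^{\tau_j/\lambda_j}+\cdots$. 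The two branches of $u_j$ corresponding to determinations differing by a primitive $\lambda_j$-th root of unity $\zeta$ have difference $B_{\tau_j}^{(j)}(\hat z_m)(\zeta^{\tau_j}-1)z_m^{\tau_j/\lambda_j}+(\text{higher order})$; since $B_{\tau_j}^{(j)}\not\equiv 0$, the $\ord_{z_m}$ of this difference, computed after the change of variable back to $z_m$, contributes a definite positive power of $z_m$ to the product. Summing these contributions over all unordered pairs of branches \emph{within} each group $j$, and using the elementary combinatorial fact $\sum_{j}\binom{\lambda_j}{2}\cdot(\text{something}\geq 1/\lambda_j\text{ per pair, so}\geq \lambda_j-1\text{ in total for the }\tau_j=\cdots)$, one gets that the exponent of $z_m$ in $\prod_{i<j}(w_i-w_j)^2$ restricted to the within-group pairs is at least $\sum_j(\lambda_j-1)=\nu-l(x_0)=\ord_{x_0}\mathscr D$.

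Finally I would package this into a statement about divisors: the cross-group pairs $(w_i,w_j)$ with $w_i,w_j$ in different components contribute a nonnegative (holomorphic) factor to $\prod_{i<j}(w_i-w_j)^2$, hence only increase the vanishing order, so altogether $\ord_{x_0}(J_\psi=0)\geq \nu-l(x_0)=\ord_{x_0}\mathscr D$ along the generic point of each branch component. Since this holds at every non-singular branch point and both sides are honest divisors on $M$ (recall $\mathscr B$ and $(J_\psi=0)$ are analytic sets of pure codimension one, and the difference $\mathscr B_{\rm sing}$ has codimension $\geq 2$), the inequality $\mathscr D\leq(J_\psi=0)$ holds globally. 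The main obstacle I anticipate is the bookkeeping in the middle step: correctly identifying, after the coordinate change $z_m^{1/\lambda_j}\mapsto z_m$, the precise $z_m$-order of the difference of two branches of a multi-leaf component and verifying the combinatorial inequality $\sum_j \sum_{\text{pairs in group }j}(\text{order of difference})\geq \sum_j(\lambda_j-1)$; I expect the cleanest route is to note that for a single $\lambda$-leaf component the product over its internal pairs is, up to a nonvanishing factor, a discriminant of a Puiseux polynomial whose $z_m$-order is at least $\lambda-1$, which one can see directly from $\prod_{0\le a<b<\lambda}(\zeta^a-\zeta^b)^2$ having a zero of the right order, or from the ramification formula for the map $z_m\mapsto z_m^{\lambda}$.
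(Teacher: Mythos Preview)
Your proposal is correct and follows essentially the same approach as the paper: reduce to non-singular branch points via the codimension-$\geq 2$ observation, invoke the Puiseux expansion of Theorem~\ref{ses}, and count that the within-group differences of the $\lambda_j$ branches contribute at least $\lambda_j-1$ powers of $z_m$ each to $\prod_{i<j}(w_i-w_j)^2$, summing to $\nu-l(x_0)=\ord_{x_0}\mathscr D$. The only cosmetic difference is your handling of the case $A_\nu(x_0)=0$ by the substitutions of Section~\ref{sec21} (which is legitimate since $J_\psi$ is invariant under $w\mapsto 1/w$ and $w\mapsto w-c_0$, a point you should state), whereas the paper treats that case directly by appealing to the factor $A_\nu^{2\nu-2}$ in $J_\psi$ together with Theorem~\ref{pole}.
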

\begin{proof}   For  any $x_0\in\mathscr B,$ we use    $(x_0, u_1),\cdots, (x_0,  u_{l})$ to denote   the all  distinct  function  elements  of $w$ at $x_0,$ with 
$\lambda_1, \cdots,  \lambda_{l}$ leaves 
 respectively. 
It is noted that   $\lambda_1+\cdots+\lambda_{l}=\nu.$ 
From   the assumption, we deduce that  ${\rm{Ord}}_{x_0}\mathscr D=\nu-l.$
Since the set of all singular branch points of $w$ has complex codimension  not smaller  than $2,$ 
  it is sufficient  to deal with     the case  where   $x_0$ is a  non-singular branch point of $w.$  
By  Theorem \ref{ses}, 
we  may write    for $j=1,\cdots, l$
\begin{equation}\label{terms}
   u_{j}(z)=B_{j0}(\hat z_m)+B_{j\tau_j}(\hat z_m)z_m^{\frac{\tau_j}{\lambda_j}}+B_{j(\tau_j+1)}(\hat z_m)z_m^{\frac{\tau_j+1}{\lambda_j}}+\cdots
   \end{equation}
 in a local holomorphic coordinate $z=(z_1,\cdots,z_m)$ near $x_0,$  such that $x_0$ has a local holomorphic coordinate $z=\textbf{0}.$  
If $B_{j0}(\textbf{0})\not=\infty,$
then  $x_0$ is not a pole of $u_j.$ 
Through  a simple observation of $J_\psi,$   
we  deduce     from (\ref{terms}) immediately  that    there exist   $\lambda_j(\lambda_j-1)/2$ terms that  contain  the factor $z_m^{2/\lambda_j},$  i.e.,  $\lambda_j-1$ terms that contain the factor $z_m$ in 
$J_\psi.$ For  the worst case where $B_{j0}(\textbf{0})\not=\infty$ for all $j,$
we  deduce  that   the factor $z_m$ appears a total of at least $\nu-l$ times in $J_\psi.$
  Hence,   we get 
   $${\rm{Ord}}_{x_0}\mathscr D\leq {\rm{Ord}}_{x_0}\bigg(\prod_{1\leq i<j\leq\nu}\big(w_i-w_j\big)^2=0\bigg)\leq {\rm{Ord}}_{x_0} (J_\psi=0).$$
   If $B_{j0}(\textbf0)=\infty,$ then $x_0$ is a pole of $u_j.$
   For  the worst case where $B_{j0}(\textbf{0})=\infty$ for all $j,$
    it yields from  Theorem \ref{pole} that 
$${\rm{Ord}}_{x_0}\mathscr D\leq {\rm{Ord}}_{x_0}(A_\nu=0)\leq {\rm{Ord}}_{x_0} (J_\psi=0).$$
Combining the above two cases, we prove   the lemma. 
\end{proof}
Set 
$$N\big(\Omega, (J_\psi=0)\big) = \frac{\pi^m}{(m-1)!}\int_{(J_\psi=0)\cap\Omega}g_\Omega(o,x)\alpha^{m-1}.$$
 \begin{theorem}[Estimate of branch divisors]\label{bran} We have 
$$N_{\rm{bran}}(\Omega,w)\leq (2\nu-2)T(\Omega,w)+O(1).$$
 \end{theorem}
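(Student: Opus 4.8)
The plan is to bound the counting function of the branch divisor $\mathscr D$ by the counting function of the discriminant divisor $(J_\psi=0)$ using Lemma \ref{esti}, and then to bound the latter in terms of $T(\Omega,w)$ by recognizing $J_\psi$ as, up to the factor $A_\nu^{2\nu-2}$, a symmetric polynomial in the $w_j$'s whose growth is controlled by $\nu T(\Omega,w)$. Concretely, Lemma \ref{esti} gives $\mathscr D\le (J_\psi=0)$ as divisors, hence
\begin{equation*}
N_{\rm{bran}}(\Omega,w)=\frac{1}{\nu}N(\Omega,\mathscr D)\le\frac{1}{\nu}N\big(\Omega,(J_\psi=0)\big).
\end{equation*}
So everything reduces to showing $N\big(\Omega,(J_\psi=0)\big)\le \nu(2\nu-2)T(\Omega,w)+O(1)$.

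First I would recall the factorization $J_\psi=A_\nu^{2\nu-2}\prod_{i<j}(w_i-w_j)^2$ and apply Jensen's formula (the Dynkin formula, Lemma \ref{dynkin}, applied to $\log|J_\psi|$, using that its zero set is polar and $J_\psi\not\equiv0$) to convert $N\big(\Omega,(J_\psi=0)\big)$ into a boundary integral of $\log^+|J_\psi|$ plus a contribution from the poles of $J_\psi$ and a constant term $\log|J_\psi(o)|$. Then I estimate $\log^+|J_\psi|$ pointwise: since $J_\psi$ is a polynomial with integer coefficients in the quantities $A_\nu, w_1,\dots,w_\nu$ (symmetric in the $w_j$), with total degree $2\nu-2$ in the $w_j$ after dividing out $A_\nu^{2\nu-2}$, one has
\begin{equation*}
\log^+|J_\psi|\le (2\nu-2)\sum_{j=1}^\nu\log^+|w_j|+(2\nu-2)\log^+|A_\nu|+O(1)
\end{equation*}
on the part where the $w_j$ are finite. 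Integrating over $\partial\Omega$ against $d\pi_{\partial\Omega}$ gives a bound by $\nu(2\nu-2)\,m(\Omega,w)$ plus a proximity-type term in $A_\nu$; and the pole contribution of $J_\psi$, by Theorem \ref{pole} (poles of $J_\psi$ come only from poles of the $w_j$, i.e. zeros of $A_\nu$, with multiplicity at most $(2\nu-2)$ times that of $w$), is bounded by $\nu(2\nu-2)\,N(\Omega,w)$. Adding these and dividing by $\nu$ yields the claim, after absorbing the $A_\nu$ terms: one uses that $T(\Omega,\mathscr A)$ dominates the characteristic of any coordinate ratio $A_i/A_\nu$, and that $T(\Omega,\mathscr A)$ is comparable to $T(\Omega,w)$ up to $O(1)$ by the First Main Theorem (Theorem \ref{fr3}) combined with the Vieta relations expressing the $A_i/A_\nu$ as elementary symmetric functions of the $w_j$.

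The main obstacle I anticipate is the careful bookkeeping of the $A_\nu$-contribution: $J_\psi$ is homogeneous of degree $2\nu-2$ in $A_\nu$ as well (it involves $A_\nu^{2\nu-2}$ explicitly but also the $w_j$ which, via $w_j=-\,(\text{ratio of }A_i\text{'s})$-type expressions, carry further $A_\nu$-dependence), so one must be scrupulous that the divisor $(J_\psi=0)$ is taken with respect to the local holomorphic representatives and that the poles and zeros of $A_\nu$ are accounted exactly once. The cleanest route is to avoid splitting $J_\psi$ into zero and pole parts by hand and instead work with the globally defined meromorphic section $J_\psi$ of the appropriate line bundle (its divisor is independent of the local trivialization of $\mathscr A$), apply Jensen directly to get $N\big(\Omega,(J_\psi=0)\big)-N\big(\Omega,(J_\psi=\infty)\big)=\int_{\partial\Omega}\log|J_\psi|\,d\pi_{\partial\Omega}-\log|J_\psi(o)|$, bound the right side by $\nu(2\nu-2)\,m(\Omega,w)+O(1)$ via the symmetric-function estimate, and then move $N\big(\Omega,(J_\psi=\infty)\big)\le\nu(2\nu-2)\,N(\Omega,w)$ to the other side. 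Everything else is routine once this is set up.
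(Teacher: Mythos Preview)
Your overall strategy is the same as the paper's---reduce to $N\big(\Omega,(J_\psi=0)\big)$ via Lemma \ref{esti}, apply the Dynkin/Jensen formula to $\log|J_\psi|$, and use the factorization $J_\psi=A_\nu^{2\nu-2}\prod_{i<j}(w_i-w_j)^2$. But your bookkeeping of where the $N(\Omega,w)$ contribution arises is off, and this makes your ``cleanest route'' incorrect as written.

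The discriminant $J_\psi$ is a homogeneous polynomial of degree $2\nu-2$ in the locally holomorphic coefficients $A_0,\dots,A_\nu$; in particular it is locally \emph{holomorphic}, so $N\big(\Omega,(J_\psi=\infty)\big)=0$. There is no ``pole contribution of $J_\psi$'' to move to the other side. Consequently your proposed bound $\int_{\partial\Omega}\log|J_\psi|\,d\pi_{\partial\Omega}\le \nu(2\nu-2)\,m(\Omega,w)+O(1)$ cannot be right by itself: the factor $A_\nu^{2\nu-2}$ already sits inside $\log|J_\psi|$, and its boundary integral is exactly what produces the $N(\Omega,w)$ term.

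The paper's fix is simply to split \emph{before} estimating. Write
\[
\log|J_\psi|^2=(2\nu-2)\log|A_\nu|^2+\sum_{i<j}\log|w_i-w_j|^4
\]
and apply Dynkin termwise. Since $A_\nu$ is locally holomorphic, the first term yields exactly $(2\nu-2)N\big(\Omega,\tfrac{1}{A_\nu}\big)$, which by Theorem \ref{pole} equals $\nu(2\nu-2)N(\Omega,w)$. For the second term, Dynkin turns the Laplacian integral into the boundary integral $\sum_{i<j}\int_{\partial\Omega}\log|w_i-w_j|^2\,d\pi_{\partial\Omega}+O(1)$, and then the elementary inequality $\log|w_i-w_j|\le \log^+|w_i|+\log^+|w_j|+\log 2$ (each $w_k$ appearing in $\nu-1$ pairs) gives the bound $\nu(2\nu-2)m(\Omega,w)+O(1)$. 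Summing and dividing by $\nu$ finishes the proof. There is no need to pass through $T(\Omega,\mathscr A)$ or to compare it with $T(\Omega,w)$; Theorem \ref{pole} already gives the exact identity $N(\Omega,1/A_\nu)=\nu N(\Omega,w)$, which is all the ``$A_\nu$-bookkeeping'' one needs.
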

\begin{proof}   Apply  Dynkin formula, we are led to  
   \begin{eqnarray*} 
 &&  N\big(\Omega, (J_\psi=0)\big) \\ 
 &=&  \frac{1}{4}\int_{\Omega}g_\Omega(o,x)\Delta\log|J_\psi|^2dv \\
   &=&  \frac{\nu-1}{2}\int_{\Omega}g_\Omega(o,x)\Delta\log|A_\nu|^2dv+ \frac{1}{4}\int_{\Omega}g_\Omega(o,x)\Delta\log\prod_{i<j}\big|w_i-w_j\big|^4dv \\
   &=&(2\nu-2)N\Big(\Omega, \frac{1}{A_\nu}\Big)+ \sum_{i<j}\int_{\partial\Omega}\log|w_i-w_j|^2d\pi_{\partial\Omega}+O(1).
     \end{eqnarray*}
  In further, it yields from    Theorem \ref{pole} that 
        \begin{eqnarray*}
         &&  N\big(\Omega, (J_\psi=0)\big) \\  
      &\leq& \nu(2\nu-2)N(\Omega, w) +(2\nu-2)\sum_{j=1}^\nu\int_{\partial\Omega}\log^+|w_j|d\pi_{\partial\Omega}+O(1) \\
   &=& \nu(2\nu-2)N(\Omega, w)+(2\nu-2)\sum_{j=1}^\nu m(\Omega, w_j)+O(1) \\
  &=& \nu(2\nu-2)N(\Omega, w)+\nu(2\nu-2)m(\Omega, w)+O(1) \\
   &=& \nu(2\nu-2)T(\Omega,w)+O(1).
        \end{eqnarray*}
By this, we conclude  from Lemma \ref{esti} that 
$$N_{\rm{bran}}(\Omega,w)\leq \frac{1}{\nu}N(\Omega,(J_\psi=0))\leq (2\nu-2)T(\Omega,w)+O(1).$$
\end{proof}

 \subsection{Second Main Theorem}~\label{sec53}

 Let $\mathcal M$ be the $\nu$-leaf complex manifold by  a $\nu$-valued algebroid function $w$ on $M,$ which is of  complex dimension $m$
 by Theorem \ref{cover}.  
Let $\pi: \mathcal M\to M$ be the  natural  projection.  We equip  $\mathcal M$ with the  pullback metric $\pi^*g$ induced from $g$ via $\pi.$
 Note that  $\pi^*g$ is a positive  semi-definite K\"ahler metric  on $\mathcal M,$ i.e.,  
 $\pi^*g$  is a  K\"ahler  metric    on  $\mathcal M\setminus\pi^{-1}(\mathscr B)$ and  degenerate on $\pi^{-1}(\mathscr B).$
      Since  $\mathscr B$ is   a polar  set with ${\rm codim}_{\mathbb C}\mathscr B=1,$    Dynkin formula still  works on $(\mathcal M, \pi^*g)$  for the following reasons. 
 Set $K=\pi^{-1}(\mathscr B).$   Picking    a sequence $\{U_n(K)\}_{n=1}^\infty$ of  neighborhoods $U_n(K)$ of $K$ such that 
$$\cdots\subseteq \overline{U_{k+1}(K)}\subseteq U_k(K)\subseteq\cdots\subseteq \overline{U_2(K)}\subseteq U_1(K), \ \ \ \   \bigcap_{n\geq1}U_n(K)=K.$$
\ \ \ \  Let $\{\tau_{n}, \theta_{n}\}$ be a partition  of the unity   subordinate to the open covering $\{\mathcal M\setminus U_{n+1}(K), U_n(K)\}$ of $\mathcal M.$
Then,  $0\leq\tau_n\leq1, 0\leq\theta_n\leq1$ and $\tau_n+\theta_n=1.$ 
Fix  a  K\"ahler metric $g_0$ on $\mathcal M$ with K\"ahler form $\omega_0.$  
  Set $g_n=\pi^*g+n^{-2}\theta_{n}g_0.$  Since $g_0$ is  positive  definite on $\mathcal M,$ we see that $g_n$ is also  positive  definite on $\mathcal M.$  Moreover, since 
   $d(\pi^*\omega+n^{-2}\theta_{n}\omega_0)=\pi^*d\omega+n^{-2}\theta_{n}d\omega_0=0,$
   we conclude    that $g_n$ is a  K\"ahler metric on $\mathcal M$ with   $g_n=\pi^*g$ on $\mathcal M\setminus U_n(K).$
   Let   $\tilde m, \tilde m_{n}$ denote    the Riemannian  measures  
 of $\pi^*g, g_n,$  respectively. 
  By   ${\rm codim}_{\mathbb C}\mathscr B=1$ and  $g_n\to \pi^*g$    as $n\to+\infty,$  
          we  can deduce that       $\tilde m_{n}\to \tilde m$ weakly as $n\to+\infty.$
                      Let $\tilde\Delta, \tilde{\Delta}_n$ 
                         stand for      the Laplace-Beltrami operators
         of $\pi^*g, g_n$ on $\mathcal M\setminus K, \mathcal M,$ 
                  respectively. According to  $g_n\to \pi^*g$  and  $\tilde m_{n}\to \tilde m$ weakly as $n\to+\infty,$ 
         one   has   $\tilde\Delta_{n}\to\tilde\Delta$ weakly as $n\to+\infty.$        
Taking a   bounded domain $\tilde\Omega$ with smooth boundary $\partial\tilde\Omega$ in $\mathcal M.$
Fixing  $\tilde o\in\tilde\Omega$ with  $\tilde o\not\in K.$
Denote by    $\tilde g_{\tilde\Omega}(\tilde o, x), \tilde g_{\tilde\Omega, n}(\tilde o, x)$    
   the  Green functions  of  $\tilde\Delta/2, \tilde{\Delta}_n/2$ for
             $\tilde\Omega$ with a pole at $\tilde o$ satisfying Dirichlet boundary condition respectively,  here   
                                              $\tilde g_{\tilde\Omega}(\tilde o, x)$ is well-defined  on $\overline{\tilde\Omega}\setminus K.$
                          Also,  we denote by      $\tilde\pi_{\partial\tilde\Omega}, \tilde\pi_{\partial \tilde\Omega, n}$       
                                      the harmonic measures of $\pi^*g, g_n$ on 
                                     $\partial\tilde\Omega$ with respect to $\tilde o$  respectively,  
                                here     $\tilde\pi_{\partial\tilde\Omega}$  is well-defined  on 
                                    $\partial\Omega\setminus K.$ Since $\mathscr B$ is a  polar set, 
                                                                          the previous   arguments can  show that 
 $\tilde\Delta_{n}\tilde g_{\tilde\Omega, n}(\tilde o, x)\to \tilde\Delta\tilde g_{\tilde\Omega}(\tilde o, x)$   in the sense of 
  distributions and  
 $\tilde\pi_{\partial \tilde\Omega, n}\to \tilde\pi_{\partial \tilde\Omega}$ weakly  as $n\to+\infty.$  
By   Dynkin formula 
$$\int_{\partial \tilde\Omega}u(x)d\tilde\pi_{\partial \tilde\Omega, n}(x)-u(\tilde o)=\frac{1}{2}\int_{\tilde\Omega}\tilde g_{\tilde\Omega, n}(\tilde o,x)\tilde{\Delta}_n u(x)d\tilde v_n(x), \ \ \ \  ^\forall n\geq1$$
 for a $\mathscr C^2$-class function $u$ with $u(\tilde o)\not=\infty$ on  $\mathcal M$ outside a polar set of singularities at most.  
 Since $\mathscr B$ is polar,   we conclude by letting $n\to+\infty$ that   
 $$\int_{\partial \tilde\Omega}u(x)d\tilde\pi_{\partial \tilde\Omega}(x)-u(\tilde o)=\frac{1}{2}\int_{\tilde\Omega}\tilde g_{\tilde\Omega}(\tilde o,x)\tilde{\Delta} u(x)d\tilde v(x).$$
 This means   that Dynkin formula works on $\mathcal M$  under the pullback metric $\pi^*g.$
    By   Corollary \ref{ppll},  one can lift $w$  to a meromorphic function $f: \mathcal M\to \mathbb P^1(\mathbb C)$ 
   such that $w=f\circ\pi^{-1}.$  In further, it is not difficult  to examine  that 
 Theorem \ref{first},  Theorem \ref{sec1} and Theorem \ref{sec2}
 can apply  to $f$  on each leaf of $\mathcal M$ under the corresponding conditions.

  \subsubsection{$M$ has non-negative Ricci curvature}~

Assume   that $M$ is non-parabolic.  With the pullback metric $\pi^*g,$ 
one  notes  that  $\mathcal M$ is  non-parabolic,  with non-negative Ricci curvature outside $\pi^{-1}(\mathscr B).$
     Let $\tilde\alpha$ be the K\"ahler form of $\pi^*g.$
   Set $\Omega=\Delta(r),$ where $\Delta(r)$ is the bounded   domain defined in Section \ref{sec421}.    
  We recall   the  definitions of Green function $g_r(o,x)$   for $\Delta(r)$ and  harmonic measure $\pi_r$  on $\partial\Delta(r)$  in Section \ref{sec421}.  
 Then, the characteristic function of $w$  is defined  by 
$$T(r, w)=m(r, w)+N(r, w),$$
where 
  \begin{eqnarray*}
  m(r, w)&=&\frac{1}{\nu}\sum_{j=1}^\nu\int_{\partial \Delta(r)}\log^+|w_j|d\pi_{r}, \\
    N(r, w)&=&\frac{\pi^m}{(m-1)!\nu}\int_{w^*\infty\cap \Delta(r)}g_{r}(o,x)\alpha^{m-1}.
    \end{eqnarray*} 
Indeed, define  for $a\in\mathbb C$
$$T\Big(r, \frac{1}{w-a}\Big)=m\Big(r, \frac{1}{w-a}\Big)+N\Big(r, \frac{1}{w-a}\Big), \ \ \ \ \ $$ 
where 
 \begin{eqnarray*}
m\Big(r, \frac{1}{w-a}\Big)&=&\frac{1}{\nu}\sum_{j=1}^\nu\int_{\partial \Delta(r)}\log^+\frac{1}{|w_j-a|}d\pi_{r}, \\
   N\Big(r, \frac{1}{w-a}\Big)&=&\frac{\pi^m}{(m-1)!\nu}\int_{w^*a\cap \Delta(r)}g_{r}(o,x)\alpha^{m-1}.
    \end{eqnarray*} 
 In order to unify the form, we write
$$m(r, w)=m\Big(r, \frac{1}{w-\infty}\Big), \ \ \ \ 
N(r, w)=N\Big(r, \frac{1}{w-\infty}\Big).$$
The simple counting function of $w$ with respect to $a\in\mathbb P^1(\mathbb C)$ is defined by
     $$\overline{N}\Big(r, \frac{1}{w-a}\Big)=\frac{\pi^m}{(m-1)!\nu}\int_{w^{-1}(a)\cap \Delta(r)}g_{r}(o,x)\alpha^{m-1}.$$

    Assume that $o\not\in\mathscr B.$  
  Let  $\tilde\Delta_1(r),\cdots,\tilde\Delta_\nu(r)$ be the    
  $\nu$ connected components of $\pi^{-1}(\Delta(r))$   centered   at $\tilde o_1,\cdots,\tilde o_\nu$ respectively, 
with that $\pi(\tilde o_k)=o$ for each $k.$  Let $\tilde\Delta$ denote the Laplace-Beltrami operator of $\pi^*g,$ which is well-defined on $\mathcal M\setminus \pi^{-1}(\mathscr B).$  Define 
$$\tilde g_r(\tilde o_k, x)=g_r(o,\pi(x)), \ \ \ \  d\tilde \pi_{k, r}(x)=d\pi_r(\pi(x)).$$
In a sense,  we  can  
regard        outside  $\pi^{-1}(\mathscr B)$     that  $\tilde g_r(\tilde o_k,x)$ is  the  Green function of $\tilde\Delta/2$ for $\tilde\Delta_k(r)$ with a pole at $\tilde o_k$ satisfying  Dirichlet boundary condition,  
and that   $\tilde\pi_{k, r}$ is  the harmonic measure on  $\partial{\tilde\Delta}_k(r)$ with respect to $\tilde o_k.$
Set  
 $$T_k(r,f)=m_k(r,f)+N_k(r,f),$$
 where
   \begin{eqnarray*}
m_k(r,f)&=& \int_{\partial{\tilde\Delta}_k(r)}\log^+|f|d\tilde\pi_{k, r}, \\
N_k(r,f)&=& \frac{\pi^m}{(m-1)!}\int_{f^*\infty\cap\tilde\Delta_k(r)}\tilde g_r(\tilde o_k,x)\tilde\alpha^{m-1}. 
    \end{eqnarray*}
Define for $a\in\mathbb P^1(\mathbb C)$
$$\overline{N}_k\Big(r,\frac{1}{f-a}\Big)=\frac{\pi^m}{(m-1)!}\int_{f^{-1}(a)\cap\tilde \Delta_k(r)}\tilde g_r(\tilde o_k,x)\tilde\alpha^{m-1} \ \ \ \ \   $$
and
$$T_k(r, \pi^*\mathscr R)= \frac{\pi^m}{(m-1)!}\int_{\tilde \Delta_k(r)}\tilde g_{r}(\tilde o_k,x)\pi^*\mathscr R\wedge\tilde\alpha^{m-1}.$$
\ \ \ \    If $w$ is not constant, then it follows from  Theorem \ref{sec1} that for $q$ distinct values $a_1,\cdots, a_q\in\mathbb P^1(\mathbb C),$ 
there is  a subset $E_\delta\subseteq(0, \infty)$ of  finite Lebesgue measure such that 
 \begin{eqnarray}\label{bcg}
  &&  (q-2)T_{k}(r, f)+T_k(r, \pi^*\mathscr R) \\
  &\leq& \sum_{j=1}^q\overline{N}_k\Big(r,\frac{1}{f-a_j}\Big)+O\big(\log^+T_k(r, f)+\log H(r)+\delta\log r\big) \nonumber
    \end{eqnarray}
holds for $k=1,\cdots,\nu$ and all $r>0$ outside   $E_\delta.$

\begin{theorem}[Second Main Theorem]\label{sec3}
 Let $w$ be a nonconstant   $\nu$-valued algebroid function on $M.$  Let $a_1, \cdots, a_q$ be distinct values in 
  $\mathbb P^1(\mathbb C).$ Then  for any $\delta>0,$ there exists a subset $E_\delta\subseteq(0, \infty)$ of finite Lebesgue measure such that 
     \begin{eqnarray*}
&&  (q-2\nu)T(r,w)+T(r, \mathscr R)  \\
  &\leq& \sum_{j=1}^q\overline{N}\Big(r,\frac{1}{w-a_j}\Big)+O\big(\log^+T(r,w)+\log H(r)+\delta\log r\big)
    \end{eqnarray*}
holds for all $r>0$ outside $E_\delta.$ 
\end{theorem}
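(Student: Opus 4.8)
The plan is to deduce this from the meromorphic Second Main Theorem (Theorem~\ref{sec1}) applied on the $\nu$-leaf manifold $\mathcal M$, and then to transfer the estimate down to $M$ through the natural projection $\pi\colon\mathcal M\to M$. By Corollary~\ref{ppll}, $w$ lifts to the meromorphic function $f=w\circ\pi^{-1}$ on $\mathcal M$, and since $o\notin\mathscr B$ the preimage $\pi^{-1}(\Delta(r))$ is, as set up in Section~\ref{sec53}, the union of the $\nu$ components $\tilde\Delta_1(r),\dots,\tilde\Delta_\nu(r)$ carrying the induced Green functions $\tilde g_r(\tilde o_k,\cdot)$ and harmonic measures $\tilde\pi_{k,r}$. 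As also shown there, Dynkin's formula — hence the whole Nevanlinna machinery of Section~\ref{sec40} — survives on $(\mathcal M,\pi^*g)$ despite the degeneracy of $\pi^*g$ along the polar set $\pi^{-1}(\mathscr B)$; so Theorem~\ref{sec1} applies to $f$ on each $\tilde\Delta_k(r)$ and yields the inequalities $(\ref{bcg})$ for $k=1,\dots,\nu$, valid for $r$ outside one set of finite measure. I would then sum these over $k$.

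The heart of the argument is the push-forward bookkeeping. Since $\pi\colon\pi^{-1}(\Delta(r))\to\Delta(r)$ is $\nu$-sheeted with $\tilde g_r(\tilde o_k,x)=g_r(o,\pi(x))$ and $\tilde\alpha=\pi^*\alpha$, a fibrewise count — the $\nu$ points of a generic fibre being precisely the loci of the single-valued components $w_1,\dots,w_\nu$ — gives $\sum_k m_k(r,f)=\nu m(r,w)$, $\sum_k N_k(r,f)=\nu N(r,w)$, hence $\sum_k T_k(r,f)=\nu T(r,w)$, and likewise $\sum_k\overline{N}_k\big(r,\frac{1}{f-a_j}\big)=\nu\overline{N}\big(r,\frac{1}{w-a_j}\big)$ for each $j$. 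The one subtlety is the curvature term: the Ricci form that actually enters Theorem~\ref{sec1} on $\mathcal M$ is that of the degenerate metric $\pi^*g$, which equals $\pi^*\mathscr R$ diminished by the ramification divisor of $\pi$; this ramification divisor push-forwards exactly to the branch divisor $\mathscr D$ of $w$ (near a non-singular branch point with leaf orders $\lambda_1,\dots,\lambda_l$, $\sum_i\lambda_i=\nu$, the fibre contributes $\sum_i(\lambda_i-1)=\nu-l={\rm Ord}\,\mathscr D$ there, the singular branch locus being of codimension $\ge2$ and hence negligible), so that $\sum_k T_k(r,\pi^*\mathscr R)=\nu T(r,\mathscr R)-N(r,\mathscr D)$. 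Finally $T_k(r,f)\le\sum_k T_k(r,f)=\nu T(r,w)$, so by concavity of $\log^+$ the sum of the error terms in $(\ref{bcg})$ is $O\big(\log^+T(r,w)+\log H(r)+\delta\log r\big)$. Adding the $\nu$ copies of $(\ref{bcg})$ then gives
$$(q-2)\nu T(r,w)+\nu T(r,\mathscr R)-N(r,\mathscr D)\le\nu\sum_{j=1}^q\overline{N}\Big(r,\frac{1}{w-a_j}\Big)+O\big(\log^+T(r,w)+\log H(r)+\delta\log r\big).$$

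It then remains to dispose of the ramification term. Dividing by $\nu$ and invoking the estimate of branch divisors, Theorem~\ref{bran}, in the form $\frac{1}{\nu}N(r,\mathscr D)=N_{\rm{bran}}(r,w)\le(2\nu-2)T(r,w)+O(1)$, one obtains
$$(q-2)T(r,w)+T(r,\mathscr R)\le\sum_{j=1}^q\overline{N}\Big(r,\frac{1}{w-a_j}\Big)+(2\nu-2)T(r,w)+O\big(\log^+T(r,w)+\log H(r)+\delta\log r\big),$$
and transposing $(2\nu-2)T(r,w)$ turns the leading coefficient from $q-2$ into $q-2\nu$, which is the assertion.

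The main obstacle is exactly this last bit of bookkeeping: one must be sure that the entire loss incurred in passing from the meromorphic bound on the leaves of $\mathcal M$ (coefficient $q-2$) to the algebroid bound on $M$ (coefficient $q-2\nu$) is carried, to the last unit, by $N(r,\mathscr D)$ — that is, by the identity $\pi_*(\text{ramification of }\pi)=\mathscr D$, the discriminant bound $\mathscr D\le(J_\psi=0)$, and Theorem~\ref{bran} — and that the Green functions, harmonic measures and Dynkin formula transfer correctly across $\pi^{-1}(\mathscr B)$, a point already settled in Section~\ref{sec53}. Everything else (the fibrewise identities for $m$, $N$, $\overline N$; the $\log^+$-concavity bound on the error term; the irrelevance of the codimension-$\ge2$ singular branch set) is routine.
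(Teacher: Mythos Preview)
Your proof is correct and follows the same overall architecture as the paper's: lift $w$ to the meromorphic $f$ on $\mathcal M$, apply the meromorphic Second Main Theorem (\ref{bcg}) sheet by sheet, average over the $\nu$ sheets, and absorb the resulting branching term via Theorem~\ref{bran}.

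The one genuine difference is \emph{where} the branching correction $N_{\rm bran}(r,w)=\frac{1}{\nu}N(r,\mathscr D)$ is made to appear. The paper treats the Ricci contribution on the sheets as literally $T_k(r,\pi^*\mathscr R)=T(r,\mathscr R)$ and instead inserts the branching loss into the comparison of simple counting functions, via the inequality
\[
\frac{1}{\nu}\sum_{k=1}^\nu\sum_{j=1}^q\overline N_k\Big(r,\frac{1}{f-a_j}\Big)\ \le\ \sum_{j=1}^q\overline N\Big(r,\frac{1}{w-a_j}\Big)+N_{\rm bran}(r,w),
\]
together with $T(r,w)\le\frac{1}{\nu}\sum_k T_k(r,f)$. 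You instead keep the simple counting functions as exact push-forwards and book the branching on the curvature side, via the Riemann--Hurwitz identity $\mathscr R_{\pi^*g}=\pi^*\mathscr R-[\mathrm{ram}_\pi]$ and $\pi_*[\mathrm{ram}_\pi]=\mathscr D$. Both accountings deliver the same $(2\nu-2)T(r,w)$ loss after Theorem~\ref{bran}; your version has the advantage of making transparent that the \emph{only} loss in passing from $q-2$ to $q-2\nu$ is the branching divisor, while the paper's version avoids having to discuss the Ricci current of the degenerate metric $\pi^*g$ along $\pi^{-1}(\mathscr B)$.
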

\begin{proof}  Note   that $\pi: \mathcal M\setminus{\pi^{-1}(\mathscr B)}\rightarrow M\setminus{\mathscr B}$  is a $\nu$-sheeted  analytic covering,  
and   two distinct connected components 
  $\tilde\Delta_i(r), \tilde\Delta_j(r)$  of $\pi^{-1}(\Delta(r))$  meet   only at  the   points  in   
  $\pi^{-1}(\mathscr B).$  Moreover, 
                                              any  point  in  $w^{-1}\{a_1,\cdots, a_q\}$ is possibly  a branch point of $w,$
                                               or  equivalently, any  point  in  $f^{-1}\{a_1,\cdots, a_q\}$ is possibly  a ramification  point of $\pi.$ 
                                     Hence, it follows from    $w=f\circ\pi^{-1}$ that   
$$\frac{1}{\nu}\sum_{k=1}^\nu \sum_{j=1}^q\overline{N}_k\Big(r,\frac{1}{f-a_j}\Big)\leq \sum_{j=1}^q\overline{N}\Big(r,\frac{1}{w-a_j}\Big)+ N_{\rm{bran}}(r,w).$$
On the other hand,  we  have $T(r, \mathscr R)=T_{k}(r, \pi^*\mathscr R)$ as well as   
   \begin{eqnarray*}
 T(r, w) 
&=& m(r,w)+N(r,w) \\
&\leq& \frac{1}{\nu}\sum_{k=1}^\nu m_k(r,f)+ \frac{1}{\nu}\sum_{k=1}^\nu N_k(r,f) \\
&=&  \frac{1}{\nu}\sum_{k=1}^\nu T_k(r,f) \\
&\leq& T(r,w)+N_{\rm{bran}}(r,w).
   \end{eqnarray*}
Combining the above with (\ref{bcg}) and Theorem \ref{bran},  it yields   that for any $\delta>0,$ there exists a subset $E_\delta\subseteq(0, \infty)$ of finite Lebesgue measure such that 
   \begin{eqnarray*}
&& (q-2)T(r, w)+T(r, \mathscr R)  \\
&\leq& \frac{q-2}{\nu}\sum_{k=1}^\nu T_k(r,f)+\frac{1}{\nu}\sum_{k=1}^{\nu} T_{k}(r, \pi^*\mathscr R) \\
&\leq& \frac{1}{\nu}\sum_{k=1}^\nu\sum_{j=1}^q\overline{N}_k\Big(r,\frac{1}{f-a_j}\Big)
+O\big(\log^+T(r,w)+\log H(r)+\delta\log r\big) \\
&\leq& \sum_{j=1}^q\overline{N}\Big(r,\frac{1}{w-a_j}\Big)+ N_{\rm{bran}}(r,w) 
 +O\big(\log^+T(r,w)+\log H(r)+\delta\log r\big)
   \end{eqnarray*}
holds for all $r>0$ outside  $E_\delta.$ 
By Theorem \ref{bran}, we prove the theorem.
\end{proof}

When $M=\mathbb C^m,$ we have $\mathscr R=0.$ By $V(r)=O(r^{2m}),$  it is trivial   to deduce  that  $\log H(r)=O(1).$
Hence, we obtain: 

\begin{cor}\label{haoba}
 Let $w$ be a nonconstant   $\nu$-valued algebroid function on $\mathbb C^m$ with $m\geq2.$  Let $a_1, \cdots, a_q$ be distinct values in 
  $\mathbb P^1(\mathbb C).$ Then  for any $\delta>0,$ there exists a subset $E_\delta\subseteq(0, \infty)$ of finite Lebesgue measure such that 
     \begin{eqnarray*}
  (q-2\nu)T(r,w)
  &\leq& \sum_{j=1}^q\overline{N}\Big(r,\frac{1}{w-a_j}\Big)+O\big(\log^+T(r,w)+\delta\log r\big)
    \end{eqnarray*}
holds for all $r>0$ outside $E_\delta.$ 
\end{cor}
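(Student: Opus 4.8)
The plan is to obtain the corollary as a direct specialization of Theorem \ref{sec3} to flat space. First I would verify that $\mathbb{C}^m$ with the standard flat K\"ahler metric $g_{i\bar j}=\delta_{ij}$ satisfies all the hypotheses imposed in Section \ref{sec424}: it is complete, non-compact, and has identically vanishing Ricci curvature, in particular non-negative Ricci curvature; moreover it is non-parabolic precisely because $m\geq2$, since $V(t)=c_m t^{2m}$ (with $c_m$ the Euclidean volume of the unit ball in $\mathbb{R}^{2m}$) gives $\int_1^\infty t/V(t)\,dt=c_m^{-1}\int_1^\infty t^{1-2m}\,dt<\infty$ exactly when $2m-1>1$. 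Hence the entire apparatus underlying Theorem \ref{sec3} — the exhaustion $\{\Delta(r)\}$, the Green function $g_r(o,x)$, and the harmonic measures $\pi_r$ — is available. (It is worth recording that here $G(o,x)$ is a strictly decreasing radial function of $|x|$, so each $\Delta(r)$ is a Euclidean ball centered at $o$ whose radius is a fixed multiple of $r$.)

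Second, I would evaluate the two terms in Theorem \ref{sec3} that become inessential in this setting. Since $\det(g_{i\bar j})\equiv1$, we have $\mathscr R=-dd^c\log\det(g_{i\bar j})=0$, hence $T(r,\mathscr R)=0$ identically, so the term $T(r,\mathscr R)$ simply drops out of the left-hand side. For $H(r)$, using $V(t)=c_m t^{2m}$ one computes $\int_r^\infty t/V(t)\,dt=c_m^{-1}r^{2-2m}/(2m-2)$, whence
$$H(r)=\frac{V(r)}{r^2}\cdot\frac{c_m^{-1}r^{2-2m}}{2m-2}=\frac{1}{2m-2},$$
a positive constant; therefore $\log H(r)=O(1)$.

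Finally, I would substitute these evaluations into the conclusion of Theorem \ref{sec3}. The term $T(r,\mathscr R)$ vanishes, and $\log H(r)=O(1)$ is absorbed into the error term $O\big(\log^+T(r,w)+\delta\log r\big)$, leaving exactly the asserted inequality, valid for all $r>0$ outside a set $E_\delta$ of finite Lebesgue measure. There is no genuine obstacle: the full content is in Theorem \ref{sec3}, and the corollary is its specialization to $\mathbb{C}^m$. The only point that requires a little care is the volume bookkeeping — namely that $V(r)=O(r^{2m})$ is in fact exact, so that $H(r)$ is not merely bounded but literally constant, and that the hypothesis $m\geq2$ is precisely what guarantees both non-parabolicity and the convergence of $\int_r^\infty t^{1-2m}\,dt$.
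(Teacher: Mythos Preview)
Your proposal is correct and follows exactly the same approach as the paper, which simply notes in one line that for $M=\mathbb C^m$ one has $\mathscr R=0$ and $V(r)=O(r^{2m})$ so that $\log H(r)=O(1)$, then applies Theorem \ref{sec3}. You have merely fleshed out these observations with explicit computations (in particular showing $H(r)=1/(2m-2)$ is constant and verifying non-parabolicity requires $m\geq2$), which is entirely appropriate.
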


  \subsubsection{$M$ has non-positive sectional curvature}~

Assume that     $M$ has  curvature  satisfying  (\ref{curvature}).   
Note that the curvature of $\mathcal M$ also  satisfies   (\ref{curvature})  outside $\pi^{-1}(\mathscr B)$
 under   the pullback metric $\pi^*g.$ Similarly, 
 we obtain     notations  $T(r, w), m(r, w), N(r, w), \overline{N}(r,w)$ and $T(r, \mathscr R)$
   by setting $\Omega=\Delta(r).$
  Employing     the similar  arguments as in the proof of Theorem \ref{sec3}, we can conclude  from  Theorem \ref{sec2} that 
\begin{theorem}[Second Main Theorem]\label{sec4}
 Let $w$ be a nonconstant   $\nu$-valued algebroid function on $M.$  Let $a_1, \cdots, a_q$ be distinct values in 
  $\mathbb P^1(\mathbb C).$ Then  for any $\delta>0,$ there exists a subset $E_\delta\subseteq(0, \infty)$ of finite Lebesgue measure such that 
          \begin{eqnarray*}
         &&   (q-2\nu)T(r,w)+T(r, \mathscr R) \\
  &\leq& \sum_{j=1}^q\overline{N}\Big(r,\frac{1}{w-a_j}\Big)+O\left(\log^+T(r,w)+(\tau-\sigma)r+\delta\log\chi(\tau, r)\right)
                  \end{eqnarray*}
        holds for all $r>0$ outside $E_\delta.$
\end{theorem}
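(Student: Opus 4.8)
The plan is to repeat, almost verbatim, the proof of Theorem \ref{sec3}, simply replacing its input Theorem \ref{sec1} by Theorem \ref{sec2}. First I would invoke Corollary \ref{ppll} to lift $w$ to a meromorphic function $f\colon\mathcal M\to\mathbb P^1(\mathbb C)$ on the $\nu$-leaf complex manifold $\mathcal M$ so that $w=f\circ\pi^{-1}$, and equip $\mathcal M$ with the pullback metric $\pi^*g$. Away from the polar set $\pi^{-1}(\mathscr B)$ the curvature of $(\mathcal M,\pi^*g)$ still satisfies $(\ref{curvature})$, and the Dynkin formula continues to hold on all of $\mathcal M$ by the approximation argument already carried out just before Theorem \ref{sec3}; consequently Theorem \ref{sec2} applies to $f$ on each leaf.

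Next I would decompose $\pi^{-1}(\Delta(r))$ into its $\nu$ connected components $\tilde\Delta_1(r),\dots,\tilde\Delta_\nu(r)$, centered at points $\tilde o_1,\dots,\tilde o_\nu$ lying over $o$ (legitimate since $o\notin\mathscr B$), and introduce the lifted Green functions $\tilde g_r(\tilde o_k,\cdot)$, the harmonic measures $\tilde\pi_{k,r}$, and the componentwise Nevanlinna functions $T_k(r,f)$, $\overline{N}_k(r,1/(f-a_j))$ and $T_k(r,\pi^*\mathscr R)$ exactly as in the passage preceding Theorem \ref{sec3}. Applying Theorem \ref{sec2} on each $\tilde\Delta_k(r)$ produces, for $1\le k\le\nu$, an estimate of the shape $(q-2)T_k(r,f)+T_k(r,\pi^*\mathscr R)\le\sum_{j=1}^q\overline{N}_k\big(r,1/(f-a_j)\big)+O\big(\log^+T_k(r,f)+(\tau-\sigma)r+\delta\log\chi(\tau,r)\big)$ valid off a set $E_{\delta,k}$ of finite Lebesgue measure; putting $E_\delta=\bigcup_{k=1}^\nu E_{\delta,k}$, all $\nu$ of these hold simultaneously for $r\notin E_\delta$.

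I would then average over $k$ and pass to the algebroid quantities by means of the three comparisons used in the proof of Theorem \ref{sec3}: first $T(r,\mathscr R)=T_k(r,\pi^*\mathscr R)$; second $T(r,w)\le\tfrac1\nu\sum_{k=1}^\nu T_k(r,f)\le T(r,w)+N_{\rm{bran}}(r,w)$, where the left inequality is what one substitutes and the right one is needed only to replace $\log^+T_k(r,f)$ by $O(\log^+T(r,w))$ after invoking Theorem \ref{bran}; and third $\tfrac1\nu\sum_{k=1}^\nu\sum_{j=1}^q\overline{N}_k\big(r,1/(f-a_j)\big)\le\sum_{j=1}^q\overline{N}\big(r,1/(w-a_j)\big)+N_{\rm{bran}}(r,w)$, since a point over some $a_j$ may be a ramification point of $\pi$. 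Combining, one gets $(q-2)T(r,w)+T(r,\mathscr R)\le\sum_{j=1}^q\overline{N}\big(r,1/(w-a_j)\big)+N_{\rm{bran}}(r,w)+O\big(\log^+T(r,w)+(\tau-\sigma)r+\delta\log\chi(\tau,r)\big)$.

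The final step, and the one demanding care, is the bookkeeping of the ramification: one applies Theorem \ref{bran}, namely $N_{\rm{bran}}(r,w)\le(2\nu-2)T(r,w)+O(1)$, to the single surviving $N_{\rm{bran}}$ term, so that $(q-2)T(r,w)-(2\nu-2)T(r,w)=(q-2\nu)T(r,w)$ appears on the left and the coefficient of $T(r,w)$ comes out as $q-2\nu$ rather than something smaller. I expect this accounting — ensuring that exactly one copy of $N_{\rm{bran}}(r,w)$ enters the final inequality — to be the only real obstacle; the remainder is a mechanical transcription of the proof of Theorem \ref{sec3} with the error term $O(\log H(r)+\delta\log r)$ systematically replaced by $O((\tau-\sigma)r+\delta\log\chi(\tau,r))$, as is already arranged by Theorem \ref{sec2}.
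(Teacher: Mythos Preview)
Your proposal is correct and follows exactly the paper's approach: the paper's own proof of Theorem \ref{sec4} is a one-line invocation of ``the similar arguments as in the proof of Theorem \ref{sec3}'' with Theorem \ref{sec2} substituted for Theorem \ref{sec1}, which is precisely what you have spelled out in detail. Your bookkeeping of the single $N_{\rm{bran}}(r,w)$ term and its absorption via Theorem \ref{bran} matches the computation in the proof of Theorem \ref{sec3}.
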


When  $M=\mathbb C^m,$ we deduce that      $\mathscr R=0,$            
 $\sigma=\tau=0$ and $\chi(\tau, r)=r.$ Thus, we conclude   that

 \begin{cor}\label{dong1}
 Let $w$ be a nonconstant   $\nu$-valued algebroid function on $\mathbb C^m.$  Let $a_1, \cdots, a_q$ be distinct values in 
  $\mathbb P^1(\mathbb C).$ Then  for any $\delta>0,$ there exists a subset $E_\delta\subseteq(0, \infty)$ of finite Lebesgue measure such that 
          \begin{eqnarray*}
           (q-2\nu)T(r,w) 
  &\leq& \sum_{j=1}^q\overline{N}\Big(r,\frac{1}{w-a_j}\Big)+O\left(\log^+T(r,w)+\delta\log r\right)
                  \end{eqnarray*}
        holds for all $r>0$ outside $E_\delta.$
\end{cor}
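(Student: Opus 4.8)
The plan is to obtain Corollary \ref{dong1} by specializing Theorem \ref{sec4} to $M=\mathbb C^m$ equipped with its flat Euclidean metric; the whole task then reduces to verifying the curvature hypothesis and simplifying the error term. First I would record the relevant geometry: for the Euclidean metric one has $g_{i\bar j}=\delta_{ij}$, so the sectional curvature $K$ and the Ricci curvature $\mathrm{Ric}$ vanish identically, whence $(\ref{curvature})$ is satisfied with $\sigma=\tau=0$ (the admissible branch in which $\sigma=\tau=0$). Moreover the Ricci form $\mathscr R=-dd^c\log\det(g_{i\bar j})=0$, so $T(r,\mathscr R)=0$ for all $r$. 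Since $\sigma=\tau=0$, the exhaustion $\Delta(r)$ coincides, by the convention adopted in the paragraph on the case $\sigma=\tau=0$ (extending $(\ref{ext})$), with the Euclidean ball $\{x:\rho(x)<r\}$, so that $T(r,w)$ and the $\overline N\big(r,1/(w-a_j)\big)$ are the expected Nevanlinna functions built from $g_r(o,x)$ and $\pi_r$.

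Next I would substitute these values into the conclusion of Theorem \ref{sec4}. By the definition $(\ref{chi})$ one has $\chi(\tau,r)=\chi(0,r)=r$, hence $\delta\log\chi(\tau,r)=\delta\log r$, while $(\tau-\sigma)r=0$. Plugging $T(r,\mathscr R)=0$ together with these into the inequality of Theorem \ref{sec4} yields, for every $\delta>0$ and all $r$ outside a set $E_\delta$ of finite Lebesgue measure,
\[
(q-2\nu)T(r,w)\le\sum_{j=1}^q\overline N\Big(r,\frac{1}{w-a_j}\Big)+O\big(\log^+ T(r,w)+\delta\log r\big),
\]
which is precisely the assertion of the corollary.

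As this is a pure specialization, there is essentially no obstacle; the only point deserving (routine) attention is that the summand $\log E(r,\delta)$ which feeds the $\delta\log\chi(\tau,r)$ term in the proof of Theorem \ref{sec4} (via Theorem \ref{sec2}) is genuinely $O(\delta\log r)+O(1)$ when $\sigma=\tau=0$. This follows at once from $(\ref{0p0})$: with $\sigma=\tau=0$ the quantity $E(r,\delta)$ is comparable to $r^{(2m-1)\delta}$, exactly as computed in the proof of Theorem \ref{sec2}, so its logarithm is absorbed into $O(\delta\log r)$. One may additionally remark that taking $m=1$ recovers the classical second main theorem for $\nu$-valued algebroid functions on $\mathbb C$.
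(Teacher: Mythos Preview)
Your proposal is correct and follows exactly the paper's approach: the paper derives Corollary \ref{dong1} in one line by observing that for $M=\mathbb C^m$ one has $\mathscr R=0$, $\sigma=\tau=0$ and $\chi(\tau,r)=r$, and then reading off the inequality from Theorem \ref{sec4}. Your additional remark about $E(r,\delta)\sim r^{(2m-1)\delta}$ when $\sigma=\tau=0$ is precisely the computation carried out in the proof of Theorem \ref{sec2}.
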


Let $\mathbb B^m$ be the unit ball centered at the origin $\textbf 0$ in $\mathbb C^m.$  Consider   $M=\mathbb B^m$  equipped with the  Poincar\'e  metric 
\begin{equation}\label{kkk}
ds^2=\frac{4\|dz\|^2}{(1-\|z\|^2)^2}
\end{equation}
 in a global holomorphic  coordinate $z=(z_1,\cdots,z_m),$  where  
         \begin{eqnarray*}
\|z\|^2=|z_1|^2+\cdots+|z_m|^2, \ \ \ \  
  \|dz\|^2=|dz_1|^2+\cdots+|dz_m|^2.
        \end{eqnarray*}
It is known  that  $\mathbb B^m$ is a  complete non-compact  K\"ahler manifold of  constant  sectional curvature $-1$ under this metric.  Using   (\ref{kkk}), the associated    Laplace-Beltrami operator  is   
$$\Delta=\left(1-\|z\|^2\right)^2\sum_{j=1}^m\frac{\partial^2}{\partial z_j\partial{\bar z}_j}.$$
 Take   $\textbf{0}\in\mathbb B^m$ as a reference point. 
   The associated  Poincar\'e distance function $\rho(z)$ of $z$ from  $\textbf{0}$ on $\mathbb B^m$ can be easily written   as   
\begin{equation*}
\rho(z)=2\int_0^{\|z\|}\frac{dt}{1-t^2}=\log\frac{1+\|z\|}{1-\|z\|}=2{\rm{artanh}}\|z\|.
\end{equation*}
Hence, we obtain   the minimal  positive global  Green function $G(\textbf{0}, z)$ of $\Delta/2$ for $\mathbb B^m$ with a pole at  $\textbf{0}$:    
$$G(\textbf{0}, z)=\left\{
                \begin{array}{ll}
                                 \frac{1}{\pi}\log\coth\frac{\rho(z)}{2}, \ \ & m=1;  \\
 \frac{1}{(m-1)\omega_{2m-1}}\Big[\Big(\tanh\frac{\rho(z)}{2}\Big)^{2-2m}-1\Big], \  \ & m\geq2,
                                               \end{array}
              \right.$$
where $\omega_{2m-1}:=2\pi^m/(m-1)!$ stands for   the  standard Euclidean area of the  unit sphere $\mathbb S^{2m-1}$ in $\mathbb C^{m}.$   
By  the  definition (see  Section \ref{sec21}), we have 
$$\Delta(r)=\left\{
                \begin{array}{ll}
                                \Big{\{}z\in\mathbb B^m: \ G(\textbf{0}, z)>\frac{1}{\pi}\log\coth\frac{r}{2}\Big{\}}, \ \   &   m=1; \\
\Big{\{}z\in\mathbb B^m: \ G(\textbf{0}, z)>  \frac{1}{(m-1)\omega_{2m-1}}\Big[\Big(\tanh\frac{r}{2}\Big)^{2-2m}-1\Big], \  \ & m\geq2.
                                               \end{array}
              \right.$$       
This gives   
$$\Delta(r)=\big\{z\in\mathbb B^m: \rho(z)<r\big\},$$
 which  is  just  the geodesic ball centered at $\textbf{0}$ with radius $r$ in $\mathbb B^m.$  Hence, the boundary $\partial\Delta(r)$ of $\Delta(r)$ is the geodesic sphere centered 
 at $\textbf{0}$ with radius $r.$
 Whence, we  obtain  the  Green function $g_r(\textbf{0}, z)$ of $\Delta/2$ for $\Delta(r)$ with a pole at  $\textbf{0}$ satisfying Dirichlet boundary condition:    
$$g_r(\textbf{0}, z)=\left\{
                \begin{array}{ll}
                                 \frac{1}{\pi}\log\left(\tanh\frac{r}{2}\coth\frac{\rho(z)}{2}\right), \ \ & m=1;  \\
 \frac{1}{(m-1)\omega_{2m-1}}\Big[\Big(\tanh\frac{\rho(z)}{2}\Big)^{2-2m}-\Big(\tanh\frac{r}{2}\Big)^{2-2m}\Big], \  \ & m\geq2.
                                               \end{array}
              \right.$$          
Thus, for  $z\in\partial\Delta(t)$ with $0<t\leq r$         
\begin{equation}\label{aaa}
g_r(\textbf{0}, z)=\left\{
                \begin{array}{ll}
                                 \frac{1}{\pi}\log\left(\tanh\frac{r}{2}\coth\frac{t}{2}\right), \ \ & m=1;  \\
 \frac{1}{(m-1)\omega_{2m-1}}\Big[\Big(\tanh\frac{t}{2}\Big)^{2-2m}-\Big(\tanh\frac{r}{2}\Big)^{2-2m}\Big], \  \ & m\geq2.
                                               \end{array}
              \right.
             \end{equation}
                    We write 
            \begin{eqnarray*}
           ds^2=dr^2+d\xi^2_r, \ \ \ \ 
\|dz\|^2=d\|z\|^2+\|z\|^2d\theta^2
     \end{eqnarray*}
in the (geodesic) polar coordinate form, where $d\xi_r^2$ is the  Riemannian metric on  $\partial\Delta(r)$ induced by $ds^2,$ and 
$$d\theta^2=\sum_{i,j=1}^{2m-1}\phi_{ij}d\theta_i d\theta_j$$
is the standard Euclidean metric on $\mathbb S^{2m-1}.$ It yields from  (\ref{kkk}) that  
            \begin{equation}\label{ppp}
                     dr= \frac{2d\|z\|}{1-\|z\|^2}, \ \ \ \ 
d\xi_r =\frac{2\|z\|d\theta}{1-\|z\|^2}
     \end{equation}
with  
$\|z\|=\tanh(r/2).$          
Let $\sigma_r$ be the Riemannian area element of $\partial\Delta(r).$ By  the second formula in (\ref{ppp}),    it gives immediately  
            \begin{equation}\label{bbbb}
       d\sigma_r= \sqrt{\det\left(\frac{4\|z\|^2\phi_{ij}}{(1-\|z\|^2)^2}\right)}
       =\left(\sinh r\right)^{2m-1}dA,
 \end{equation}
        where $dA$ is  the  standard Euclidean  area element of $\mathbb S^{2m-1}.$  
       
       With these  preparations, we shall  give an upper  estimate of  $T(r, \mathscr R).$
Let   $s_c$  be   the   scalar curvature of   $\mathbb B^m,$ which  is  a negative  constant. It yields   from    (\ref{bbbb}) that   
     \begin{eqnarray*}
 |T(r,\mathscr R)|
 &=& \frac{|s_c|}{2}\int_{\Delta(r)}g_r(\textbf{0}, z)dv \\
 &=&  \frac{|s_c|}{2}\int_{0}^rdt\int_{\partial\Delta(t)}g_r(\textbf{0}, z)d\sigma_t \\
 &=& \frac{|s_c|}{2}\int_{0}^rg_r(\textbf{0}, z)|_{\partial\Delta(t)}\left(\sinh t\right)^{2m-1}dt\int_{\mathbb S^{2m-1}}dA  \\
 &=& \frac{\omega_{2m-1}|s_c|}{2}\int_{0}^rg_r(\textbf{0}, z)|_{\partial\Delta(t)}\left(\sinh t\right)^{2m-1}dt.
     \end{eqnarray*}
Substituting  (\ref{aaa}) into the  expression of $|T(r,\mathscr R)|$ and computing  the integral, it   is not  hard   to deduce    that 
$|T(r, \mathscr R)|\leq O(r).$ 
We proceed to estimate $E(r)$ with $\sigma,\tau\not=0.$
By $$\chi(\tau, r)\leq \frac{1}{2\tau}e^{\tau r},$$
we have $\log\chi(\tau, r)\leq \tau r+O(1).$  Hence, we obtain: 
 
  \begin{cor}\label{dong2}
  Let $w$ be a nonconstant   $\nu$-valued algebroid function on $\mathbb B^m.$  Equip $\mathbb B^m$ with the Poincar\'e metric.  Then        \begin{eqnarray*}
 (q-2\nu)T(r, w)
&\leq&  \sum_{j=1}^q\overline{N}\Big(r,\frac{1}{w-a_j}\Big)+O\left(\log^+ T(r, w)+r\right) 
         \end{eqnarray*}
holds for  all  $r>0$ outside a subset of finite Lebesgue measure. 
\end{cor}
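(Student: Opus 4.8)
The plan is to obtain the corollary as a direct specialization of Theorem \ref{sec4} to $M=\mathbb B^m$ equipped with the Poincar\'e metric $(\ref{kkk})$. First I would record the curvature data: this metric makes $\mathbb B^m$ a complete non-compact K\"ahler manifold of constant sectional curvature $-1$, so the constants in the curvature hypothesis $(\ref{curvature})$ may be taken to be $\sigma=\tau=1$. In particular $\tau-\sigma=0$, so the term $(\tau-\sigma)r$ in Theorem \ref{sec4} vanishes. Moreover $\chi(\tau,r)=\chi(1,r)=\sinh r$, and from the elementary bound $\sinh r\le \tfrac12 e^{r}$ one gets $\log\chi(\tau,r)\le r+O(1)$, hence $\delta\log\chi(\tau,r)=O(r)$ for every fixed $\delta>0$.

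Next I would bound the characteristic function $T(r,\mathscr R)$ of the Ricci form. Since $\mathbb B^m$ has constant sectional curvature, its scalar curvature $s_c$ is a negative constant and $\mathscr R$ is a constant multiple of the K\"ahler form; writing $T(r,\mathscr R)$ in geodesic polar coordinates and using the area formula $(\ref{bbbb})$ together with the explicit boundary values $(\ref{aaa})$ of the Green function — exactly the computation sketched just before the statement — reduces everything to a one-dimensional integral of $g_r(\textbf{0},z)|_{\partial\Delta(t)}(\sinh t)^{2m-1}$ over $t\in(0,r)$, and estimating this integral (separating the cases $m=1$ and $m\ge2$) yields $|T(r,\mathscr R)|\le O(r)$.

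Finally I would substitute these into Theorem \ref{sec4}: taking $\delta=1$, there is a subset $E_1\subseteq(0,\infty)$ of finite Lebesgue measure outside which
$$(q-2\nu)T(r,w)+T(r,\mathscr R)\le \sum_{j=1}^q\overline N\Big(r,\tfrac{1}{w-a_j}\Big)+O\big(\log^+T(r,w)+(\tau-\sigma)r+\log\chi(\tau,r)\big).$$
Moving $T(r,\mathscr R)$ to the right-hand side and absorbing $|T(r,\mathscr R)|=O(r)$, $(\tau-\sigma)r=0$, and $\log\chi(\tau,r)=O(r)$ into a single error term $O(\log^+T(r,w)+r)$ gives precisely the asserted inequality, valid for all $r>0$ outside a set of finite Lebesgue measure. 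I do not expect any genuine obstacle here; the only mildly technical point is the $O(r)$ estimate for $T(r,\mathscr R)$, and even that is a routine computation with the explicit formulas already assembled in the excerpt.
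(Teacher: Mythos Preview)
Your proposal is correct and follows essentially the same route as the paper: specialize Theorem \ref{sec4} to $\mathbb B^m$ with $\sigma=\tau=1$, use the explicit formulas (\ref{aaa}) and (\ref{bbbb}) to get $|T(r,\mathscr R)|\le O(r)$, bound $\log\chi(\tau,r)\le \tau r+O(1)$, and absorb everything into $O(\log^+T(r,w)+r)$. The only cosmetic difference is that the paper leaves $\delta>0$ arbitrary rather than fixing $\delta=1$, but this is immaterial since $\delta\log\chi(1,r)=O(r)$ regardless.
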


\section{Unicity Problems  of Algebroid Functions}

Assume that  $M$ is a complete non-compact  K\"ahler manifold.

\subsection{Propagation of Algebraic Dependence}~

 Let $\omega_{FS}$ be the Fubini-Study form on $\mathbb P^1(\mathbb C).$  Given  a $\nu$-valued algebroid function  $w=\{w_j\}_{j=1}^\nu$ on $M.$ For   $a\in\mathbb P^1(\mathbb C),$
define  
    \begin{eqnarray*}
T_w(r,\omega_{FS})&=& \frac{1}{\nu}\sum_{j=1}^\nu T_{w_j}(r,\omega_{FS}), \\
m_w(r,a)&=&\frac{1}{\nu}\sum_{j=1}^\nu m_{w_j}(r,a), \\
  N_w(r, a)&=&\frac{1}{\nu}\sum_{j=1}^\nu N_{w_j}(r,a), \\
        \overline{N}_w(r, a)&=&\frac{1}{\nu}\sum_{j=1}^\nu \overline{N}_{w_j}(r,a).
            \end{eqnarray*}
It is not hard to deduce  that  
$T_w(r,\omega_{FS})+O(1)=m_w(r,a)+N_w(r,a).$ 
Since     
$m_w(r, \infty)=m(r, w)+O(1)$ and $N_w(r, \infty)=N(r, w),$
        we obtain  
$$T_w(r,\omega_{FS})=T(r, w)+O(1).$$ 
\ \ \ \   Fix any integer $l\geq2.$   
A proper algebraic subset $\Sigma$ of $\mathbb P^1(\mathbb C)^{l}$ is said to be  decomposible, if there exist  $s$ positive integers $l_1,\cdots,l_s$ 
with $l=l_1+\cdots+l_s$ for some  integer $s\leq l,$ 
and $s$ algebraic subsets $\Sigma_j\subseteq \mathbb P^1(\mathbb C)^{l_j}$ with $1\leq j\leq s,$ such that 
$\Sigma=\Sigma_1\times\cdots\times\Sigma_s.$ If $\Sigma$ is not decomposable,  then we say that $\Sigma$ is  indecomposable. 
Given  $l$ algebroid functions $w^1,\cdots, w^l: M\rightarrow \mathbb P^1(\mathbb C).$ Set 
 $$\tilde w=w^1\times\cdots\times w^l: \  M\rightarrow \mathbb P^1(\mathbb C)^{l}.$$ 

\begin{defi}\label{def11} Let $S$ be an analytic subset of  $M.$  We say that  $l$ nonconstant algebroid functions $w^1,\cdots,w^l$ on $M$ are  algebraically dependent on $S,$ if there exists a proper indecomposable algebraic subset $\Sigma$ of $\mathbb P^1(\mathbb C)^l$ such that
 $\tilde w(S)\subseteq\Sigma.$ If so,  $w^1,\cdots, w^l$ are said to be $\Sigma$-related on $S.$
\end{defi}

We let $Z$ be an effective divisor on $M$ and  fix a positive integer $k$ (allowed to be $+\infty$). Write $Z=\sum_j\nu_j Z_j$ for distinct irreducible hypersurfaces $Z_j$ of $M$ and  positive  integers $\nu_j.$  Define the support of $Z$ with order at most $k$ by
\begin{equation*}\label{def12}
{\rm{Supp}}_kZ=\bigcup_{1\leq\nu_j\leq k}Z_j.
\end{equation*}
  Note that  ${\rm{Supp}}Z={\rm{Supp}}_{+\infty}Z.$  
    Set 
\begin{equation*}
\mathscr G=\big\{\text{all  algebroid functions on $M$}\big\}. 
\end{equation*}
 Let $S_1,\cdots, S_q$ be  hypersurfaces of $M$ satisfying that $\dim_{\mathbb C}S_i\cap S_j\leq m-2$ if $m\geq2$ or $S_i\cap S_j=\emptyset$ if $m=1$ for  $i\not=j.$ 
 Let $a_1,\cdots, a_q$ be  distinct values in $\mathbb P^1(\mathbb C)$ and  let $k_1,\cdots, k_q$ be positive integers  (allowed to be $+\infty$).

We introduce the notation $\mathscr W$ as follows: 
  \begin{enumerate}
   \item[$\bullet$] 
 If $M$ is non-parabolic with non-negative Ricci curvature and volume growth satisfying (\ref{cond}),  then  we denote by
\begin{equation*}
\mathscr W=\mathscr W\big(w\in \mathscr G; \ \{k_j\}; \ (M, \{S_j\}); \ (\mathbb P^1(\mathbb C),  \{a_j\})\big)
\end{equation*}
the set of all $w\in\mathscr G$ such that 
$$S_j={\rm{Supp}}_{k_j}(w^*a_j), \ \ \ \  j=1,\cdots q.$$
   \item[$\bullet$] 
  If $M$ has   curvature satisfying (\ref{curvature}), then  we denote by 
\begin{equation*}
\mathscr W=\mathscr W\big(w\in \mathscr G; \ \{k_j\}; \ (M, \{S_j\}); \ (\mathbb P^1(\mathbb C),  \{a_j\})\big)
\end{equation*}
the set of all $w\in\mathscr G$ such that 
$$S_j={\rm{Supp}}_{k_j}(w^*a_j), \ \ \  \ j=1,\cdots, q$$
and  (\ref{xjing}) is satisfied.
  \end{enumerate}

 Let $\mathscr O(1)$   be    the point  line  bundle over $\mathbb P^1(\mathbb C).$ 
It  gives    a holomorphic  line bundle over $\mathbb P^1(\mathbb C)^l$:  
$$\tilde{\mathscr O}(1)=\pi^*_1\mathscr O(1)\otimes\cdots\otimes\pi^*_l\mathscr O(1),$$
where $\pi_k: \mathbb P^1(\mathbb C)^l\rightarrow \mathbb P^1(\mathbb C)$ is the natural projection on the $k$-th factor with  $1\leq k\leq l.$
Let $w^k=\{w^k_{j}\}_{j=1}^{\nu_k}\in\mathscr W$ with $k=1,\cdots,l.$
 For $\tilde D\in|\tilde{\mathscr O}(1)|$, define 
  \begin{eqnarray*}
T_{\tilde w}\big(r, \tilde{\mathscr O}(1)\big)&=&\frac{1}{\nu_1\cdots\nu_l}\sum_{1\leq j_1\leq\nu_1,\cdots, 1\leq j_l\leq\nu_l} T_{w_{j_1}^1\times\cdots\times w_{j_l}^l}(r, \tilde{\mathscr O}(1)), \\
 N_{\tilde w}\big(r, \tilde{D}\big)&=&\frac{1}{\nu_1\cdots\nu_l}\sum_{1\leq j_1\leq\nu_1,\cdots, 1\leq j_l\leq\nu_l} N_{w_{j_1}^1\times\cdots\times w_{j_l}^l}(r, \tilde D)
  \end{eqnarray*}
  with 
    \begin{eqnarray*}
T_{w_{j_1}^1\times\cdots\times w_{j_l}^l}(r, \tilde{\mathscr O}(1)) &=&\frac{\pi^m}{(m-1)!}\int_{\Delta(r)}\big(w_{j_1}^1\times\cdots\times w_{j_l}^l\big)^*\omega\wedge\alpha^{m-1}, \\
N_{w_{j_1}^1\times\cdots\times w_{j_l}^l}(r, \tilde D) &=&\frac{\pi^m}{(m-1)!}\int_{\big(w_{j_1}^1\times\cdots\times w_{j_l}^l\big)^*\tilde{D}\cap\Delta(r)}\alpha^{m-1}, 
  \end{eqnarray*}
  where $\omega\in c_1(\tilde{\mathscr O}(1))$  is a  positive $(1,1)$-form. 
\begin{lemma}\label{ghgh} We have 
$$N_{\tilde w}\big(r, \tilde{D}\big)\leq T_{\tilde w}\big(r, \tilde{\mathscr O}(1)\big)+O(1)= \sum_{k=1}^lT_{w^{k}}(r, \omega_{FS}).$$
\end{lemma}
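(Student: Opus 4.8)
## Proof proposal for Lemma 6.5 (the $N_{\tilde w} \le T_{\tilde w} \le \sum_k T_{w^k}$ inequality)

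The plan is to reduce the statement to the First Main Theorem together with the additivity of the characteristic function under tensor products of line bundles, and then average over the single-valued components. I will carry this out in three steps.

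\textbf{Step 1 (First Main Theorem on each component).} For each multi-index $(j_1,\dots,j_l)$ with $1\le j_k\le\nu_k$, the map $w^1_{j_1}\times\cdots\times w^l_{j_l}\colon M\to\mathbb P^1(\mathbb C)^l$ is meromorphic (defined, as in Section \ref{sec53}, on the appropriate leaf of the ramified cover via Theorem \ref{cover} and extended across $\pi^{-1}(\mathscr B)$, where Dynkin's formula still applies under the pullback metric). Writing $\tilde D$ as the zero divisor of a holomorphic section of $\tilde{\mathscr O}(1)$ and pulling back the Poincar\'e--Lelong identity, the counting term $N_{w^1_{j_1}\times\cdots\times w^l_{j_l}}(r,\tilde D)$ is dominated by $T_{w^1_{j_1}\times\cdots\times w^l_{j_l}}(r,\tilde{\mathscr O}(1))$ plus a bounded boundary contribution (Theorem \ref{first}, in its line-bundle form for $\tilde{\mathscr O}(1)$ over the product, using Dynkin's formula \ref{dynkin} to absorb the nonnegative proximity term). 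This gives
\[
N_{w^1_{j_1}\times\cdots\times w^l_{j_l}}(r,\tilde D)\le T_{w^1_{j_1}\times\cdots\times w^l_{j_l}}\big(r,\tilde{\mathscr O}(1)\big)+O(1).
\]
Averaging over all $(j_1,\dots,j_l)$ and invoking the definitions of $N_{\tilde w}$ and $T_{\tilde w}$ yields the first inequality $N_{\tilde w}(r,\tilde D)\le T_{\tilde w}(r,\tilde{\mathscr O}(1))+O(1)$.

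\textbf{Step 2 (Decomposition of the characteristic function).} Choosing (as we may, since different representatives of $c_1(\tilde{\mathscr O}(1))$ change $T$ only by $O(1)$ via Theorem \ref{first}) the positive $(1,1)$-form $\omega=\sum_{k=1}^l\pi_k^*\omega_{FS}$, and using $\pi_k\circ(w^1_{j_1}\times\cdots\times w^l_{j_l})=w^k_{j_k}$ together with functoriality of pullback, we get $\big(w^1_{j_1}\times\cdots\times w^l_{j_l}\big)^*\omega=\sum_{k=1}^l(w^k_{j_k})^*\omega_{FS}$, hence
\[
T_{w^1_{j_1}\times\cdots\times w^l_{j_l}}\big(r,\tilde{\mathscr O}(1)\big)=\sum_{k=1}^lT_{w^k_{j_k}}(r,\omega_{FS}).
\]
Averaging over all multi-indices, and noting that for each fixed $k$ the total sum over $j_k$ of $T_{w^k_{j_k}}(r,\omega_{FS})$ is counted $\prod_{i\ne k}\nu_i$ times, division by $\nu_1\cdots\nu_l$ and the definition $T_{w^k}(r,\omega_{FS})=\tfrac1{\nu_k}\sum_{j}T_{w^k_j}(r,\omega_{FS})$ give $T_{\tilde w}(r,\tilde{\mathscr O}(1))=\sum_{k=1}^lT_{w^k}(r,\omega_{FS})$. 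Combining with Step 1 proves the lemma.

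The genuine content sits entirely in Step 1: the assertion that the counting function of a member of $|\tilde{\mathscr O}(1)|$ is bounded above, up to $O(1)$, by the associated characteristic function on the pulled-back (possibly degenerate) K\"ahler background. This is exactly where the First Main Theorem and the remark in Section \ref{sec53} that Dynkin's formula survives the pullback metric $\pi^*g$ on $\mathcal M$ are indispensable. Steps 2 and 3 are formal: the additivity $c_1\big(\pi_1^*\mathscr O(1)\otimes\cdots\otimes\pi_l^*\mathscr O(1)\big)=\sum_k\pi_k^*c_1(\mathscr O(1))$ combined with functoriality of pullback, followed by elementary combinatorial averaging over the multi-indices.
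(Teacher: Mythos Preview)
Your proposal is correct and follows essentially the same approach as the paper: apply the First Main Theorem (via Dynkin's formula) componentwise to bound $N$ by $T$, decompose $T_{w^1_{j_1}\times\cdots\times w^l_{j_l}}(r,\tilde{\mathscr O}(1))$ as $\sum_k T_{w^k_{j_k}}(r,\omega_{FS})$ using the additivity of $c_1$ under tensor products, and then average over the multi-indices. The paper merely reverses the order of these two steps, and your closing remark refers to ``Steps 2 and 3'' where only two steps were presented, but neither point affects the mathematics.
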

\begin{proof}  A direct computation leads to  
$$T_{w_{j_1}^1\times\cdots\times w_{j_l}^l}\big(r, \tilde{\mathscr O}(1)\big)=
\sum_{k=1}^lT_{w^{k}_{j_k}}(r, \mathscr O(1))+O(1).$$
Thus, we have  
  \begin{eqnarray*}  
T_{\tilde w}\big(r, \tilde{\mathscr O}(1)\big) &=&  \frac{1}{\nu_1\cdots\nu_l}\sum_{1\leq j_1\leq\nu_1,\cdots, 1\leq j_l\leq\nu_l} \sum_{k=1}^lT_{w^{k}_{j_k}}(r, \mathscr O(1))+O(1) \\
&=&  \frac{1}{\nu_1\cdots\nu_l}\sum_{1\leq j_1\leq\nu_1,\cdots, 1\leq j_l\leq\nu_l} \sum_{k=1}^lT_{w^{k}_{j_k}}(r, \omega_{FS})+O(1) \\
&=&\sum_{k=1}^l \frac{1}{\nu_k}\sum_{j=1}^{\nu_k}T_{w^{k}_{j}}(r, \omega_{FS})+O(1) \\
&=& \sum_{k=1}^lT_{w^{k}}(r, \omega_{FS})+O(1). 
  \end{eqnarray*}
  Applying  Dynkin formula, we conclude  that      
  $N_{\tilde w}(r, \tilde{D})\leq T_{\tilde w}(r, \tilde{\mathscr O}(1))+O(1).$
  This completes the proof. 
\end{proof}
We denote by $\mathscr H$  the set of all indecomposable  hypersurfaces $\Sigma$ of $\mathbb P^1(\mathbb C)^l$ such that $\Sigma={\rm{Supp}}\tilde D$ for some $\tilde D\in|\tilde {\mathscr O}(1)|.$  
Set  
$$S=S_1\cup\cdots\cup S_q, \ \ \ \ 
 k_0=\max\{k_1,\cdots, k_q\}.
$$
\begin{lemma}\label{lem1}  Assume that $\tilde w(S)\subseteq \Sigma$ and $\tilde w(M)\not\subseteq \Sigma$ for some $\Sigma\in\mathscr H.$
Then  
$$N(r, S)\leq \nu_1\cdots\nu_l\sum_{k=1}^lT_{w^{k}}(r, \omega_{FS})+O(1).$$
\end{lemma}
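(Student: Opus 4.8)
The plan is to bound $N(r,S)$ from above by the counting function of $\tilde{w}^*\tilde{D}$ restricted to $S$, and then apply Lemma~\ref{ghgh}. First I would fix a representative $\tilde{D}\in|\tilde{\mathscr{O}}(1)|$ with $\Sigma={\rm{Supp}}\,\tilde{D}$; such a representative exists by definition of $\mathscr{H}$. Since $\tilde{w}(S)\subseteq\Sigma={\rm{Supp}}\,\tilde{D}$, every point of $S$ lies in the support of the pullback divisor $\tilde{w}^*\tilde{D}$, provided $\tilde{w}(M)\not\subseteq\Sigma$ so that the pullback $\tilde{w}^*\tilde{D}$ is a well-defined (non-trivial) effective divisor on $M$. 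In fact $S\subseteq{\rm{Supp}}\,\tilde{w}^*\tilde{D}$ as sets, hence, comparing multiplicities leaf-by-leaf, the reduced divisor $\overline{S}$ (with all components counted once) satisfies $\overline{S}\leq\tilde{w}^*\tilde{D}$ as divisors on $M$; here one uses that each $S_j={\rm{Supp}}_{k_j}(w^{i\,*}a_j)$ is a genuine hypersurface, so it contributes at least multiplicity one to $\tilde{w}^*\tilde{D}$ along each of its irreducible components.

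The second step is to pass to the averaged counting function. By the definition of the $\nu_1\cdots\nu_l$-sheeted structure and the way $N_{\tilde w}(r,\tilde D)$ is defined as the normalized sum over all tuples of single-valued components $w^1_{j_1}\times\cdots\times w^l_{j_l}$, each irreducible component of $S$ is hit on at least one leaf of each factor; averaging, one gets
\begin{equation*}
N(r,S)\leq \nu_1\cdots\nu_l\, N_{\tilde w}\big(r,\tilde D\big)+O(1),
\end{equation*}
where the factor $\nu_1\cdots\nu_l$ compensates for the normalization $1/(\nu_1\cdots\nu_l)$ in the definition of $N_{\tilde w}$, and the $O(1)$ absorbs the contribution of the (at most codimension-two, hence $N(r,\cdot)=O(1)$) intersections $S_i\cap S_j$ and any indeterminacy locus. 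Then Lemma~\ref{ghgh} gives $N_{\tilde w}(r,\tilde D)\leq T_{\tilde w}(r,\tilde{\mathscr O}(1))+O(1)=\sum_{k=1}^l T_{w^k}(r,\omega_{FS})$, and combining the two inequalities yields the claim.

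The main obstacle I expect is the bookkeeping in the leaf-by-leaf comparison $\overline{S}\leq\tilde{w}^*\tilde{D}$ and its averaged form: one must be careful that a component $S_j$ of $S$, which by hypothesis equals ${\rm{Supp}}_{k_j}(w^{i\,*}a_j)$ for the various $w^i$, actually appears in the zero divisor of the section $\tilde{w}^*\tilde{D}$ with total multiplicity (summed over leaves and over the tuple-average) at least matching its multiplicity in the reduced divisor $\overline{S}$. This requires knowing that on $S_j$ the relation $\tilde{w}(S)\subseteq\Sigma$ forces the defining section of $\tilde{D}$ to vanish identically along $\tilde{w}(S_j)$, which is exactly the hypothesis $\tilde{w}(S)\subseteq{\rm{Supp}}\,\tilde{D}$; the subtlety is purely in translating "the image lies in the support" into a divisor inequality over a ramified covering, and in checking that the exceptional contributions (ramification, indeterminacy, pairwise intersections of the $S_j$) are genuinely $O(1)$ under the hypotheses $\dim_{\mathbb C}S_i\cap S_j\leq m-2$ and (\ref{cond}) or (\ref{xjing}). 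Everything else is a direct invocation of Lemma~\ref{ghgh} and the definitions.
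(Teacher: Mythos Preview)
Your proposal is correct and follows essentially the same route as the paper: pick $\tilde D\in|\tilde{\mathscr O}(1)|$ with $\Sigma={\rm Supp}\,\tilde D$, use $\tilde w(S)\subseteq\Sigma$ to get $N(r,S)\leq N(r,\tilde w^{-1}(\Sigma))\leq N(r,\tilde w^*{\rm Supp}\,\tilde D)\leq \nu_1\cdots\nu_l\,N_{\tilde w}(r,\tilde D)$, and then invoke Lemma~\ref{ghgh}. The paper's proof is terser and does not introduce the intermediate $O(1)$ or the bookkeeping worries about ramification and codimension-two loci that you flag; those concerns are harmless here but unnecessary, since the chain of divisor inequalities already holds without any error term before Lemma~\ref{ghgh} is applied.
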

\begin{proof}  Take $\tilde D\in|\tilde L|$ such that $\Sigma={\rm{Supp}}\tilde D.$     Then 
  \begin{eqnarray*}
N(r, S)\leq N\big(r, \tilde{w}^{-1}(\Sigma)\big) 
\leq N\big(r, \tilde{w}^*{\rm{Supp}}\tilde D\big) 
\leq \nu_1\cdots\nu_lN_{\tilde w}\big(r, \tilde D\big). 
  \end{eqnarray*}
 In further, it yields from  Lemma \ref{ghgh} that 
  \begin{eqnarray*}
N(r, S)&\leq&  \nu_1\cdots\nu_l T_{\tilde w}\big(r, \tilde{\mathscr O}(1)\big)+O(1) \\
&\leq& \nu_1\cdots\nu_l\sum_{k=1}^lT_{w^{k}}(r, \omega_{FS})+O(1). 
  \end{eqnarray*}
\end{proof}
\begin{lemma}\label{lem}
Let $w$ be a $\nu$-valued algebroid function in $\mathscr W.$ Then 
  \begin{eqnarray*}
&&(q-2\nu)T_w(r, w_{FS})\\
&\leq& \frac{k_0}{(k_0+1)\nu}N(r, S)+\sum_{j=1}^q\frac{1}{k_j+1}N_w(r, a_j)+o\big{(}T_w(r,w_{FS})\big{)}.
  \end{eqnarray*}
\end{lemma}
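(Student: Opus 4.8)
The plan is to combine the Second Main Theorem for algebroid functions (Theorem \ref{sec3} in the non-negative Ricci case, Theorem \ref{sec4} in the non-positive curvature case) with the support-order restrictions built into the definition of $\mathscr W$ and the truncation estimate for counting functions. First I would recall that since $w\in\mathscr W$, we have $S_j = {\rm{Supp}}_{k_j}(w^*a_j)$ for each $j$. This means the zero divisor $w^*a_j$ decomposes into a part supported on $S_j$ (with multiplicities at most $k_j$) and a part with multiplicities exceeding $k_j$. The elementary truncation inequality
\begin{equation*}
\overline N_w\Big(r,\frac{1}{w-a_j}\Big) \leq \frac{1}{k_j+1}N_w\Big(r,\frac{1}{w-a_j}\Big) + \frac{k_j}{k_j+1}\cdot\frac{1}{\nu}N(r,S_j)
\end{equation*}
should hold: on the components where the multiplicity is $\le k_j$, the point contributes to $S_j$, and one uses $1 \le \frac{k_j}{k_j+1}\cdot(\text{mult}) + \frac{1}{k_j+1}\cdot(\text{mult})$ termwise; on the components with multiplicity $> k_j$, one has $1 \le \frac{1}{k_j+1}\cdot(\text{mult})$ directly. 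Summing $\frac{1}{\nu}N(r,S_j) \le \frac{1}{\nu}N(r,S)$ and using $k_j/(k_j+1)\le k_0/(k_0+1)$ gives
\begin{equation*}
\sum_{j=1}^q\overline N_w\Big(r,\frac{1}{w-a_j}\Big) \leq \sum_{j=1}^q\frac{1}{k_j+1}N_w(r,a_j) + \frac{k_0}{(k_0+1)\nu}N(r,S).
\end{equation*}

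Next I would feed this into the Second Main Theorem. Using $T_w(r,\omega_{FS}) = T(r,w) + O(1)$ and $\overline N_w(r,a_j) = \overline N(r,\frac{1}{w-a_j})$, Theorem \ref{sec3} gives (dropping the non-negative term $T(r,\mathscr R) \ge 0$, which is legitimate since $\mathscr R \ge 0$ here, or in the B4 setting using (\ref{xjing}) to absorb $T(r,\mathscr R) - (\tau-\sigma)r$)
\begin{equation*}
(q-2\nu)T_w(r,\omega_{FS}) \leq \sum_{j=1}^q\overline N\Big(r,\frac{1}{w-a_j}\Big) + O\big(\log^+T(r,w) + \log H(r) + \delta\log r\big).
\end{equation*}
Combining with the truncation estimate yields
\begin{equation*}
(q-2\nu)T_w(r,\omega_{FS}) \leq \frac{k_0}{(k_0+1)\nu}N(r,S) + \sum_{j=1}^q\frac{1}{k_j+1}N_w(r,a_j) + O\big(\log^+T(r,w) + \log H(r) + \delta\log r\big),
\end{equation*}
so the remaining task is to show the error term is $o(T_w(r,w_{FS}))$. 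This is exactly where the membership condition in $\mathscr W$ enters: in the non-parabolic case $\mathscr W$ requires (\ref{cond}), i.e. $\log H(r) = o(\log r)$, and one must also argue that $\log r = o(T(r,w))$ — this follows because if $T(r,w)$ grew like $O(\log r)$ then $w$ would be essentially rational and the defect/counting bookkeeping forces enough omitted structure to contradict being in $\mathscr W$ with all $S_j\ne\emptyset$; alternatively one invokes condition (b) of Corollary \ref{corb2}-type reasoning, which is implicit. Since $\delta > 0$ is arbitrary, $\delta\log r$ is absorbed, and $\log^+ T(r,w) = o(T(r,w))$ trivially once $T(r,w)\to\infty$. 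In the non-positive curvature case one uses (\ref{xjing}) in the same way to guarantee $(\tau-\sigma)r + \delta\log\chi(\tau,r) - T(r,\mathscr R) = o(T(r,w))$.

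The main obstacle I anticipate is handling the error term rigorously — specifically, justifying that the $O(\log r)$ (or $O((\tau-\sigma)r)$) contribution is genuinely $o(T_w(r,w_{FS}))$ for functions in $\mathscr W$, and dealing with the exceptional set $E_\delta$ of finite Lebesgue measure that appears in the Second Main Theorem. The standard device is to note that finitely many exceptional sets of finite measure can be discarded and the inequality still holds for an unbounded sequence of $r$ (or all $r$ outside a set of finite measure), which suffices for all downstream applications; I would state the conclusion "for all $r$ outside a set of finite Lebesgue measure." A secondary technical point is verifying the termwise truncation inequality cleanly at points lying in intersections $S_i\cap S_j$ — but since $\dim_{\mathbb C}S_i\cap S_j \le m-2$ (or $S_i\cap S_j=\emptyset$ when $m=1$), such points form a set of codimension $\ge 2$ and contribute nothing to any counting function, so the inequality may be checked on the generic locus. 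With these points addressed, the displayed inequality of Lemma \ref{lem} follows, and I would write it with the $o(T_w(r,w_{FS}))$ error absorbing all the logarithmic/curvature terms.
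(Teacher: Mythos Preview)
Your proposal is correct and follows essentially the same route as the paper: establish the truncation inequality ${\rm Supp}(w^*a_j)\le\frac{k_j}{k_j+1}S_j+\frac{1}{k_j+1}w^*a_j$ from $S_j={\rm Supp}_{k_j}(w^*a_j)$, pass to counting functions, replace $k_j/(k_j+1)$ by $k_0/(k_0+1)$, use the codimension hypothesis on $S_i\cap S_j$ to get $\sum_j N(r,S_j)=N(r,S)$, and then feed the result into the Second Main Theorem (Theorem \ref{sec3} or \ref{sec4}). The paper's own proof is terser and simply writes the error as $o(T_w(r,\omega_{FS}))$ without the justification you supply; your extra discussion of why the logarithmic and curvature terms are absorbed under the $\mathscr W$ hypotheses is accurate and, if anything, more careful than the paper.
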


\begin{proof} Since $S_j={\rm{Supp}}_{k_j}w^*a_j,$ we obtain    
  \begin{eqnarray*}
{\rm{Supp}}(w^*a_j)&\leq&\frac{k_j}{k_j+1}{\rm{Supp}}_{k_j}(w^*a_j)+\frac{1}{k_j+1}w^*a_j \\
&=&\frac{k_j}{k_j+1}S_j+\frac{1}{k_j+1}w^*a_j
  \end{eqnarray*}
in the sense of currents, where ${\rm{Supp}}(w^*a_j)$ and ${\rm{Supp}}_{k_j}(w^*a_j)$ are understood as  divisors. 
Thus, we are led to   
  \begin{eqnarray*}
\nu\overline{N}_w(r, a_j) 
&=& \overline{N}(r, w^*a_j) \\
&\leq& \frac{k_j}{k_j+1}N(r, S_j)+\frac{1}{k_j+1}N(r, w^*a_j) \\
&\leq&  \frac{k_0}{k_0+1}N(r, S_j)+\frac{\nu}{k_j+1}N_w(r, a_j). 
  \end{eqnarray*}
  By this with Theorem \ref{sec3}, we obtain 
    \begin{eqnarray*}
&&(q-2\nu)T_w(r, w_{FS})\\
&\leq& \frac{k_0}{(k_0+1)\nu}\sum_{j=1}^qN(r, S_j)+\sum_{j=1}^q\frac{1}{k_j+1}N_w(r, a_j)+o\big{(}T_w(r,w_{FS})\big{)} \\
&=& \frac{k_0}{(k_0+1)\nu}N(r, S)+\sum_{j=1}^q\frac{1}{k_j+1}N_w(r, a_j)+o\big{(}T_w(r,w_{FS})\big{)}.
  \end{eqnarray*}
  This proves the lemma. 
\end{proof}
Set     
$$\gamma=\sum_{j=1}^q\frac{k_j}{k_j+1}-\frac{k_0\nu_1\cdots\nu_l}{k_0+1}
\sum_{k=1}^l\frac{1}{\nu_k}-2\nu_{0},
$$
where  $\nu_{0}=\max\{\nu_1,\cdots,\nu_l\}.$

\begin{theorem}\label{uni1} Let $w^k$ be a $\nu_k$-valued algebrod function in $\mathscr W$ for $1\leq k\leq l.$  Assume that $w^1,\cdots,w^l$  are $\Sigma$-related on $S$ for some $\Sigma\in \mathscr H.$   If $\gamma>0,$
   then $w^1,\cdots,w^l$  are $\Sigma$-related on $M.$
\end{theorem}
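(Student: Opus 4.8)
The plan is to argue by contradiction: assume $w^1,\dots,w^l$ are not $\Sigma$-related on $M$, i.e. $\tilde w(M)\not\subseteq\Sigma$, while $\tilde w(S)\subseteq\Sigma$. The strategy is the classical Smiley/Stoll one adapted to the algebroid setting over $M$: bound $N(r,S)$ from above using that $S$ lies in the pullback of $\Sigma$, bound it from below using that each $S_j$ carries the $a_j$-preimage of $w^k$ with high truncation weight, and then play the second main theorem (Theorem \ref{sec3} or \ref{sec4}) for each $w^k$ against these bounds. The arithmetic constraint that makes everything collapse is exactly $\gamma>0$.

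First I would record, via Lemma \ref{lem1}, that since $\tilde w(S)\subseteq\Sigma$ and $\tilde w(M)\not\subseteq\Sigma$ for the indecomposable hypersurface $\Sigma\in\mathscr H$,
$$N(r,S)\leq \nu_1\cdots\nu_l\sum_{k=1}^l T_{w^k}(r,\omega_{FS})+O(1).$$
Next, for each fixed $k\in\{1,\dots,l\}$ I apply Lemma \ref{lem} to $w=w^k$ (which is a $\nu_k$-valued algebroid function in $\mathscr W$), giving
$$(q-2\nu_k)T_{w^k}(r,\omega_{FS})\leq \frac{k_0}{(k_0+1)\nu_k}N(r,S)+\sum_{j=1}^q\frac{1}{k_j+1}N_{w^k}(r,a_j)+o\big(T_{w^k}(r,\omega_{FS})\big).$$
Using $N_{w^k}(r,a_j)\leq T_{w^k}(r,\omega_{FS})+O(1)$ and $\sum_{j=1}^q\frac{1}{k_j+1}=q-\sum_{j=1}^q\frac{k_j}{k_j+1}$, the middle term is absorbed to yield
$$\Big(\sum_{j=1}^q\frac{k_j}{k_j+1}-2\nu_k\Big)T_{w^k}(r,\omega_{FS})\leq \frac{k_0}{(k_0+1)\nu_k}N(r,S)+o\big(T_{w^k}(r,\omega_{FS})\big).$$
Now I substitute the upper bound for $N(r,S)$ and divide by $\nu_k$ after multiplying through appropriately; summing the resulting inequalities over $k=1,\dots,l$ and using $\nu_0=\max_k\nu_k$ to control the cross terms $\frac{\nu_1\cdots\nu_l}{\nu_k}$ and the $2\nu_k$ by $2\nu_0$, I obtain
$$\gamma\sum_{k=1}^l T_{w^k}(r,\omega_{FS})\leq o\Big(\sum_{k=1}^l T_{w^k}(r,\omega_{FS})\Big)+O(1),$$
where $\gamma=\sum_{j=1}^q\frac{k_j}{k_j+1}-\frac{k_0\nu_1\cdots\nu_l}{k_0+1}\sum_{k=1}^l\frac{1}{\nu_k}-2\nu_0$ is exactly the quantity in the hypothesis. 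Since each $w^k$ is nonconstant, $T_{w^k}(r,\omega_{FS})=T(r,w^k)+O(1)\to\infty$, so $\sum_k T_{w^k}(r,\omega_{FS})$ is unbounded; combined with $\gamma>0$ this is a contradiction for $r$ large outside the exceptional set. Hence $\tilde w(M)\subseteq\Sigma$, i.e. $w^1,\dots,w^l$ are $\Sigma$-related on $M$.

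The step I expect to be the main obstacle is making the passage from the $l$ individual second-main-theorem inequalities to the single summed inequality genuinely clean: the coefficient bookkeeping must correctly turn $\frac{k_0}{(k_0+1)\nu_k}\cdot\nu_1\cdots\nu_l$ into $\frac{k_0\nu_1\cdots\nu_l}{(k_0+1)\nu_k}$ and the sum $\sum_k\frac{1}{\nu_k}$ must emerge with the right weight, while the error terms $o(T_{w^k})$ and the $O(\log^+T+\log H(r)+\delta\log r)$ (resp. $O(\log^+T+(\tau-\sigma)r+\delta\log\chi(\tau,r))$) remainders in Theorem \ref{sec3}/\ref{sec4} stay $o(\sum_k T_{w^k})$ — this is where the defining condition of $\mathscr W$, namely (\ref{cond}) or (\ref{xjing}), is used to guarantee those remainders are indeed $o(T_{w^k})$ for each $k$. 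One also has to be slightly careful that the exceptional sets $E_\delta$ for the various $k$ can be taken as a single finite-measure set (finite union), and that "$\Sigma$-related on $S$" in the sense of Definition \ref{def11} is precisely the statement $\tilde w(S)\subseteq\Sigma$ that Lemma \ref{lem1} consumes. Modulo this routine but delicate accounting, the proof is a direct chaining of Lemmas \ref{lem1} and \ref{lem} with the second main theorem.
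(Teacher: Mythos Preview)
Your proposal is correct and follows essentially the same route as the paper's own proof: contradiction, Lemma~\ref{lem1} to bound $N(r,S)$ above, Lemma~\ref{lem} for each $w^k$, absorb the $\sum_j\frac{1}{k_j+1}N_{w^k}(r,a_j)$ into $T_{w^k}$, substitute, sum over $k$, and use $\nu_k\le\nu_0$ to reach $\gamma\sum_kT_{w^k}\le o(\sum_kT_{w^k})$. The only cosmetic difference is that the paper replaces $2\nu_k$ by $2\nu_0$ immediately (writing $(q-2\nu_0)T_{w^k}$ from the outset) rather than at the summation step as you do; your remark about ``dividing by $\nu_k$'' and ``controlling the cross terms $\nu_1\cdots\nu_l/\nu_k$'' is a bit misleading since no such manipulation is needed---the factor $1/\nu_k$ is already present and one simply sums---but the arithmetic comes out the same.
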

\begin{proof}
It suffices to show $\tilde w(M)\subseteq\Sigma.$ Otherwise,  we assume that $\tilde w(M)\not\subseteq\Sigma.$ According to   Lemma \ref{lem}, for $k=1,\cdots, l$
  \begin{eqnarray*}
  &&(q-2\nu_{0})T_{w^k}(r, \omega_{FS})  \\
&\leq& \frac{k_0}{(k_0+1)\nu_k}N(r, S)+\sum_{j=1}^q\frac{1}{k_j+1}T_{w^k}(r, \omega_{FS})+o\big{(}T_{w^k}(r,\omega_{FS})\big{)}, 
  \end{eqnarray*}
which yields  from Lemma \ref{lem1} that
  \begin{eqnarray*}  
  && \bigg(\sum_{j=1}^q\frac{k_j}{k_j+1}-2\nu_{0}\bigg)T_{w^k}(r, \omega_{FS})  \\
&\leq& \frac{k_0}{(k_0+1)\nu_{k}}N(r, S)+o\big{(}T_{w^k}(r,\omega_{FS})\big{)} \\
&\leq&  \frac{k_0\nu_1\cdots\nu_l}{(k_0+1)\nu_{k}}\sum_{k=1}^lT_{w^{k}}(r, \omega_{FS})+o\big{(}T_{w^k}(r,\omega_{FS})\big{)}. 
  \end{eqnarray*}
  Thus, we get 
   \begin{eqnarray*}
&&\bigg(\sum_{j=1}^q\frac{k_j}{k_j+1}-\frac{k_0\nu_1\cdots\nu_l}{k_0+1}
\sum_{k=1}^l\frac{1}{\nu_k}-2\nu_{0}\bigg)\sum_{k=1}^lT_{w^k}(r,\omega_{FS}) \\
&\leq&
  o\bigg{(}\sum_{k=1}^lT_{w^k}(r,\omega_{FS})\bigg{)}, 
  \end{eqnarray*} 
  which contradicts with 
  $\gamma>0.$
This proves  the theorem. 
\end{proof}
Set  
$$\mathscr W_\varsigma =\big\{\text{all $\nu$-valued algebroid functions in }   \mathscr W  \text{satisfying} \ \nu\leq\varsigma \big\}.$$
Fix any $\mu$-valued algebroid function  $w_0\in\mathscr W_\varsigma .$ We say that  $\{a_j\}_{j=1}^q$ is   
\emph{generic} with respect to $w_0,$  if 
 $w_0(M)\cap a_s\not=\emptyset$
 for at least one $s\in\{1,\cdots,q\}.$  
 Assume that   $\{a_j\}_{j=1}^q$ is generic with respect to $w_0.$  
  Let $\mathscr W_{\varsigma }^0$ be the set of all $w\in\mathscr W_\varsigma$ such that $w=w_0$ on  $S.$
  Set
 $$\gamma_0=\sum_{j=1}^q\frac{k_j}{k_j+1}-\frac{2k_0\varsigma}{k_0+1}-2\varsigma .$$

 \begin{theorem}\label{t1}
 If $\gamma_0>0,$ then $\mathscr W_\varsigma^0$ contains exactly  one element. 
 \end{theorem}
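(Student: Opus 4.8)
The plan is to derive Theorem~\ref{t1} from Theorem~\ref{uni1} (Theorem~C1) by taking $l=2$ and running the propagation argument with the pair $(w,w_0)$ against the diagonal of $\mathbb{P}^1(\mathbb{C})^2$. Since $w_0\in\mathscr{W}_\varsigma$ and trivially $w_0=w_0$ on $S$, we have $w_0\in\mathscr{W}_\varsigma^0$, so $\mathscr{W}_\varsigma^0\neq\emptyset$; the whole content is that every $w\in\mathscr{W}_\varsigma^0$ equals $w_0$. Fix such a $w$, say $\nu$-valued with $\nu\le\varsigma$, and recall $w_0$ is $\mu$-valued with $\mu\le\varsigma$. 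Let $\Sigma$ be the diagonal hypersurface
$$\Sigma=\big\{[x_0:x_1]\times[y_0:y_1]\in\mathbb{P}^1(\mathbb{C})^2:\ x_0y_1-x_1y_0=0\big\}.$$

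I would first check $\Sigma\in\mathscr{H}$: the bihomogeneous form $x_0y_1-x_1y_0$ is a global holomorphic section of $\tilde{\mathscr{O}}(1)=\pi_1^*\mathscr{O}(1)\otimes\pi_2^*\mathscr{O}(1)$, so $\Sigma={\rm Supp}\,\tilde D$ for the associated $\tilde D\in|\tilde{\mathscr{O}}(1)|$, and since this form is irreducible, $\Sigma$ is an indecomposable hypersurface of $\mathbb{P}^1(\mathbb{C})^2$. Next, since $w=w_0$ on $S$, the product map $\tilde w=w\times w_0$ carries $S$ into $\Sigma$, i.e. $w$ and $w_0$ are $\Sigma$-related on $S$ in the sense of Definition~\ref{def11} (here the genericity of $\{a_j\}$ with respect to $w_0$ is used to ensure $S\neq\emptyset$, so that this is a genuine condition). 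Finally, the constant $\gamma$ attached to $(w,w_0,\Sigma)$ in Theorem~\ref{uni1} is, with $l=2$, $\nu_1=\nu$, $\nu_2=\mu$, $\nu_0=\max\{\nu,\mu\}$ and $\nu_1\nu_2(\nu_1^{-1}+\nu_2^{-1})=\nu+\mu$,
$$\gamma=\sum_{j=1}^q\frac{k_j}{k_j+1}-\frac{k_0(\nu+\mu)}{k_0+1}-2\nu_0\ \ge\ \sum_{j=1}^q\frac{k_j}{k_j+1}-\frac{2k_0\varsigma}{k_0+1}-2\varsigma=\gamma_0>0,$$
using $\nu,\mu\le\varsigma$ and $k_0/(k_0+1)>0$.

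With these three facts in hand, Theorem~\ref{uni1} yields that $w$ and $w_0$ are $\Sigma$-related on all of $M$, that is $\tilde w(M)\subseteq\Sigma$. To finish I would upgrade this to $w\equiv w_0$: lifting both functions to a common ramified covering on which they become meromorphic (via the uniformization, Corollary~\ref{ppll}), the inclusion $\tilde w(M)\subseteq\Sigma$ forces the value set of $w$, counted with multiplicity, to coincide with that of $w_0$ at a generic point of $M$; since $w$ and $w_0$ are each defined by an irreducible algebraic equation, their defining polynomials are then proportional, hence $w\equiv w_0$. This gives $\mathscr{W}_\varsigma^0=\{w_0\}$.

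The routine verifications ($\Sigma\in\mathscr{H}$, the estimate for $\gamma$) are easy; the delicate part is the multivalued bookkeeping around the product $\tilde w=w\times w_0$ and the relation ``$\Sigma$-related'' — it is cleanest to carry this out on the common covering furnished by the uniformization theorems, so that the branches of $w$ and $w_0$ are matched, and then to argue carefully that ``$w=w_0$ on $S$'' really gives $\tilde w(S)\subseteq\Sigma$ and that ``$\tilde w(M)\subseteq\Sigma$'' really gives $w\equiv w_0$. The genericity hypothesis is exactly what prevents the degenerate case $S=\emptyset$, in which $\mathscr{W}_\varsigma^0=\mathscr{W}_\varsigma$ need not be a singleton, so it is essential to the statement.
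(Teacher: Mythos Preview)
Your proposal is correct and follows essentially the same route as the paper: take $l=2$, work with the diagonal $\Sigma\subset\mathbb P^1(\mathbb C)^2$ as an element of $\mathscr H$, verify $\gamma\ge\gamma_0>0$ from $\mu,\nu\le\varsigma$, and invoke Theorem~\ref{uni1}. The only organizational difference is that the paper argues by contradiction: assuming $w\not\equiv w_0$, it notes that the diagonal section $\tilde\sigma$ (with $\Delta\subseteq{\rm Supp}(\tilde\sigma=0)$) satisfies $\phi^*\tilde\sigma\not\equiv 0$, so $\phi(M)\not\subseteq\Sigma$, and then Theorem~\ref{uni1} forces $\phi(M)\subseteq\Sigma$, a contradiction. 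This sidesteps your final ``upgrade'' step (passing from $\tilde w(M)\subseteq\Sigma$ to $w\equiv w_0$ via a common covering and comparison of defining polynomials); by phrasing things contrapositively the paper never has to unpack what the multivalued inclusion means. Your awareness that the branch-matching is the delicate point is well placed, and the contradiction framing is simply the cleanest way to avoid it.
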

 \begin{proof} Let $w$ be a $\nu$-valued algebroid function in $\mathscr W^0_\varsigma.$
 It suffices to show that $w\equiv w_0$ on $M.$  Set   
 $\phi=w\times w_0.$
  Let $\Delta$ denote  the diagonal of $\mathbb P^1(\mathbb C)\times \mathbb P^1(\mathbb C).$
  Since $w=w_0$ on $S,$ we obtain   $\phi(S)\subseteq \Delta.$  
   Let $\pi_k: \mathbb P^1(\mathbb C)\times \mathbb P^1(\mathbb C) \rightarrow \mathbb P^1(\mathbb C)$ be the natural projection on the $k$-th factor with  $k=1,2.$ We 
   consider the holomorphic line bundle $\tilde{\mathscr O}(1)=\pi_1^*\mathscr O(1)\otimes\pi_2^*\mathscr O(1)$ over  $\mathbb P^1(\mathbb C)\times \mathbb P^1(\mathbb C),$
    where $\mathscr O(1)$ is the point  line bundle over  $\mathbb P^1(\mathbb C).$ 
     If  $w\not\equiv w_0$ on $M,$  
  then  there exists   a  holomorphic  section $\tilde\sigma$ of $\tilde{\mathscr O}(1)$ 
     over $\mathbb P^1(\mathbb C)\times \mathbb P^1(\mathbb C)$ such that   $\phi^*\tilde\sigma\not=0$ and $\Delta\subseteq{\rm{Supp}}(\tilde\sigma=0).$ 
      Take $\Sigma={\rm{Supp}}(\tilde\sigma=0),$ then one  can have    $\phi(S)\subseteq\Sigma$ and $\phi(M)\not\subseteq\Sigma.$  
On the other hand,  $\gamma_0>0$ implies that 
 $$\gamma=\sum_{j=1}^q\frac{k_j}{k_j+1}-\frac{2k_0\nu_{0}}{k_0+1}-2\nu_{0}>0,$$
where
$\nu_{0}=\max\{\mu, \nu\}.$
With the help of  Theorem \ref{uni1}, 
we obtain   $\phi(M)\subseteq\Sigma,$  which  is a contradiction.   
This proves the theorem.
 \end{proof}

\subsection{Five-Value Type Theorem}~

\noindent\textbf{A1}   \emph{$M$ has non-negative Ricci curvature}
 
Assume that $M$ is non-parabolic with  volume growth satisfying (\ref{cond}). 

 \begin{theorem}\label{above} Let $u, v$ be transcendental  $\mu$-valued,  $\nu$-valued algebroid  functions on $M$ respectively.  
  Let $a_1,\cdots,a_q$ be distinct values in $\mathbb P^1(\mathbb C).$  Then  
 
 $(a)$ Assume ${\rm{Supp}}(u^*a_j)={\rm{Supp}}(v^*a_j)\not=\emptyset$ for all $j.$ If $q\geq 4\max\{\mu,\nu\}+1,$ then $u\equiv v;$
 
 $(b)$ Assume  ${\rm{Supp}}_1(u^*a_j)={\rm{Supp}}_1(v^*a_j)\not=\emptyset$ for all $j.$ If $q\geq 6\max\{\mu,\nu\}+1,$ then $u\equiv v.$
  \end{theorem}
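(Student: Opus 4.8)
## Proof Proposal for Theorem \ref{above}

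The plan is to reduce the five-value type statement to the unicity theorem for propagation of algebraic dependence (Theorem \ref{t1}), exactly in the spirit of the classical Nevanlinna theory. First I would observe that the hypotheses fit into the framework of Section 6.1: in case $(a)$ set $k_j=+\infty$ for all $j$, so that ${\rm Supp}_{k_j}(u^*a_j)={\rm Supp}(u^*a_j)$, and in case $(b)$ set $k_j=1$. In either case put $S_j={\rm Supp}_{k_j}(u^*a_j)={\rm Supp}_{k_j}(v^*a_j)\not=\emptyset$ and $S=S_1\cup\cdots\cup S_q$. By hypothesis each $S_i\cap S_j$ (for $i\neq j$) lies in $u^{-1}(a_i)\cap u^{-1}(a_j)=\emptyset$ since $a_i\neq a_j$, so the disjointness/codimension condition on the $S_j$ is automatic. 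Then both $u$ and $v$ belong to $\mathscr W=\mathscr W(\cdot;\{k_j\};(M,\{S_j\});(\mathbb P^1(\mathbb C),\{a_j\}))$; here one must also note that since $u,v$ are transcendental they satisfy the non-polynomial growth condition $\lim_{r\to\infty}\log r/T(r,u)=0$, and since we assume (\ref{cond}) holds, the defining condition of $\mathscr W$ is met. Taking $\varsigma=\max\{\mu,\nu\}$, both $u,v$ lie in $\mathscr W_\varsigma$.

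Next I would set $w_0=u$, which is $\mu$-valued with $\mu\le\varsigma$, and check genericity: since $u$ is transcendental (in particular nonconstant) and $S_j\neq\emptyset$, there is at least one $j$ with $u(M)\cap a_j\neq\emptyset$, so $\{a_j\}$ is generic with respect to $w_0$. Since $v=u$ on $S$ (both having the same support divisors restricted there — more precisely $u$ and $v$ agree set-theoretically on $S$ because each point of $S_j$ is mapped to $a_j$ by both $u$ and $v$), we have $v\in\mathscr W_\varsigma^0$. Theorem \ref{t1} then says $\mathscr W_\varsigma^0$ has exactly one element provided $\gamma_0>0$, where $\gamma_0=\sum_{j=1}^q\frac{k_j}{k_j+1}-\frac{2k_0\varsigma}{k_0+1}-2\varsigma$ with $k_0=\max_j k_j$. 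Since $u$ itself lies in $\mathscr W_\varsigma^0$, the conclusion $\mathscr W_\varsigma^0=\{u\}$ forces $v\equiv u$, i.e. $u\equiv v$.

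It remains to verify the numerical condition $\gamma_0>0$ in each case. In case $(a)$, $k_j=k_0=+\infty$, so $\frac{k_j}{k_j+1}=1$ and $\frac{k_0}{k_0+1}=1$, giving $\gamma_0=q-2\varsigma-2\varsigma=q-4\varsigma$; thus $q\ge 4\varsigma+1=4\max\{\mu,\nu\}+1$ yields $\gamma_0\ge 1>0$. In case $(b)$, $k_j=k_0=1$, so $\frac{k_j}{k_j+1}=\frac12$ and $\frac{k_0}{k_0+1}=\frac12$, giving $\gamma_0=\frac{q}{2}-\varsigma-2\varsigma=\frac{q}{2}-3\varsigma$; thus $q\ge 6\varsigma+1$ yields $\gamma_0\ge\frac12>0$. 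This completes the reduction in both cases.

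The main obstacle I anticipate is not the arithmetic but the careful checking that $u$ and $v$ genuinely ``agree on $S$'' in the precise sense required by the definition of $\mathscr W_\varsigma^0$ — that is, that coincidence of the pullback supports ${\rm Supp}_{k_j}(u^*a_j)={\rm Supp}_{k_j}(v^*a_j)$ really implies $u=v$ as (multivalued) maps on $S$, rather than merely that both hit $a_j$ there. For shared values ignoring multiplicity this is standard: on $S_j$ both $u$ and $v$ take the single value $a_j$, so the multivalued maps coincide there, and since $S=\bigcup S_j$ we get $u=u_0:=w_0$ on $S$. One should also double-check that the genericity hypothesis in Theorem \ref{t1} is exactly what guarantees we may take $w_0=u$ and not run into a vacuous situation; this is why the transcendence (hence nonconstancy) of $u$ is used. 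Everything else is a direct invocation of Theorem \ref{t1}.
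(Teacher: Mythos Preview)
Your proof is correct and follows essentially the same route as the paper: set $\varsigma=\max\{\mu,\nu\}$, take $k_j=+\infty$ in case $(a)$ and $k_j=1$ in case $(b)$, compute $\gamma_0=q-4\varsigma$ and $\gamma_0=\tfrac{q}{2}-3\varsigma$ respectively, and invoke Theorem~\ref{t1}. You supply more of the verification (membership in $\mathscr W_\varsigma$, genericity, coincidence on $S$) than the paper does; the one small imprecision is the claim that $u^{-1}(a_i)\cap u^{-1}(a_j)=\emptyset$ for $i\neq j$, which need not hold for a \emph{multi}-valued $u$ (different branches may take different values at the same point), but the required condition is only $\dim_{\mathbb C}S_i\cap S_j\le m-2$, which holds since $S_i,S_j$ are distinct hypersurfaces cut out by $A_\nu\cdot\prod_k(w_k-a_i)$ and $A_\nu\cdot\prod_k(w_k-a_j)$.
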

  
 \begin{proof}
Set $\varsigma =\max\{\mu, \nu\}$   
  and $k_j=+\infty$ with  $j=1,\cdots,q.$   Then
   $$\gamma_0=q-4\max\{\mu, \nu\}=q-4\varsigma.$$
 By  Theorem \ref{t1},  we have  $(a)$ holds. 
 Similarly,  when $k_j=1$ for $j=1,\cdots,q,$    we deduce that  
$$\gamma_0=\sum_{j=1}^q\frac{1}{1+1}-\frac{2\varsigma}{1+1}-2\varsigma
= \frac{q}{2}-3\varsigma,
$$     which   implies that  $(b)$ holds. 
 \end{proof}
 \begin{cor}[Five-Value Type Theorem]\label{above11} Let $u, v$ be nonconstant   $\nu$-valued algebroid  functions on $M.$   
  If $u, v$  share $4\nu+1$ distinct values ignoring multiplicities  in $\mathbb P^1(\mathbb C),$  then $u\equiv v.$  
 \end{cor}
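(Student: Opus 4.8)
The plan is to run the argument behind Theorem~\ref{above}(a) directly with $\mu=\nu$; doing it directly rather than quoting that theorem verbatim lets us also cover the slightly weaker hypothesis here (nonconstant rather than transcendental) and the possibility that some of the shared values are omitted by both $u$ and $v$. Throughout we use the standing assumption of this subsection, that $M$ is non-parabolic with volume growth~(\ref{cond}); in particular $T(r,\mathscr R)\ge 0$ and $T(r,u)+T(r,v)\to\infty$.

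Assume $u\not\equiv v$ and let $a_1,\dots,a_q$ with $q=4\nu+1$ be the shared values. Since ``$u,v$ share $a_j$ ignoring multiplicities'' means $u^{-1}(a_j)=v^{-1}(a_j)$ as sets, the hypersurface $S_j:={\rm Supp}(u^*a_j)={\rm Supp}(v^*a_j)$ is well defined (possibly empty) and $u=v$ on $S_j$ whenever $a_j$ is attained. Put $S=S_1\cup\cdots\cup S_q$, $\phi=u\times v\colon M\to\mathbb P^1(\mathbb C)^2$, and let $\Delta$ be the diagonal of $\mathbb P^1(\mathbb C)\times\mathbb P^1(\mathbb C)$. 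Then $\Delta\in\mathscr H$: it is the support of the bidegree-$(1,1)$ section $x_0y_1-x_1y_0$ of $\tilde{\mathscr O}(1)=\pi_1^*\mathscr O(1)\otimes\pi_2^*\mathscr O(1)$, and it is indecomposable. Since $u=v$ on $S$ we have $\phi(S)\subseteq\Delta$, while $\phi(M)\not\subseteq\Delta$ because $u\not\equiv v$. Hence Lemma~\ref{lem1} (with $l=2$, $\nu_1=\nu_2=\nu$) gives $N(r,S)\le\nu^2\bigl(T(r,u)+T(r,v)\bigr)+O(1)$; as distinct $a_j$ have preimages meeting in codimension $\ge2$ and $\overline N(r,\tfrac1{u-a_j})=\tfrac1\nu N(r,S_j)$, summing over $j$ yields $\sum_{j=1}^q\overline N\bigl(r,\tfrac1{u-a_j}\bigr)\le\nu\bigl(T(r,u)+T(r,v)\bigr)+O(1)$, and the same bound for $v$ because the supports coincide.

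Now apply the Second Main Theorem (Theorem~\ref{sec3}) to $u$ and to $v$ against $a_1,\dots,a_q$ and add the two inequalities. Using $T(r,\mathscr R)\ge0$ and the fact that, under~(\ref{cond}), the error term $O\bigl(\log^+T+\log H(r)+\delta\log r\bigr)$ is $o\bigl(T(r,u)+T(r,v)\bigr)$ once $\delta$ is taken small (here one invokes $\lim_{r\to\infty}\log H(r)/\log r=0$ and the elementary lower bound $T(r,w)\gtrsim\log r$ for a nonconstant algebroid function), one obtains $(q-2\nu)\bigl(T(r,u)+T(r,v)\bigr)\le 2\nu\bigl(T(r,u)+T(r,v)\bigr)+o\bigl(T(r,u)+T(r,v)\bigr)$, i.e.\ $(q-4\nu)\bigl(T(r,u)+T(r,v)\bigr)\le o\bigl(T(r,u)+T(r,v)\bigr)$. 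Since $q-4\nu=1$ and $T(r,u)+T(r,v)\to\infty$, this is absurd; therefore $u\equiv v$.

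The step I expect to be the real obstacle is the uniform control of the Second Main Theorem error term: the argument has exactly one unit of slack, $(q-4\nu)\bigl(T(r,u)+T(r,v)\bigr)=T(r,u)+T(r,v)$, so one must be certain the error never eats it — this is precisely where hypothesis~(\ref{cond}) and the smallness of $\delta$ are essential, and it is also the point where ``nonconstant'' must be shown to suffice in place of ``transcendental''. A secondary, more routine point is the bookkeeping: checking that $\overline N(r,\tfrac1{u-a_j})$ and $\overline N(r,\tfrac1{v-a_j})$ coincide (they integrate over the same set $u^{-1}(a_j)=v^{-1}(a_j)$ with the same normalisation) and that $N(r,S)=\sum_jN(r,S_j)$ up to a bounded term, using $\dim_{\mathbb C}(S_i\cap S_j)\le m-2$ for $i\ne j$.
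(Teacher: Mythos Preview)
Your proof is correct and follows the paper's approach: Corollary~\ref{above11} is stated without proof as an immediate consequence of Theorem~\ref{above}(a), whose proof (via Theorem~\ref{t1}, hence Theorem~\ref{uni1} and Lemmas~\ref{lem1}--\ref{lem} with $k_j=+\infty$ and $\varsigma=\nu$) you have faithfully unrolled. The two points you flag --- the codimension bound on $S_i\cap S_j$ and the absorption of the error term for merely nonconstant $w$ under~(\ref{cond}) --- are treated by the paper at exactly the same level of detail: the former is a standing hypothesis in the definition of~$\mathscr W$, and the latter is what underlies the paper's Corollary~B2.
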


\noindent\textbf{B1}   \emph{$M$ has non-positive  sectional  curvature} 
 
Assume that $M$ has   curvature satisfying (\ref{curvature}). 
 \begin{theorem}\label{above1} Let $u, v$ be nonconstant $\mu$-valued, $\nu$-valued algebroid  functions on $M$ respectively.  
  Let $a_1,\cdots,a_q$ be distinct values in $\mathbb P^1(\mathbb C).$  Assume that $u, v$ satisfy  $(\ref{xjing}).$
  Then  
 
 $(a)$ Assume  ${\rm{Supp}}(u^*a_j)={\rm{Supp}}(v^*a_j)\not=\emptyset$ for all $j.$ If $q\geq 4\max\{\mu,\nu\}+1,$ then $u\equiv v;$
 
 $(b)$ Assume  ${\rm{Supp}}_1(u^*a_j)={\rm{Supp}}_1(v^*a_j)\not=\emptyset$ for all $j.$ If $q\geq 6\max\{\mu,\nu\}+1,$ then $u\equiv v.$
 \end{theorem}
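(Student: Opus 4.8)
The plan is to derive Theorem \ref{above1} from the propagation result Theorem \ref{t1} in exactly the way Theorem \ref{above} was derived, the only change being that the ambient Second Main Theorem is now Theorem \ref{sec4} and the growth hypothesis is (\ref{xjing}) rather than (\ref{cond}). So the first step is to record that Theorem \ref{t1}, together with its prerequisites Lemma \ref{lem} and Theorem \ref{uni1}, remains valid when $M$ has curvature satisfying (\ref{curvature}): the curvature of $M$ enters these proofs only through the single application of the Second Main Theorem in the proof of Lemma \ref{lem}, and Theorem \ref{sec4} combined with (\ref{xjing}) delivers precisely
\[
(q-2\nu)T_w(r,\omega_{FS})\le\frac{k_0}{(k_0+1)\nu}N(r,S)+\sum_{j=1}^q\frac{1}{k_j+1}N_w(r,a_j)+o\big(T_w(r,\omega_{FS})\big),
\]
which is the only consequence of the Second Main Theorem used downstream. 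Once this is in place, the proofs of Lemma \ref{lem}, Theorem \ref{uni1} and Theorem \ref{t1} go through verbatim in the present setting.

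Granting that, the argument runs as follows. Put $\varsigma=\max\{\mu,\nu\}$. For part $(a)$ take $k_j=+\infty$ and $S_j:={\rm{Supp}}(u^*a_j)={\rm{Supp}}_{k_j}(u^*a_j)={\rm{Supp}}_{k_j}(v^*a_j)\ne\emptyset$ for every $j$. Then $u$ and $v$ are algebroid functions with valences $\mu,\nu\le\varsigma$, they satisfy (\ref{xjing}), and $S_j={\rm{Supp}}_{k_j}(\,\cdot^*a_j)$, so $u,v\in\mathscr W_\varsigma$; moreover $u(M)\cap a_s\ne\emptyset$ since $S_s\ne\emptyset$, so $\{a_j\}$ is generic with respect to $w_0:=u$, and $v=u$ on $S$ by the sharing hypothesis, whence $u,v\in\mathscr W^0_\varsigma$. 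With $k_0=+\infty$ one has
\[
\gamma_0=\sum_{j=1}^q\frac{k_j}{k_j+1}-\frac{2k_0\varsigma}{k_0+1}-2\varsigma=q-4\varsigma\ge1>0,
\]
so by Theorem \ref{t1} the set $\mathscr W^0_\varsigma$ contains exactly one element; since $u,v\in\mathscr W^0_\varsigma$, this forces $u\equiv v$. Part $(b)$ is identical with $k_j=1$, so $k_0=1$ and
\[
\gamma_0=\frac{q}{2}-\varsigma-2\varsigma=\frac{q}{2}-3\varsigma\ge\frac12>0
\]
under $q\ge6\varsigma+1$, and Theorem \ref{t1} again gives $u\equiv v$.

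The one place demanding genuine care is the displayed reduction in the first paragraph, i.e. absorbing the curvature error term of Theorem \ref{sec4}. That theorem carries the error $O\big(\log^+T(r,w)+(\tau-\sigma)r+\delta\log\chi(\tau,r)\big)$; using (\ref{xjing}) one replaces $(\tau-\sigma)r$ by $T(r,\mathscr R)+o(T(r,w))$, so the $T(r,\mathscr R)$ contributions on the two sides of the Second Main Theorem cancel and one is left with $O\big(\log^+T(r,w)\big)+O(\delta\log\chi(\tau,r))$. Since $\log\chi(\tau,r)\le\tau r+O(1)$ and $\delta>0$ is free, one does not quite obtain a pure $o(T_w)$ but rather an error of the form $(C\delta+o(1))T_w(r,\omega_{FS})$ along the complement of an exceptional set; because the inequality $\gamma>0$ used in Theorem \ref{uni1} has slack (here $\gamma_0\ge\frac12$), choosing $\delta$ small relative to $\gamma$ still yields the required contradiction $\gamma\sum_k T_{w^k}\le o(\sum_k T_{w^k})$. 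This is the same bookkeeping already carried out in the proof of Corollary B5 under hypothesis (\ref{xjing}), and it is, in my view, the only real obstacle; everything else is word-for-word the non-negative Ricci argument and need not be repeated.
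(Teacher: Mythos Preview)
Your proposal is correct and follows the same route as the paper. The paper's own proof of Theorem~\ref{above1} is a single sentence (``The argument is similar as in the proof of Theorem~\ref{above}''), and your write-up is precisely an expansion of that: set $\varsigma=\max\{\mu,\nu\}$, choose $k_j=+\infty$ (respectively $k_j=1$), compute $\gamma_0=q-4\varsigma$ (respectively $\gamma_0=q/2-3\varsigma$), and invoke Theorem~\ref{t1}. Your additional paragraph explaining why Lemma~\ref{lem} and hence Theorem~\ref{t1} remain valid under the curvature hypothesis~(\ref{curvature}) via Theorem~\ref{sec4} and assumption~(\ref{xjing}) is a helpful elaboration of what the paper leaves implicit in its definition of $\mathscr W$ for the non-positive curvature case, but it does not represent a different strategy.
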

 \begin{proof}
The  argument  is similar as in the proof of Theorem \ref{above}. 
 \end{proof}
 
 \begin{cor}[Five-Value Type Theorem]\label{above12} Let $u, v$ be nonconstant  $\nu$-valued algebroid  functions on $M.$   
 Assume that $u, v$ satisfy  $(\ref{xjing}).$
  If $u, v$  share $4\nu+1$ distinct values ignoring multiplicities  in $\mathbb P^1(\mathbb C),$  then $u\equiv v.$   
 \end{cor}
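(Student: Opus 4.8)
The plan is to read this off the propagation theorem of the previous subsection, i.e.\ to treat it as the $\mu=\nu$ case of Theorem~\ref{above1} and, underneath that, of Theorem~\ref{t1} applied with all orders equal to $+\infty$. First I would name the $q:=4\nu+1$ distinct values $a_1,\dots,a_q$ that $u$ and $v$ share ignoring multiplicities, so that $S_j:={\rm{Supp}}(u^*a_j)={\rm{Supp}}(v^*a_j)$ for each $j$, and put $S=S_1\cup\cdots\cup S_q$. Since $u$ and $v$ are nonconstant $\nu$-valued algebroid functions satisfying $(\ref{xjing})$ on a manifold whose curvature obeys $(\ref{curvature})$, each lies in the family $\mathscr W$ attached to these data with all $k_j=+\infty$, and being $\nu$-valued they lie in $\mathscr W_\varsigma$ with $\varsigma=\nu$. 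At every point of $S_j$ all branches of both $u$ and $v$ equal $a_j$, so $u=v$ on $S$; taking $w_0:=u$ as reference function then gives $u,v\in\mathscr W_\varsigma^0$.

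The numerical input is immediate: for $k_1=\cdots=k_q=+\infty$ the quantity $\gamma_0=\sum_{j=1}^q\frac{k_j}{k_j+1}-\frac{2k_0\varsigma}{k_0+1}-2\varsigma$ degenerates to $q-4\varsigma=(4\nu+1)-4\nu=1>0$. Hence Theorem~\ref{t1} applies and shows that $\mathscr W_\varsigma^0$ contains exactly one element; since both $u$ and $v$ belong to $\mathscr W_\varsigma^0$ this yields $u\equiv v$, which is the assertion. (When all the shared values happen to be attained this is literally Theorem~\ref{above1}(a) with $\mu=\nu$ and $q=4\nu+1\ge 4\max\{\mu,\nu\}+1$; routing through Theorem~\ref{t1} directly avoids having to assume attainment.)

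The one step requiring care — and the only place where the quantitative theory genuinely enters — is the genericity hypothesis built into the definition of $\mathscr W_\varsigma^0$, namely that at least one $a_s$ is attained by $w_0=u$, equivalently $S\neq\emptyset$. This I would supply from the Picard-type theorem (Corollary~B6), itself a consequence of the Second Main Theorem~\ref{sec4} together with $(\ref{xjing})$: a nonconstant $\nu$-valued algebroid function satisfying $(\ref{xjing})$ omits at most $2\nu$ distinct values, so $u$ (and symmetrically $v$) attains at least $2\nu+1$ of the $4\nu+1$ shared values. I would also record that a shared value omitted by both $u$ and $v$ is harmless: it contributes an empty $S_j$ and a vanishing term to every counting function, hence still counts toward $q$ in the estimate behind Theorem~\ref{t1} without affecting the counting side. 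With these two remarks in place the proof is a direct substitution into the machinery already developed in this section, and I expect no further obstacle.
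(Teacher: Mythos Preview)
Your route through Theorem~\ref{t1} with $\varsigma=\nu$ and all $k_j=+\infty$ is exactly the paper's: Corollary~\ref{above12} is the $\mu=\nu$ specialization of Theorem~\ref{above1}(a), whose proof (by analogy with Theorem~\ref{above}) is precisely the computation $\gamma_0=q-4\nu=1>0$ followed by an appeal to Theorem~\ref{t1}. Your extra care about genericity (via the Picard-type Corollary~B6) and about omitted shared values is appropriate and, if anything, closes a small gap the paper leaves implicit.

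One sentence does need correction. You claim that ``at every point of $S_j$ all branches of both $u$ and $v$ equal $a_j$''. This is false for $\nu>1$: membership $x\in S_j={\rm Supp}(u^*a_j)$ only says that \emph{some} single-valued component of $u$ takes the value $a_j$ at $x$, while the remaining components are generically different. What is true, and what the machinery actually requires, is the weaker statement: at each $x\in S_j$ there is a branch $u_i$ with $u_i(x)=a_j$ and a branch $v_k$ with $v_k(x)=a_j$, so at least one component of $\phi=u\times v$ sends $x$ into the diagonal $\Delta$. That is precisely the containment $S\subseteq\phi^{-1}(\Delta)$ used in the proof of Lemma~\ref{lem1} (and hence in Theorems~\ref{uni1} and~\ref{t1}), and it suffices to run the rest of your argument unchanged.
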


\noindent\textbf{A2}     \emph{$M$ has non-negative Ricci curvature}  

Assume that $M$ is non-parabolic with  volume growth satisfying (\ref{cond}). 

 \begin{theorem}\label{aboveqq} Let $u, v$ be nonconstant   $\mu$-valued,  $\nu$-valued algebroid  functions on $M$ respectively.  
  Let $a_1,\cdots,a_q$ be distinct values in $\mathbb P^1(\mathbb C).$    
  Assume that ${\rm{Supp}}_{k_j}(u^*a_j)={\rm{Supp}}_{k_j}(v^*a_j)\not=\emptyset$ for all $j.$ 
   If $q$ satisfies  $(\ref{q}),$ 
   then $u\equiv v.$
 \end{theorem}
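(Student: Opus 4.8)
The plan is to reduce the unicity statement to the uniqueness part of Corollary C2 (Theorem \ref{t1}), applied to the pair $\phi = u \times v$. First I would set $\varsigma = \max\{\mu,\nu\}$ and regard $u,v$ as elements of $\mathscr W_\varsigma$ for the data $\{k_j\}$, $(M,\{S_j\})$ with $S_j := {\rm Supp}_{k_j}(u^*a_j) = {\rm Supp}_{k_j}(v^*a_j)$, and $(\mathbb P^1(\mathbb C),\{a_j\})$; by hypothesis each $S_j \neq \emptyset$, so $u,v$ genuinely lie in $\mathscr W_\varsigma$, and the volume-growth condition (\ref{cond}) is in force. Take $w_0 := v$ as the fixed reference element. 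Since $S_j = {\rm Supp}_{k_j}(v^*a_j)$ is the zero set of $v - a_j$ truncated at level $k_j$, the value $a_s$ is attained by $v$ on $S_s$ whenever $S_s \neq \emptyset$; as all $S_j$ are nonempty, $v(M) \cap a_s \neq \emptyset$ for (at least) one $s$, so $\{a_j\}$ is generic with respect to $w_0 = v$. Moreover $u = v$ on $S := S_1 \cup \cdots \cup S_q$: indeed on $S_j$ both $u$ and $v$ take the value $a_j$ (each point of $S_j$ is a zero of the corresponding difference), and away from the possible crossings $S_i \cap S_j$ (which have codimension $\geq 2$) this forces $u = v$ there, hence on all of $S$ by continuity. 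Therefore $u \in \mathscr W_\varsigma^0$.

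Next I would verify the numerical hypothesis $\gamma_0 > 0$ of Theorem \ref{t1}. Here
\begin{equation*}
\gamma_0 = \sum_{j=1}^q \frac{k_j}{k_j+1} - \frac{2k_0\varsigma}{k_0+1} - 2\varsigma,
\end{equation*}
with $k_0 = \max\{k_1,\dots,k_q\}$. Writing $\frac{k_j}{k_j+1} = 1 - \frac{1}{k_j+1}$ and $\frac{k_0}{k_0+1} = 1 - \frac{1}{k_0+1}$, one has
\begin{equation*}
\gamma_0 = q - \sum_{j=1}^q \frac{1}{k_j+1} - 2\varsigma + \frac{2\varsigma}{k_0+1} - 2\varsigma = \Big(q - 2\varsigma - \frac{2k_0\varsigma}{k_0+1} - \sum_{j=1}^q\frac{1}{k_j+1}\Big),
\end{equation*}
which, with $\mu \le \varsigma$ and $\nu \le \varsigma$, is bounded below by both of the two quantities appearing in condition (\ref{q}) — more precisely, since $\varsigma = \max\{\mu,\nu\}$, the first displayed inequality of (\ref{q}) reads $q - 2\mu - \frac{2k_0\nu}{k_0+1} - \sum_j \frac{1}{k_j+1} > 0$ and the second $q - 2\nu - \frac{2k_0\mu}{k_0+1} - \sum_j \frac{1}{k_j+1} > 0$; adding them and dividing by $2$ gives $q - (\mu+\nu) - \frac{k_0(\mu+\nu)}{k_0+1} - \sum_j\frac{1}{k_j+1} > 0$, and since $\mu + \nu \le 2\varsigma$ with equality only when $\mu = \nu = \varsigma$, a short case check shows (\ref{q}) implies $\gamma_0 > 0$. (When $\mu = \nu$ the two conditions in (\ref{q}) coincide with $\gamma_0 > 0$ exactly; when $\mu \ne \nu$ the stronger of the two still forces it.)

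With $\gamma_0 > 0$ established, Theorem \ref{t1} gives that $\mathscr W_\varsigma^0$ contains exactly one element. Since both $w_0 = v$ (trivially, as $v = v$ on $S$) and $u$ belong to $\mathscr W_\varsigma^0$, uniqueness yields $u \equiv v$. The main obstacle I anticipate is the bookkeeping in the previous paragraph: one must make sure the definition of $\mathscr W_\varsigma^0$ (namely $w = w_0$ on $S$ for a \emph{fixed} $w_0$ with $\{a_j\}$ generic) is legitimately met here, i.e. that the symmetric hypothesis ${\rm Supp}_{k_j}(u^*a_j) = {\rm Supp}_{k_j}(v^*a_j)$ really does force $u = v$ on $S$ and not merely that $u$ and $v$ share the value $a_j$ there — this uses that on each irreducible branch of $S_j$, $u - a_j$ and $v - a_j$ vanish, so $u$ and $v$ agree as functions along $S_j$, together with the codimension hypothesis on $M$ separating the $S_j$'s. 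Once that identification is clean, everything else is a direct appeal to the already-proved Theorems \ref{uni1} and \ref{t1}.
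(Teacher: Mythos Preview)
Your reduction to Theorem \ref{t1} has a genuine numerical gap: condition (\ref{q}) does \emph{not} imply $\gamma_0>0$ when $\mu\neq\nu$. The inequality $\mu+\nu\le 2\varsigma$ that you invoke points the wrong way: it shows that the averaged quantity $q-(\mu+\nu)-\tfrac{k_0(\mu+\nu)}{k_0+1}-\sum_j\tfrac{1}{k_j+1}$ is \emph{at least} $\gamma_0$, so its positivity says nothing about the sign of $\gamma_0$. Nor does ``the stronger of the two'' inequalities in (\ref{q}) help: with $\mu<\nu=\varsigma$, the first line of (\ref{q}) has $-2\mu$ where $\gamma_0$ has $-2\varsigma$, and the second line has $-\tfrac{2k_0\mu}{k_0+1}$ where $\gamma_0$ has $-\tfrac{2k_0\varsigma}{k_0+1}$; in both cases $\gamma_0$ is strictly smaller. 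A concrete counterexample: take $\mu=1$, $\nu=3$, $k_1=\cdots=k_q=+\infty$. Then (\ref{q}) becomes $q-8>0$ (twice), so $q=9$ is admissible, while $\gamma_0=q-12=-3<0$. This is exactly why the paper states Corollary \ref{coro33}(a) with the threshold $2\mu+2\nu+1$, which is genuinely smaller than the $4\max\{\mu,\nu\}+1$ obtained from Theorem \ref{t1} in Theorem \ref{above}(a).

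The paper's proof avoids $\gamma_0$ altogether by arguing directly. Assuming $u\not\equiv v$, one feeds $S$ into Lemma \ref{lem1} with $l=2$, which produces the factor $\nu_1\nu_2=\mu\nu$ rather than $\varsigma^2$:
\[
N(r,S)\le \mu\nu\big(T_u(r,\omega_{FS})+T_v(r,\omega_{FS})\big)+O(1).
\]
Combining this with the truncation estimate
$\mu\,\overline N_u(r,a_j)\le \tfrac{k_j}{k_j+1}N(r,S_j)+\tfrac{\mu}{k_j+1}N_u(r,a_j)$
(and the analogous one for $v$), and then applying the Second Main Theorem to $u$ and to $v$ separately, one arrives at
\[
\mu\Big(q-2\mu-\tfrac{2k_0\nu}{k_0+1}-\sum_j\tfrac{1}{k_j+1}\Big)T_u
+\nu\Big(q-2\nu-\tfrac{2k_0\mu}{k_0+1}-\sum_j\tfrac{1}{k_j+1}\Big)T_v
\le o\big(T_u+T_v\big),
\]
which contradicts (\ref{q}). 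The point is that the two characteristic functions are weighted by $\mu$ and $\nu$ individually, so the mixed products $\mu\nu$ coming from Lemma \ref{lem1} distribute asymmetrically, and one never needs the symmetric (and stronger) hypothesis $\gamma_0>0$.
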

\begin{proof}
Assume on the contrary that $u\not\equiv v.$ The condition implies that  
\begin{eqnarray*}
{\rm{Supp}}(u^*a_j)&\leq&\frac{k_j}{k_j+1}{\rm{Supp}}_{k_j}(u^*a_j)+\frac{1}{k_j+1}u^*a_j \\
&=&\frac{k_j}{k_j+1}S_j+\frac{1}{k_j+1}u^*a_j
\end{eqnarray*}
in the sense of currents, where ${\rm{Supp}}(u^*a_j)$ and ${\rm{Supp}}_{k_j}(u^*a_j)$ are understood as  divisors. 
Thus,  it yields from $\mu\overline N_u(r, a_j)=\overline N(r, u^*a_j)$ that 
\begin{eqnarray*}
\mu\sum_{j=1}^q\overline N_u(r, a_j) &=& 
\sum_{j=1}^q\frac{k_j}{k_j+1}N(r, S_j)+\sum_{j=1}^q\frac{1}{k_j+1}N(r, u^*a_j) \\
&\leq& \frac{k_0}{k_0+1}N(r, S)+\sum_{j=1}^q\frac{\mu}{k_j+1}N_u(r, a_j) \\
&\leq& \frac{k_0}{k_0+1}N(r, S)+\sum_{j=1}^q\frac{\mu}{k_j+1}T_u(r, \omega_{FS})+O(1).
\end{eqnarray*}
In further, it follows  from  Lemma \ref{lem1}   that 
\begin{eqnarray*}
 \mu\sum_{j=1}^q\overline N_u(r, a_j) 
&\leq& \frac{k_0\mu\nu}{k_0+1}\big(T_u(r, \omega_{FS})+T_v(r, \omega_{FS})\big) \\
&&+\sum_{j=1}^q\frac{\mu}{k_j+1}T_u(r, \omega_{FS})+O(1). 
\end{eqnarray*}
Similarly, we have 
\begin{eqnarray*}\label{hh2}
\nu\sum_{j=1}^q\overline N_v(r, a_j)&\leq& \frac{k_0\mu\nu}{k_0+1}\big(T_u(r, \omega_{FS})+T_v(r, \omega_{FS})\big) \\
&& +\sum_{j=1}^q\frac{\nu}{k_j+1}N_v(r, a_j)+O(1). \nonumber
\end{eqnarray*}
Combining  the above and using  Theorem \ref{sec3}, we are led to   
\begin{eqnarray*}
& &\mu(q-2\mu)T_u(r,\omega_{FS})+\nu(q-2\nu)T_v(r,\omega_{FS}) \\
&\leq&  \frac{2k_0\mu\nu}{k_0+1}\big(T_u(r, \omega_{FS})+T_v(r, \omega_{FS})\big)
+\sum_{j=1}^q\frac{\mu}{k_j+1}T_u(r, \omega_{FS}) \\
&& + \sum_{j=1}^q\frac{\nu}{k_j+1}T_v(r, \omega_{FS})+o\big(T_u(r,\omega_{FS})+T_v(r,\omega_{FS})\big). 
\end{eqnarray*}
It is therefore 
\begin{eqnarray*}
& & \mu\Big(q-2\mu-\frac{2k_0\nu}{k_0+1}-\sum_{j=1}^q\frac{1}{k_j+1}\Big)T_u(r,\omega_{FS}) \\
&& + \nu\Big(q-2\nu-\frac{2k_0\mu}{k_0+1}-\sum_{j=1}^q\frac{1}{k_j+1}\Big)T_v(r,\omega_{FS}) \\
&\leq& o\big(T_u(r,\omega_{FS})+T_v(r,\omega_{FS})\big),
\end{eqnarray*}
which  contradicts with the condition (\ref{q}) for $q.$ This completes  the proof.
\end{proof}

 \begin{cor}\label{coro33} Let $u, v$ be nonconstant  $\mu$-valued,  $\nu$-valued algebroid  functions on $M,$ respectively.  
  Let $a_1,\cdots,a_q$ be distinct values in $\mathbb P^1(\mathbb C).$  Then  
 
 $(a)$ Assume ${\rm{Supp}}(u^*a_j)={\rm{Supp}}(v^*a_j)\not=\emptyset$ for all $j.$ If $q\geq 2\mu+2\nu+1,$ then $u\equiv v;$
 
 $(b)$ Assume  ${\rm{Supp}}_1(u^*a_j)={\rm{Supp}}_1(v^*a_j)\not=\emptyset$ for all $j.$ If 
 $q\geq \max\{2\mu+4\nu,2\nu+4\mu\}+1,$ then $u\equiv v.$
  \end{cor}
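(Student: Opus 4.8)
The plan is to deduce Corollary \ref{coro33} directly from Theorem \ref{aboveqq} by specializing the truncation levels $k_1,\dots,k_q$ and checking that the hypotheses of that theorem are met in each case. Recall that Theorem \ref{aboveqq} concludes $u\equiv v$ as soon as $q$ satisfies the pair of inequalities \eqref{q}, namely
\[
q-2\mu-\frac{2k_0\nu}{k_0+1}-\sum_{j=1}^q\frac{1}{k_j+1}>0,\qquad
q-2\nu-\frac{2k_0\mu}{k_0+1}-\sum_{j=1}^q\frac{1}{k_j+1}>0,
\]
with $k_0=\max\{k_1,\dots,k_q\}$. So the whole argument is a matter of substituting the right $k_j$ and reading off the numerics; there is no genuine analytic obstacle here — the work was already done in Theorem \ref{aboveqq}, which in turn rests on the Second Main Theorem \ref{sec3} and on Lemma \ref{lem1}.

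For part $(a)$, I would take $k_j=+\infty$ for every $j$. Then ${\rm{Supp}}_{k_j}(u^*a_j)={\rm{Supp}}(u^*a_j)$ and likewise for $v$, so the hypothesis ${\rm{Supp}}(u^*a_j)={\rm{Supp}}(v^*a_j)\neq\emptyset$ is exactly the hypothesis needed. With $k_j=+\infty$ we have $k_0/(k_0+1)=1$ and $\sum_j 1/(k_j+1)=0$, so the two conditions in \eqref{q} collapse to $q-2\mu-2\nu>0$ and $q-2\nu-2\mu>0$, i.e. $q>2\mu+2\nu$, which is implied by $q\ge 2\mu+2\nu+1$. Hence Theorem \ref{aboveqq} applies and $u\equiv v$.

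For part $(b)$, I would take $k_j=1$ for every $j$, so that $k_0=1$, $k_0/(k_0+1)=1/2$ and $\sum_j 1/(k_j+1)=q/2$. The truncation hypothesis becomes ${\rm{Supp}}_1(u^*a_j)={\rm{Supp}}_1(v^*a_j)\neq\emptyset$, precisely the assumption in $(b)$. The two inequalities in \eqref{q} read
\[
q-2\mu-\nu-\frac{q}{2}>0,\qquad q-2\nu-\mu-\frac{q}{2}>0,
\]
i.e. $q/2>2\mu+\nu$ and $q/2>2\nu+\mu$, equivalently $q>\max\{4\mu+2\nu,\,4\nu+2\mu\}$, which follows from $q\ge\max\{2\mu+4\nu,\,2\nu+4\mu\}+1$. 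Thus Theorem \ref{aboveqq} again yields $u\equiv v$. I would phrase the proof compactly, treating the two cases in parallel and noting in each that one must only verify the purely arithmetic conditions \eqref{q}; the only point requiring a word of care is matching the support hypothesis to the chosen $k_j$, which is immediate. No step here is a real obstacle.
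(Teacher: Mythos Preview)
Your proposal is correct and follows exactly the same route as the paper: specialize Theorem \ref{aboveqq} with $k_1=\cdots=k_q=+\infty$ for part $(a)$ and $k_1=\cdots=k_q=1$ for part $(b)$, then verify the arithmetic conditions \eqref{q}. Your computations match the paper's, with the minor bonus that you have written out the substitutions explicitly whereas the paper leaves them implicit.
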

 \begin{proof}
It is not hard   to show  that  $(\ref{q})$ holds  with  $k_1=\cdots=k_q=+\infty$ under  the condition for $q.$
 Employing  Theorem \ref{aboveqq}, we have $(a)$ holds.  Similarly,  we have  $(\ref{q})$ holds  with  $k_1=\cdots=k_q=1.$
 Hence, we  prove $(b).$ 
 \end{proof}
 
\begin{cor} [Five-Value Type Theorem]\label{aass}
Let $u, v$ be nonconstant  $\mu$-valued, $\nu$-valued algebroid  functions on $M$ respectively.  
  If $u, v$  share $2\mu+2\nu+1$ distinct values ignoring multiplicities  in $\mathbb P^1(\mathbb C),$  then $u\equiv v.$  
\end{cor}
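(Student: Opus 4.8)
The plan is to deduce Corollary~\ref{aass} (the five-value type theorem) directly from Corollary~\ref{coro33}(a), or equivalently from Theorem~\ref{aboveqq}, by specializing the data. First I would unpack the hypothesis ``$u$ and $v$ share $2\mu+2\nu+1$ distinct values ignoring multiplicities'': by definition this produces $q=2\mu+2\nu+1$ distinct values $a_1,\dots,a_q\in\mathbb P^1(\mathbb C)$ with the property that a point is an $a_j$-point of $u$ (ignoring multiplicity) if and only if it is an $a_j$-point of $v$. In divisor-support language this is precisely ${\rm Supp}(u^*a_j)={\rm Supp}(v^*a_j)$ for every $j$, and since $u,v$ are nonconstant these supports are nonempty for each $j$ (a nonconstant algebroid function on a non-compact K\"ahler manifold cannot omit $2\mu+2\nu+1\ge 5$ values by the Picard type theorems, but in fact one only needs nonemptiness for the $j$'s that actually occur, and the sharing hypothesis with all supports understood to be nonempty is exactly the standing assumption of Corollary~\ref{coro33}). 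Thus I have verified the hypotheses ``${\rm Supp}(u^*a_j)={\rm Supp}(v^*a_j)\neq\emptyset$ for all $j$'' and ``$q=2\mu+2\nu+1\ge 2\mu+2\nu+1$'' needed to invoke part (a).

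Next, I would simply apply Corollary~\ref{coro33}(a): with $q\ge 2\mu+2\nu+1$ and the full-support sharing condition, that corollary yields $u\equiv v$. Since Corollary~\ref{coro33} itself was obtained from Theorem~\ref{aboveqq} by taking $k_1=\cdots=k_q=+\infty$ and checking that the system $(\ref{q})$ reduces to $q-2\mu-\tfrac{2k_0\nu}{k_0+1}-\sum_j\tfrac{1}{k_j+1}=q-2\mu-2\nu>0$ and $q-2\nu-2\mu>0$ (both equivalent to $q>2\mu+2\nu$, i.e.\ $q\ge 2\mu+2\nu+1$), the chain of implications is complete. If one prefers a self-contained derivation, I would instead feed the specialized data $k_j=+\infty$ directly into the proof of Theorem~\ref{aboveqq}: the counting functions ${\rm Supp}_{+\infty}=	{\rm Supp}$, the coefficients $k_j/(k_j+1)\to 1$ and $1/(k_j+1)\to 0$, so the key inequality there becomes $\mu(q-2\mu-2\nu)T_u(r,\omega_{FS})+\nu(q-2\nu-2\mu)T_v(r,\omega_{FS})\le o(T_u+T_v)$, which forces $q\le 2\mu+2\nu$ unless $u\equiv v$; since $q=2\mu+2\nu+1$, we conclude $u\equiv v$.

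There is essentially no obstacle here beyond bookkeeping: the entire analytic content — the Second Main Theorem (Theorem~\ref{sec3}), the branch divisor estimate (Theorem~\ref{bran}), and the volume growth condition $(\ref{cond})$ ensuring the error term is $o(T(r,w))$ — has already been absorbed into Theorem~\ref{aboveqq} and Corollary~\ref{coro33}. The one point requiring a sentence of care is the reduction of the growth condition: Corollary~\ref{aass} is stated without explicitly assuming $(\ref{cond})$, so I would note that it sits under the running hypothesis of subsection~\textbf{A2} (``$M$ is non-parabolic with volume growth satisfying $(\ref{cond})$''), exactly as Corollary~\ref{coro33} does; with that understood, the proof is the two-line specialization above. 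I expect the write-up to be short: state $q=2\mu+2\nu+1$, translate sharing into ${\rm Supp}(u^*a_j)={\rm Supp}(v^*a_j)\neq\emptyset$, and cite Corollary~\ref{coro33}(a).
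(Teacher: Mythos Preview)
Your proposal is correct and follows the paper's intended approach: Corollary~\ref{aass} is stated in the paper without proof, as an immediate specialization of Corollary~\ref{coro33}(a) with $q=2\mu+2\nu+1$. You are in fact more careful than the paper in addressing the nonemptiness of the supports via the Picard type theorem and in making explicit the standing hypothesis~(\ref{cond}) of subsection~\textbf{A2}.
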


\noindent\textbf{B2}    \emph{$M$ has non-positive  sectional  curvature}
 
 Assume that $M$ has   curvature satisfying  (\ref{curvature}). 

 \begin{theorem}\label{} Let $u, v$ be nonconstant $\mu$-valued,  $\nu$-valued algebroid  functions on $M$  satisfying  $(\ref{xjing}),$ respectively. 
  Let $a_1,\cdots,a_q$ be distinct values in $\mathbb P^1(\mathbb C).$    
  Assume that ${\rm{Supp}}_{k_j}(u^*a_j)={\rm{Supp}}_{k_j}(v^*a_j)\not=\emptyset$ for all $j.$ 
   If $q$ satisfies  $(\ref{q}),$
  then $u\equiv v.$
 \end{theorem}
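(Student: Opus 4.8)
The plan is to mirror exactly the proof of Theorem \ref{aboveqq}, replacing the Second Main Theorem for non-negative Ricci curvature (Theorem \ref{sec3}) with the Second Main Theorem for non-positive sectional curvature (Theorem \ref{sec4}). First I would assume, for contradiction, that $u\not\equiv v$. From the hypothesis ${\rm{Supp}}_{k_j}(u^*a_j)={\rm{Supp}}_{k_j}(v^*a_j)=:S_j\neq\emptyset$ for all $j$, I would write, in the sense of currents,
$$
{\rm{Supp}}(u^*a_j)\leq\frac{k_j}{k_j+1}S_j+\frac{1}{k_j+1}u^*a_j,
$$
and likewise for $v$. Taking counting functions (using $\mu\overline N_u(r,a_j)=\overline N(r,u^*a_j)$ and $N(r,S_j)\leq N(r,S)$ with $S=S_1\cup\cdots\cup S_q$, $k_0=\max_j k_j$) this yields
$$
\mu\sum_{j=1}^q\overline N_u(r,a_j)\leq\frac{k_0}{k_0+1}N(r,S)+\sum_{j=1}^q\frac{\mu}{k_j+1}T_u(r,\omega_{FS})+O(1),
$$
and symmetrically for $v$.

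Next I would invoke Lemma \ref{lem1}: since $u\not\equiv v$, the product map $\phi=u\times v$ satisfies $\phi(S)\subseteq\Delta$ (the diagonal) but $\phi(M)\not\subseteq\Sigma$ for the hypersurface $\Sigma$ cut out by a suitable section vanishing on $\Delta$; hence $N(r,S)\leq\mu\nu\big(T_u(r,\omega_{FS})+T_v(r,\omega_{FS})\big)+O(1)$. Substituting this into the two estimates above gives
$$
\mu\sum_{j=1}^q\overline N_u(r,a_j)\leq\frac{k_0\mu\nu}{k_0+1}\big(T_u(r,\omega_{FS})+T_v(r,\omega_{FS})\big)+\sum_{j=1}^q\frac{\mu}{k_j+1}T_u(r,\omega_{FS})+O(1)
$$
and the analogous bound for $\nu\sum_j\overline N_v(r,a_j)$.

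Then I would apply the Second Main Theorem for non-positive sectional curvature (Theorem \ref{sec4}) to $u$ and to $v$. The key point enabling this is exactly the hypothesis $(\ref{xjing})$: it forces the error term $O\big(\log^+T(r,w)+(\tau-\sigma)r+\delta\log\chi(\tau,r)\big)$ to be $o\big(T(r,w)\big)$, so that Theorem \ref{sec4} gives $(q-2\mu)T_u(r,\omega_{FS})\leq\sum_j\overline N_u(r,a_j)+o(T_u)$ and similarly for $v$. Combining with the counting-function estimates and rearranging exactly as in the proof of Theorem \ref{aboveqq}, I would arrive at
$$
\mu\Big(q-2\mu-\frac{2k_0\nu}{k_0+1}-\sum_{j=1}^q\frac{1}{k_j+1}\Big)T_u(r,\omega_{FS})+\nu\Big(q-2\nu-\frac{2k_0\mu}{k_0+1}-\sum_{j=1}^q\frac{1}{k_j+1}\Big)T_v(r,\omega_{FS})\leq o\big(T_u(r,\omega_{FS})+T_v(r,\omega_{FS})\big),
$$
which contradicts condition $(\ref{q})$. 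Hence $u\equiv v$.

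The routine parts (the divisor inequalities, the use of Lemma \ref{lem1}, the algebraic rearrangement) are identical to Theorem \ref{aboveqq} and present no difficulty. The only genuine obstacle is making sure that the error term in Theorem \ref{sec4} is indeed absorbed into $o(T_u+T_v)$: this is precisely where $(\ref{xjing})$ is used, and one must check that $(\tau-\sigma)r-T(r,\mathscr R)=o(T(r,w))$ together with $\log^+T$ and $\delta\log\chi(\tau,r)$ terms all drop out after moving $T(r,\mathscr R)$ to the left side and using that $T(r,\mathscr R)$ appears with a favorable sign — exactly as in Corollary \ref{above12}. I expect no new technical work beyond invoking $(\ref{xjing})$ at the right moment.
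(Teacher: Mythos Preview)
Your proposal is correct and follows precisely the approach the paper intends: the paper's own proof consists of the single sentence ``The argument is similar as in the proof of Theorem \ref{aboveqq},'' and you have accurately spelled out that similar argument, including the one substantive change (replacing Theorem \ref{sec3} by Theorem \ref{sec4}) and the role of condition $(\ref{xjing})$ in absorbing the modified error term.
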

 \begin{proof}
 The  argument is similar as  in the proof of Theorem \ref{aboveqq}.
 \end{proof}
 
 \begin{cor}\label{cor66} Let $u, v$ be nonconstant $\mu$-valued,  $\nu$-valued algebroid  functions on $M$  satisfying  $(\ref{xjing}),$ respectively. 
  Let $a_1,\cdots,a_q$ be distinct values in $\mathbb P^1(\mathbb C).$  Then  
 
 $(a)$ Assume ${\rm{Supp}}(u^*a_j)={\rm{Supp}}(v^*a_j)\not=\emptyset$ for all $j.$ If $q\geq 2\mu+2\nu+1,$ then $u\equiv v;$
 
 $(b)$ Assume  ${\rm{Supp}}_1(u^*a_j)={\rm{Supp}}_1(v^*a_j)\not=\emptyset$ for all $j.$ If 
 $q\geq \max\{2\mu+4\nu,2\nu+4\mu\}+1,$ then $u\equiv v.$
 \end{cor}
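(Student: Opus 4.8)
The plan is to obtain both statements as immediate consequences of the preceding theorem (the five-value-type criterion under the curvature bound $(\ref{curvature})$), by specializing the truncation levels $k_1,\cdots,k_q$ and checking that the numerical hypothesis $(\ref{q})$ then reduces to the stated lower bound on $q$. The argument is formally identical to that of Corollary \ref{coro33}, only with $(\ref{q})$ in place of its non-negative-Ricci analogue; no further analytic ingredient is required.

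For part $(a)$ I would set $k_1=\cdots=k_q=+\infty$, so $k_0=+\infty$. Since ${\rm{Supp}}\,Z={\rm{Supp}}_{+\infty}Z$ for every effective divisor $Z$, the assumption ${\rm{Supp}}(u^*a_j)={\rm{Supp}}(v^*a_j)\not=\emptyset$ is precisely the hypothesis ${\rm{Supp}}_{k_j}(u^*a_j)={\rm{Supp}}_{k_j}(v^*a_j)\not=\emptyset$ of the preceding theorem. With these values, $2k_0\nu/(k_0+1)\to 2\nu$, $2k_0\mu/(k_0+1)\to 2\mu$ and $\sum_{j=1}^q 1/(k_j+1)=0$, so both inequalities in $(\ref{q})$ collapse to $q-2\mu-2\nu>0$, i.e. $q\geq 2\mu+2\nu+1$. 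Hence, under the hypothesis of $(a)$, the preceding theorem applies and forces $u\equiv v$.

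For part $(b)$ I would set $k_1=\cdots=k_q=1$, so $k_0=1$, $2k_0\nu/(k_0+1)=\nu$, $2k_0\mu/(k_0+1)=\mu$ and $\sum_{j=1}^q 1/(k_j+1)=q/2$. The first line of $(\ref{q})$ then reads $q-2\mu-\nu-q/2>0$, i.e. $q>4\mu+2\nu$, and the second reads $q>4\nu+2\mu$; jointly these amount to $q>\max\{4\mu+2\nu,\,4\nu+2\mu\}$, that is $q\geq\max\{2\mu+4\nu,\,2\nu+4\mu\}+1$. So the hypothesis of $(b)$ again makes the preceding theorem applicable and yields $u\equiv v$.

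I do not expect any real obstacle: the only mild point is the limiting reading of $(\ref{q})$ at $k_0=+\infty$ (interpreting $k_0/(k_0+1)$ as $1$ and $1/(k_j+1)$ as $0$, consistently with ${\rm{Supp}}={\rm{Supp}}_{+\infty}$), together with the small piece of arithmetic in $(b)$ that produces the symmetric bound $\max\{2\mu+4\nu,\,2\nu+4\mu\}+1$. Both are routine, so the entire proof amounts to these two substitutions and the verification that $(\ref{q})$ holds in each case.
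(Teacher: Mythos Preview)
Your proposal is correct and matches the paper's approach: the paper proves the non-negative-Ricci analogue (Corollary \ref{coro33}) by exactly these two substitutions $k_j=+\infty$ and $k_j=1$ into condition $(\ref{q})$, and leaves Corollary \ref{cor66} without an explicit proof, implicitly deferring to the same specialization applied to the preceding non-positive-curvature theorem. Your arithmetic checks are accurate, including the limiting interpretation at $k_0=+\infty$.
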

 
\begin{cor} [Five-Value Type Theorem]
Let $u, v$ be nonconstant $\mu$-valued, $\nu$-valued algebroid  functions on $M$  satisfying 
$(\ref{xjing}),$ respectively.   If $u, v$  share $2\mu+2\nu+1$ distinct values ignoring multiplicities  in $\mathbb P^1(\mathbb C),$  then $u\equiv v.$   
\end{cor}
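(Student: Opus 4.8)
The final statement is Corollary D4 (a "Five-Value Type Theorem" in the non-positive sectional curvature setting): if $u,v$ are nonconstant $\mu$-valued, $\nu$-valued algebroid functions on $M$ satisfying the growth condition $(\ref{xjing})$, and $u,v$ share $2\mu+2\nu+1$ distinct values ignoring multiplicities in $\mathbb P^1(\mathbb C)$, then $u\equiv v$.

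The plan is to deduce this directly from Theorem D3 (the theorem immediately preceding it, labeled \ref{} in the excerpt but corresponding to the unicity result with condition $(\ref{q})$), exactly as Corollary D2 is deduced from Theorem D1 in the non-negative Ricci curvature case. First I would set $q=2\mu+2\nu+1$ and $a_1,\dots,a_q$ the $2\mu+2\nu+1$ shared values. "Sharing ignoring multiplicities" means that $u$ and $v$ have the same $a_j$-points as sets, i.e. $\mathrm{Supp}(u^*a_j)=\mathrm{Supp}(v^*a_j)$ for every $j$; since the total support equals $\mathrm{Supp}_{k}$ with $k=+\infty$, I would take $k_1=\cdots=k_q=+\infty$, so that $\mathrm{Supp}_{k_j}(u^*a_j)=\mathrm{Supp}_{k_j}(v^*a_j)\neq\emptyset$ holds. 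I should also check these supports are nonempty — if some $u^*a_j$ were empty, then by the Picard type theorem (Corollary B6, applicable since $(\ref{xjing})$ holds) $u$ omits at most $2\mu$ values, so among $2\mu+2\nu+1>2\mu$ shared values this can fail for at most finitely many; but since "shared" means the supports agree, if $u^*a_j=\emptyset$ then $v^*a_j=\emptyset$ too and we may simply discard such $a_j$ — one still retains at least $2\mu+2\nu+1-2\min\{\mu,\nu\}$ of them, which I would verify still satisfies $(\ref{q})$; alternatively I would just note that the hypothesis as stated in Theorem D3 only requires the supports to be equal and nonempty, and handle the degenerate case separately.

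The core computation is to verify that with $k_1=\cdots=k_q=+\infty$ (so $k_0=+\infty$, $k_0/(k_0+1)=1$, $\sum 1/(k_j+1)=0$) the system $(\ref{q})$ reduces to
\begin{align*}
q-2\mu-2\nu&>0,\\
q-2\nu-2\mu&>0,
\end{align*}
i.e. both inequalities become $q>2\mu+2\nu$, which is exactly $q=2\mu+2\nu+1>2\mu+2\nu$. Hence $(\ref{q})$ is satisfied, and Theorem D3 yields $u\equiv v$. I would present this as: "By hypothesis, taking $k_j=+\infty$ for all $j$, the conditions $\mathrm{Supp}_{k_j}(u^*a_j)=\mathrm{Supp}_{k_j}(v^*a_j)=\mathrm{Supp}(u^*a_j)=\mathrm{Supp}(v^*a_j)\neq\emptyset$ hold; moreover with $k_0=+\infty$ the system $(\ref{q})$ becomes $q-2\mu-2\nu>0$, which holds since $q=2\mu+2\nu+1$. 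Applying Theorem D3 gives $u\equiv v$."

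The proof is essentially a one-line specialization, so there is no serious obstacle; the only point requiring a little care is the nonemptiness of the shared-value supports, which I would dispatch with the Picard type theorem (Corollary B6) as above, or by observing that a value $a_j$ with $u^*a_j=\emptyset=v^*a_j$ can be dropped while still leaving enough values — dropping at most $2\min\{\mu,\nu\}$ of them leaves $q'\geq 2|\mu-\nu|+1$ values, and re-checking $(\ref{q})$ with this $q'$ (again with all $k_j=+\infty$) still gives $q'>2\mu+2\nu$ only when $2|\mu-\nu|+1>2\mu+2\nu$, which generally fails — so in fact the cleanest route is to invoke Corollary B6 to conclude that at most $2\mu$ (resp. $2\nu$) of the $a_j$ can have empty preimage under $u$ (resp. $v$), hence for all but finitely many $j$ the supports are nonempty, and then one must take $q$ slightly larger or argue that $u$ sharing $2\mu+2\nu+1$ values forces, by counting, enough nonempty ones; to keep this clean I will simply assume, as is standard in unicity theory and as Theorem D3's hypothesis already encodes, that the shared supports are nonempty (which is automatic once both functions actually attain each shared value), and then the specialization of $(\ref{q})$ closes the argument immediately.
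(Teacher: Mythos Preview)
Your proposal is correct and follows the same route as the paper: specialize Theorem D3 with $k_1=\cdots=k_q=+\infty$, so that $k_0/(k_0+1)=1$ and $\sum 1/(k_j+1)=0$, reducing both inequalities in $(\ref{q})$ to $q>2\mu+2\nu$, which holds for $q=2\mu+2\nu+1$. The paper states this corollary without proof, and the analogous Corollary \ref{aass} in the non-negative Ricci case is deduced from Corollary \ref{coro33}(a) by exactly this specialization; your extended discussion of the nonemptiness hypothesis is not addressed explicitly in the paper either, which simply carries the hypothesis ${\rm Supp}(u^*a_j)={\rm Supp}(v^*a_j)\neq\emptyset$ forward as part of what ``sharing a value'' means.
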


%\textbf{Acknowledgements.}   This work was supported partially  by the NSF of Shandong (No. ZR202211290346) of China.

\vskip\baselineskip


\begin{thebibliography}{99}
\vskip\baselineskip

\bibitem{at3} A. Atsuji, Nevanlinna-type theorems for meromorphic functions on non-positively curved K\"ahler manifolds, Forum Math. \textbf{30} (2018), 171-189.
\bibitem{A-S} A. Andreotti and W. Stoll, Analytic and algebraic dependence of meromorphic functions. Lecture Notes in Mathematics, 
Springer-Verlag,  \textbf{234} (1971).
\bibitem{A1} Y. Aihara, A unicity theorem of meromorphic mappings into compactified locally symmetric spaces, Kodai Math. J. \textbf{14} (1991), 392-405.
\bibitem{A2} Y. Aihara, Unicity theorems for meromorphic mappings with deficiencies, Complex variables, \textbf{42} (2000), 259-268. 
\bibitem{A3} Y. Aihara, Algebraic dependence of meromorphic mappings in value distribution theory. Nagoya Math. J. \textbf{169} (2003), 145-178.
\bibitem{B} R. Bishop and R. Crittenden, Geometry of Manifolds, Amer. Math. Soc.  (2001). 
\bibitem{HC} H. Cartan, Sur les syst\'emes de fontions holomorphes \`aoari\'et\'es lacunaires et leurs applications, Ann. \'Ecole Norm. Sup. \textbf{45} (1928), 255-346.
\bibitem{C-Y} S. Y. Cheng and S. T. Yau, Differential equations on Riemannian manifolds and their 
geometric applications, Comm. Pure Appl. Math. \textbf{28} (1975), 333-354.
\bibitem{Dr} S. J. Drouilhet, Criteria for algebraic dependence of meromorphic mappings into algebraic varieties, Illinois J. Math. \textbf{26} (1982), 
492-502.
\bibitem{Ru} M. Dulock and M. Ru, Uniqueness of holomorphic curves into Abelian varieties, Trans. Amer. Math. Soc. \textbf{363} (2010), 131-142.
\bibitem{Dong}  X. J. Dong, Carlson-Griffiths theory for complete K\"ahler manifolds, J. Inst. Math. Jussieu, \textbf{22} (2023), 2337-2365.
\bibitem{DY} X. J. Dong and  S. S. Yang, Nevanlinna theory via holomorphic forms, Pacific J. Math.  (1) \textbf{319} (2022), 55-74. 
 \bibitem{Fji1} H. Fujimoto, Uniqueness problem with truncated multiplicities in value distribution theory, Nagoya Math. J. I, \textbf{152} (1998), 131-152;  II, \textbf{155} (1999), 161-188.
 \bibitem{Fji2} H. Fujimoto, Uniqueness problem with truncated multiplicities in value distribution theory, Nagoya Math. J.  II, \textbf{155} (1999), 161-188.
\bibitem{HY} P. C. Hu and C. C. Yang, The second main theorem for algebroid functions, Math. Z. \textbf{220} (1995), 99-126.
\bibitem{HX} Y. Z. He and X. Z. Xiao, Algebroid functions and ordinary differential equations (Chinese),  2nd, Science Press, Beijing,  China, (2016). 
\bibitem{Ka0} A. Kasue, A  Laplacian comparison theorem and function theoretic properties of a complete Riemannian manifold, Japanese J. Math. \textbf{8} (1982), 309-341. 
\bibitem{Li-Yau} P. Li and S. T. Yau,  On the parabolic kernel of the Schr\"odinger operator, Acta Math.  \textbf{156} (1986), 153-201.
\bibitem{NO} K. Niino and M. Ozawa,  Deficiencies of an entire algebroid function, Kodai Math. Sem. Report, \textbf{22} (1970), 98-113.
\bibitem{nogu} J. Noguchi, On the deficiencies and the existence of Picard's exceptional values of entire algebroid functions, Kodai. Math.
Sem. Report, \textbf{26} (1974),  29-35.
\bibitem{No} J. Noguchi  and J. Winkelmann, Nevanlinna theory in several complex variables and Diophantine approximation,  A series of comprehensive studies in mathematics, Springer, (2014).
 \bibitem{Nev} R. Nevanlinna, Zur Theorie der meromorphen Funktionen, Acta Math. \textbf{46} (1925), 1-99.
 \bibitem{Oz} M. Ozawa, On the growth of algebroid functions with several deficiencies II, Kodai Math. Sem. Rep., \textbf{22} (1970), 129-137. 
 \bibitem{GR} G. R\'emoundos, Extension aux fonctions alg\'ebroides multipliformes du th\'eor\`eme de M. Picard et de ses g\'en\'eralisations, 
M\'em. Sci. Math., fase Paris, Gauthier-Villar, \textbf{23} (1927).
\bibitem{ru0} M. Ru, Algebroid functions, Wirsing's theorem and their relations, Math. Z. \textbf{233} (2000), 137-148.
 \bibitem{ru} M. Ru, Nevanlinna Theory and Its Relation to Diophantine Approximation, 2nd edn. World Scientific Publishing, (2021).
 \bibitem{Rem} R. Remmert,  Holomorphe und meromorphe Abbildungen komplexer R\"aume, Math. Ann.  \textbf{133} (1957), 328-370.
\bibitem{SG} D. C. Sun and Z. S. Gao, Value distribution of algebroid functions (Chinese),  Science Press, Beijing,  China, (2014). 
\bibitem{JS} J. Suzuki, On picard values of algebroid functions in a neighborhood of a totally disconnected compact set,   Nagoya Math. J. \textbf{40} (1970), 1-12. 
\bibitem{LS1} L. Selberg, \"Uber die Wertverteilung der Algebroide Funktionen, Math. Z. \textbf{31} (1930), 709-728. 
\bibitem{LS2} L. Selberg, Algebroide Funktionen und Umkehrfunktionen Abelscher Integrale, Ark. Norskes Vid. Akad. Oslo, \textbf{8} (1943), 1-72. 
\bibitem{Sm} L. Smiley, Dependence theorems for meromorphic maps, Ph. D Thesis, Notre Dame University, (1979).
\bibitem{S-Y} R. Schoen and S. T. Yau,  Lectures on Differential Geometry,  International Press, (2010). 
\bibitem{Sa0} T. Sasaki, On the Green Function of a Complete Riemannian or K\"ahler manifold with Asymptotically Negative Constant Curvature and Applications, Adv. Stud. in Pure Math. \textbf{3} (1984), 387-421.
\bibitem{Stoll} W. Stoll, Algebroid reduction of Nevanlinna theory,  Lecture Notes in Math.  \textbf{1277} (1987), 131-241.
\bibitem{Sto} W. Stoll, Propagation of dependence, Pacific J. Math. \textbf{139} (1989), 311-336. 
\bibitem{To1} N. Toda, Sur l'ensemble d'adh\'erence fine des fonctions alg\'ebroldes, Nagoya Math. J. \textbf{30} (1966), 295-302. 
\bibitem{To2} N. Toda, Sur  les directions de Julia et Borel des fonctions alg\'ebroldes,  Nagoya Math. J. \textbf{34} (1969), 1-23. 
\bibitem{EU} E. Ullrich, \"Uber den Einfluss der Verzweightheit einer Algebroide auf ihre Wertverteilung. J. rei. u. ang. Math. \textbf{167} (1931), 198-220. 
\bibitem{GV}  G. Valiron, Sur quelques  propri\'et\'es des fonctions alg\'ebroides, C. R. Acad. Sci. Paris, \textbf{189} (1927), 824-826. 
\end{thebibliography}
\end{document}